\documentclass[a4paper, 12pt]{article}
\usepackage{amsmath}
\usepackage{amsfonts}
\usepackage{amssymb}
\usepackage{amscd}
\usepackage{amsthm}
\usepackage{aprod}

\sloppy

\oddsidemargin= 1 pt \textwidth=450 pt \textheight=640 pt
\topmargin= 0 pt \headheight= 0 pt
\renewcommand{\P}{{\mathbb P}}
\newcommand{\Z}{{\mathbb Z}}

\newcommand{\G}{{Gers}}
\newcommand{\Kb}{{\mathbf K}}
\newcommand{\F}{{\mathcal F}}
\renewcommand{\H}{{\mathcal H}}
\newcommand{\E}{{\mathcal E}}

\newcommand{\Ac}{{\mathcal A}}
\newcommand{\V}{{\mathcal V}}

\newcommand{\Gc}{{\mathcal G}}
\newcommand{\M}{{\mathcal M}}
\newcommand{\Pc}{{\mathcal P}}
\newcommand{\Cc}{{\mathcal C}}
\newcommand{\N}{{\mathcal N}}

\newcommand{\K}{{\mathcal K}}
\newcommand{\LL}{{\mathcal L}}

\newcommand{\A}{{\mathbf A}}
\newcommand{\Ab}{{\mathbb A}}

\newcommand{\OO}{{\mathcal O}}

\newcommand{\Alb}{{\rm Alb}}
\newcommand{\Sym}{{\rm Sym}}
\newcommand{\codim}{{\rm codim}}
\newcommand{\Pic}{{\rm Pic}}

\newcommand{\Ker}{{\rm Ker}}

\newcommand{\Imm}{{\rm Im}}
\newcommand{\Spec}{{\rm Spec}}

\newcommand{\Tor}{{\rm Tor}}
\renewcommand{\dim}{{\rm dim}}

\newcommand{\sing}{{\rm sing}}
\renewcommand{\div}{{\rm div}}

\theoremstyle{plain}
\newtheorem{theor}{Theorem}[section]
\newtheorem{prop}[theor]{Proposition}
\newtheorem{claim}[theor]{Claim}
\newtheorem{defin-prop}[theor]{Definition-Proposition}
\newtheorem{corol}[theor]{Corollary}
\newtheorem{lemma}[theor]{Lemma}
\newtheorem{quest}[theor]{Question}

\theoremstyle{remark}
\newtheorem{rmk}[theor]{Remark}
\newtheorem{examp}[theor]{Example}
\newtheorem{examps}[theor]{Examples}
\theoremstyle{definition}
\newtheorem{defin}[theor]{Definition}
\title{Adelic resolution for homology sheaves}
\date{}
\author{Sergey Gorchinskiy\footnote{The author was partially supported
by the grants RFBR 04-01-00613, 05-01-00455, and INTAS
05-1000008-8118.}\\ \small{Steklov Mathematical Institute}\\
\small{\it e-mail: gorchins@mi.ras.ru}}
\begin{document}
\maketitle

\begin{abstract}
A generalization of the usual ideles group is proposed, namely, we
construct certain adelic complexes for sheaves of $K$-groups on
schemes. More generally, such complexes are defined for any abelian
sheaf on a scheme. We focus on the case when the sheaf is associated
to the presheaf of a homology theory with certain natural axioms,
satisfied by $K$-theory. In this case it is proven that the adelic
complex provides a flasque resolution for the above sheaf and that
the natural morphism to the Gersten complex is a quasiisomorphism.
The main advantage of the new adelic resolution is that it is
contravariant and multiplicative in contrast to the Gersten
resolution. In particular, this allows to reprove that the
intersection in Chow groups coincides up to sign with the natural
product in the corresponding $K$-cohomology groups. Also, we show
that the Weil pairing can be expressed as a Massey triple product in
$K$-cohomology groups with certain indices.
\end{abstract}

\tableofcontents

\section{Introduction}
%%There exists a general ideology telling that many notions
%%and constructions in algebraic geometry may be translated
%%to the language of various {\it adelic} groups related to
%%a scheme and sometimes to some additional data on it, for
%%instance a coherent sheaf. The introduction to this
%%theory may be found in \cite{FP}. In the present paper we
%%provide a new example to this approach. Namely we study
%%certain {\it $K$-adeles}, which are related to algebraic
%%cycles on the scheme.
Classical adeles have been defined by A.\,Weil and C.\,Chevalley for
a global field $K$ in the following way: {\it adeles} are elements
of the set $\A_K=\displaystyle\aprod_v{K_v}$, where $v$ runs over
all valuations of the field $K$, $K_v$ is the completion of $K$ at
the point $v$, and the {\it restricted product} $\aprod{}$ means
that we consider only collections $\{f_v\}\in \displaystyle\prod_v
K_v$ such that $f_v\in \hat\OO_v$ for almost all $v$, where
$\hat\OO_v\subset K_v$ is the complete local ring corresponding to
$v$. The set of adeles has a natural structure of a topological
ring. Its group of units is called the group of {\it ideles} and is
equal to $\A^*_K=\displaystyle\aprod_v{K_v^*}$, where the restricted
product is taken in the same sense as above with $K_v$ replaced by
$K_v^*$ and $\hat\OO_v$ replaced by $\hat\OO_v^*$ (we omit the
details about a topological structure of the ideles). The group of
ideles plays a central role in the classical one-dimensional global
class field theory. One of its main properties is the relation with
the class group $Cl(K)$ of $K$. Actually there is a natural
surjective homomorphism $\A_K^*\to Cl(K)$ defined by the formula
$\{f_v\}\mapsto \sum\limits_v \nu_v(f)\cdot[v]$, where the sum is
taken over all non-archemedian valuations $v$ and $\nu_v$ denotes a
discrete valuation corresponding to $v$.

Further, J.-P.\,Serre used in \cite{Ser} {\it non-complete} adeles
on a {\it curve} $X$ over a field $k$ to prove the Riemann--Roch
theorem for $X$. Namely he considered collections $\{f_x\}\in
\displaystyle\prod_{x\in X} k(X)$ such that $f_x\in \OO_{X,x}$ for
almost all closed points $x\in X$, where $\OO_{X,x}$ is the local
ring of $X$ at $x$. Similarly, one may construct a non-complete
version of ideles. Moreover, Serre was using a certain complex of
adeles though he did not mention this explicitly.

A.\,N.\,Parshin introduced in \cite{Par76} non-complete (also called
{\it rational}) adeles on a {\it surface}, and has constructed a
certain {\it adelic complex}. In \cite{Par78} there is also a
multiplicative version of complete adeles on a surface, related to
the $K_2$-functor, and have been proposed a natural two-dimensional
generalization of the classical class field theory. Later,
A.\,A.\,Beilinson defined in \cite{Bei} a complex of adeles
${\mathbb A}(X,\OO_X)^{\bullet}$ for any Noetherian scheme $X$ using
simplicial language (in fact, the adelic complex is defined for any
quasicoherent sheaf $\F$ on $X$ instead of $\OO_X$). Let us describe
explicitly the complexes of rational adeles in low dimensions. For a
curve $X$ over a field $k$, it looks like
$$
0\to k(X)\oplus \prod_{x\in X}\OO_{X,x}\to
\aprod_{x\in X}{k(X)}\to 0,
$$
where the restricted product is taken in the
above sense and the differential is defined using the natural inclusions by
the formula $(f_X,\{f_x\})\mapsto \{f_x- f_X\}$. For a surface $X$, the
rational adelic complex has the following form:
$$
0\to k(X)\oplus \prod_{x\in
X}\OO_{X,x}\oplus\prod_{C\subset X}\OO_{X,C}\to \aprod_{C\subset X}{k(X)}\oplus
\aprod_{x\in X}{k(X)}\oplus \aprod_{x\in C}{\OO_{X,C}}\to \aprod_{x\in C\subset
X}{k(X)}\to 0,
$$
where the restricted products can be defined explicitly in
terms of the poles of functions, and the differentials are defined using the
natural inclusion by the formulas
$$
(f_X,\{f_x\},\{f_C\})\mapsto
(\{f_C-f_X\}, \{f_x-f_X\},\{f_x-f_C\})
$$
and
$$
(\{f_{XC}\},\{f_{Xx}\},\{f_{Cx}\})\mapsto \{f_{Cx}-f_{Xx}+f_{XC}\}.
$$
The Beilinson--Huber theorem tells that for any Noetherian scheme, the cohomology
groups of the adelic complex ${\mathbb A}(X,\OO_X)^{\bullet}$ are canonically isomorphic
to the cohomology groups $H^i(X,\OO_X)$, see \cite{Bei}, \cite{Hub} (the
analogous result holds true for any quasicoherent sheaf $\F$ on $X$).

The aim of this paper is to give a version of these constructions
for a class of sheaves of abelian groups on schemes different from
quasicoherent sheaves, in particular, for {\it sheaves of
$K$-groups}. Recall that a sheaf of $K$-groups $\K_n^X=\K_n(\OO_X)$
is associated with the presheaf given by $U\mapsto K_n(k[U])$, $n\ge
0$, where $U\subset X$ is an open subset in the scheme $X$ and
$K_n(-)$ denotes the Quillen $K$-group. Recall that for a regular
Noetherian separable scheme $X$ of finite type over a field, there
are Gersten (or Cousin) complexes $\G(X,n)^{\bullet}$ whose
cohomology groups are equal to $H^i(X,\K^X_n)$. On the other hand,
for the case of a regular curve $X$, there is a natural
quasiisomorphism of complexes
$$
\begin{array}{ccccccc}
0&\to& K_n(k(X))\oplus \displaystyle\prod_{x\in
C}K_n(\OO_{X,x})&\to& \displaystyle\aprod_{x\in
X}{K_n(k(X))}&\to&0\\ &&\downarrow &&\downarrow&&\\ 0&\to&
K_n(k(X)) &\to&\bigoplus\limits_{x\in X}
K_{n-1}(k(x))&\to&0\end{array}\\
$$
where the restricted product is taken in the same way as before for
rational adeles with $k(X)$ replaced by $K_n(k(X))$ and $\OO_{X,x}$
replaced by $K_n(\OO_{X,x})$, $k(x)$ stands for the residue field at
$x$, the complex on the bottom is the Gersten complex for the curve
$X$, the first vertical morphism is the projection on the first
summand, and the second one is given by residue maps. We give a
higher-dimensional generalization of this. Recall that in general
the Cousin complex of an abelian sheaf consists of direct sums over
schematic points of fixed codimension. Following the general
definition of adeles, we replace these direct sums by {\it adelic
groups}, which are certain restricted products over {\it flags} of
fixed length, i.e., sequences of schematic points
$\eta_0\ldots\eta_p$ on a scheme $X$ such that
$\eta_i\in\overline{\eta}_{i-1}$ for all $1\le i\le p$. For the
explicit construction of these groups in the simplest cases see
Examples \ref{simple_examples}. Next, we construct an {\it adelic
complex} $\A(X,\F)^{\bullet}$, which consists of adelic groups.
Under certain natural conditions, there is a canonical morphism of
complexes $\nu_X:\A(X,\F)^{\bullet}\to Cous(X,\F)^{\bullet}$, where
$Cous(X,\F)^{\bullet}$ is the Cousin complex of $\F$ on $X$. We
restrict our attention to a special type of abelian sheaves, namely,
sheaves associated with the presheaves of a homology theory that
satisfies certain axioms (see Section \ref{homology-theories}). Our
main result is that for such sheaves, the morphism $\nu_X$ is a
quasiisomorphism on smooth varieties over an infinite perfect field.
In particular, we get the following result.

\begin{theor}
There is a canonical morphism of complexes $\nu_X:
\A(X,\K_n^X)^{\bullet}\to \G(X,n)^{\bullet}$. This morphism is a
quasiisomorphism if $X$ is a smooth variety over an infinite perfect field.
\end{theor}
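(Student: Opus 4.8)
\emph{Strategy and the morphism $\nu_X$.} The plan is to deduce the theorem from the fact that $\A(X,\K_n^X)^\bullet$ is a resolution of $\K_n^X$ together with the uniqueness of resolutions; write $\F=\K_n^X$. In degree $p$ the group $\A(X,\F)^p$ is a restricted product over flags $\eta_0\ldots\eta_p$ of suitable localizations of $K_n$ along the flag, whereas $\G(X,n)^p=\bigoplus_{\codim\eta=p}K_{n-p}(k(\eta))$. I construct $\nu_X$ componentwise: the component of a flag $\eta_0\ldots\eta_p$ is sent to $0$ unless the flag is nondegenerate with $\codim\eta_i=i$ for all $i$ --- so $\eta_0$ is the generic point and the codimension drops by one at each step --- in which case I apply the iterated residue maps $K_n(k(\eta_0))\to K_{n-1}(k(\eta_1))\to\cdots\to K_{n-p}(k(\eta_p))$ attached to the homology theory and put the result into the $\eta_p$-summand of $\G(X,n)^p$. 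Checking that this is well defined on the restricted product and commutes with the differentials is bookkeeping: the adelic differential is an alternating sum of ``omit one point'' maps, after composing with $\nu_X$ every term that omits an interior point of a flag vanishes since the residue of an element regular at a point is zero, and the two surviving terms recombine into the Gersten differential with matching signs. The same description makes $\nu_X$ compatible with the augmentations $\F\hookrightarrow\A(X,\F)^0$ (the diagonal) and $\F\hookrightarrow\G(X,n)^0$.

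\emph{Reduction to a resolution statement.} Both complexes are complexes of Zariski sheaves, $\nu_X$ sheafifies, and being a quasiisomorphism is checked on stalks; since the adelic complex, like the Gersten complex, behaves well under localization, one reduces to the case where $X$ is the spectrum of a local ring of a smooth variety over the given field. There $\G(X,n)^\bullet$ is a resolution of $\F$ by Quillen's theorem, so it suffices to prove that $\A(X,\F)^\bullet$ is \emph{also} a resolution of $\F$, i.e.\ exact in positive degrees with degree-zero cohomology sheaf $\F$. Indeed, granting this and the augmentation compatibility, $\nu_X$ becomes a morphism between two resolutions of $\F$ inducing the identity on $\F$, and any such morphism of complexes is a quasiisomorphism.

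\emph{The core: acyclicity of the adelic complex.} I would prove the resolution statement by induction on $\dim X$, the zero-dimensional case being trivial. The adelic complex carries a natural filtration by the codimension of the first point of the flag; this filtration is respected by $\nu_X$ and matches the codimension filtration of $\G(X,n)^\bullet$. Its graded pieces are assembled from the adelic complexes of the closed subvarieties $\overline{\{\eta_0\}}$, which have strictly smaller dimension so that the inductive hypothesis applies, with the appropriate local coefficients, together with the generic contribution $K_n(k(X))$ in degree $0$. Combining the inductive hypothesis with the localization axiom of the homology theory (to compare $X$, a dense open subset, and the closed complements), homotopy invariance, and the Gersten vanishing on the local rings that occur, one obtains exactness in positive degrees; the identification of the degree-zero cohomology with $\bigcap_{x\in X}K_n(\OO_{X,x})=\F$ is again Quillen's Gersten resolution.

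\emph{The main obstacle.} The real difficulty is that this entire argument must be carried out inside \emph{restricted} products rather than full products: one has to verify that the condition ``almost all components are integral'' is preserved by the connecting maps of the filtration and, above all, by the local Gersten splittings, so that the acyclicity of the graded pieces is not destroyed when they are reassembled. This is exactly where the hypotheses enter: the geometric presentation lemmas (Quillen's trick, refined by Gabber and Panin) that produce the local Gersten splittings require the base field to be infinite, and perfectness is what keeps the auxiliary subvarieties occurring in those presentations smooth. Once the restricted-product condition is shown to be compatible with these local trivializations, the acyclicity --- and with it the theorem --- follows.
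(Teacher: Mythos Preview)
Your description of $\nu_X$ and the reduction to showing that $\underline{\A}(X,\K_n^X)^\bullet$ is a resolution of $\K_n^X$ are fine and agree with what the paper does. The gap is in your inductive argument for acyclicity.

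The filtration by the codimension of the \emph{first} point $\eta_0$ does not yield graded pieces to which the inductive hypothesis applies. For a flag with $\codim\eta_0=k>0$, the adelic group $\A((k,i_1,\ldots,i_p),\K_n^X)$ is, by the inductive definition of adeles, a product over $\eta_0\in X^{(k)}$ of adelic groups on $\overline{\eta_0}$ with coefficients in the sheaf $[\K_n^X]_{\eta_0}$ (essentially the restriction of $\K_n^X$). This is \emph{not} a homology sheaf on $\overline{\eta_0}$: its stalk at $\xi\in\overline{\eta_0}$ is $K_n(\OO_{X,\xi})$, not $K_{n-k}(\OO_{\overline{\eta_0},\xi})$. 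Moreover $\overline{\eta_0}$ is singular in general. So neither smoothness nor the ``homology sheaf'' hypothesis survives, and the induction does not close. Your phrase ``with the appropriate local coefficients'' hides exactly this failure.

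The paper's route is genuinely different. It introduces an auxiliary complex $\A'(X,\F)^\bullet$ defined not by a restricted-product condition but by a finiteness condition on iterated residues. This $\A'$-complex \emph{does} admit a clean decomposition, but by the \emph{last} point of the flag: $\A'((i_0\ldots i_{p-1}p),\F)$ is a (restricted) product over $\eta\in X^{(p)}$ of $\A'$-groups on the local schemes $X_\eta=\Spec\OO_{X,\eta}$, which are regular with the Cohen--Macaulay sheaf $j_\eta^*\F$. This gives an honest induction proving $\A'$ is a flasque resolution of $\F$ for any Cohen--Macaulay sheaf. The remaining work---and this is where the infinite perfect field hypothesis is actually used---is to show that $\A\hookrightarrow\A'$ induces an injection on cohomology. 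That step is an approximation lemma: given an $\A'$-adele with prescribed residues lying in an $\A$-group, one must produce an $\A$-adele with the same residues. The paper accomplishes this via ``strongly locally effaceable pairs'' (a uniform, global refinement of Quillen's presentation lemma) and ``patching systems'' built from them; this is several sections of geometric work, and it is precisely the content you gesture at in your last paragraph but do not supply.
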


Recall that one of the main advantages of the Gersten complex is that it
allows to relate explicitly cohomology of the sheaves of $K$-groups,
called {\it $K$-cohomology}, with the (algebraic) geometry of $X$. In
particular, the famous Bloch--Quillen formula says that
$H^n(X,\K_n(\OO_X))=CH^n(X)$, see \cite{Q}. At the same time there is a
canonical product between the sheaves of $K$-groups, induced by the
product in $K$-groups themselves. This product structure
cannot be prolonged to the Gersten complex: otherwise there would
exist an intersection theory for algebraic cycles without taking
them modulo rational equivalence.

The main advantage of the adelic construction is that the flag
simplicial structure, involved in the definition of the adelic
complexes, allows to define a product on them, i.e., the complex
$\bigoplus\limits_{n\ge 0}\A(X,\K^X_n)^{\bullet}$ is a DG-ring. Note
that the general theory of sheaves provides many multiplicative
simplicial resolutions of sheaves, i.e., resolutions carrying the
product structure, for example, Chech or Godement resolutions.
Nevertheless these resolutions seem to be too general to reflect the
algebro-geometric structure of a scheme, for instance, relations of
$K$-cohomology to algebraic cycles or direct images for proper
morphisms. Though the adelic complex as presented here also does not
have direct images, the covariant Gersten complex turns out to be a
right DG-module over the DG-ring of adeles. Roughly speaking, the
difference between the adelic complex $\A(X,\K^X_n)^{\bullet}$ and
the Gersten complex $\G(X,n)^{\bullet}$ consists of all possible
{\it systems of local $K$-group equations along flags} for each
irreducible subvariety on $X$. The main idea is that in order to get
an intersection-product on the groups of algebraic cycles we enlarge
them by systems of equations instead of taking them modulo rational
equivalence.

Analysis of the adelic complex provides certain explicit formulas
for products and also Massey higher products in $K$-cohomology. In
particular, we obtain a new direct proof of the coincidence up to
sign of the intersection product in Chow groups and the natural
product in $K$-cohomology. Another example is the triple product
$m_3(\alpha,l,\beta)$, where $\alpha\in CH^d(X)_l$, $\beta\in
\Pic^0(X)_l$. It occurs that this triple is equal to the Weil
pairing of $\alpha$ and $\beta$. In the case of a curve the equality
of the corresponding explicit adelic formula with the Weil pairing
was proved by different methods in \cite{How}, \cite{Maz}, and
\cite{Gor}.

The paper is organized as follows. First in Section
\ref{defin-basic-adeles} we define adelic groups for abelian sheaves
on arbitrary schemes and study their basic properties, such as
multiplicativity and contravariancy. Section \ref{Cousin-relation}
shows an important relation of the adelic complex with the Cousin
complex. In Sections \ref{section-control-support} and
\ref{section-1-pure} we establish more properties of adelic groups
imposing rather mild conditions on sheaves. In Section
\ref{section-prime-adeles} we define a new type of adelic groups,
called $\A'$-adeles, which do not have a multiplicative structure
but provide a flasque resolution for any Cohen--Macaulay sheaf.

In Section \ref{homology-theories} we introduce the notion of a
homology theory locally acyclic in fibrations (l.a.f. homology
theory). We discuss the basic properties of an l.a.f. homology
theory and define the associated homology sheaves. Then, in Section
\ref{K-theory} we define strongly locally effaceable pairs of closed
subvarieties on a smooth variety with respect to an l.a.f. homology
theory. This is a globalization of the method from Quillen's proof
of the Gersten conjecture in the geometric case. Section
\ref{section-existence-addition} contains the existence and the
addition results for strongly locally effaceable pairs. In
particular, we get some uniform version of the local exactness of a
Gersten resolution, which might have interest in its own right (see
Corollary \ref{uniform_Gersten}). In Section
\ref{patching-systems-section} we introduce the notion of patching
systems of closed subvarieties on a smooth variety. This is our key
tool for studying the relation between the adelic and the Gersten
complexes. Section \ref{section-main-theorem} contains the main
result (Theorem \ref{quasiis}): for any l.a.f. homology theory the
adelic complex of the homology sheaves is quasiisomorphic to the
Gersten complex on any smooth variety over an infinite perfect
field. After several easy reductions the proof of the main theorem
is reduced to a certain approximation result, namely Lemma
\ref{approximation}, whose proof uses the developed technique of
patching systems. Section \ref{explicit-classes} is devoted to the
explicit construction of cocycles in the adelic complex representing
cocycles in the Gersten complex.

Then we consider our main example of an l.a.f. homology theory,
namely the \mbox{$K'$-theory} of schemes. We recall some general
facts on $K$-sheaves and $K$-cohomology in Section
\ref{section-K-generalities}. We also give some examples of the
explicit $K$-adelic cocycles for algebraic cycles and we study the
link between the $K$-adeles and the coherent differential adeles of
Parshin and Beilinson. Section \ref{Euler_charact} provides an
explicit construction of an Euler characteristic map from the
$K$-groups of the exact category of complexes of coherent sheaves on
a scheme $T$ that are exact outside of a closed subscheme $S\subset
T$ to the $K'$-groups of $S$. This map can be also constructed using
$R$-spaces introduced in \cite{Blo84} or general preperties of
Waldhausen $K$-theory of perfect complexes given in \cite{Tho}.
Next, explicit formulas for products in $K$-cohomology in terms of
Gersten cocycles are obtain in Section
\ref{product_cocycles_section} as a consequence of the product
structure on the adelic complex~(Theorem \ref{intersecting_cycles}).
The case of certain Massey triple products is treated in Section
\ref{section-triple-formula}. We also show the coincidence of the
considered Massey triple product with the Weil pairing between
zero-cycles and divisors (Proposition~\ref{Weil_Massey}).

It is a pleasure for the author to thank A.\,N.\,Parshin for his
constant attention to this work and many helpful suggestions,
C.\,Soul\'e for many useful discussions and remarks, D.\,Grayson,
and D.\,Kaledin for important comments, which appeared after they
read the manuscript of this text. The author is very grateful for
the hospitality during his visits to Institut de Math\'ematiques de
Jussieu, where many parts of the text had been written down.

\section{Generalities on adeles}

\subsection{Definition and first properties}\label{defin-basic-adeles}

We use Beilinson's simplicial approach to
higher-dimensional
adeles, first defined by Parshin in the two-dimensional case
(see \cite{Bei}, \cite{Par76}). Besides, we follow most notations from \cite{Hub}.
For a cosimplicial group $A^*$, let $A^{\bullet}$ be the associated cochain
complex. We define the differential in the tensor product
$A^{\bullet}\otimes B^{\bullet}$ of two complexes $A^{\bullet}$
and $B^{\bullet}$ by the formula $d(a\otimes b)=da\otimes
b+(-1)^{\deg a}a\otimes db$. For a scheme $X$, by $X^{(p)}$ denote the set
of all schematic points of codimension $p$ on $X$.

Let $X$ be a scheme and let $P(X)$ be the set of all its schematic points. By
$\overline{\eta}$ denote the closure of a point $\eta\in P(X)$. By definition, put
$$
S(X)_p=\{(\eta_0,\eta_1\ldots,\eta_p):\eta_i\in
P(X),\eta_{i}\in\overline{\eta}_{i-1}\mbox{ for all } 1\le i \le p\}.
$$
An element $F=(\eta_p\ldots\eta_p)\in S(X)_p$ is called
a {\it flag of length} $p$. The assignment $X\mapsto S(X)_{*}$ is
a covariant functor from the category of schemes to
the category of simplicial sets. Let
$\delta^{p}_i:S(X)_{p}\to S(X)_{p-1}$, $0\le i\le p$ and
$\sigma^p_i:S(X)_p\to S(X)_{p+1}$, $0\le i\le p$ be
the natural boundary and degeneracy maps, respectively.

There is an exact additive functor $\F\mapsto S(X,\F)$ from the category of abelian
sheaves on $X$ to the category of cohomological abelian systems of coefficients
on $S(X)_*$, given by $S(X,\F)(\eta_0\ldots\eta_p)=\F_{\eta_0}$ for any flag
$(\eta_0\ldots\eta_p)\in S(X)_p$. By $C(X,\F)^*$ denote the cosimplicial group
associated to the system of coefficients $S(X,\F)$ on $S(X)_*$. We have
$C(X,\F)^p=\prod\limits_{\eta_0\ldots\eta_p}\F_{\eta_0}$. Explicitly, the
differential in the complex $C(X,\F)^{\bullet}$ is given by the formula
$(df)_{\eta_0\ldots\eta_{p+1}}=\sum\limits_{i=0}^p(-1)^i
f_{\eta_0\ldots\hat{\eta}_i\ldots\eta_{p+1}}\in F_{\eta_0}$ for any element
$f\in C(X,\F)^p$, where the hat means that we omit the corresponding element
in the flag. By definition, put $C(M,\F)=
\prod\limits_{(\eta_0\ldots\eta_p)\in M}\F_{\eta_0}$ for a subset $M\subset
S(X)_p$. In particular, we have $C(X,\F)^p=C(S(X)_p,\F)$.

Evidently,
$S(X,\F\otimes\Gc)=S(X,\F)\otimes S(X,\Gc)$ for any two sheaves $\F$ and $\Gc$ on $X$
(we consider a point wise multiplication for systems of coefficients). Consequently
there is a canonical morphism of complexes
$C(X,\F)^{\bullet}\otimes C(X,\Gc)^{\bullet}\to C(X,\F\otimes\Gc)^{\bullet}$ given by
$(f\cdot g)_{\eta_0\ldots\eta_{p+q}}=
f_{\eta_0\ldots\eta_p}\otimes g_{\eta_p\ldots\eta_{p+q}}$ for any elements
$f\in C(X,\F)^p$, $g\in C(X,\Gc)^q$.

If $f:X\to Y$ is a morphism of schemes, then there is a natural morphism of
systems of coefficients $S(Y,f_*\F)\to f_*S(X,\F)$ on $S(Y)_*$, where
$f_*S(X,\F)(G)= \prod\limits_{F:f(F)=G}S(X,\F)(F)$ for any flag $G\in S(Y)_*$.
Consequently there is a morphism of cosimplicial groups $C(Y,f_*\F)^*\to
C(X,\F)^*$.

Let $\F$ be a sheaf of abelian groups on a scheme $X$. We put $\F_U=(i_U)_*i_U^*\F$
for any open embedding $i_U:U\hookrightarrow X$. Note that $(\F_U)_V=\F_{U\cap V}$
for two open subsets $U$ and $V$ in $X$. For a point $\eta\in X$, we put
$[\F]_{\eta}=j_*j^*(\F)$, where $j:\Spec\OO_{X,\eta}\to X$ is the natural
morphism. We have
$[\F]_{\eta}=\lim\limits_{\longrightarrow}\F_{U}$, where
the limit is taken over all open subsets $U\subset X$ containing the point $\eta$.

For $M\subset S(X)_p$, $\eta\in P(X)$, by
$_{\eta}M$ denote the following set:
$$
{}_{\eta}M=\{(\eta_1,\ldots,\eta_p)\in
S(X)_{p-1}:(\eta,\eta_1,\ldots,\eta_p)\in M\}.
$$
We define inductively the {\it adelic groups} $\A(M,\F)$, $M\subset S(X)_p$
of $\F$ on $X$ in the following way.

\begin{defin}
For a subset $M\subset P(X)=S(X)_0$, we put
$$
\A(M,\F)=C(M,\F)=\prod_{\eta\in M}\F_{\eta}.
$$
For a subset $M\subset S(X)_p$, $p>0$, we put
$$
\A(M,\F)=\prod_{\eta\in P(X)}\widetilde{\A}(_{\eta}M,[\F]_{\eta}),
$$
and
$$
\widetilde{\A}(M,[\F]_{\eta})=\lim\limits_{\longrightarrow\atop U} \A(M,\F_U),
$$
where the limit is taken over all open subsets $U\subset X$ containing the point $\eta$.
Also, we put $\A(\emptyset,\F)=0$.
Elements of the adelic groups $\A(M,\F)$ are called {\it adeles}.
\end{defin}

\begin{rmk}
The definition of adelic groups
does not use the ring structure of the sheaf $\OO_X$. In fact, all generalities
about adeles that are discussed below make sense for abelian sheaves on any
topological space such that every closed subset has a unique generic point.
\end{rmk}

\begin{rmk}
This definition is analogous to the definition of adeles from \cite{Bei}. The main
differences with \cite{Bei} are as follows: we replace coherent sheaves by
sheaves of type $\F_V$ and we use no completion in the construction.
Consequently our adelic condition is more rough: if $\F$ is a coherent sheaf
on a Noetherian scheme $X$, then the defined above group $\A(M,\F)$
contains the corresponding group of
{\it rational adeles} (see \cite{Par76} and \cite{Hub}). However there is a
comparison in the backward direction, see Proposition \ref{Parshin-Beilinson}.
\end{rmk}

\begin{rmk}
The scheme $X$ is not included in our notation for adelic groups $\A(M,\F)$.
Nevertheless in what follows $X$ could be always reconstructed from
the notation for a sheaf $\F$.
\end{rmk}

\begin{rmk}\label{limit-prod}
It follows from the definition that for any subset $M\subset S(X)_p$, $p>0$ and
for any open subset $V\subset X$, we have
$$
\A(M,\F_V)=\prod_{\eta\in P(X)}\lim\limits_{\longrightarrow\atop U_{\eta}}
\A(_{\eta}M,\F_{V\cap U_{\eta}})=\lim\limits_{\longrightarrow\atop \{U_{\eta}\}}
\prod_{\eta\in P(X)}\A(_{\eta}M,\F_{V\cap U_{\eta}}),
$$
where the second limit is taken over the set of systems $\{U_{\eta}\}$
of open subsets in $X$ parameterized by
schematic points $\eta$ such that $\eta\in U_{\eta}$ for any $\eta\in P(X)$, and
$\{U_{\eta}\}\le\{U'_{\eta}\}$ if and only if $U_{\eta}\supseteq U'_{\eta}$ for
all $\eta\in P(X)$.
\end{rmk}

Evidently, $\F\mapsto \A(M,\F)$ is a covariant functor from the category of
abelian sheaves on $X$ to the category of abelian groups for any subset $M\subset
S(X)_p$. It is easily shown that $\A((\eta_0\ldots\eta_p),\F)=\F_{\eta_0}$
for any element $(\eta_0\ldots\eta_p)\in S(X)_p$. For any subset
$M\subset S(X)_p$, there is a natural morphism
$\theta:\A(M,\F)\to C(M,\F)=\prod\limits_{(\eta_0\ldots\eta_p)\in M}\F_{\eta_0}$.

\begin{prop}\label{Properties-M}
\hspace{0cm}
\begin{itemize}
\item[(i)]
For any subsets $M,N\subset S(X)_p$, $p\ge 0$ such that $M\cap N=\emptyset$, there is
a decomposition $\A(M\cup N,\F)=\A(M,\F)\oplus\A(N,\F)$;
\item[(ii)]
for any subset $M\subset S(X)_{p}$, $p>0$,
there are boundary maps $d^{p}_i:\A(\delta^{p+1}_i(M),\F)\to\A(M,\F)$, $0\le i\le p$;
\item[(iii)]
for any subset $M\subset S(X)_{p}$,
there are isomorphisms $s^p_i:\A(\sigma^p_i(M),\F)\to\A(M,\F)$, $0\le i\le p$;
\item[(iv)]
for any subset $M\subset S(X)_p$ and for any sheaves
$\F$, $\Gc$ on $X$, there is a morphism
$\A(M,\F)\otimes\A(M,\Gc)\to\A(M,\F\otimes\Gc)$;
\item[(v)]
for any subset $M\subset S(Y)_p$ and
for any morphism of schemes $f:X\to Y$, there is a natural morphism
$f^*:\A(M,f_*\F)\to \A(f^{-1}(M),\F)$;
\item[(vi)]
all the morphisms constructed in $(i)-(v)$ commute via the map $\theta$ with
their natural counterparts for the corresponding direct product groups.
\end{itemize}
\end{prop}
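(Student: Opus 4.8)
The plan is to prove the six statements of Proposition~\ref{Properties-M} by induction on $p$, exploiting the recursive structure of the definition $\A(M,\F)=\prod_{\eta\in P(X)}\widetilde\A({}_\eta M,[\F]_\eta)$ together with the fact that filtered colimits are exact and commute with finite products. The base case $p=0$ is immediate: for (i) we have $\A(M\cup N,\F)=\prod_{\eta\in M\cup N}\F_\eta=\prod_{\eta\in M}\F_\eta\oplus\prod_{\eta\in N}\F_\eta$; for (iv) the map is the obvious one induced by $\F_\eta\otimes\Gc_\eta\to(\F\otimes\Gc)_\eta$; for (v) one uses the adjunction-type map $(f_*\F)_\eta\to\prod_{\xi:f(\xi)=\eta}\F_\xi$ from the discussion of systems of coefficients; and (ii), (iii) are vacuous or trivial there. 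Statement (vi) is a bookkeeping assertion that all constructions are compatible with $\theta$, which will be verified in tandem with each construction.

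For the inductive step I would argue as follows. For (i), note that ${}_\eta(M\cup N)={}_\eta M\cup{}_\eta N$ disjointly, so $\widetilde\A({}_\eta(M\cup N),[\F]_\eta)=\widetilde\A({}_\eta M,[\F]_\eta)\oplus\widetilde\A({}_\eta N,[\F]_\eta)$ by the inductive hypothesis applied inside the colimit over $U$ (using exactness of filtered colimits), and then take the product over $\eta$. For (ii): the boundary map $\delta^{p+1}_i:S(X)_{p+1}\to S(X)_p$ and the corresponding map on the sets ${}_\eta(\,\cdot\,)$ behave differently for $i=0$ versus $i\geq 1$ --- for $i\geq 1$ one has ${}_\eta\delta^{p+1}_i(M)=\delta^p_{i-1}({}_\eta M)$ on the nose (the first point $\eta$ is untouched), so the map $d^p_i$ is built from $d^{p-1}_{i-1}$ by induction and then taking products; for $i=0$ the first point changes, which reindexes the outer product, and here one uses that $[\F]_{\eta_0}$ for $\eta_1\in\overline{\eta_0}$ maps naturally to $[\F]_{\eta_1}$, giving the needed transition maps on stalks. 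Statement (iii) for degeneracies is the analogous but easier argument since $\sigma^p_i$ only repeats a point, and the colimit/product description shows the induced map is an isomorphism. For (iv), the multiplicativity of the system of coefficients, $S(X,\F\otimes\Gc)=S(X,\F)\otimes S(X,\Gc)$, together with $[\F\otimes\Gc]_\eta$ receiving a map from $[\F]_\eta\otimes[\Gc]_\eta$, feeds the inductive hypothesis through the colimit. For (v), one combines the inductive hypothesis with the base-change behaviour of ${}_\eta M$ under $f^{-1}$ and the compatibility of $[\,\cdot\,]_\eta$ with pushforward.

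The main obstacle, such as it is, will be the $i=0$ case of the boundary maps in (ii): unlike the higher face maps, deleting the zeroth vertex of a flag $(\eta_0,\ldots,\eta_{p+1})$ removes the point that indexes the outermost product in the recursive definition, so one must produce, for each inclusion of closures $\eta_1\in\overline{\eta_0}$, a canonical map $\widetilde\A(\,\cdot\,,[\F]_{\eta_0})\to\widetilde\A(\,\cdot\,,[\F]_{\eta_1})$ and check it is compatible with all the colimit transition maps. This is where one uses that $[\F]_\eta=\varinjlim_{U\ni\eta}\F_U$ and that $\eta_1\in\overline{\eta_0}$ forces every open $U$ containing $\eta_1$ to also ``see'' $\eta_0$ in the appropriate sense, so $\F_U$ at $\eta_0$ maps to $\F_U$ at $\eta_1$; Remark~\ref{limit-prod}, which rewrites $\A(M,\F_V)$ as a single colimit of honest products over systems $\{U_\eta\}$, makes this manipulation clean. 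Everything else is a routine unwinding, and throughout, statement (vi) is checked by observing that each construction is defined so that the evident square with $\theta$ and the corresponding construction on $C(M,\F)=\prod_{(\eta_0\ldots\eta_p)\in M}\F_{\eta_0}$ commutes by construction.
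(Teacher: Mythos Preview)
Your approach is correct and coincides with the paper's: both proceed by induction on $p$ through the recursive definition, and the paper explicitly records the two sheaf-level ingredients you implicitly invoke, namely the natural maps $\F_U\otimes\Gc_V\to(\F\otimes\Gc)_{U\cap V}$ for (iv) and $(f_*\F)_U\to f_*(\F_{f^{-1}(U)})$ for (v). For (i)--(iii) the paper simply cites the analogous arguments in Huber's treatment of coherent adeles (with $\F_U$ replacing coherent sheaves), which is precisely the inductive unwinding you outline, including the distinction between the $i=0$ face and the others.
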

\begin{proof}
The proof of $(i)$ and the constructions in $(ii)$, $(iii)$
are the same as the proof of
Propositions 2.1.5, Definition 2.2.2, and the proof of Proposition 2.3.1,
respectively, from \cite{Hub}. The only difference is that we use sheaves of type $\F_U$
in place of coherent sheaves.

The proof of $(iv)$ is by induction on $p$ and uses that there is a
natural morphism of sheaves $\F_U\otimes
\Gc_V\to(\F\otimes\Gc)_{U\cap V}$ for any open subsets $U,V\subset
X$. Besides, for $p=0$, the morphisms in question equals the point
wise multiplication in stalks of sheaves.

The proof of $(v)$ is also by induction on $p$ and uses that there is a natural
morphism of sheaves $(f_*\F)_U\to f_*(\F_{f^{-1}(U)})$ for any open subset $U\subset Y$.
Besides, for $p=0$, the morphism in question equals the point wise map on
stalks of sheaves.

The proof of $(vi)$ is straightforward.
\end{proof}

For a closed or an open subscheme $Y\subset X$, let $i_Y$ be the corresponding
embedding. For any subset $M\subset S(X)_p$, let $M(Y)$ be the set of flags
on $Y$ that are in $M$. The following consequences of Proposition \ref{Properties-M}
are needed for the sequel.

\begin{corol}\label{open-local}
\hspace{0cm}
\begin{itemize}
\item[(i)]
For any open subset $U\subset X$ and any subset $M\subset S(X)_p$, we have
$$
\A(M(U),\F)=\A(M(U),i^*_U\F),
$$
where the right hand side is the adelic group on $U$. In particular,
$\A(M,\F)=\A(M(U),i_U^*\F)\oplus A$ for some subgroup $A\subset\A(M,\F)$;
\item[(ii)]
for any schematic point $\eta\in X$ and any subset $M\subset S(X)_p$, we have
$$
\A(M(\eta),\F)=\A(M(\eta),j_{\eta}^*\F),
$$
where $j_{\eta}:X_{\eta}=\Spec(\OO_{X,{\eta}})\to X$
is the natural morphism of schemes, $M(\eta)$ is the set of flags on
$X_{\eta}$ that are from $M$, and the right hand side is the adelic group on
$X_{\eta}$. In particular, $\A(M,\F)=\A(M(\eta),j_{\eta}^*\F)\oplus B$
for some subgroup $B\subset\A(M,\F)$;
\item[(iii)]
for any closed subset $Z\subset X$ and any subset $M\subset S(X)_p$, we have
$$
\A(M(Z),\F)=\A(M(Z),i_Z^*\F),
$$
where the right hand side is the adelic group on $Z$. In particular, for any
sheaf $\Gc$ on $Z$ we have $\A(M(Z),(i_Z)_*\Gc)=\A(M(Z),\Gc)$;
\item[(iv)]
suppose that $Y,Z$ are closed subschemes in $X$ such that $X=Y\cup Z$; then for
any sheaf $\F$ on $X$ and for any subset $M\subset S(X)_p$ we have
$$
\A(M,\F)=[\A(M(Y),i_Y^*\F)\oplus\A(M(Z),i_Z^*\F)]/\A(M(Y\cap Z),i_{Y\cap
Z}^*\F);
$$
\item[(v)]
consider a point $\eta\in X$ and a subset $M\subset S(X)_p$
such that any flag in $M$ starts with ${\eta}$; then for any sheaf $\F$ on $X$
and for any open subset $U\subset X$ containing $\eta$ we have
$$
\A(M,\F)=\A(M,\F_U).
$$
\end{itemize}
\end{corol}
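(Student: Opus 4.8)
The plan is to prove $(i)$, $(ii)$ and $(iii)$ simultaneously by induction on $p$, to deduce the three ``In particular'' clauses from Proposition \ref{Properties-M}$(i)$, and then to obtain $(iv)$ from $(iii)$ and $(v)$ by a direct unwinding of the definition. In the base case $p=0$ all three assertions reduce to the elementary fact that the stalk functor commutes with the three restriction functors: for a point $\xi$ lying in $U$, in $X_\eta$, or in $Z$ one has $(i_U^*\F)_\xi=(j_\eta^*\F)_\xi=(i_Z^*\F)_\xi=\F_\xi$, so the products $\prod_{\xi\in M(\,\cdot\,)}(\,\cdot\,)_\xi$ defining $\A(M(\,\cdot\,),\F)$ and the adelic group of the pulled-back sheaf on the subscheme coincide termwise.

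For the inductive step one unwinds $\A(N,\F)=\prod_{\xi\in P(X)}\lim\limits_{\longrightarrow\atop U\ni\xi}\A({}_{\xi}N,\F_U)$ with $N$ equal to $M(U_0)$, to $M(\eta_0)$, or to $M(Z)$. Two remarks make the induction go through. First, each of the three classes of flags is stable under dropping the initial vertex, and ${}_{\xi}\bigl(M(Y')\bigr)=\bigl({}_{\xi}M\bigr)(Y')$ whenever $Y'$ is one of the three subschemes and $\xi$ is a point of $Y'$; hence only the points of $Y'$ contribute to the product, and the inner index set is again of the same type. Second, for an open neighbourhood $U$ of $\xi$ one has $i_{U_0}^*(\F_U)=(i_{U_0}^*\F)_{U\cap U_0}$, $j_{\eta}^*(\F_U)=(j_{\eta}^*\F)_{U\cap X_{\eta}}$ and $i_Z^*(\F_U)=(i_Z^*\F)_{U\cap Z}$ by base change for the open immersion $i_U$. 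Applying the induction hypothesis to the sheaf $\F_U$ on $X$ and reindexing the direct limit — the opens $U\cap U_0$, the sets $U\cap X_{\eta}$, and the sets $U\cap Z$ are cofinal among the open neighbourhoods of $\xi$ in $U_0$, in $X_{\eta}$, and in $Z$ respectively — identifies $\A(M(Y'),\F)$ with the adelic group of the pulled-back sheaf on $Y'$. The ``In particular'' clauses of $(i)$ and $(ii)$ then follow from the disjoint decomposition $M=M(Y')\sqcup\bigl(M\setminus M(Y')\bigr)$ together with Proposition \ref{Properties-M}$(i)$, and the last clause of $(iii)$ is the special case $\F=(i_Z)_*\Gc$ of the first one, using $i_Z^*(i_Z)_*\Gc=\Gc$.

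For $(iv)$, the key point is that when $X=Y\cup Z$ with $Y$ and $Z$ closed, every flag on $X$ lies entirely in $Y$ or entirely in $Z$: if $\eta_0\in Y$ then $\overline{\eta}_0\subseteq Y$ and therefore $\eta_i\in Y$ for all $i$. Hence $M=M(Y)\cup M(Z)$ with $M(Y)\cap M(Z)=M(Y\cap Z)$; writing $M$ as the disjoint union of $M(Y\cap Z)$, $M(Y)\setminus M(Y\cap Z)$ and $M(Z)\setminus M(Y\cap Z)$ and invoking Proposition \ref{Properties-M}$(i)$ together with $(iii)$ presents $\A(M,\F)$ as the quotient $[\A(M(Y),i_Y^*\F)\oplus\A(M(Z),i_Z^*\F)]/\A(M(Y\cap Z),i_{Y\cap Z}^*\F)$, the denominator being embedded diagonally through the two inclusions $M(Y\cap Z)\subseteq M(Y)$ and $M(Y\cap Z)\subseteq M(Z)$. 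Finally $(v)$ is immediate from the definition: if every flag in $M$ begins with $\eta$ then ${}_{\xi}M=\emptyset$ for $\xi\neq\eta$, so $\A(M,\F)=\widetilde{\A}({}_{\eta}M,[\F]_{\eta})$; since $[\F]_{\eta}=[\F_U]_{\eta}$ whenever $\eta\in U$, the same computation applied to $\F_U$ gives $\A(M,\F_U)=\widetilde{\A}({}_{\eta}M,[\F_U]_{\eta})=\A(M,\F)$.

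The step I expect to demand the most care is the inductive passage in $(i)$--$(iii)$, specifically the cofinality verification: one must check that the direct limit over shrinking open neighbourhoods of a point in $X$, once restricted to flags supported on a fixed subscheme, is literally the direct limit defining the adelic group on that subscheme. This, together with the base-change identities for the sheaves $\F_U$, is routine but somewhat delicate bookkeeping with the nested limits of Remark \ref{limit-prod}.
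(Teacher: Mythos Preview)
Your proof is correct and is exactly the natural way to spell out what the paper leaves implicit: the paper states this corollary as a consequence of Proposition~\ref{Properties-M} without further argument, and your induction on $p$ together with the disjoint decomposition of Proposition~\ref{Properties-M}$(i)$ is precisely how one fills in the details. The cofinality check you flag as delicate is indeed the only nontrivial point, and your treatment of it is sound (for $X_\eta$ it reduces to the fact that every basic open $D(f)\subset\Spec\OO_{X,\eta}$ is the preimage of an open in $X$, since $f$ lifts to a section on some neighbourhood of $\eta$).
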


We put $\A_{s}(X,\F)^p=\A(S(X)_p,\F)$.
Using Proposition \ref{Properties-M}, we get the following statement.

\begin{corol}\label{Properties-S}
\hspace{0cm}
\begin{itemize}
\item[(i)]
There is a natural structure of a cosimplicial group on $\A_s(X,\F)^*$ such that
the natural morphism $\theta:\A_s(X,\F)^*\to C(X,\F)^*$ is a morphism of cosimplicial
groups;
\item[(ii)]
for any two sheaves $\F$ and $\Gc$ on $X$ there is a morphism of complexes
$\A_s(X,\F)^{\bullet}\otimes\A_s(X,\Gc)^{\bullet}\to
\A_s(X,\F\otimes\Gc)^{\bullet}$, which commutes via $\theta$ with the
morphism of complexes $C(X,\F)^{\bullet}\otimes C(X,\Gc)^{\bullet}\to
C(X,\F\otimes\Gc)^{\bullet}$;
\item[(iii)]
for any morphism of schemes $f:X\to Y$ and for any sheaf $\F$ on $X$
there is a morphism of cosimplicial groups $\A_s(Y,f_*\F)^*\to\A_s(X,\F)^*$,
which commutes via $\theta$ with the morphism of cosimplicial groups
$C(Y,f_*\F)^*\to C(X,\F)^*$.
\end{itemize}
\end{corol}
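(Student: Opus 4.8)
The plan is to obtain all three assertions by specializing the constructions of Proposition~\ref{Properties-M} to the full flag sets $M=S(X)_p$ and then checking that the resulting operators satisfy the expected identities. For (i), I would first observe that each elementary face map $\delta^{p}_i\colon S(X)_p\to S(X)_{p-1}$ is surjective — every flag of length $p-1$ admits a degenerate lift, e.g. $\sigma^{p-1}_{i-1}$ applied to it maps to it under $\delta^{p}_i$ — so Proposition~\ref{Properties-M}(ii) applied to $M=S(X)_p$ yields coface maps $\A_s(X,\F)^{p-1}\to\A_s(X,\F)^p$. For the codegeneracies, the image $\sigma^{p-1}_i(S(X)_{p-1})$ is the proper subset of degenerate flags in $S(X)_p$, so I would compose the projection coming from the decomposition in Proposition~\ref{Properties-M}(i) with the inverse of the isomorphism of Proposition~\ref{Properties-M}(iii) to get a map $\A_s(X,\F)^p\to\A_s(X,\F)^{p-1}$. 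It then remains to verify the cosimplicial identities for these operators. Since $\theta$ need not be injective I cannot simply pull the identities back from the genuine cosimplicial group $C(X,\F)^*$; instead each identity, being an equality between finitely many composites of the structure maps, I would check by induction on the flag length $p$, using that $\A(M,\F)=\prod_{\eta}\lim\limits_{\longrightarrow\atop U}\A({}_{\eta}M,\F_U)$ reduces it to the analogous identity for adelic groups of strictly smaller flag length, the base case $p=0$ being the tautological identities for the product groups. Compatibility of all these operators with $\theta$ is exactly Proposition~\ref{Properties-M}(vi), which then shows $\theta$ is a morphism of cosimplicial groups.

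For (ii), recall that the product on $C(X,\F)^\bullet$ has the form $(f\cdot g)_{\eta_0\ldots\eta_{p+q}}=f_{\eta_0\ldots\eta_p}\otimes g_{\eta_p\ldots\eta_{p+q}}$, i.e. it restricts $f$ to the initial segment and $g$ to the final segment of a flag of length $p+q$. Accordingly, given $f\in\A_s(X,\F)^p$ and $g\in\A_s(X,\Gc)^q$, I would pull $f$ back along the initial-segment map $S(X)_{p+q}\to S(X)_p$ and $g$ back along the final-segment map $S(X)_{p+q}\to S(X)_q$ — each an iterate of elementary face maps, again surjective with degenerate lifts, hence covered by Proposition~\ref{Properties-M}(ii) — obtaining elements of $\A_s(X,\F)^{p+q}$ and $\A_s(X,\Gc)^{p+q}$, and then apply the pointwise multiplication of Proposition~\ref{Properties-M}(iv) for $M=S(X)_{p+q}$. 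That this pairing satisfies the Leibniz rule with respect to the differentials, with the sign convention $d(a\otimes b)=da\otimes b+(-1)^{\deg a}a\otimes db$, I would verify once more by induction on the flag length, reading off the base case and the signs from the already-established analogous statement for $C(X,\F)^\bullet$; the required commutativity with $\theta$ is Proposition~\ref{Properties-M}(vi).

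For (iii), since a morphism of schemes $f\colon X\to Y$ is continuous and preserves closure relations it carries flags to flags, so $f^{-1}(S(Y)_p)=S(X)_p$, and Proposition~\ref{Properties-M}(v) specializes to a morphism $f^*\colon\A_s(Y,f_*\F)^p\to\A_s(X,\F)^p$. Because $X\mapsto S(X)_*$ is a functor of simplicial sets, $f$ commutes with the $\delta_i$ and $\sigma_i$; combining this with Proposition~\ref{Properties-M}(vi) — and, where a composite has to be unwound, the same induction on flag length as above — shows that $f^*$ commutes with the coface and codegeneracy operators of part (i), so it is a morphism of cosimplicial groups, and its compatibility with $\theta$ is again Proposition~\ref{Properties-M}(vi).

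The proof has no serious difficulty: it is a routine unwinding of Proposition~\ref{Properties-M}. The one point that requires care — the \emph{hard part}, such as it is — is that because $\theta$ is not injective one cannot import the cosimplicial identities and the Leibniz rule directly from the product complex $C(X,\F)^*$, so each must be run through the inductive structure $\A(M,\F)=\prod_{\eta}\lim\limits_{\longrightarrow\atop U}\A({}_{\eta}M,\F_U)$ down to the base case $p=0$; keeping the sign conventions in the initial/final-segment product straight during this induction is the only genuinely fiddly ingredient.
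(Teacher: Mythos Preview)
Your proposal is correct and follows the same approach as the paper, which states only ``Using Proposition~\ref{Properties-M}, we get the following statement'' without further detail. You have simply unpacked this: the cosimplicial structure comes from (ii) and (iii) of Proposition~\ref{Properties-M} applied to $M=S(X)_p$, the product is the Alexander--Whitney pairing built from (ii) and (iv), the pullback is (v), and your observation that $\theta$ need not be injective---so the identities must be checked via the inductive definition rather than pulled back from $C(X,\F)^*$---is exactly the care needed to make the terse statement honest.
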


For a cosimplicial group $A^*$, we put
$A_{deg}^{p}=\sum\limits_{i=0}^{p} {\rm Im}(s^p_i)$, where $s^p_i:A^{p+1}\to A^{p}$
are the degeneracy maps; then $A_{deg}^{\bullet}$ is a subcomplex in the complex
$A^{\bullet}$ (however there is no
analogous inclusion of cosimplicial groups). It is well known that the
quotient map $A^{\bullet}\to A^{\bullet}/A_{deg}^{\bullet}$ is a
quasiisomorphism. We put $A_{red}^{\bullet}=A^{\bullet}/A_{deg}^{\bullet}$. Any
morphism of simplicial groups $f:A^*\to B^*$ induces a morphism of
complexes $f:A_{red}^{\bullet}\to B_{red}^{\bullet}$.

\begin{defin}
For a scheme $X$ and an abelian sheaf $\F$ on $X$, let the {\it adelic
complex} $\A(X,\F)^{\bullet}$ be $\A_{s}(X,\F)_{red}^{\bullet}$.
\end{defin}

It follows from Corollary \ref{Properties-S} that for any sheaves
$\F$ and $\Gc$ on $X$ there is a morphism of complexes
$\A(X,\F)^{\bullet}\otimes\A(X,\Gc)^{\bullet}\to
\A(X,\F\otimes\Gc)^{\bullet}$ and that for any morphism of schemes
$f:X\to Y$ there is a morphism of complexes
$f^*:\A(Y,f_*\F)\to\A(X,\F)$. In particular, if $\Ac$ is a sheaf of
associative rings on $X$, then $\A(X,\Ac)^{\bullet}$ is a DG-ring.
Given a morphism of schemes $f:X\to Y$, we get a homomorphism of
DG-rings $\A(Y,f_*\Ac)^{\bullet}\to \A(X,\Ac)^{\bullet}$. In
addition, for any sheaf $\F$, there is a natural inclusion
$\Gamma(X,\F)\hookrightarrow H^0(\A(X,\F)^{\bullet})$.

Suppose that the scheme $X$ is Noetherian of finite dimension $d$.
Then it is easily shown that
$$
\A_{s}(X,\F)^p=\prod_{0\le i_0\le \ldots\le i_p\le d}\A((i_0\ldots i_p),\F),
$$
$$
\A(X,\F)^{p}=\prod_{0\le i_0<\ldots <i_p\le d}
\A((i_0 \ldots i_p),\F),
$$
where the expression $(i_0\ldots i_p)$ stands for the set of all flags
$\eta_0\ldots\eta_p$ on $X$ such
that for any $j,0\le j\le p$, we have $\codim(\eta_j)=i_j$. We say
that such flags {\it are of type} $(i_0\ldots i_p)$. For example,
$\A(X,\F)^0=\prod\limits_{0\le p\le d}\A((p),\F)$
and $\A((p),\F)=C((p),\F)=\prod\limits_{\eta\in X^{(p)}}\F_{\eta}$.
Thus the adelic complex is bounded and has length $d$. In fact, the adelic
complex is bounded for any Noetherian scheme $X$ of finite Krull dimension
by the maximal dimension of the irreducible components of $X$.

We may sheafify the above construction. Namely for any scheme $X$,
an abelian sheaf $\F$ on $X$, and a subset $M\subset S(X)_p$
there is an abelian presheaf $\underline{\A}(M,\F)^*$ defined by the formula
$U\mapsto\A_s(M(U),i^*_U\F)$ (see Corollary \ref{open-local}$(i)$).

\begin{prop}
If the scheme $X$ is Noetherian, then the presheaf $\underline{\A}_s(X,\F)$ is actually a
flasque sheaf.
\end{prop}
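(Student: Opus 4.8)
The plan is to verify the two assertions separately: first that $\underline{\A}_s(X,\F)$ is a sheaf, and then that it is flasque. For the sheaf property, I would work with a finite open cover $\{U_i\}$ of an open $U\subset X$ (finiteness is available since $X$ is Noetherian) and reduce by induction to the case of two opens $U=V\cup W$. Using the Noetherian hypothesis one also has the finite decomposition $\A_s(M(U),\cdot)^p=\prod_{0\le i_0\le\cdots\le i_p\le d}\A((i_0\ldots i_p)(U),\cdot)$, so it suffices to check the sheaf axiom componentwise on each $\A((i_0\ldots i_p),\F)$. The key point is an additivity/Mayer--Vietoris statement for adelic groups along open covers: for a flag set $M\subset S(X)_p$ one needs $\A(M(V\cup W),\F)$ to be the fibre product $\A(M(V),\F)\times_{\A(M(V\cap W),\F)}\A(M(W),\F)$. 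For $p=0$ this is immediate since $\A(M,\F)=\prod_{\eta\in M}\F_\eta$ and a point of $V\cup W$ lies in $V$ or $W$, with the overlap contributing the points in $V\cap W$; matching sections on the overlap is exactly gluing of the sheaf $\F$ on stalks, which is automatic. For $p>0$ I would use the inductive definition $\A(M,\F)=\prod_{\eta}\widetilde\A({}_\eta M,[\F]_\eta)$ together with Corollary \ref{open-local}, especially parts (i) and (v), to push the gluing down to the case of one fewer flag length over the local rings $\OO_{X,\eta}$, where the colimit over opens commutes with the finite limit defining the fibre product (here Remark \ref{limit-prod} is what makes the interchange of $\varinjlim$ and the finite product/fibre product legitimate).

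For flasqueness I must show that for opens $V\subset U$ the restriction $\A_s(M(U),i_U^*\F)^p\to\A_s(M(V),i_V^*\F)^p$ is surjective, and again it suffices to treat each component $\A((i_0\ldots i_p),\F)$. By the decomposition in Corollary \ref{open-local}(i), $\A(M(U),\F)=\A(M(V),i_V^*\F)\oplus A$ for a complementary subgroup $A$, so the restriction map admits a section and is in particular surjective. This already gives flasqueness once the sheaf property is in hand; indeed the splitting in Corollary \ref{open-local}(i) is precisely a restatement that these presheaves are built so that restriction to an open is a direct-summand projection. So the flasque part is essentially formal given the machinery of Section \ref{defin-basic-adeles}.

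The main obstacle is the sheaf axiom in the inductive step for $p>0$: one has to be careful that the filtered colimit $\widetilde\A(M,[\F]_\eta)=\varinjlim_U\A(M,\F_U)$ interacts correctly with the gluing condition, because a priori an element of $\widetilde\A$ over $V\cup W$ compatible on $V\cap W$ is represented on possibly different shrinking opens on the $V$-side and the $W$-side. The resolution is that $X$ Noetherian forces only finitely many schematic points $\eta$ to contribute nontrivially in each bounded codimension range, so one may choose a common refinement; combined with Remark \ref{limit-prod} (which lets one move the colimit outside the product over $\eta$) and Corollary \ref{open-local}(iv) (the Mayer--Vietoris decomposition for $X=Y\cup Z$ with $Y,Z$ closed, applied to the closed complements), the interchange goes through. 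I would also remark that the separatedness part of the sheaf axiom (an element restricting to zero on a cover is zero) follows from the same componentwise reduction plus the $p=0$ case, since $\F$ itself is a sheaf. Once these interchanges are justified, the proposition follows by the two-open induction on the cover together with induction on $p$.
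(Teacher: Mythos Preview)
Your overall strategy matches the paper's: flasqueness is immediate from Corollary~\ref{open-local}(i), and the sheaf property is checked on a finite cover (using Noetherian) by induction on $p$, with the inductive step handled componentwise at each $\eta$ via the definition $\A(M,\F)=\prod_\eta\widetilde{\A}({}_\eta M,[\F]_\eta)$.

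A couple of points in your inductive step are misdirected, though. First, Corollary~\ref{open-local}(iv) is about a \emph{closed} decomposition $X=Y\cup Z$ and plays no role here; drop that reference. Second, and more importantly, the sentence ``$X$ Noetherian forces only finitely many schematic points $\eta$ to contribute nontrivially in each bounded codimension range, so one may choose a common refinement'' misidentifies what the Noetherian hypothesis is buying you. There are in general infinitely many $\eta$ in the product, and you do not need to bound their number. The Noetherian hypothesis is used only to make the cover $\{V_\alpha\}$ finite. The actual mechanism---and this is exactly what the paper isolates---is that for a \emph{fixed} $\eta\in X$, the data on each $V_\alpha$ is represented in $\A({}_\eta M,\F_{U_\alpha})$ for some open $U_\alpha\ni\eta$, and because there are only finitely many $\alpha$ the intersection $\bigcap_\alpha U_\alpha$ is again an open containing $\eta$; then you apply the inductive hypothesis on that common open. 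So the ``common refinement'' is chosen pointwise in $\eta$, not globally, and the only finiteness needed is that of the cover. Your abstract remark that filtered colimits commute with finite limits is a correct way to package this, but the concrete reason it works is the one just stated.
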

\begin{proof}
The flasqueness of this presheaf follows from Corollary \ref{open-local}$(i)$.
Clearly, it is enough to prove the sheaf property for the case of a finite open
covering $\cup_{\alpha}V_{\alpha}$ of $X$. In this case we proceed by induction
on $p$, using that for any collection of open subsets $U_{\alpha}\subset
V_{\alpha}$ containing a fixed point $\eta\in X$, the open subset
$\cap_{\alpha}U_{\alpha}\subset X$ also contains $\eta$.
\end{proof}

Thus if $X$ is Noetherian, then we get
the flasque cosimplicial abelian sheaf $\underline{\A}_s(X,\F)^*$ and
the complexes of flasque sheaves $\underline{\A}_s(X,\F)^{\bullet}$,
$\underline{\A}_s(X,\F)_{deg}^{\bullet}$, and $\underline{\A}(X,\F)^{\bullet}$.
Moreover, there is a morphism of complexes $\F\to \underline{\A}(X,\F)^{\bullet}$,
where $\F$ is considered as a complex concentrated in the zero term.

\begin{quest}
Under which conditions on $\F$ the complex of sheaves
$\underline{\A}(X,\F)^{\bullet}$ is a flasque resolution of $\F$?
\end{quest}

\begin{rmk}\label{commute-adelic}
\hspace{0cm}
\begin{itemize}
\item[(i)]
Let $\F$, $\Gc$ be two sheaves on a Noetherian scheme $X$; then the composition
of the morphisms of complexes $\F\otimes\Gc\to\underline{\A}(X,\F)^{\bullet}\otimes
\underline{\A}(X,\Gc)^{\bullet}\to
\underline{\A}(X,\F\otimes\Gc)^{\bullet}$ is the natural map described above.
\item[(ii)]
Let $f:X\to Y$ be a morphism of Noetherian schemes, $\F$ be a sheaf on $X$;
then the composition of the morphisms of complexes $f_*\F\to
\underline{\A}(Y,f_*\F)^{\bullet}\to f_*\underline{\A}(X,\F)^{\bullet}$ coincides
with the value of the functor $f_*$ at the natural morphism
$\F\to\underline{\A}(X,\F)^{\bullet}$ described above.
\end{itemize}
\end{rmk}

In particular, if $\Ac$ is a sheaf of associative rings, then the map
$\bigoplus\limits_{i\ge 0}H^{i}(X,\Ac)\to
\bigoplus\limits_{i\ge 0}H^{i}(\A(X,\F)^{\bullet})$ is a homomorphism of rings.

We will use the following explicit description of adelic
groups on Noetherian schemes. Let us introduce the following notation.

\begin{defin}\label{defin-Z(.)}
Let $Z$ be a closed subscheme in a Noetherian scheme $X$ and $\eta$
be a schematic point in $X$; then by $Z(\eta)\subset X^{(1)}$ denote
the set of irreducible reduced divisors on $X$ that are contained in
$Z$ and pass through $\eta$. Analogously, for a closed subset
$W\subset X$, by $Z(W)\subset X^{(1)}$ denote the set of irreducible
reduced divisors on $X$ that are contained in $Z$ and contain $W$.
\end{defin}

\begin{prop}\label{systemofd}
For any subset $M\subset S(X)_p$, we have
$$
\A(M,\F)=
\lim\limits_{\longrightarrow\atop\{D_{\eta_0\ldots\eta_k}\}}\,
\prod_{(\eta_0\ldots\eta_p)\in M}(\F_{X\backslash
D_{\eta_0\ldots\eta_{p-1}}})_{\eta_p},
$$
where the limit is taken over the set of systems $\{D_{\eta_0\ldots\eta_k}\}$, $0\le k< p$
of effective, reduced, possibly reducible divisors on $X$ parameterized
by flags $\eta_0\ldots\eta_k$ that can be extended to the right
to a flag from $M$ with the following property. For any $k,0< k< p$ and for any
``left part'' $(\eta_0\ldots\eta_k)$ of a flag from $M$, we have
$$
D_{\eta_0\ldots\eta_{k-1}}(\eta_{k})\supseteq
D_{\eta_0\ldots\eta_{k-1}\eta_{k}}(\eta_{k})\eqno (*)
$$
and $D_{\eta_0}(\eta_0)=\emptyset$. The partial order on the systems
$\{D_{\eta_0\ldots\eta_k}\}$, $0\le k<p$
is given by the flag wise embedding of divisors in $X$.
\end{prop}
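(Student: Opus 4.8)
The plan is to argue by induction on $p$, using the recursive definition of the adelic groups together with a cofinality lemma that replaces small open neighbourhoods of a point by complements of divisors not passing through that point. The base case $p=0$ is immediate: the right hand side carries no divisor datum and equals $\prod_{\eta_0\in M}\F_{\eta_0}=\A(M,\F)$ by definition.

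The geometric input that makes the induction run, and the step I expect to cost the most effort, is the following cofinality statement: for a Noetherian scheme $X$ and a point $\eta\in X$, the open subsets of the form $X\backslash D$, where $D$ ranges over effective reduced (possibly reducible) divisors on $X$ with $D(\eta)=\emptyset$, are cofinal in the directed set of all open neighbourhoods of $\eta$. Granting this, for any $N\subset S(X)_q$ and any sheaf $\Gc$ on $X$ we may rewrite
$$
\widetilde{\A}(N,[\Gc]_{\eta})=\lim\limits_{\longrightarrow\atop D}\A(N,\Gc_{X\backslash D}),
$$
the limit being over the same divisors $D$. To prove the cofinality statement one covers $X$ by finitely many affine opens and, given an open $U\ni\eta$ with closed complement $Z$, uses prime avoidance and Krull's principal ideal theorem on each chart to thicken the components of $Z$ of codimension $\ge 2$ to hypersurfaces missing $\eta$; the delicate point is to ensure that the divisors produced on the individual charts glue to a global effective reduced divisor that is still purely of codimension one and still avoids $\eta$.

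For the inductive step let $p>0$ and assume the statement for $p-1$. By definition $\A(M,\F)=\prod_{\eta_0\in P(X)}\widetilde{\A}({}_{\eta_0}M,[\F]_{\eta_0})$, with ${}_{\eta_0}M\subset S(X)_{p-1}$. Fix $\eta_0$ and an effective reduced divisor $D_{\eta_0}$ with $D_{\eta_0}(\eta_0)=\emptyset$. Applying the inductive hypothesis to the subset ${}_{\eta_0}M$ and the sheaf $\F_{X\backslash D_{\eta_0}}$ on $X$, and using $(\F_{X\backslash D_{\eta_0}})_{X\backslash D'}=\F_{X\backslash(D_{\eta_0}\cup D')}$, one obtains that $\A({}_{\eta_0}M,\F_{X\backslash D_{\eta_0}})$ is the direct limit, over the systems $\{D'_{\eta_1\ldots\eta_j}\}_{0\le j<p-1}$ allowed by the statement applied to ${}_{\eta_0}M$, of $\prod_{(\eta_1\ldots\eta_p)\in{}_{\eta_0}M}(\F_{X\backslash(D_{\eta_0}\cup D'_{\eta_1\ldots\eta_{p-1}})})_{\eta_p}$. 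Substituting this into the displayed description of $\widetilde{\A}({}_{\eta_0}M,[\F]_{\eta_0})$, merging the two nested filtered colimits (over $D_{\eta_0}$ and over $\{D'_{\eta_1\ldots\eta_j}\}$, both directed) into a single one, commuting the outer product $\prod_{\eta_0}$ with it exactly as in Remark~\ref{limit-prod}, and using $\prod_{\eta_0}\prod_{(\eta_1\ldots\eta_p)\in{}_{\eta_0}M}=\prod_{(\eta_0\ldots\eta_p)\in M}$, we get that $\A(M,\F)$ is the direct limit, over the choice for each $\eta_0$ of a pair $(D_{\eta_0},\{D'_{\eta_1\ldots\eta_j}\})$ as above, of $\prod_{(\eta_0\ldots\eta_p)\in M}(\F_{X\backslash(D_{\eta_0}\cup D'_{\eta_1\ldots\eta_{p-1}})})_{\eta_p}$.

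It then remains to match this index set, up to cofinality, with the set of divisor systems $\{D_{\eta_0\ldots\eta_k}\}_{0\le k<p}$ of the statement, compatibly with the products. From a pair $(D_{\eta_0},\{D'_{\eta_1\ldots\eta_j}\})$ for each $\eta_0$ one builds $D_{\eta_0\ldots\eta_k}:=D_{\eta_0}\cup D'_{\eta_1\ldots\eta_k}$ for $1\le k<p$, retaining $D_{\eta_0}$ for $k=0$: then $D_{\eta_0}(\eta_0)=\emptyset$ by the choice of $D_{\eta_0}$, the case $k=1$ of $(*)$ is forced by $D'_{\eta_1}(\eta_1)=\emptyset$, and the cases $k\ge 2$ follow from the compatibility conditions of $\{D'_{\eta_1\ldots\eta_j}\}$ relative to ${}_{\eta_0}M$, while the products literally coincide. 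Conversely, from a system $\{D_{\eta_0\ldots\eta_k}\}$ satisfying $(*)$ one retains the divisors $D_{\eta_0}$ ($k=0$) and sets $D'_{\eta_1\ldots\eta_k}$ to be the union of those irreducible components of $D_{\eta_0\ldots\eta_k}$ that are not contained in $D_{\eta_0}$ (for $1\le k<p$); then $D_{\eta_0}\cup D'_{\eta_1\ldots\eta_k}\supseteq D_{\eta_0\ldots\eta_k}$, so the repackaged systems are cofinal, and both $D'_{\eta_1}(\eta_1)=\emptyset$ and the compatibility conditions for $\{D'_{\eta_1\ldots\eta_j}\}$ follow from $(*)$ for $\{D_{\eta_0\ldots\eta_k}\}$ together with the definition of the $D'_{\eta_1\ldots\eta_k}$. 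Comparing the two direct limits finishes the induction. I expect the cofinality lemma of the second paragraph to be the genuine obstacle; the remainder is bookkeeping, the one subtle device being the passage to ``components not contained in $D_{\eta_0}$'', which is what makes the backward matching respect the normalisation $D_{\eta_0}(\eta_0)=\emptyset$.
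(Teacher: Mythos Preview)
Your proof is correct and follows essentially the same strategy as the paper's: induction on $p$ via the recursive definition, Remark~\ref{limit-prod} to commute the product over $\eta_0$ with the filtered colimit, and the cofinality of divisor complements among open neighbourhoods in the Noetherian setting. The only organizational difference is that the paper first carries out the induction entirely in terms of systems of open subsets $\{U_{\eta_0\ldots\eta_k}\}$ with $\eta_k\in U_{\eta_0\ldots\eta_k}$, obtaining $\A(M,\F)=\varinjlim\prod(\F_{U_{\eta_0}\cap\ldots\cap U_{\eta_0\ldots\eta_{p-1}}})_{\eta_p}$, and only afterwards thickens each complement to a divisor and sets $D_{\eta_0\ldots\eta_k}=D_{\eta_0\ldots\eta_{k-1}}\cup(X\backslash U_{\eta_0\ldots\eta_k})$; you instead establish the divisor-cofinality lemma first and carry divisors through the induction, which forces you to do the explicit back-and-forth matching of index sets. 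One small remark: your worry about gluing divisors across affine charts is unnecessary, since for each irreducible component $\overline{\zeta}$ of $Z$ the thickening can be done directly in the local ring $\OO_{X,\zeta}$ by prime avoidance and Krull's principal ideal theorem (choose a height-one prime not contained in the prime of $\eta$), and the resulting $\overline{\xi}$ is already a global irreducible divisor.
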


\begin{proof}
Using Remark \ref{limit-prod}, one proves by induction on $p$ that
$$
\A(M,\F)=\lim\limits_{\longrightarrow\atop\{U_{\eta_0\ldots\eta_k}\}}
\prod_{(\eta_0\ldots\eta_p)\in M}
(\F_{U_{\eta_0}\cap\ldots\cap U_{\eta_0\ldots\eta_{p-1}}})_{\eta_p},
$$
where the limit is taken over the set of systems
$\{U_{\eta_0\ldots\eta_k}\}$, $0\le k<p$ of open subsets in $X$ parameterized
by ``left parts'' of flags from $M$ such that for any ``left part''
$(\eta_0\ldots\eta_k)$ of a flag from $M$, we have $\eta_k\in U_{\eta_0\ldots\eta_k}$.
The partial order on the systems $\{U_{\eta_0\ldots\eta_k}\}$,
$0\le k<p$ is given as the inverse to
the flag wise embedding of open subsets in $X$.

Since the scheme $X$ is Noetherian, enlarging the complement
$X\backslash U_{\eta_0\ldots\eta_k}$, we may assume that this complement is reduced,
has pure codimension one, and does not contain $\eta_k$.
Finally, we put $D_{\eta_0}=X\backslash U_{\eta_0}$ and
$D_{\eta_0\ldots\eta_k}=D_{\eta_0\ldots\eta_{k-1}}\cup X\backslash U_{\eta_0\ldots\eta_k}$
for $1\le k<p$.
\end{proof}

\begin{claim}\label{integrality}
Condition $(*)$ implies that
$$
D_{\eta_0\ldots\eta_k}(\eta_{j})\supseteq
D_{\eta_0\ldots\eta_k\ldots\eta_{k+l}}(\eta_{j}) $$ for any $0\le
j\le k+1$, $0\le l\le p-k-1$.
\end{claim}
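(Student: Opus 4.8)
The plan is to bootstrap from the single-step case $(*)$ to the general inclusion by telescoping along the flag, after isolating two elementary monotonicity properties of the sets $Z(-)$. Namely: if $Z\subseteq Z'$ are closed subschemes then $Z(\eta)\subseteq Z'(\eta)$; and for a fixed closed subscheme $Z$ and a specialization $\eta'\in\overline{\eta}$ one has $Z(\eta)\subseteq Z(\eta')$, because an irreducible reduced divisor passing through $\eta$ is closed and hence contains $\overline{\eta}\ni\eta'$. Inside a flag $\eta_0\ldots\eta_p$ the second property gives $Z(\eta_i)\subseteq Z(\eta_j)$ whenever $i\le j$.

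With these in hand I would first reduce to the case $l=1$: the inclusion of the claim is obtained by composing the one-step inclusions
$$
D_{\eta_0\ldots\eta_m}(\eta_j)\supseteq D_{\eta_0\ldots\eta_m\eta_{m+1}}(\eta_j),
\qquad 0\le m\le p-2,\ \ 0\le j\le m+1,
$$
chained for $m=k,k+1,\ldots,k+l-1$; at each link the hypothesis $j\le k+1\le m+1$ is exactly what is needed, and the range $0<m+1\le p-1<p$ keeps $(*)$ available at index $m+1$. For the one-step inclusion I would take an irreducible reduced divisor $C$ with $C\subseteq D_{\eta_0\ldots\eta_m\eta_{m+1}}$ and $\eta_j\in C$, i.e.\ a typical element of $D_{\eta_0\ldots\eta_m\eta_{m+1}}(\eta_j)$. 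Since $j\le m+1$, the flag condition gives $\eta_{m+1}\in\overline{\eta}_j\subseteq C$, so $C\in D_{\eta_0\ldots\eta_m\eta_{m+1}}(\eta_{m+1})$; then $(*)$ at index $m+1$ yields $C\in D_{\eta_0\ldots\eta_m}(\eta_{m+1})$, in particular $C\subseteq D_{\eta_0\ldots\eta_m}$, and since $\eta_j\in C$ this is precisely the statement $C\in D_{\eta_0\ldots\eta_m}(\eta_j)$.

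I do not expect a genuine obstacle here; the content is entirely in the bookkeeping. The one subtlety to watch is the asymmetry of the two monotonicities of $Z(-)$: enlarging the flag shrinks the set of relevant components only at the terminal point of the flag, whereas moving the evaluation point to a more special point enlarges the set. Thus one must make sure, when chaining, that every evaluation point $\eta_j$ used lies at or before the terminal index of the shorter flag in each link — which is exactly what the bounds $j\le k+1$ and $l\le p-k-1$ guarantee — and that the indices stay strictly between $0$ and $p$, so that $(*)$ is applicable at each step.
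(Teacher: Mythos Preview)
Your proposal is correct and follows essentially the same approach as the paper: reduce to $l=1$ by telescoping, and for $l=1$ observe that any divisor $C$ in $D_{\eta_0\ldots\eta_k\eta_{k+1}}(\eta_j)$ passes through $\eta_{k+1}$ (since $j\le k+1$ forces $\eta_{k+1}\in\overline{\eta}_j\subseteq C$), then apply $(*)$ to land in $D_{\eta_0\ldots\eta_k}(\eta_{k+1})$ and hence in $D_{\eta_0\ldots\eta_k}(\eta_j)$. Your write-up is more explicit about the index bookkeeping in the reduction step than the paper's, but the argument is the same.
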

\begin{proof}
It is enough to show this for $l=1$. Each irreducible divisor
$D\in D_{\eta_0\ldots\eta_k,\eta_{k+1}}(\eta_{j})$ contains
$\eta_{k+1}$ and thus belongs to
$D_{\eta_0\ldots\eta_k}(\eta_{k+1})$. Moreover, since $D$ contains
$\eta_j$, we see that $D$ also belongs to
$D_{\eta_0\ldots\eta_k}(\eta_{j})$.
\end{proof}

\subsection{Relation with the Cousin complex}\label{Cousin-relation}

Let $X$ be a Noetherian catenary scheme such that all irreducible
components of $X$ have the same finite dimension $d$. For a closed
subset $Z\subset X$ and a sheaf $\F$ on $X$, denote by $\gamma_Z\F$
the sheaf on $Z$ such that if $U\subset X$ is an open subset, then
$(\gamma_Z\F)(U\cap Z)$ consists of all sections in $\F(U)$ with
support on $U\cap Z$. Thus, $R\gamma_Z=i_Z^!$. One can also apply
functors $R^p\gamma_Z$ to complexes of sheaves on $X$. Following the
notations from \cite{Har}, for any sheaf $\F$ on $X$ and any point
$\eta\in X^{(p)}$, we put $H_{\eta}^p(X,\F)= (R^p
\gamma_{\overline{\eta}}\F)_{\eta}$. Let
$\nu_{\eta\xi}:H^p_{\eta}(X,\F)\to H^{p+1}_{\xi}(X,\F)$ be the
natural map defined for any two points $\eta,\xi\in X$ such that
$\xi\in\overline{\eta}$ and $\overline{\xi}$ has codimension one in
$\overline{\eta}$ (see \cite{Har}). Let $Cous(X,\F)^{\bullet}$ be
the {\it Cousin complex} of $\F$ on $X$, i.e.,
$$
Cous(X,\F)^p=\mbox{$\bigoplus\limits_{\eta\in X^{(p)}}H_{\eta}^p(X,\F)$},
$$
where the differential is the sum of the maps $\nu_{\eta\xi}$.
Let us sheafify the Cousin complex, namely, consider the complex of sheaves
$\underline{Cous}(X,\F)^{\bullet}$ given by
$\underline{Cous}(X,\F)^p=
\bigoplus\limits_{\eta\in X^{(p)}}(i_{\overline{\eta}})_*H^p_{\eta}(X,\F)$, where for
each point $\eta\in X^{(p)}$ we consider $H^p_{\eta}(X,\F)$ as a constant sheaf
on $\overline{\eta}$.
There is a natural map of complexes $\F\to\underline{Cous}(X,\F)^{\bullet}$, where
we consider $\F$ as a complex concentrated in the zero term.

Let $0\le i_0<\ldots< i_p$ be a strictly increasing sequence of natural numbers; then
the {\it depth} of $(i_0\ldots i_p)$ is the maximal natural number $l\ge 0$ such that
$(i_0\ldots i_l)=(0\ldots l)$ if $i_0=0$. Otherwise, the depth of $(i_0\ldots i_p)$
equals $-1$.

\begin{prop}\label{residue}
Let $(i_0\ldots i_p)$ be a strictly increasing sequence of natural numbers such that
$i_p\le d$. Suppose that the depth of $(i_0\ldots i_p)$ is $l\ge 0$; then there
exists a natural map
$$
\nu_{0\ldots l}\colon
\A((01\ldots l i_{l+1} \ldots i_p),\F)\to
\mbox{$\bigoplus\limits_{\eta\in X^{(l)}}
\A((0(i_{l+1}-l)\ldots (i_p-l)),R^l\gamma_{\overline{\eta}}
\underline{Cous}(X,\F)^{\bullet})$},
$$
where the right hand side is the direct sum of adelic groups on $\eta\in
X^{(l)}$. Moreover, for any adele
$f\in \A((01\ldots l i_{l+1} \ldots i_p),\F)$ and any flag
$\eta_l\eta_{i_{l+1}}\ldots\eta_{i_p}$ of type $(li_{l+1}\ldots i_p)$ on $X$, we have
$$
\theta(\nu_{0\ldots l}(f))_{\eta_l\eta_{i_{l+1}}\ldots\eta_{i_p}}=
\sum_{\eta_0\ldots\eta_l}
(\nu_{\eta_{l-1}\eta_l}\circ\ldots\circ\nu_{\eta_0\eta_1})
(\theta(f)_{\eta_0\ldots\eta_l\eta_{i_{l+1}}\ldots\eta_{i_p}}),
$$
where the sum in the left hand side is actually finite.
\end{prop}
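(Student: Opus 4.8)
The plan is to construct $\nu_{0\ldots l}$ by induction on the depth $l$, the inductive step being ``perform a single residue along the initial step $\eta_{0}\rightsquigarrow\eta_{1}$ and then apply $\nu_{0\ldots l-1}$ to the resulting data over $\overline{\eta_{1}}$'', so that the displayed formula for $\theta\circ\nu_{0\ldots l}$ reads off from the recursion. For $l=0$ the construction is straightforward: it is induced, via the functoriality of adelic groups, by the map $R^{0}\gamma_{\overline{\eta}}\F\to R^{0}\gamma_{\overline{\eta}}\underline{Cous}(X,\F)^{\bullet}$ coming from the augmentation $\F\to\underline{Cous}(X,\F)^{\bullet}$ on each irreducible component $\overline{\eta}$, $\eta\in X^{(0)}$ (note that $\Spec\OO_{X,\eta}$ is then the generic point), and the formula holds trivially as there are no residues to apply.

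First I would record the sheaf-theoretic input. Since $\underline{Cous}(X,\F)^{\bullet}$ is a complex of flasque sheaves, $R^{q}\gamma_{\overline{\eta}}$ may be computed termwise; as a nonzero section of a constant sheaf on an irreducible space has full support, one gets for $\eta\in X^{(l)}$
$$
\gamma_{\overline{\eta}}\,\underline{Cous}(X,\F)^{q}=\bigoplus_{\xi\in(\overline{\eta})^{(q-l)}}(i_{\overline{\xi}})_{*}H^{q}_{\xi}(X,\F),
$$
so this complex is concentrated in degrees $\ge l$ and its stalk at the generic point $\eta$ of $\overline{\eta}$ is $H^{l}_{\eta}(X,\F)$ placed in degree $l$. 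Hence $R^{l}\gamma_{\overline{\eta}}\underline{Cous}(X,\F)^{\bullet}$ is a sheaf on $\overline{\eta}$ with stalk $H^{l}_{\eta}(X,\F)$ at $\eta$, of the same shape after restriction to opens; this makes the right-hand side of the proposition meaningful and shows that the composite of Cousin differentials $\nu_{\eta_{l-1}\eta_{l}}\circ\cdots\circ\nu_{\eta_{0}\eta_{1}}$ in the formula has target $(R^{l}\gamma_{\overline{\eta_{l}}}\underline{Cous}(X,\F)^{\bullet})_{\eta_{l}}$, as it must.

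Next I would build the map together with the verification that it respects the adelic (restricted-product) conditions, using the divisor description of Proposition \ref{systemofd}. Represent $f\in\A((01\ldots l\,i_{l+1}\ldots i_{p}),\F)$ by a compatible system $\{D_{\eta_{0}\ldots\eta_{k}}\}$, $0\le k<p$, of reduced divisors together with sections $f_{\eta_{0}\ldots\eta_{p}}\in(\F_{X\setminus D_{\eta_{0}\ldots\eta_{p-1}}})_{\eta_{p}}$; viewing $\theta(f)_{\eta_{0}\ldots\eta_{p}}$ in the generic stalk $\F_{\eta_{0}}=H^{0}_{\eta_{0}}(X,\F)$, apply $\nu_{\eta_{0}\eta_{1}}$. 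By condition $(*)$ together with Claim \ref{integrality}, every codimension-one subvariety along which $\theta(f)_{\eta_{0}\ldots}$ has a pole and which passes through $\eta_{1}$ is already a component of $D_{\eta_{0}}$; since $\nu_{\eta_{0}\eta_{1}}$ vanishes on germs regular at $\eta_{1}$, the corresponding term is nonzero only if $\overline{\eta_{1}}\subseteq D_{\eta_{0}}$, i.e.\ for finitely many $\eta_{1}$ once $\eta_{0}$ is fixed. The Cousin differentials being compatible with restriction to opens, this first residue is compatible with the direct limits defining the adelic groups and yields, on each $\overline{\eta_{1}}$, a datum for $R^{1}\gamma_{\overline{\eta_{1}}}\underline{Cous}(X,\F)^{\bullet}$ subject to the divisor system obtained by restricting $\{D_{\eta_{0}\eta_{1}\ldots\eta_{k}}\}$ to $\overline{\eta_{1}}$ --- again a legitimate system, by Claim \ref{integrality}. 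Iterating $l$ times (i.e.\ applying the inductive hypothesis on $\overline{\eta_{1}}$) produces, for each $\eta\in X^{(l)}$, an element of $\A((0(i_{l+1}-l)\ldots(i_{p}-l)),R^{l}\gamma_{\overline{\eta}}\underline{Cous}(X,\F)^{\bullet})$; the same finiteness at every stage shows that only finitely many $\eta\in X^{(l)}$ are hit (so the target direct sum is legitimate) and that for fixed $\eta$ the sum over initial chains $\eta_{0}\supsetneq\cdots\supsetneq\eta_{l-1}\supsetneq\eta$ in the formula is finite. Naturality in $\F$ and compatibility with $\theta$ are then immediate from the construction.

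The main obstacle is the simultaneous bookkeeping in the last paragraph: one must check, all at once, that the residue-along-the-initial-chain construction (i) is compatible with the two kinds of limits built into an adelic group, namely shrinking the open sets and --- concealed inside them --- enlarging the divisors $D_{\eta_{0}\ldots\eta_{k}}$; (ii) genuinely ``consumes'' the head divisors $D_{\eta_{0}},\ldots,D_{\eta_{0}\ldots\eta_{l-1}}$ as residue data, which is exactly the point at which condition $(*)$ and Claim \ref{integrality} force the required vanishings and the finiteness of all the sums; and (iii) pushes the tail divisors $D_{\eta_{0}\ldots\eta_{l}\ldots\eta_{k}}$ ($k\ge l$) forward to a divisor system on $\overline{\eta_{l}}$ still satisfying $(*)$. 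Granting these compatibilities, the induction on $l$ closes the argument.
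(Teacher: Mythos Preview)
Your inductive strategy and the base case $l=0$ match the paper, but your inductive step takes a genuinely different route. The paper does \emph{not} pass through the explicit divisor description of Proposition~\ref{systemofd}. Instead, it uses the inductive definition of the adelic groups directly: for $X$ irreducible one has
\[
\A((01i_{2}\ldots i_{p}),\F)=\lim_{\longrightarrow\atop U}\A((1i_{2}\ldots i_{p}),\F_{U}),
\]
and one simply applies the functor $\A((1i_{2}\ldots i_{p}),-)$ to the sheaf map $\F_{U}\to(i_{D})_{*}R^{1}\gamma_{D}\F$ (with $D=X\setminus U$), then uses Corollary~\ref{open-local}(iii),(iv) to identify the target with a finite direct sum over $\eta\in X^{(1)}$. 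The reduction from general $l$ to $l=1$ is handled by a short sheaf-theoretic observation about $R^{1}\gamma_{Z'Z}(R^{l}\gamma_{Z}\underline{Cous}(X,\F)^{\bullet})\to R^{l+1}\gamma_{Z'}\underline{Cous}(X,\F)^{\bullet}$. The point is that all of the ``adelic bookkeeping'' you worry about in your items (i)--(iii) is absorbed into one line: functoriality of $\A(M,-)$ in the sheaf argument, together with commutation of $\A$ with the relevant direct limits.

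Your concrete approach via Proposition~\ref{systemofd} and Claim~\ref{integrality} is not wrong in spirit, and your finiteness argument is correct (if $\overline{\eta_{1}}$ is not a component of $D_{\eta_{0}}$ then $\eta_{1}\notin D_{\eta_{0}\ldots\eta_{p-1}}$ by Claim~\ref{integrality}, so the germ extends to $\F_{\eta_{1}}$ and $\nu_{\eta_{0}\eta_{1}}$ kills it). But the step you flag as ``the main obstacle'' really is one: to show that the residues assemble into an element of the adelic group on $\overline{\eta_{l}}$ you must verify that the restricted system $\{D_{\eta_{0}\ldots\eta_{l}\eta_{i_{l+1}}\ldots\eta_{i_{k}}}\cap\overline{\eta_{l}}\}$ is again a system of \emph{divisors} on $\overline{\eta_{l}}$ satisfying $(*)$, and that the residue lands in the stalk of $(R^{l}\gamma_{\overline{\eta_{l}}}\underline{Cous}(X,\F)^{\bullet})_{\overline{\eta_{l}}\setminus(\text{restricted divisor})}$ at $\eta_{i_{p}}$. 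This is doable but requires care (e.g.\ one must discard from each $D_{\eta_{0}\ldots\eta_{k}}$ the components equal to $\overline{\eta_{l}}$ itself and check that the residue is insensitive to this). The paper's approach buys you exactly the avoidance of this verification; yours buys a more transparent reading of the explicit formula for $\theta\circ\nu_{0\ldots l}$, at the cost of that bookkeeping.
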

\begin{proof}
The proof is by induction on $l$. For $l=0$, by Proposition
\ref{open-local}$(iii),(iv),(v)$, we have the natural map
$$
\A((0i_1\ldots i_p),\F)=
\mbox{$\bigoplus\limits_{\eta\in X^{(0)}}\A((0i_1\ldots i_p)(\overline{\eta}),
i_{\overline{\eta}}^*\F)$}= \mbox{$\bigoplus\limits_{\eta\in
X^{(0)}}\A((0i_1\ldots i_p),\gamma_{\overline{\eta}}\F)$}\to
$$
$$
\to\mbox{$\bigoplus\limits_{\eta\in
X^{(0)}}\A((0i_1\ldots i_p),\gamma_{\overline{\eta}}
\underline{Cous}(X,\F)^{\bullet})$}.
$$

Further, note that for the composition of
closed embeddings $Z'\subset Z\subset X$, where $Z$ has pure codimension $l$
in $X$ and $Z'$ has pure codimension one in $Z$, the natural morphism of complexes
$$
R^l\gamma_Z \underline{Cous}(X,\F)^{\bullet}\to \gamma_Z
\underline{Cous}(X,\F)^{\bullet}[l]
$$
induces the morphism of sheaves
$$
R^{1}\gamma_{Z'Z}(R^l\gamma_Z\underline{Cous}(X,\F)^{\bullet})
\to R^1\gamma_{Z'Z}(\gamma_Z\underline{Cous}(X,\F)^{\bullet}[l])=
R^{l+1}\gamma_{Z'}\underline{Cous}(X,\F)^{\bullet}.
$$
Therefore, to prove the proposition by
induction on $l$, it is enough to consider the case $l=1$ and $X$ is irreducible.
Recall that for any closed subscheme $D\subset X$, there is a morphism
of sheaves $\F_U\to (i_D)_*R^1\gamma_{D}\F$, where $U=X\backslash D$.
Hence, using the same argument as for the case $l=0$, we get the map
$$
\A((01i_2\ldots i_p),\F)=\lim\limits_{\longrightarrow\atop U}\A((1i_2\ldots
i_p),\F_U)\to\lim\limits_{\longrightarrow\atop D=X\backslash U}\A((1i_2\ldots i_p),
(i_D)_*R^1\gamma_{D}\F)=
$$
$$
=\lim\limits_{\longrightarrow\atop D}\A(0(i_2-1)\ldots (i_p-1)),
R^1\gamma_{D}\F)=\mbox{$\bigoplus\limits_{\eta\in X^{(1)}}\A((0(i_2-1)\ldots (i_p-1)),
R^1\gamma_{\overline{\eta}}\F)$}\to
$$
$$
\to\mbox{$\bigoplus\limits_{\eta\in X^{(1)}}\A((0(i_2-1)\ldots (i_p-1)),
R^1\gamma_{\overline{\eta}}\underline{Cous}(X,\F)^{\bullet})$},
$$
where the second limit is taken over all closed subschemes $D\subset X$ of
pure codimension one.
\end{proof}

\begin{rmk}
It seems that it is impossible to replace in the formulation of
Proposition \ref{residue} the sheaf $R^l\gamma_Z\underline{Cous}(X,\F)^{\bullet}$
by a more natural sheaf $R^l\gamma_Z\F$. At least the induction step in the above
proof will not be valid, because in general there is no map
$R^1\gamma_{Z'Z}(R^l\gamma\F)\to R^{l+1}\gamma_{Z'}\F$
in notations from the proof of Proposition \ref{residue}.
\end{rmk}

\begin{examp}\label{resexamp}
Suppose that $l=p$; then we get the map $\nu_p=\nu_{0\ldots p}\colon
\A((0\ldots p),\F)\to \mbox{$\bigoplus\limits_{\eta\in
X^{(p)}} H_{\eta}^p(X,\F)$}$.
\end{examp}

There is a morphism of complexes
$$
\nu_X:\A(X,\F)^{\bullet}\to Cous(X,\F)^{\bullet}
$$
that is equal to the map $(-1)^{\frac{p(p+1)}{2}}\nu_p$
on the $(0\ldots p)$-type components of the adelic
complex and equals zero on all the other components of the adelic complex.

Also, for any two sheaves $\F$ and $\Gc$ on $X$ and a point $\eta\in X^{(p)}$,
we have the natural morphism
$$
H^p_{\overline{\eta}}(X,\F)\otimes \A(X,\Gc)^q=
\A((p),(i_{\overline{\eta}})_*R^p\gamma_{\overline{\eta}}\F)\otimes\A(X,\Gc)^q\to
\A(\overline{\eta},R^p\gamma_{\overline{\eta}}(\F\otimes\Gc))^q
\stackrel{\nu_{\overline{\eta}}}\longrightarrow
$$
$$
\stackrel{\nu_{\overline{\eta}}}\longrightarrow
\mbox{$\bigoplus\limits_{\xi\in \overline{\eta}^{(q)}}
H^q_{\xi}(\overline{\eta},R^p\gamma_{\overline{\eta}}(\F\otimes\Gc))$}\subset
\mbox{$\bigoplus\limits_{\xi\in X^{(p+q)}}
H^{p+q}_{\xi}(X,\F\otimes\Gc)$}.
$$
It is easily checked that this defines a morphism of complexes
$$
\mu:Cous(X,\F)^{\bullet}\otimes \A(X,\Gc)^{\bullet}\to
Cous(X,\F\otimes\Gc)^{\bullet}
$$
given by the formula
$$
\mu(f\otimes g)_{\eta}=
(-1)^{\epsilon(p,q)}\sum_{\eta_0\ldots\eta_{q-1}}(\nu_{\eta_{q-1}\eta}\circ\ldots\circ\nu_{\eta_0\eta_1})
(f_{\eta_0}\cdot \theta(g)_{\eta_0\ldots\eta_{q-1}\eta})
$$
for any
$f\in Cous(X,\F)^p$, $g\in\A(X,\Gc)^q$, and $\eta\in X^{(p+q)}$,
where the sum is taken over all flags
$\eta_0\ldots\eta_{q-1}$ of type $(p,p+1\ldots,p+q-1)$
such that $\eta\in\overline{\eta}_{q-1}$ and
$f_{\eta_0}\cdot\theta(g)_{\eta_{0}\ldots\eta_{q-1}\eta}\in
H^p_{\eta}(X,\F\otimes\Gc)$, and $\epsilon(p,q)=pq+\frac{q(q+1)}{2}$.

\begin{rmk}
The analogous product is well defined of one replaces $\F$ by a complex of
abelian sheaves $\F^{\bullet}$.
\end{rmk}

\begin{rmk}
A coherent version of the product between the Cousin and the adelic
complex was considered in \cite{Yek}.
\end{rmk}

\begin{examp}
Multiplication of the adelic complex on the right by $1\in\Z=\underline{\Z}(X)$
coincides with the morphism $\nu_X$.
\end{examp}

In particular, if $\Ac$ is a sheaf of associative rings on $X$, then
$Cous(X,\Ac)^{\bullet}$ is a right DG-module over the DG-ring
$\A(X,\Ac)^{\bullet}$.

\begin{rmk}\label{surj-cohom}
Evidently, we also have the morphism of complexes of sheaves
$\underline{\nu}_X:\underline{\A}(X,\F)^{\bullet}\to
\underline{Cous}(X,\F)^{\bullet}$.
Suppose that the sheaf $\F$ on $X$ is {\it Cohen--Macaulay} in the sense of \cite{Har},
i.e., that the composition $\F\to\underline{\A}(X,\F)^{\bullet}
\to\underline{Cous}(X,\F)^{\bullet}$ is a quasiisomorphism;
then for any $i\ge 0$, the cohomology group $H^i(X,\F)$ is a direct summand in
the group $H^i(\A(X,\F)^{\bullet})$.
One may expect that the map $\F\to\underline{\A}(X,\F)^{\bullet}$
is a quasiisomorphism for any Cohen--Macaulay sheaf $\F$.
For the particular case of this statement see Theorem \ref{quasiis}.
In particular, if $\Ac$ is a Cohen--Macaulay sheaf of associative rings,
then the ring
$\bigoplus\limits_{i\ge 0}H^{i}(X,\Ac)$ is a direct summand as a ring in the
associative ring $\bigoplus\limits_{i\ge 0}H^{i}(\A(X,\Ac)^{\bullet})$.
\end{rmk}

The next statement is needed for the sequel.

\begin{lemma}\label{surjective-nu}
Suppose that $X$ is a Noetherian catenary scheme such that all irreducible
components of $X$ have the same finite dimension $d$. Let $\F$ be a
Cohen--Macaulay sheaf on $X$ (see \cite{Har});
then for any $p$, $0\le p\le d$, the map
$\nu_{p}:\bigoplus\limits_{\eta_0\ldots\eta_p}\F_{\eta_0}\to
\bigoplus\limits_{\eta\in X^{(p)}}H^p_{\eta}(X,\F)$ is surjective, where the first
direct sum is taken over all flags of type $(0\ldots p)$ on $X$.
\end{lemma}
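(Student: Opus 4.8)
The plan is to prove surjectivity of $\nu_p$ by induction on $p$, peeling off one codimension at a time, using the inductive description of the map $\nu_{0\ldots l}$ built in Proposition~\ref{residue} together with the Cohen--Macaulay hypothesis. For $p=0$ the claim is immediate: a flag of type $(0)$ is a generic point $\eta$, and $\nu_0$ sends $\F_\eta$ isomorphically onto $H^0_\eta(X,\F)$, so surjectivity is clear component by component over $X^{(0)}$. For the inductive step, fix a point $\xi\in X^{(p)}$ and a class $c\in H^p_\xi(X,\F)$; I want to realize $c$ as the image under $\nu_{\eta_{p-1}\xi}$ of some class in $H^{p-1}_{\eta_{p-1}}(X,\F)$ for a suitable point $\eta_{p-1}\in X^{(p-1)}$ with $\xi\in\overline{\eta}_{p-1}$ and $\overline{\xi}$ of codimension one in $\overline{\eta}_{p-1}$, and then invoke the inductive hypothesis to write that class as $\nu_{p-1}$ of a flag adele of type $(0\ldots p-1)$; appending $\xi$ produces a flag of type $(0\ldots p)$ mapping to $c$.

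The substantive input is that the boundary map $\nu_{\eta_{p-1}\xi}\colon H^{p-1}_{\eta_{p-1}}(X,\F)\to H^p_\xi(X,\F)$ is surjective onto the $\xi$-component when $\F$ is Cohen--Macaulay. This is where the hypothesis is used decisively: the Cohen--Macaulay condition in the sense of \cite{Har} says precisely that $\F\to\underline{Cous}(X,\F)^\bullet$ is a quasiisomorphism, equivalently that locally on $X$ the Cousin complex of $\F$ is a resolution, so the local cohomology sheaves $\underline{\H}^q_Z(\F)$ vanish outside the ``expected'' degree and the long exact sequences of local cohomology for the stratification by codimension degenerate into short exact sequences. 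Concretely, restricting to $\Spec\OO_{X,\xi}$ and using the excision/localization triangle relating $R\gamma_{\overline{\xi}}$, $R\gamma_{\overline{\eta}_{p-1}}$ on the closure of $\eta_{p-1}$, the vanishing $H^p_{\overline{\eta}_{p-1}}(\,\cdot\,)=0$ in degrees $\ne p-1$ forces the connecting map $H^{p-1}_{\eta_{p-1}}(X,\F)\to H^p_\xi(X,\F)$ to be surjective. One has freedom in choosing $\eta_{p-1}$: any codimension-one point of $\overline{\xi}$'s ``parent'' stratum works, and since $X$ is catenary of pure dimension $d$ such chains exist, so the type $(0\ldots p)$ is genuinely achievable.

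The main obstacle is the bookkeeping in the inductive step: one must check that the class in $H^{p-1}_{\eta_{p-1}}(X,\F)$ chosen as a preimage can simultaneously be extended, via Proposition~\ref{residue} and the inductive hypothesis, to a flag adele whose ``left part'' is compatible with the appended point $\xi$ — i.e.\ that the systems of divisors $\{D_{\eta_0\ldots\eta_k}\}$ implicit in the adelic description (Proposition~\ref{systemofd}) can be chosen to avoid $\xi$ in the required sense. But because $H^p_\xi(X,\F)$ depends only on $\OO_{X,\xi}$ and every divisor not through $\xi$ is irrelevant after localization at $\xi$ (Corollary~\ref{open-local}$(ii),(v)$), this compatibility is automatic once we work on $\Spec\OO_{X,\xi}$. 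So after these reductions the argument is a finite diagram chase in local cohomology; the only real content is the surjectivity of the single connecting map, which is exactly the Cohen--Macaulay property, and the induction closes.
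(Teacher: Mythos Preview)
Your inductive outline is right, but the key step has a gap. You claim that for a \emph{single} codimension-$(p-1)$ point $\eta_{p-1}$ with $\xi\in\overline{\eta}_{p-1}$, the boundary map $\nu_{\eta_{p-1}\xi}\colon H^{p-1}_{\eta_{p-1}}(X,\F)\to H^p_\xi(X,\F)$ is surjective, and you justify this by asserting that $H^p_{\overline{\eta}_{p-1}}(X_\xi,\F)=0$. But the Cohen--Macaulay hypothesis only controls local cohomology at \emph{points}: it says $H^i_\eta(X,\F)=0$ for $i\ne\codim(\eta)$. For a closed set $Z=\overline{\eta}_{p-1}$ of codimension $p-1$ this yields $H^i_Z(X_\xi,\F)=0$ for $i<p-1$, but says nothing directly about $i=p$. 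In fact, computing $R\Gamma_Z$ via the Cousin resolution on $X_\xi$ shows $H^p_Z(X_\xi,\F)=\mathrm{coker}(\nu_{\eta_{p-1}\xi})$, so your vanishing claim is \emph{equivalent} to the surjectivity you are trying to prove---the argument is circular. (In special situations, e.g.\ $\F=\OO_X$ over an excellent local domain, this vanishing does follow from the Hartshorne--Lichtenbaum theorem, but the lemma is stated in much greater generality.)

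The paper's proof runs the same induction but uses only what Cohen--Macaulay actually gives: exactness of the Cousin complex on $X_\xi$ means the \emph{full} differential $\bigoplus_{\eta_{p-1}\in X_\xi^{(p-1)}}H^{p-1}_{\eta_{p-1}}(X,\F)\to H^p_\xi(X,\F)$ is surjective. One lifts $c$ to a finite sum $\{g_{\eta_{p-1}}\}$ over several points $\eta_{p-1}$, applies the inductive hypothesis on $X_\xi$ to each, and appends $\xi$ to the resulting flags. The fix to your argument is therefore small: drop the claim about a single $\eta_{p-1}$ and allow a finite collection instead. Also, the discussion of systems of divisors $\{D_{\eta_0\ldots\eta_k}\}$ from Proposition~\ref{systemofd} is unnecessary here---the domain of $\nu_p$ is a direct sum $\bigoplus\F_{\eta_0}$, not an adelic group, so there is no restricted-product condition to verify.
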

\begin{proof}
The proof is by induction on $p$. For $p=0$, there is nothing to prove.
Suppose that $p>0$. Consider a collection $\{f_{\eta}\}\in
\bigoplus\limits_{\eta\in X^{(p)}}H^p_{\eta}(X,\F)$.
Note that for any point $\eta\in X^{(p)}$, the sheaf $j_{\eta}^*\F$ is Cohen--Macaulay
on $X_{\eta}$, where $j_{\eta}:X_{\eta}=\Spec(\OO_{X,\eta})\to X$
is the natural morphism. Hence for each point $\eta\in X^{(p)}$, there exists
a collection
$\{g_{\xi}\}_{(\eta)}\in \bigoplus\limits_{\eta\in X_{\eta}^{(p-1)}}
H^{p-1}_{\xi}(X_{\eta},j_{\eta}^*\F)$ such that $d_{\eta}\{g_{\xi}\}_{(\eta)}=
f_{\eta}$,
where $d_{\eta}$ denotes the differential in the Cousin complex on
$X_{\eta}$. We may suppose that $\{g_{\xi}\}_{(\eta)}=0$ for almost all
$\eta\in X^{(p)}$. By the induction hypothesis, for each point $\eta\in X^{(p)}$,
there exists a collection
$\{g_{\xi_0\ldots\xi_{p-1}}\}_{(\eta)}\in\bigoplus\limits_{\xi_0\ldots\xi_{p-1}}\F_{\xi_0}$ such that
$\nu_{p-1}\{g_{\xi_0\ldots\xi_{p-1}}\}_{(\eta)}=\{g_{\xi}\}_{(\eta)}$, where the direct sum is taken
over all flags of type $(0\ldots p-1)$ on $X_{\eta}$.
Again, we may suppose that $\{g_{\xi_0\ldots\xi_{p-1}}\}_{(\eta)}=0$
for almost all $\eta\in X^{(p)}$. Finally, we put
$\{f_{\eta_0\ldots\eta_p}\}=\{g_{\eta_0\ldots\eta_{p-1}}\}_{(\eta_p)}$.
\end{proof}

\subsection{Projection formula}

Let $X$, $Y$ be Noetherian catenary irreducible schemes,
$f:X\to Y$ be a morphism such
that for any point $\eta\in X$, we have
$\dim(\overline{\eta})\ge\dim(\overline{f(\eta)})$.
Under the above hypothesis, for any sheaf $\F$ on $X$,
there is a canonical morphism of complexes
$Cous(X,\F)^{\bullet}\to Cous(Y,Rf_*\F[d])^{\bullet}$,
where $d=\dim(f)=\dim(X)-\dim(Y)$. The definition of this morphism uses
inclusions of complexes
$\Gamma_{Z}(X,C(\F)^{\bullet})\hookrightarrow
\Gamma_{\overline{f(Z)}}(Y,f_*C(\F)^{\bullet})$
for any closed subset $Z\subset X$, where $C(\F)^{\bullet}$
is a flasque resolution of $\F$ on $X$. The morphism
$Cous(X,\F)^{\bullet}\to Cous(Y,Rf_*\F[d])^{\bullet}$ consists of
homomorphisms of type $f_*:H^p_{\eta}(X,\F)\to H^{p-d}_{f(\eta)}(X,Rf_*\F[d])$, where
$\dim(\overline{\eta})=\dim(\overline{f(\eta)})$.

The following adelic projection formula holds true.

\begin{prop}\label{product_formula}
Let $f:X\to Y$ be as above and let $\F$, $\Gc$ be two sheaves on $X$;
then the following natural diagram commutes up to the
sign $(-1)^{d\cdot \deg_{\A}}$,
where $\deg_{\A}$ is the degree of the components in the adelic complex:
$$
\begin{array}{ccc}
Cous(X,\F)^{\bullet}\otimes\A(Y,f_*\Gc)^{\bullet}&=&
Cous(X,\F)^{\bullet}\otimes\A(Y,f_*\Gc)^{\bullet}\\
\downarrow&&\downarrow\\
Cous(X,\F)^{\bullet}\otimes\A(X,\Gc)^{\bullet}&&
Cous(Y,Rf_*\F[d])^{\bullet}\otimes\A(Y,\Gc)^{\bullet}\\
\downarrow&&\downarrow\\
Cous(X,\F\otimes\Gc)^{\bullet}&\longrightarrow&
Cous(Y,Rf_*(\F\otimes\Gc)[d])^{\bullet}.\\
\end{array}
$$
\end{prop}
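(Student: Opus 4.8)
The plan is to compute both composites explicitly --- from the formula for $\mu$, from the formula for the pullback $f^{*}$ on adeles (Proposition~\ref{Properties-M}$(v),(vi)$), and from the description of the Cousin trace map --- and to compare them component by component. Both composites are morphisms of complexes $Cous(X,\F)^{\bullet}\otimes\A(Y,f_{*}\Gc)^{\bullet}\to Cous(Y,Rf_{*}(\F\otimes\Gc)[d])^{\bullet}$, so by additivity and Proposition~\ref{Properties-M}$(i)$ it suffices to treat one summand $H^{p}_{\eta}(X,\F)\otimes\A((i_{0}\ldots i_{q}),f_{*}\Gc)$ with $\eta\in X^{(p)}$, and, since the common target $Cous(Y,-)^{p+q-d}$ is a genuine direct sum, it suffices to compare, for each $\xi\in Y^{(p+q-d)}$, the components in $H^{p+q-d}_{\xi}(Y,Rf_{*}(\F\otimes\Gc)[d])$.

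Use a lift of $g$ to $\A_{s}(Y,f_{*}\Gc)^{q}$ for which $\theta(g)$ vanishes on degenerate flags. The left composite sends $f_{\eta}\otimes g$ to $f_{\eta}\otimes f^{*}g$, where by Proposition~\ref{Properties-M}$(vi)$ the value $\theta(f^{*}g)_{\zeta_{0}\ldots\zeta_{q}}$ is the image in $\Gc_{\zeta_{0}}$ of $\theta(g)_{f(\zeta_{0})\ldots f(\zeta_{q})}$, then applies $\mu$ on $X$ and the Cousin trace $f_{*}$; its value at $\xi$ is
$$
(-1)^{\epsilon(p,q)}\,f_{*}\Bigl(\,\sum(\nu_{\zeta_{q-1}\zeta}\circ\cdots\circ\nu_{\eta\zeta_{1}})\bigl(f_{\eta}\cdot\theta(g)_{f(\eta)f(\zeta_{1})\ldots f(\zeta_{q-1})\xi}\bigr)\Bigr),
$$
the sum being over flags $\eta\zeta_{1}\ldots\zeta_{q-1}\zeta$ of type $(p,\ldots,p+q)$ on $X$ with $f(\zeta)=\xi$ and $\dim\overline{\zeta}=\dim\overline{\xi}$. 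As $X$ is irreducible and catenary and $\dim\overline{\eta'}\ge\dim\overline{f(\eta')}$ for all $\eta'$, a codimension count shows that if $\dim\overline{\eta}>\dim\overline{f(\eta)}$ then every such flag collapses under $f$ at some step, so that $\theta(g)$ is there evaluated on a degenerate $Y$-flag and vanishes; whereas if $\dim\overline{\eta}=\dim\overline{f(\eta)}$, every surviving flag maps onto a genuine flag of type $(p-d,\ldots,p+q-d)$ on $Y$ running from $f(\eta)$ down to $\xi$. The right composite sends $f_{\eta}\otimes g$ to $f_{*}f_{\eta}\otimes g$ with $f_{*}f_{\eta}\in H^{p-d}_{f(\eta)}(Y,Rf_{*}\F[d])$, nonzero only if $\dim\overline{\eta}=\dim\overline{f(\eta)}$, and then applies $\mu$ on $Y$ and the projection-formula map $Rf_{*}\F[d]\otimes f_{*}\Gc\to Rf_{*}(\F\otimes\Gc)[d]$; its value at $\xi$ is
$$
(-1)^{\epsilon(p-d,q)}\sum(\nu_{\omega_{q-1}\xi}\circ\cdots\circ\nu_{f(\eta)\omega_{1}})\bigl(f_{*}f_{\eta}\cdot\theta(g)_{f(\eta)\omega_{1}\ldots\omega_{q-1}\xi}\bigr),
$$
the sum being over flags $f(\eta)\omega_{1}\ldots\omega_{q-1}\xi$ of type $(p-d,\ldots,p+q-d)$ on $Y$.

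Thus the two sides vanish simultaneously as functions of $\eta$, and it remains to compare the non-vanishing terms: applying $f_{*}$ to the $X$-expression and grouping by the $Y$-flag it lies over, one must recover the $Y$-expression up to the sign $(-1)^{\epsilon(p,q)-\epsilon(p-d,q)}=(-1)^{dq}=(-1)^{d\deg_{\A}}$. This follows from two facts which together say that the Cousin trace is a morphism of complexes compatible with products: first, $f_{*}$ commutes with the residue maps $\nu$ --- this is the chain-map property, read one residue at a time, and it automatically incorporates both the sum over the fibres of the intermediate points and the shift $[d]$, the target residues being those of the shifted complex $Rf_{*}(-)[d]$; and second, $f_{*}(f_{\eta}\cdot f^{*}(-))=f_{*}(f_{\eta})\cdot(-)$, the ordinary projection formula for the trace at the point $\eta$, where on the surviving terms $\dim\overline{\eta}=\dim\overline{f(\eta)}$. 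Both are compatible with the sheaf maps $\F_{U}\otimes\Gc_{V}\to(\F\otimes\Gc)_{U\cap V}$ and $(f_{*}\F)_{U}\to f_{*}(\F_{f^{-1}(U)})$ that define $\mu$ and $f^{*}$. Since ``$f_{*}$ commutes with residues'' carries no sign and the residue chains on both sides lie in the same shifted complex, no further sign arises, and the only discrepancy is the asserted $(-1)^{dq}$.

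The main obstacle is the first of these facts: making precise and verifying the base-change behaviour of the local cohomology functors $H^{\bullet}_{\bullet}$ under $f$ --- that the Cousin trace is compatible, residue by residue, consistently with the shift $[d]$ and with the flasque resolutions $C(\F)^{\bullet}$ used in its construction, both with the adelic structure and with the products $\F_{U}\otimes\Gc_{V}\to(\F\otimes\Gc)_{U\cap V}$. The hypothesis $\dim\overline{\eta}\ge\dim\overline{f(\eta)}$ is what ensures at once that the trace maps appearing in these squares are everywhere defined and, via the codimension count, that collapsing is the only possible degeneration, so the collapse terms are killed by the chosen lift of $g$.
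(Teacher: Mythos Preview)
Your proof is correct and follows essentially the same approach as the paper: both arguments reduce, via the explicit formula for $\mu$, to the pointwise projection formula $f_*(f_\eta\cdot f^*(-))=f_*(f_\eta)\cdot(-)$ together with the compatibility of $f_*$ with iterated residues, which the paper isolates as Lemma~\ref{sum_over_flags} and you phrase as ``$f_*$ commutes with residues, read one residue at a time.'' Your treatment is slightly more explicit than the paper's in two places---the sign computation $\epsilon(p,q)-\epsilon(p-d,q)=dq$ and the pigeonhole/collapse argument showing that terms with $\dim\overline{\eta}>\dim\overline{f(\eta)}$ contribute nothing on the left---but these are details the paper leaves to the reader rather than genuine differences in method.
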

\begin{proof}
For any point $\eta\in X^{(p)}$ such that
$\dim(\overline{\eta})=\dim(\overline{f(\eta)})$, the following diagram commutes
$$
\begin{array}{ccc}
H^p_{\eta}(X,\F)\otimes (f_*\Gc)_{f(\eta)}&=&
H^p_{\eta}(X,\F)\otimes (f_*\Gc)_{f(\eta)}\\
\downarrow&&\downarrow\\
H^p_{\eta}(X,\F)\otimes \Gc_{\eta}&&
H^{p-d}_{f(\eta)}(Y,Rf_*\F[d])\otimes (f_*\Gc)_{f(\eta)}\\
\downarrow&&\downarrow\\
H^p_{\eta}(X,\F\otimes\Gc)&\longrightarrow&
H^{p-d}_{f(\eta)}(Y,Rf_*(\F\otimes\Gc)[d]),\\
\end{array}
$$
where $f(\eta)\in Y^{(q)}$.
Hence the proposition follows from Lemma \ref{sum_over_flags} and
the explicit formula for the product between
the Cousin and the adelic complexes.
\end{proof}

\begin{lemma}\label{sum_over_flags}
Let $f:X\to Y$ be as above with $\dim(X)=\dim(Y)$,
$\H$ be a sheaf on $X$, and
$(\xi_0\ldots\xi_r)$ be a flag on $Y$ such
that $\xi_l\in Y^{(l)}$ for all $l,0\le l\le r$; then for any element
$h\in \H_X$, we have
$$
f_*\left(
\sum_{\eta_0\ldots\eta_r}\nu_{\eta_0\ldots\eta_r}(h)\right)=
\nu_{\xi_0\ldots\xi_r}(f_*(h)),
$$
where $f_*:H^*_{\eta}(X,\H)\to H^{*-d}_{f(\eta)}(Y,Rf_*\H[d])$
are the natural homomorphisms and the sum is taken
over all flags $\eta_0\ldots\eta_r$ on $X$ such that
$f(\eta_0\ldots\eta_r)=(\xi_0\ldots\xi_r)$ and $\eta_l\in X^{(l)}$
for all $l,0\le l\le r$.
\end{lemma}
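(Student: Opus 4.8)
The plan is to reduce the statement to a single flag step and then to the local structure of the pushforward maps on local cohomology. First I would argue by induction on $r$. For $r=0$ the claim is just the compatibility of $f_*\colon H^0_{\eta_0}(X,\H)\to H^{-d}_{\xi_0}(Y,Rf_*\H[d])$ with the inclusions $\H_X\hookrightarrow \H_{\eta_0}$, which is immediate from the construction of $f_*$ on Cousin complexes via the inclusions $\Gamma_Z(X,C(\H)^{\bullet})\hookrightarrow \Gamma_{\overline{f(Z)}}(Y,f_*C(\H)^{\bullet})$. For the inductive step, note that $\nu_{\eta_0\ldots\eta_r}=\nu_{\eta_{r-1}\eta_r}\circ\nu_{\eta_0\ldots\eta_{r-1}}$, and that in the sum $\sum_{\eta_0\ldots\eta_r}$ the indices $\eta_0,\ldots,\eta_{r-1}$ with $f(\eta_j)=\xi_j$ range over a set which, after fixing them, leaves $\eta_r$ ranging over points of $X^{(r)}$ in $\overline{\eta}_{r-1}$ with $f(\eta_r)=\xi_r$. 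So it suffices to show, for a fixed flag $\eta_0\ldots\eta_{r-1}$ on $X$ lying over $\xi_0\ldots\xi_{r-1}$ and any $g\in H^{r-1}_{\eta_{r-1}}(X,\H)$, the identity
$$
f_*\Bigl(\sum_{\eta_r}\nu_{\eta_{r-1}\eta_r}(g)\Bigr)=\nu_{\xi_{r-1}\xi_r}(f_*(g)),
$$
where $f_*$ on the left is the map $H^{r-1}_{\eta_{r-1}}(X,\H)\to H^{r-1-d}_{\xi_{r-1}}(Y,Rf_*\H[d])$ and the sum runs over $\eta_r\in X^{(r)}\cap\overline{\eta}_{r-1}$ with $f(\eta_r)=\xi_r$.

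This reduced statement is essentially local at $\xi_{r-1}$ on $Y$ and at $\eta_{r-1}$ on $X$. Replacing $Y$ by $\Spec\OO_{Y,\xi_{r-1}}$ and $X$ by its preimage, we are in the situation where $\overline{\xi}_{r-1}$ is the closed point of $Y$, $\overline{\eta}_{r-1}$ maps finitely (generically) onto it, and $\xi_r$ is a codimension-one point of $\overline{\xi}_{r-1}$. Here the maps $\nu_{\eta_{r-1}\eta_r}$ are the connecting maps in the Cousin complex of $\H$ restricted to $\overline{\eta}_{r-1}$, the maps $\nu_{\xi_{r-1}\xi_r}$ are the analogous maps downstairs, and $f_*$ is the trace-type map on local cohomology in the top degree. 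The desired equality is then the standard compatibility of the trace map with the residue maps along codimension-one specializations — this is precisely the assertion that the trace $f_*$ is a morphism of Cousin complexes, a fact used (in a different notation) already in the construction of $Cous(X,\F)^{\bullet}\to Cous(Y,Rf_*\F[d])^{\bullet}$ recalled before Proposition \ref{product_formula}.

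The main obstacle is bookkeeping in the general (non-finite, non-equidimensional fibre) setting: a priori several points $\eta_r$ over $\xi_r$ may occur, and one must check the sum on the left is finite and that the total contribution equals the single downstairs residue. The key point making this work is the hypothesis $\dim(X)=\dim(Y)$, which forces $\dim(\overline{\eta}_j)=\dim(\overline{\xi}_j)$ for the flags in question, so that all the relevant local cohomology groups are in the top degree and the trace map is built degreewise from the finite residue-field extensions $k(\eta_j)/k(\xi_j)$; the additivity of the classical residue over the points of the fibre then gives the claimed formula. I would therefore carry out: (1) the induction reduction to one flag step; (2) localization at $\xi_{r-1}$; (3) identification of both sides with residue/trace maps on one-dimensional local rings; (4) invocation of the additivity of residues, equivalently of the fact that $f_*$ commutes with the Cousin differential, to conclude.
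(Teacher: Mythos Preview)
Your proposal is correct and follows essentially the same approach as the paper's proof: both argue by induction on $r$, reduce to a single flag step, and identify that step with the fact that the maps $f_*$ constitute a morphism of Cousin complexes $Cous(X,\H)^{\bullet}\to Cous(Y,Rf_*\H[d])^{\bullet}$. The only cosmetic difference is that the paper splits off the \emph{first} step (replacing $X,Y$ by $\overline{\eta}_1,\overline{\xi}_1$ and applying the inductive hypothesis there) whereas you split off the \emph{last} step; your added discussion of localization and residue/trace maps is more detailed than the paper's but not a different argument.
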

\begin{proof}
The proof is by induction on $r$. Note that there are only finitely
many schematic points $\eta_1\in X^{(1)}$ such that
$f(\eta_1)=\xi_1$. Hence, if we replace $X$ and $Y$ by
$\overline{\eta}_1$ and $\overline{\xi}_1$, respectively, we see that the
induction step is equivalent to the case $r=1$.
On the other hand, when $r=1$ the residue maps
$\nu_{\eta_0\eta_1}$ and $\nu_{\xi_0\xi_1}$ correspond to the
differentials in the Cousin complexes $Cous(X,\H)^{\bullet}$ and
$Cous(Y,Rf_*\H[d])^*$. Therefore
Lemma \ref{sum_over_flags} is equivalent to the fact that the
homomorphisms $f_*$ define a morphism of the corresponding Cousin complexes.
\end{proof}

\subsection{Sheaves with controllable support}\label{section-control-support}

\begin{defin}
We say that a sheaf $\F$ on a scheme $X$ {\it has controllable support} if for
any point $\eta\in X$, there exists a closed subset $Z_{\eta}\subset X$ such that
$\eta\notin Z_{\eta}$ and $\lim\limits_{\longrightarrow}(i_Z)_*\gamma_Z\F=
(i_{Z_{\eta}})_*\gamma_{Z_{\eta}}\F$,
where the limit is taken over all closed subsets $Z\subset X$ such that $\eta\notin Z$.
\end{defin}

\begin{rmk}\label{properties-controled}
\hspace{0cm}
\begin{itemize}
\item[(i)]
If $\F$ has controllable support, then for any open subset
$U\subset X$ the sheaf $\F_U$ has also controllable support (indeed,
for any closed subset $Z\subset X$ we have
$(i_Z)_*\gamma_Z(\F_U)=((i_Z)_*\gamma_Z\F)_U$;
\item[(ii)]
any subsheaf in a sheaf with controllable support has also
controllable support.
\end{itemize}
\end{rmk}

\begin{examps}
\hspace{0cm}

1) If $X$ has finitely many irreducible components, then a constant
sheaf on $X$ has controllable support.

2) If $\F$ is a coherent sheaf on a Noetherian scheme $X$, then $\F$
has controllable support. Indeed, all sheaves $(i_Z)_*\gamma_Z\F$ in
the definition are coherent subsheaves in the sheaf $\F$.

3) Any Cohen--Macaulay sheaf on a Noetherian scheme has controllable support.

4) Let $X$ be an irreducible one-dimensional scheme with infinitely many closed points.
Consider the sheaf $\bigoplus\limits_{x\in X}(i_x)_*\Z$, where $x$ ranges over all
closed points in $X$ and $i_x:\Spec k(x)\hookrightarrow X$ is the
closed embedding of a point. Then the sheaf $\bigoplus\limits_{x\in
X}(i_x)_*\Z$ does not have controllable support.
\end{examps}

\begin{claim}\label{controled}
If a sheaf $\F$ on a scheme $X$ has controllable support, then for any point
$\eta\in X$ there exists an open subset $U_{\eta}\subset X$ containing $\eta$
such that for any open subsets $V\subset U\subset U_{\eta}$ containing $\eta$
the natural morphism of sheaves $\F_{U}\to \F_V$ is injective.
\end{claim}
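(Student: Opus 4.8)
The plan is to translate the definition of controllable support into a statement about stalks of the quotient sheaves $\F/\gamma_Z\F$ and then invoke Noetherian finiteness. Fix a point $\eta\in X$ and let $Z_\eta\subset X$ be the closed subset furnished by the definition, so that $\eta\notin Z_\eta$ and $(i_{Z_\eta})_*\gamma_{Z_\eta}\F=\lim_{\longrightarrow}(i_Z)_*\gamma_Z\F$, the limit over all closed $Z\not\ni\eta$. Set $U_\eta=X\setminus Z_\eta$; this is an open subset containing $\eta$. The claim to prove is that for open subsets $V\subset U\subset U_\eta$ both containing $\eta$, the restriction map $\F_U\to\F_V$ is injective. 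Recall $\F_U=(i_U)_*i_U^*\F$ and that $\F_U\to\F_V$ is the map induced by shrinking the open set; its kernel is supported on $X\setminus V$, more precisely on the closed set $Z:=X\setminus V$, which does not contain $\eta$ since $\eta\in V$.

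The key observation is that the kernel of $\F_U\to\F_V$ is exactly $\gamma_Z(\F_U)$ where $Z=X\setminus V$: a section of $\F_U$ over some open $W$ restricts to zero in $\F_V$ over $W$ precisely when its restriction to $W\cap V$ vanishes, i.e. when it is supported on $W\cap Z$. By Remark \ref{properties-controled}(i), $\F_U$ again has controllable support, and one checks $(i_Z)_*\gamma_Z(\F_U)=((i_Z)_*\gamma_Z\F)_U=((i_Z)_*\gamma_Z\F)_{U_\eta\cap U}$. Now since $Z\subset Z_\eta$ (because $V\supset X\setminus Z_\eta$, equivalently $X\setminus V\subset Z_\eta$, wait — rather $Z=X\setminus V\subset X\setminus U_\eta$? no: we only know $\eta\notin Z$), the subtlety is that $Z$ need not be contained in $Z_\eta$. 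So instead I would argue as follows: the directed system $\{(i_Z)_*\gamma_Z\F\}_{Z\not\ni\eta}$ stabilizes at $(i_{Z_\eta})_*\gamma_{Z_\eta}\F$, which means the transition maps are eventually isomorphisms on stalks; concretely, for $Z$ large enough the inclusion $(i_Z)_*\gamma_Z\F\hookrightarrow(i_{Z_\eta})_*\gamma_{Z_\eta}\F$ identifies the $\eta$-stalks. But the $\eta$-stalk of $(i_{Z_\eta})_*\gamma_{Z_\eta}\F$ is zero since $\eta\notin Z_\eta$. Hence there is an open $U_\eta'\ni\eta$ (which we may take equal to $U_\eta$ by shrinking) such that $\gamma_Z\F$ restricted to $U_\eta$ is already zero for the cofinal system, and in particular for every closed $Z$ with $\eta\notin Z$ we get $\gamma_Z(\F_{U_\eta})=0$ as a sheaf on $U_\eta$ — since $\gamma_Z(\F_{U_\eta})$ is a subsheaf of $(\gamma_{Z_\eta}\F)_{U_\eta}$ restricted appropriately, or more directly, it injects into the stalk computation that vanishes.

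Assembling: for $V\subset U\subset U_\eta$ with $\eta\in V$, the kernel of $\F_U\to\F_V$ is $\gamma_{X\setminus V}(\F_U)$, which is a subsheaf of $\gamma_{X\setminus V}(\F_{U_\eta})$ pulled back to $U$; and the latter is $0$ because $X\setminus V$ is a closed set not containing $\eta$, so the controllable-support property applied over $U_\eta$ kills it. Therefore $\F_U\to\F_V$ is injective, as desired. The main obstacle is the bookkeeping in the previous paragraph: one must be careful that the stabilization of the direct system $\{(i_Z)_*\gamma_Z\F\}$ at $Z_\eta$ genuinely forces $\gamma_Z\F$ to vanish in a neighborhood of $\eta$ for \emph{all} closed $Z\not\ni\eta$ (not merely a cofinal family), which follows because $\gamma_Z\F\hookrightarrow\gamma_{Z\cup Z_\eta}\F$ and the latter maps into $(i_{Z_\eta})_*\gamma_{Z_\eta}\F$ with zero $\eta$-stalk. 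I would state this stabilization lemma cleanly first and then the rest is a two-line deduction.
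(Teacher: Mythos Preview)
Your approach is the same as the paper's, and it works, but you have created a phantom difficulty. You write that ``the subtlety is that $Z$ need not be contained in $Z_\eta$'' and then spend the rest of the argument working around this. In fact the \emph{opposite} containment holds automatically, and that is all you need: since $V\subset U_\eta=X\setminus Z_\eta$, taking complements gives $Z=X\setminus V\supset Z_\eta$. The directed system $\{(i_Z)_*\gamma_Z\F\}_{\eta\notin Z}$ has injective transition maps and stabilizes at $Z_\eta$, so for any $Z\supset Z_\eta$ with $\eta\notin Z$ one has the equality $(i_Z)_*\gamma_Z\F=(i_{Z_\eta})_*\gamma_{Z_\eta}\F$. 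Hence $\Ker(\F_U\to\F_V)=((i_Z)_*\gamma_Z\F)_U=((i_{Z_\eta})_*\gamma_{Z_\eta}\F)_U=0$, the last step because $U\subset X\setminus Z_\eta$. This is exactly the paper's proof, in two lines.

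Your detour via $Z\cup Z_\eta$ and stalk vanishing is correct but redundant: once you note $Z\cup Z_\eta\supset Z_\eta$ you should equally note $Z\supset Z_\eta$, and the intermediate stalk discussion (``there is an open $U_\eta'\ni\eta$ which we may take equal to $U_\eta$ by shrinking'') is both unnecessary and, as written, not quite a proof. Drop the middle paragraph, use the containment $Z_\eta\subset Z$, and the argument collapses to the paper's.
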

\begin{proof}
We have
${\rm Ker}\{\F_{U}\to \F_V\}=((i_Z)_*\gamma_Z\F)_{U}$,
where $Z=X\backslash V$. We put $U_{\eta}=X\backslash Z_{\eta}$;
since $Z_{\eta}\subset Z$ and $U\subset X\backslash Z_{\eta}$,
we get $((i_Z)_*\gamma_Z\F)_{U}=((i_{Z_{\eta}})_*\gamma_{Z_{\eta}}\F)_{U}=0$.
%$((i_Z)_*\gamma_Z\F)_{U}=
%{\rm Coker}\{(i_Y)_*\gamma_{Y}\F\to(i_Z)_*i^!_Z\F\}=0$, where $Y=X\backslash U$.
\end{proof}

\begin{prop}\label{theta-emb}
Suppose that the sheaf $\F$ on a scheme $X$ has controllable support;
then the map $\theta$ is injective for any subset $M\subset S(X)_p$:
$$
\theta:\A(M,\F)\hookrightarrow C(M,\F)=\prod_{(\eta_0\ldots\eta_p)\in M}\F_{\eta_0}.
$$
\end{prop}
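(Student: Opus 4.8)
The plan is to induct on $p$, the case $p=0$ being trivial since there $\theta$ is the identity map of $\prod_{\eta\in M}\F_{\eta}$. For the inductive step I would unwind the defining decompositions
$$
\A(M,\F)=\prod_{\eta_0\in P(X)}\widetilde{\A}({}_{\eta_0}M,[\F]_{\eta_0}),\qquad
\widetilde{\A}({}_{\eta_0}M,[\F]_{\eta_0})=\lim\limits_{\longrightarrow\atop U\ni\eta_0}\A({}_{\eta_0}M,\F_U),
$$
and, on the target side, group $C(M,\F)=\prod_{(\eta_0\ldots\eta_p)\in M}\F_{\eta_0}$ according to the first entry $\eta_0$. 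From the already recorded identity $\A((\eta_0\eta_1),\F)=\F_{\eta_0}$ together with the definition one reads off that $([\F]_{\eta_0})_{\eta_1}=\F_{\eta_0}$ for every flag $(\eta_0,\eta_1,\ldots)$, so the $\eta_0$-block of $C(M,\F)$ is precisely $C({}_{\eta_0}M,[\F]_{\eta_0})$, a product of stalks. Since the natural morphism $\theta$ is defined by the evident recursion parallel to that of $\A(M,\F)$, it is the product over $\eta_0\in P(X)$ of maps $\widetilde{\theta}_{\eta_0}\colon\widetilde{\A}({}_{\eta_0}M,[\F]_{\eta_0})\to C({}_{\eta_0}M,[\F]_{\eta_0})$, and it suffices to prove each $\widetilde{\theta}_{\eta_0}$ is injective.

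Fix $\eta_0$ and write $N={}_{\eta_0}M\subset S(X)_{p-1}$. By Claim \ref{controled} there is an open $U_{\eta_0}\ni\eta_0$ such that $\F_U\to\F_V$ is injective for all opens $V\subset U\subset U_{\eta_0}$ containing $\eta_0$; since such $U$ are cofinal among the opens containing $\eta_0$, I may compute $\widetilde{\A}(N,[\F]_{\eta_0})$ and the stalks $([\F]_{\eta_0})_{\eta_1}=\lim\limits_{\longrightarrow\atop U}(\F_U)_{\eta_1}$ as colimits over the family $\{U\subset U_{\eta_0}\}$. With this, $\widetilde{\theta}_{\eta_0}$ factors as the composite
$$
\lim\limits_{\longrightarrow\atop U}\A(N,\F_U)\ \longrightarrow\ \lim\limits_{\longrightarrow\atop U}C(N,\F_U)\ \longrightarrow\ \prod_{(\eta_1\ldots\eta_p)\in N}\Bigl(\lim\limits_{\longrightarrow\atop U}(\F_U)_{\eta_1}\Bigr)=C(N,[\F]_{\eta_0}),
$$
where the first arrow is the colimit of the maps $\theta_{\F_U}$ and the second is the canonical comparison map. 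Each $\F_U$ has controllable support (Remark \ref{properties-controled}(i)) and $N\subset S(X)_{p-1}$, so by the induction hypothesis every $\theta_{\F_U}$ is injective; a filtered colimit of injective maps is injective, hence so is the first arrow.

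The crux is the injectivity of the second arrow, and this is the single place where controllable support genuinely intervenes. An element of its source is represented by a family $(x_{\eta_1})_{(\eta_1\ldots\eta_p)\in N}$ with all $x_{\eta_1}\in(\F_U)_{\eta_1}$ for one fixed $U\subset U_{\eta_0}$; if it maps to zero in the product of the colimits, then for each flag there is $U'\subset U$ with $x_{\eta_1}\mapsto 0$ in $(\F_{U'})_{\eta_1}$, but $(\F_U)_{\eta_1}\to(\F_{U'})_{\eta_1}$ is the stalk of the injection $\F_U\to\F_{U'}$ from Claim \ref{controled}, hence injective, so $x_{\eta_1}=0$ for all $\eta_1$ and the element was already zero. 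Thus the second arrow is injective, $\widetilde{\theta}_{\eta_0}$ is injective, and taking the product over $\eta_0$ yields the injectivity of $\theta$. The main obstacle is exactly this last step: for an arbitrary sheaf the comparison map out of a filtered colimit of products need not be injective, so the argument collapses without controllability; everything else is a bookkeeping induction on $p$.
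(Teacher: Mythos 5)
Your argument is correct and is essentially the paper's proof: the same induction on $p$, applying the inductive hypothesis to the sheaves $\F_U$ (which have controllable support), using exactness of filtered colimits for the first arrow, and invoking Claim \ref{controled} to get injectivity of the comparison map from the colimit of products to the product of colimits. The extra bookkeeping you include (cofinality of the opens $U\subset U_{\eta_0}$ and the identification of the $\eta_0$-block of $C(M,\F)$) only spells out steps the paper leaves implicit.
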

\begin{proof}
The proof is by induction on $p$. For $p=0$, there is nothing to prove. Suppose
that $p>0$; then, by the induction hypothesis, we have
$$
\A(M,\F)=\prod_{\eta\in P(X)}\lim\limits_{\longrightarrow\atop U_{\eta}}
\A(_{\eta}M,\F_{U_{\eta}})\hookrightarrow
\prod_{\eta\in P(X)}\lim\limits_{\longrightarrow\atop U_{\eta}}
\prod_{(\eta_1\ldots\eta_p)\in_{\eta}M}(\F_{U_{\eta}})_{\eta_1}\hookrightarrow
$$
$$
\hookrightarrow \prod_{\eta\in P(X)}\prod_{(\eta_1\ldots\eta_p)\in_{\eta}M}
\lim\limits_{\longrightarrow\atop U_{\eta}}(\F_{U_{\eta}})_{\eta_1}=
\prod_{(\eta_0\ldots\eta_p)\in M}\F_{\eta},
$$
where the injectivity of the second map follows from Claim \ref{controled}.
\end{proof}

Thus when $\F$ has controllable support, each adele $f\in \A(M,\F)$ is
uniquely determined by its components $f_{\eta_0\ldots\eta_p}\in \F_{\eta_0}$, where
$(\eta_0\ldots\eta_p)$ runs over flags in $M$.

\begin{rmk}
Let $\F$ be a sheaf with controllable support on a scheme $X$; then for any subset
$M\subset S(X)_p$, there is a natural inclusion
$\bigoplus\limits_{(\eta_0\ldots\eta_p)\in M}\F_{\eta_0}\subset \A(M,\F)$.
\end{rmk}

\begin{examps}\label{simple_examples}
Let $\F$ be a sheaf with controllable support on $X$. For an irreducible
closed subscheme $Z\subset X$, by $\F_Z$ denote the stalk of $\F$ at the
generic point of $Z$.

1) Suppose that $\dim X=1$; then the adelic group $\A((01),\F)\subset
\prod\limits_{x\in X}{\F_{X}}$ consists of all collections
$\{f_{Xx}\}\in \prod\limits_{x\in X}{\F_X}$ such that
$f_{Xx}\in {\rm Im}(\F_x\to\F_X)$ for almost all $x\in X$. In particular, we get
rational adeles on a curve if $\F=\OO_X$ (see \cite{Ser}, where rational adeles
are called {\it repartitions}).

2) Suppose that $\dim X=2$. Let us describe explicitly the arising
adelic groups. The adelic group
$\A((01),\F)\subset\prod\limits_{C\subset X}\F_X$ consists of all
collection $\{f_{XC}\}$ such that $f_{XC}\in {\rm Im}(\F_C\to\F_X)$
for almost all irreducible curves $C\in X$. The adelic group
$\A((12),\F)\subset\prod\limits_{x\in C}\F_C$ consists of all
collections $\{f_{Cx}\}$ such that $f_{Cx}\in {\rm Im}(\F_x\to\F_C)$
for almost all points $x\in C$ for a fixed $C$. The adelic group
$\A((02),\F)\subset \prod\limits_{x\subset X}\F_X$ consists of all
collections $\{f_{Xx}\}$ such that there exists a divisor $D\subset
X$ such that $f_{Xx}\in {\rm Im}((\F_{X\backslash D})_{x}\to \F_X)$
for any closed point $x\in X$. The adelic group $\A((012),\F)\subset
\prod\limits_{x\in C\subset X}\F_X$ consists of all collections
$\{f_{XCx}\}$ satisfying the following condition. There exists a
divisor $D\subset X$ and for each irreducible curve $C\subset X$,
there is a divisor $D_C$ such that $D_C(C)=D(C)$ (see
Definition~\ref{defin-Z(.)}), and $f_{XCx}\in {\rm
Im}((\F_{X\backslash D_C})_{x}\to \F_X)$ for all flags $x\in
C\subset X$. This is analogous to the construction given in
\cite{Par76}, p. 751.
\end{examps}

\subsection{$1$-pure sheaves}\label{section-1-pure}

\begin{defin}
We say that a sheaf $\F$ on a Noetherian scheme $X$ is {\it $1$-pure} if for any point
$\eta\in X^{(i)}$, we have $H^0_{\eta}(X,\F)=0$ if $i\ge 1$ and
also $H^1_{\eta}(X,\F)=0$ if $i\ge 2$.
\end{defin}

Equivalently, a sheaf $\F$ is $1$-pure
if the following complex of sheaves is exact:
$$
0\to\F\to \underline{Cous}(X,\F)^0\to\underline{Cous}(X,\F)^1.
$$

\begin{rmk}\label{explicit-sheaves-pure}
For any open subset $U\subset X$, any point $\xi\in P(X)$, and a $1$-pure sheaf
$\F$ on $X$, there are exact sequences of sheaves
$$
0\to\F_U\to\mbox{$\bigoplus\limits_{\eta\in X^{(0)}}(i_{\overline{\eta}})_*
\F_{\eta}\to\bigoplus\limits_{\eta\in D(X\backslash U)}
(i_{\overline{\eta}})_*H^1_{\eta}(X,\F)$},
$$
$$
0\to[\F]_{\xi}\to\mbox{$\bigoplus\limits_{\eta\in X^{(0)}}(i_{\overline{\eta}})_*
\F_{\eta}\to\bigoplus\limits_{\eta\in D(\xi)}(i_{\overline{\eta}})_*H^1_{\eta}(X,\F)$},
$$
where $D(X\backslash U)$ is the set of codimension one points in $X$ that belong to
the complement $X\backslash U$ and $D(\xi)$ is the set of codimension one
points $\eta$ in $X$ such that $\xi\in\overline{\eta}$. In particular, the sheaves
$\F_U$ and $[\F]_{\xi}$ are $1$-pure for any $1$-pure sheaf $\F$.
\end{rmk}

\begin{rmk}
It is easily shown that for a sheaf $\F$ on a Noetherian scheme $X$,
the sheaf $\bigoplus\limits_{\eta\in X^{(0)}}(i_{\overline{\eta}})_*
\F_{\eta}$ has controllable support. Hence any $1$-pure sheaf on a
Noetherian scheme has controllable support.
\end{rmk}

For any flag $F=(\eta_0\ldots\eta_i)$ and a subset $M\subset S(X)_p$, we put $_F M=
{_{\eta_i}(_{\eta_{i-1}}(\ldots_{\eta_0}(M)))}$.

\begin{prop}\label{intersect}
Let $\F$ be a $1$-pure sheaf on a Noetherian irreducible scheme $X$.
Suppose that the subset
$M\subset S(X)_p$ and a natural number $l$, $0\le l\le p$ satisfy the following
condition: either $p\le l+1$, or for any flag $F=(\eta_0\ldots\eta_{l+1})\in
S(X)_{p+1}$,
we have $_F M=
{_{\eta_{l+1}}(\delta^{p-l}_0\ldots\delta^p_l(M))}$ or
$_F M=\emptyset$.
Then the boundary maps $C(\delta^p_l(M),\F)\to C(M,\F)$ and
$\A(\delta^p_l(M),\F)\to \A(M,\F)$, $0\le l\le p$
(see Proposition \ref{Properties-M}$(ii)$) are injective and we have
$$
\A(\delta^p_l(M),\F)=\A(M,\F)\cap C(\delta^p_l(M),\F),
$$
where the intersection is taken inside the group $C(M,\F)$.
\end{prop}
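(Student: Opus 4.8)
The plan is to run an induction on $p$, following the same pattern as the inductive definition of the adelic groups, and to use the $1$-purity of $\F$ to reduce at each stage to the key injectivity statement for codimension-one data that is packaged into Remark \ref{explicit-sheaves-pure}. The base cases $p\le l+1$, or more precisely the cases where the combinatorial hypothesis on $_F M$ is vacuous, should be handled directly: when $p\le l+1$ the boundary map is essentially a map between products of stalks (or limits of such), and one checks the intersection formula termwise. For the inductive step, I would write both sides of the claimed identity using the defining presentation $\A(M,\F)=\prod_{\eta}\widetilde\A(_\eta M,[\F]_\eta)$ together with Remark \ref{limit-prod}, so that the boundary map $\A(\delta^p_l(M),\F)\to\A(M,\F)$ becomes, over each starting point $\eta$, a boundary map of lower-index adelic groups for the sheaf $[\F]_\eta$, which is again $1$-pure by Remark \ref{explicit-sheaves-pure}.

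The combinatorial hypothesis is exactly what is needed to make the induction go through: the condition $_F M = {_{\eta_{l+1}}(\delta^{p-l}_0\cdots\delta^p_l(M))}$ (or $\emptyset$) for flags $F$ of length $l+1$ says that after peeling off the first $l+1$ vertices, the "residual" subset either is empty or has the same shape relative to the appropriate face operators, so that the inductive hypothesis applies to $_\eta M$ with the shifted parameters. I would check carefully that passing from $M$ to $_\eta M$ shifts $(p,l)$ to $(p-1,l-1)$ when $l\ge 1$, and that when $l=0$ one is in the situation covered by the base case or by a separate short argument using the first exact sequence in Remark \ref{explicit-sheaves-pure}. The injectivity of the two boundary maps will follow along the way: for $C(-,\F)$ it is elementary (a face map between products of stalks, injective because the relevant $H^0_\eta$ vanish by $1$-purity, which kills the kernel coming from sections supported on smaller subsets), and for $\A(-,\F)$ it follows from the $C$-statement together with Proposition \ref{theta-emb}, since $\F$ has controllable support.

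The heart of the matter, and the step I expect to be the main obstacle, is the inclusion $\A(M,\F)\cap C(\delta^p_l(M),\F)\subseteq\A(\delta^p_l(M),\F)$ — i.e., showing that an adele all of whose components happen not to depend on the $l$-th vertex actually lies in the image of the boundary map, meaning one can choose the divisor data $\{D_{\eta_0\ldots\eta_k}\}$ from Proposition \ref{systemofd} compatibly so that the adelic condition is met on $\delta^p_l(M)$. Here I would invoke the explicit description in Proposition \ref{systemofd}: given an adele $f\in\A(M,\F)$ with a witnessing system of divisors, one needs to produce a witnessing system for the same family of components viewed on $\delta^p_l(M)$, and the compatibility condition $(*)$ together with Claim \ref{integrality} should let one do this by omitting or merging the divisor indexed at the $l$-th slot. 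The vanishing $H^1_\eta(X,\F)=0$ for $\eta$ of codimension $\ge 2$ is what guarantees that the "missing" residue conditions are automatically satisfied, so that no new poles are forced. I would organize this as: first reduce to $X$ irreducible and $M$ consisting of flags of a single type via Proposition \ref{Properties-M}$(i)$ and Corollary \ref{open-local}, then run the divisor bookkeeping; the routine but slightly delicate part is verifying that the merged divisor system still satisfies $(*)$, which is precisely where Claim \ref{integrality} is used.
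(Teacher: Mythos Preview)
Your inductive reduction for $l\ge 1$ --- peeling off the first vertex and passing from $(M,p,l)$ to $(_\eta M,p-1,l-1)$ with the sheaf $\F_{U_\eta}$ (still $1$-pure by Remark~\ref{explicit-sheaves-pure}) --- is exactly what the paper does, and the base case $p=1$ is trivial as you say. The place where your proposal diverges from the paper, and where there is a real gap, is the case $l=0$.

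The paper does \emph{not} attack $l=0$ by divisor bookkeeping from Proposition~\ref{systemofd}. Instead it isolates two auxiliary facts. First, Lemma~\ref{intersect-sheaves-adelic}: for any $N$ and any open $V$, one has $\A(N,\F_V)\cap C(N,\F)=\A(N,\F)$; this is where $1$-purity is actually used, via the sheaf identity $\F_V\cap[\F]_\eta=\F_U$ for $U$ small enough (Remark~\ref{explicit-sheaves-pure}). Second, Lemma~\ref{projections-adelic}: if $N=\bigcup_\alpha N_\alpha$ and for every $\xi$ some $N_\alpha$ satisfies $_\xi N_\alpha={_\xi N}$, then a subgroup of $C(N,\F)$ whose projection to each $C(N_\alpha,\F)$ lands in $\A(N_\alpha,\F)$ is already contained in $\A(N,\F)$. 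For $l=0$ one writes $\delta^p_0(M)=\bigcup_{\eta}\,{_\eta M}$; applying the first lemma to each $_\eta M$ shows that the $\eta$-projection of $A=\A(M,\F)\cap C(\delta^p_0(M),\F)$ lies in $\A(_\eta M,\F)$, and then the combinatorial hypothesis on $M$ is \emph{exactly} the covering condition needed to invoke the second lemma with $N=\delta^p_0(M)$, $N_\alpha={_\eta M}$.

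Your divisor-bookkeeping proposal runs into a genuine obstacle you have not addressed. To manufacture a witnessing system $\{D'_{\ldots}\}$ on $\delta^p_l(M)$ from one on $M$, you must, for each new left-part of a flag, \emph{choose} an insertion of the missing vertex $\eta_l$ (or $\eta_0$ when $l=0$); there is no canonical choice, and the a~priori infinitely many candidates would produce infinitely many divisors to ``merge''. Claim~\ref{integrality} only propagates condition~$(*)$ along a single flag extension --- it does not let you take unions over unboundedly many $\eta_l$. What rescues the situation is precisely that the combinatorial hypothesis pins the tail of the flag to $\eta_{l+1}$ alone, so that a single choice of $\eta_l$ per $\eta_{l+1}$ suffices; but making this precise and then showing that the resulting element has no pole through $\eta_1$ (when $l=0$) amounts to reproving Lemmas~\ref{intersect-sheaves-adelic} and~\ref{projections-adelic} in disguise. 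Your remark that $H^1_\eta=0$ in codimension $\ge 2$ ``guarantees no new poles are forced'' is the right intuition but is not a proof; the actual mechanism is the sheaf-level intersection $\F_V\cap[\F]_\eta=\F_U$. Finally, the proposed reduction to ``$M$ of a single type'' is neither justified by nor needed for the proposition as stated.
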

\begin{proof}
Since $X$ is irreducible, the sheaf $\F$ is a subsheaf in a constant sheaf and
for any points $\eta,\xi\in X$ and any open subset $U$ such that
$\xi\in\overline{\eta}$, $\eta\in U$, and $U$ is connected,
the natural morphisms $\F_{\xi}\to\F_{\eta}$ and $\F(U)\to \F_{\eta}$ are injective.
Thus we get the injectivity of the boundary maps for the direct product groups
and hence, by Proposition \ref{theta-emb},
we get the injectivity of the boundary maps
for the adelic groups. In particular, the left hand side of the equality is
contained in the right hand side.

To prove the backward inclusion we use induction on $p$ and $l$.
For $p=1$ and $l=0,1$, we have $\A(\delta^1_l(M),\F)=C(\delta^1_l(M),\F)$ and the
assertion is clear.

Suppose that $p>1,l=0$. Denote by $A$ the right hand side of the needed equality.
For each $\eta\in P(X)$, consider the image $A_{\eta}$ of $A$ under
the natural map $C(\delta^p_0(M),\F)\to C(_{\eta}M,\F)$,
see Proposition \ref{Properties-M}$(i)$.
The group $A_{\eta}$ coincides with the projection of $A$ on
the $\eta$-part in the direct product
$C(M,\F)=\prod\limits_{\eta\in P(X)}C(_{\eta}M,[\F]_{\eta})$ and hence
$A_{\eta}=\lim\limits_{\longrightarrow\atop U_{\eta}}
\A(_{\eta}M,\F_{U_{\eta}})\cap C(_{\eta}M,\F)$, where the intersection is taken
in the group $C(_{\eta}M,[\F]_{\eta})$.
Therefore, by Lemma \ref{intersect-sheaves-adelic}, $A_{\eta}=\A(_{\eta}M,\F)$.
Further, note that $\delta^0_p(M)=\bigcup\limits_{\eta\in P(X)}{_{\eta}M}$ and
for any point $\xi\in P(X)$ there exists $\eta\in P(X)$ such that
$_{\xi}(_{\eta}M)=_{\xi}(\delta^p_0(M))$. Therefore,
by Lemma \ref{projections-adelic}, $A=\A(\delta^p_0(M),\F)$.

Suppose that $p>1$, $l>0$; then, by
definition, the right hand side is equal to
$$
\prod_{\eta\in P(X)}\lim\limits_{\longrightarrow\atop U_{\eta}}
\A(_{\eta}M,\F_{U_{\eta}})\cap C(\delta^p_l(M),\F)=
\prod_{\eta\in P(X)}\left(\lim\limits_{\longrightarrow\atop U_{\eta}}
\A(_{\eta}M,\F_{U_{\eta}})\cap
C(_{\eta}\delta^p_{l}(M),[\F]_{\eta})\right).
$$
Since $l>0$, for each point $\eta\in P(X)$, we have $_{\eta}\delta^p_{l}(M)=
\delta^p_{l-1}(_{\eta}M)$. On the other hand, by Proposition \ref{theta-emb},
$\A(_{\eta}M,\F_{U_{\eta}})\subset C(_{\eta}M,\F_{U_{\eta}})$ and
the intersection corresponding to the point $\eta$
is actually contained in the subgroup
$\lim\limits_{\longrightarrow\atop U_{\eta}}C(\delta^p_{l-1}(_{\eta}M),\F_{U_{\eta}})
\subset C(\delta^p_{l-1}(_{\eta}M),[\F]_{\eta})$. Hence, by the inductive assumption
for $\F_{U_{\eta}}$, $_{\eta}M$, $p-1$, and $l-1$,
the intersection considered above is equal to
$$
\prod_{\eta\in P(X)}\lim\limits_{\longrightarrow\atop U_{\eta}}
\left(\A(_{\eta}M,\F_{U_{\eta}})\cap C(\delta^p_{l-1}(_{\eta}M),\F_{U_{\eta}})\right)=
\prod_{\eta\in P(X)}\lim\limits_{\longrightarrow\atop U_{\eta}}
\A(\delta^p_{l-1}(_{\eta}M),\F_{U_{\eta}})=\A(\delta^p_l(M),\F).
$$
\end{proof}

\begin{rmk}
It follows from the proof of Proposition \ref{intersect} that for any subsheaf $\F$ of
a constant sheaf on a scheme $X$ with finitely many irreducible components, for any
$M\subset S(X)_p$, and for any $l$, $0\le l\le p$, there is the inclusion
$$
\A(\delta^p_l(M),\F)\subset\A(M,\F)\cap C(\delta^p_l(M),\F).
$$
\end{rmk}

\begin{rmk}
The condition that $\F$ is $1$-pure in Proposition \ref{intersect} might be replaced by a weaker condition
but actually we do not need such improvement: all sheaves that we consider
further are $1$-pure. The same is true about the condition on the set $M$.
\end{rmk}

\begin{examp}
Suppose that $X$ is irreducible and $\dim(X)=2$. Let $D\subset X$ be a closed
irreducible codimension one subset with infinitely many
closed points. We put $G=\bigoplus\limits_{y\in D}\Z$,
where the sum is taken over all closed points $y$ on $D$. Let
$\F$ be the kernel of the natural map
$\underline{G}\to\bigoplus\limits_{y\in D}(i_y)_*\Z$ of sheaves on $X$.
Then $\F$ is a subsheaf in a constant sheaf but is not $1$-pure on $X$.
Further, consider the adele
$f\in C((12),\F)$ defined by $f_{Cx}=\{1_x\}\in G$
if $C=D$ and $f_{Cx}=0$ otherwise. For $C=D$, we have
$f_{Cx}\in (\F_{X\backslash \{x\}})_x$ and there are two inclusions
$(\F_{X\backslash \{x\}})_x\subset(\F_{X\backslash C})_x$ and
$(\F_{X\backslash \{x\}})_x\subset\F_C$.
Therefore, $f\in \A((012),\F)\cap C((12),\F)$ but $f\notin \A((12),\F)$.
\end{examp}

\begin{lemma}\label{intersect-sheaves-adelic}
Let $\F$ be $1$-pure sheaf on a Noetherian scheme $X$.
Consider the sheaf $\Gc=\F_V$ for some
open subset $V\subset X$. Then for any subset $M\subset S(X)_p$, we have
$$
\A(M,\Gc)\cap C(M,\F)=\A(M,\F),
$$
where the intersection is taken in the group $C(M,\Gc)$.
\end{lemma}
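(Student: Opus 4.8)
The plan is to work with the description of adelic groups by systems of divisors (Proposition~\ref{systemofd}). First, since $\Gc=\F_V$ is again $1$-pure by Remark~\ref{explicit-sheaves-pure}, both $\F$ and $\Gc$ have controllable support, so $\theta$ is injective for them (Proposition~\ref{theta-emb}) and all of $\A(M,\F)$, $\A(M,\Gc)$, $C(M,\F)$ sit inside $C(M,\Gc)$; the inclusion $\A(M,\F)\subseteq\A(M,\Gc)\cap C(M,\F)$ is then immediate from functoriality of $\F\mapsto\A(M,\F)$ and Proposition~\ref{Properties-M}(vi). I would then reduce to $X$ irreducible (via Corollary~\ref{open-local}(iv); the case $V=\emptyset$ is trivial since $C(M,\Gc)=0$; on a reducible scheme one replaces $\underline{A}$ below by $\bigoplus_{\zeta\in X^{(0)}}(i_{\overline\zeta})_*\F_\zeta$). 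For irreducible $X$ with $V\neq\emptyset$, the sheaves $\F$ and $\Gc$ are subsheaves of the constant sheaf $\underline{A}$, $A=\F_\zeta$ the generic stalk, every stalk specialization map is an inclusion of subgroups of $A$, and $\F(U)=\{a\in A: a\in\F_x\text{ for all }x\in U\}$; so the asserted identity becomes an honest equality of subgroups of $\prod_{(\eta_0\ldots\eta_p)\in M}A$.

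Given $f\in\A(M,\Gc)\cap C(M,\F)$, write $a_F:=\theta(f)_F\in\F_{\eta_0}\subseteq A$ for $F=(\eta_0\ldots\eta_p)\in M$. By Proposition~\ref{systemofd} applied to $\F_V$, I would pick a system $\{D_{\eta_0\ldots\eta_k}\}$ of reduced divisors satisfying $(*)$ and $D_{\eta_0}(\eta_0)=\emptyset$, with $a_F\in(\F_{V\backslash D_{\eta_0\ldots\eta_{p-1}}})_{\eta_p}$ for every flag. Let $E_{\eta_0}\subset X$ be the union of the codimension-one irreducible components of the proper closed set $X\backslash V$ that do not pass through $\eta_0$, and put $D'_{\eta_0\ldots\eta_k}=D_{\eta_0\ldots\eta_k}\cup E_{\eta_0}$. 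This is again an admissible system: adjoining one and the same divisor $E_{\eta_0}$ (depending only on $\eta_0$) to every member changes both sides of $(*)$ by the same set, and $D'_{\eta_0}(\eta_0)=D_{\eta_0}(\eta_0)\cup E_{\eta_0}(\eta_0)=\emptyset$. By Proposition~\ref{systemofd} it then remains to prove $a_F\in(\F_{X\backslash D'_{\eta_0\ldots\eta_{p-1}}})_{\eta_p}$ for every flag.

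Since $\F\subseteq\underline{A}$, this amounts to showing $a_F\in\F_x$ for every $x$ with $x\notin D'_{\eta_0\ldots\eta_{p-1}}$ in a fixed neighbourhood $W\ni\eta_p$ chosen so that $a_F\in\F_{x'}$ for all $x'\in W\cap V\backslash D_{\eta_0\ldots\eta_{p-1}}$ (possible because $a_F\in(\F_{V\backslash D_{\eta_0\ldots\eta_{p-1}}})_{\eta_p}$). Fix such an $x$; by $1$-purity (the exact sequence in Remark~\ref{explicit-sheaves-pure}), $a_F\in\F_x$ holds iff $a_F\in\F_\eta$ for every codimension-one point $\eta$ with $x\in\overline\eta$. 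Any such $\eta$ lies in $W$ and not in $D_{\eta_0\ldots\eta_{p-1}}$ (else $\overline\eta\subseteq D_{\eta_0\ldots\eta_{p-1}}$, forcing $x\in D_{\eta_0\ldots\eta_{p-1}}$). If $\overline\eta\not\subseteq X\backslash V$ then $\eta\in V$, so $\eta\in W\cap V\backslash D_{\eta_0\ldots\eta_{p-1}}$ and $a_F\in\F_\eta$. If $\overline\eta\subseteq X\backslash V$, then $\overline\eta$ is a codimension-one component of $X\backslash V$ through $x$; as $x\notin E_{\eta_0}$, it must pass through $\eta_0$, so $\eta$ is a generization of $\eta_0$, $\F_{\eta_0}\subseteq\F_\eta$ inside $A$, and $a_F\in\F_{\eta_0}\subseteq\F_\eta$ — and this is exactly where the hypothesis $f\in C(M,\F)$ enters. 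Thus $a_F\in\F(W\backslash D'_{\eta_0\ldots\eta_{p-1}})$, hence $a_F\in(\F_{X\backslash D'_{\eta_0\ldots\eta_{p-1}}})_{\eta_p}$, and so $f\in\A(M,\F)$.

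The step I expect to be the crux is this last case of the pointwise verification. The combination of $(*)$ with $D_{\eta_0}(\eta_0)=\emptyset$ and Claim~\ref{integrality} forces $D'_{\eta_0\ldots\eta_{p-1}}(\eta_0)=\emptyset$ in any admissible system, so the divisors of $X\backslash V$ through $\eta_0$ genuinely cannot be discarded; the heart of the matter is recognizing that these are the only irremovable ones and that membership $a_F\in\F_{\eta_0}$, i.e. precisely the condition $f\in C(M,\F)$, is exactly strong enough to cover them, since their generic points generize $\eta_0$. The remaining ingredients — admissibility of the enlarged system, the reduction to $X$ irreducible, and unwinding $(\F_{X\backslash D'_{\eta_0\ldots\eta_{p-1}}})_{\eta_p}$ in terms of stalks — are routine.
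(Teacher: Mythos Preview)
Your argument is correct, but it takes a different route from the paper's. The paper proceeds by induction on $p$, using the recursive definition $\A(M,\Gc)=\prod_\eta\varinjlim_U\A({}_\eta M,\Gc_U)$ directly: for each $\eta$ it chooses $U\ni\eta$ so small that every irreducible component of $U\backslash V$ contains $\eta$, and then the sheaf identity $\Gc_U\cap[\F]_\eta=\F_U$ (read off from Remark~\ref{explicit-sheaves-pure}) forces the intersection to land in $\varinjlim_U C({}_\eta M,\F_U)$, whence the inductive hypothesis applies to $\F_U$ and $(\F_U)_V=\Gc_U$. By contrast you unroll the recursion once and for all via Proposition~\ref{systemofd}, then modify the divisor system by adjoining $E_{\eta_0}$ and verify the adelic condition for $\F$ pointwise using the stalk description of a subsheaf of a constant sheaf together with $1$-purity.

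The underlying geometric idea is the same in both proofs: the codimension-one components of $X\backslash V$ that pass through the starting index $\eta_0$ (the paper's $\eta$) are exactly the ones that cannot be absorbed into the divisor system, and membership $a_F\in\F_{\eta_0}$ --- i.e.\ the hypothesis $f\in C(M,\F)$ --- is precisely what neutralises them. The paper packages this as the sheaf equality $\Gc_U\cap[\F]_\eta=\F_U$; you package it as the case split on whether $\overline\eta\subseteq X\backslash V$ passes through $\eta_0$. Your approach is more hands-on and avoids the inductive bookkeeping, at the cost of relying on the somewhat heavier Proposition~\ref{systemofd}; the paper's is shorter, stays at the level of sheaves, and does not need to reduce to the irreducible case (your reduction via Corollary~\ref{open-local}(iv) is workable but, as you note, requires the mild extra care of replacing $\underline A$ by $\bigoplus_{\zeta\in X^{(0)}}(i_{\overline\zeta})_*\F_\zeta$).
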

\begin{proof}
The proof is by induction on $p$. For $p=0$, there is nothing to prove.
Suppose that $p>0$; then, by definition, the left hand side is equal to
$
\prod\limits_{\eta\in P(X)}\left(\lim\limits_{\longrightarrow\atop U_{\eta}}
\A(_{\eta}M,\Gc_{U_{\eta}})\cap C(_{\eta}M,[\F]_{\eta})\right).
$

Consider a point $\eta\in P(X)$ and an open subset $U\subset X$ containing $\eta$. Suppose that $U$ is
small enough so that all irreducible components of $U\backslash V$ contain $\eta$ and the natural
morphisms $\F_U\to [\F]_{\eta}$, $\Gc_U\to [\Gc]_{\eta}$ are injective. It follows from
the explicit description given in Remark \ref{explicit-sheaves-pure} that
$\Gc_{U}\cap [\F]_{\eta}=\F_U$, where the intersection is taken inside the sheaf
$[\Gc]_{\eta}$. Therefore $C(N,\Gc_{U})\cap C(N,[\F]_{\eta})=C(N,\F_U)$
for any subset $N\subset S(X)_q$.

Consequently the intersection $\lim\limits_{\longrightarrow\atop U_{\eta}}
\A(_{\eta}M,\Gc_{U_{\eta}})\cap C(_{\eta}M,[\F]_{\eta})$ is actually contained in the
subgroup $\lim\limits_{\longrightarrow\atop U_{\eta}}C(_{\eta}M,\F_{U_{\eta}})
\subset C(_{\eta}M,[\F]_{\eta})$ and,
by the inductive hypothesis, we get the needed statement.
\end{proof}

\begin{lemma}\label{projections-adelic}
Let $\F$ be a sheaf with controllable support on a scheme $X$ and let
$N$ be a subset in $S(X)_p$ such that
$N=\bigcup\limits_{\alpha} N_{\alpha}$, where $N_{\alpha}\subset S(X)_p$
for each $\alpha$. Suppose that for any point $\eta\in P(X)$ there exists $\alpha$
such that $_{\eta}(N_{\alpha})={_{\eta}N}$. Let $A$ be a subgroup
in $C(N,\F)$ such that for any $\alpha$, the image of $A$ under the natural map
$C(N,\F)\to C(N_{\alpha},\F)$ is contained inside $\A(N_{\alpha},\F)$.
Then $A\subset \A(N,\F)$.
\end{lemma}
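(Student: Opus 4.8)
The plan is to reduce the statement about the subgroup $A$ to the defining inductive structure of adelic groups, proceeding by induction on $p$. For $p=0$ there is nothing to prove since $\A(N,\F)=C(N,\F)$. For $p>0$, recall from the definition that
$$
\A(N,\F)=\prod_{\eta\in P(X)}\widetilde{\A}({}_{\eta}N,[\F]_{\eta})=
\prod_{\eta\in P(X)}\lim\limits_{\longrightarrow\atop U_{\eta}}\A({}_{\eta}N,\F_{U_{\eta}}),
$$
and similarly $C(N,\F)=\prod_{\eta\in P(X)}C({}_{\eta}N,[\F]_{\eta})$. So it suffices to check, for each fixed $\eta\in P(X)$, that the projection $A_{\eta}$ of $A$ onto the $\eta$-component $C({}_{\eta}N,[\F]_{\eta})$ is contained in $\widetilde{\A}({}_{\eta}N,[\F]_{\eta})$.

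First I would fix $\eta$ and choose, by hypothesis, an index $\alpha=\alpha(\eta)$ with ${}_{\eta}(N_{\alpha})={}_{\eta}N$. The natural map $C(N,\F)\to C(N_{\alpha},\F)$ sends $A$ into $\A(N_{\alpha},\F)$; composing with the projection of $C(N_{\alpha},\F)$ onto its $\eta$-component gives that $A_{\eta}$, regarded now as a subgroup of $C({}_{\eta}N_{\alpha},[\F]_{\eta})=C({}_{\eta}N,[\F]_{\eta})$, lands inside $\widetilde{\A}({}_{\eta}(N_{\alpha}),[\F]_{\eta})=\widetilde{\A}({}_{\eta}N,[\F]_{\eta})$. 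Here I use that the projection of $\A(N_{\alpha},\F)$ onto its $\eta$-component is precisely $\widetilde{\A}({}_{\eta}(N_{\alpha}),[\F]_{\eta})$, which is immediate from the definition of $\A(-,\F)$ as a product over schematic points.

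At this stage one must be slightly careful: $\widetilde{\A}({}_{\eta}N,[\F]_{\eta})=\lim_{\longrightarrow U_{\eta}}\A({}_{\eta}N,\F_{U_{\eta}})$ is itself a filtered colimit, so I want to know that an element of $A_{\eta}$ whose image in $C({}_{\eta}N,[\F]_{\eta})$ lies in this colimit actually comes from some $\A({}_{\eta}N,\F_{U_{\eta}})$ compatibly — but this is automatic because the colimit is taken along injections of subgroups of the fixed group $C({}_{\eta}N,[\F]_{\eta})$ (the transition maps $\F_U\to\F_{U'}$ need not be injective in general, but after passing to stalks and using that $\F$ has controllable support, Claim~\ref{controled} gives injectivity on a cofinal system of $U$'s, so the colimit is realized as an honest increasing union inside $C({}_{\eta}N,[\F]_{\eta})$). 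Thus membership of $A_{\eta}$ in $\widetilde{\A}({}_{\eta}N,[\F]_{\eta})$ is an honest containment of subgroups and requires no further bookkeeping. Assembling over all $\eta\in P(X)$ gives $A\subset\prod_{\eta}\widetilde{\A}({}_{\eta}N,[\F]_{\eta})=\A(N,\F)$, as desired.

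The only genuine subtlety — the step I expect to be the main obstacle — is the interaction between the colimit defining $\widetilde{\A}$ and the fact that we are intersecting/projecting subgroups of the direct product $C(N,\F)$: one needs the controllable-support hypothesis exactly to guarantee that $\widetilde{\A}({}_{\eta}N,[\F]_{\eta})$ embeds into $C({}_{\eta}N,[\F]_{\eta})$ (via Proposition~\ref{theta-emb}) and that the colimit is computed as a union of subgroups there, rather than as an abstract colimit that could collapse elements. Once this is pinned down, the rest is a formal componentwise argument using the hypothesis $_{\eta}(N_{\alpha})={}_{\eta}N$ together with the explicit product description of $\A(-,\F)$ from the definition.
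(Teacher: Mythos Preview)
Your proof is correct and follows essentially the same route as the paper's: fix $\eta$, choose $\alpha$ with ${}_{\eta}(N_{\alpha})={}_{\eta}N$, and observe that the $\eta$-projection of $A$ factors through $\widetilde{\A}({}_{\eta}N_{\alpha},[\F]_{\eta})=\widetilde{\A}({}_{\eta}N,[\F]_{\eta})$. The induction framing is unnecessary (you never invoke the hypothesis), and your extra care about why $\widetilde{\A}({}_{\eta}N,[\F]_{\eta})$ embeds in $C({}_{\eta}N,[\F]_{\eta})$ via controllable support is a point the paper uses implicitly but does not spell out.
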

\begin{proof}
For a point $\eta\in P(X)$ let $\alpha$ be such that $_{\eta}(N_{\alpha})={_{\eta}N}$.
Then the projection of $A$ to the group $C(_{\eta}N,[\F]_{\eta})=
C(_{\eta}(N_{\alpha}),[\F]_{\eta})$
from the direct product $C(N,\F)=\prod\limits_{\eta\in P(X)}C(_{\eta}N,[\F]_{\eta})$
is contained in the group $\lim\limits_{\longrightarrow\atop U_{\eta}}
\A(_{\eta}(N_{\alpha}),\F_{U_{\eta}})=\lim\limits_{\longrightarrow\atop U_{\eta}}
\A(_{\eta}N,\F_{U_{\eta}})$. Thus we get that $A\subset \A(N,\F)$.
\end{proof}

Let $X$ be a Noetherian scheme.
Consider an increasing sequence of natural numbers $i_0\le \ldots \le i_p$.
Recall that $(i_0\ldots i_p)$
denotes the set of all flags from $S(X)_p$ of type $(i_0\ldots i_p)$. For any $l$,
$0\le l\le p$, we have
$\delta^p_l(i_0\ldots i_p)=(i_0\ldots \hat{\imath}_l\ldots i_p)$.
From this one deduces that the subset $(i_0\ldots i_p)\subset S(X)_p$ satisfies the condition
from Proposition \ref{intersect} for any $l$, $0\le l\le p$.
Let $(j_0\ldots j_q)$ be a subsequence in $(i_0\ldots i_p)$, $q\le p$.
There are canonical maps $\alpha:C((j_0\ldots j_q),\F)\to C((i_0\ldots i_p),\F)$ and
$\beta:\A((j_0\ldots j_q),\F)\to \A((i_0\ldots i_p),\F)$
that are compositions of the corresponding boundary maps.
By Proposition \ref{intersect}, we get the following.

\begin{corol}\label{intersect2}
Let $\F$ be a $1$-pure sheaf on a Noetherian irreducible scheme $X$.
Then for any sequences
$(i_0\ldots i_p)$ and $(j_0\ldots j_q)$ as above,
the maps $\alpha$ and $\beta$ are injective and we have
$$
\A((j_0\ldots j_q),\F)=\A((i_0\ldots i_p),\F)\cap
C((j_0\ldots j_q),\F),
$$
where the intersection is taken inside the group $C((i_0\ldots i_p),\F)$.
\end{corol}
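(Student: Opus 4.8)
The plan is to deduce both assertions from Proposition~\ref{intersect} by presenting the passage from $(j_0\ldots j_q)$ to $(i_0\ldots i_p)$ as a chain of single-index deletions. Recall from the discussion preceding the statement that for every $l$ with $0\le l\le p$ the subset $(i_0\ldots i_p)\subset S(X)_p$ satisfies the hypothesis of Proposition~\ref{intersect}, and that $\delta^p_l((i_0\ldots i_p))=(i_0\ldots\hat{\imath}_l\ldots i_p)$. Since a subsequence of an increasing sequence is again an increasing sequence, the same facts apply to every sequence that will occur below.

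First I would fix a chain of subsequences
$$
(j_0\ldots j_q)=T^{(0)}\subset T^{(1)}\subset\cdots\subset T^{(p-q)}=(i_0\ldots i_p),
$$
in which each $T^{(m)}$ is obtained from $T^{(m+1)}$ by deleting exactly one index, i.e.\ $T^{(m)}$ is the image of $T^{(m+1)}$ under one of the face maps $\delta$. By construction $\alpha$ and $\beta$ are the compositions of the corresponding boundary maps along this chain. Each individual boundary map $C(\delta^r_l(M),\F)\to C(M,\F)$ and $\A(\delta^r_l(M),\F)\to\A(M,\F)$ is injective by Proposition~\ref{intersect}, so $\alpha$ and $\beta$ are injective. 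Combining Proposition~\ref{Properties-M}(vi), which says the boundary maps on adelic groups commute via $\theta$ with those on the direct-product groups, with Proposition~\ref{theta-emb}, which gives injectivity of $\theta$ for $\F$ (a $1$-pure sheaf has controllable support), we may regard all of $\A(T^{(m)},\F)$ and $C(T^{(m)},\F)$ as subgroups of $C((i_0\ldots i_p),\F)$, compatibly with the inclusions among them; in particular $C(T^{(m)},\F)\subseteq C(T^{(m+1)},\F)$ inside $C((i_0\ldots i_p),\F)$.

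It then remains to prove the displayed equality, which I would do by induction on $p-q$. The case $p=q$ is vacuous. For the inductive step set $K=T^{(1)}$, so that $(j_0\ldots j_q)=\delta^{q+1}_l(K)$ for a suitable $l$. Proposition~\ref{intersect} applied to the set $K$ and the index $l$ gives
$$
\A((j_0\ldots j_q),\F)=\A(K,\F)\cap C((j_0\ldots j_q),\F),
$$
while the inductive hypothesis applied to the pair $K\subset(i_0\ldots i_p)$ gives
$$
\A(K,\F)=\A((i_0\ldots i_p),\F)\cap C(K,\F).
$$
Since $C((j_0\ldots j_q),\F)\subseteq C(K,\F)$ inside $C((i_0\ldots i_p),\F)$, intersecting the second equality with $C((j_0\ldots j_q),\F)$ and substituting into the first yields
$$
\A((i_0\ldots i_p),\F)\cap C((j_0\ldots j_q),\F)=\A(K,\F)\cap C((j_0\ldots j_q),\F)=\A((j_0\ldots j_q),\F),
$$
which closes the induction.

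The argument is essentially formal, so I do not expect a genuine obstacle; the only point requiring care is the bookkeeping that makes the chain of nested intersections meaningful, i.e.\ verifying that the various boundary maps and the maps $\theta$ are mutually compatible, so that all the groups in sight may legitimately be viewed inside the single ambient group $C((i_0\ldots i_p),\F)$. This compatibility is supplied by Proposition~\ref{Properties-M}(vi) and Proposition~\ref{theta-emb}, after which the entire content of the corollary reduces to the single-step case recorded in Proposition~\ref{intersect}.
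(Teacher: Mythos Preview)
Your proof is correct and follows exactly the approach the paper intends: the text immediately preceding the corollary already notes that $\alpha$ and $\beta$ are compositions of boundary maps and that each set of the form $(i_0\ldots i_p)$ satisfies the hypothesis of Proposition~\ref{intersect}, so the corollary is obtained by iterating that proposition along a chain of single-index deletions. You have simply written out the induction and the bookkeeping (via Proposition~\ref{Properties-M}(vi) and Proposition~\ref{theta-emb}) that the paper leaves implicit.
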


In what follows we will always imply the inclusion $\beta$ when
comparing adelic groups with different indices given by type.

\subsection{$\A'$-adelic groups}\label{section-prime-adeles}

We introduce a new type of adelic groups. Let $X$ be a Noetherian
catenary scheme such that all irreducible components of $X$ have the same
finite dimension $d$ and let $\F$ be an abelian sheaf
on $X$.

Consider a strictly increasing sequence of natural numbers $(i_0,\ldots,i_p)$
such that $i_p\le d$. Recall that $C((i_0\ldots i_p),\F)=
\displaystyle\prod_{\eta_0\ldots\eta_p}\F_{\eta_0}$, where the product is taken
over all flags of type $(i_0\ldots i_p)$ on $X$. We put $l$ to be the depth of
$(i_0\ldots i_p)$ (see Proposition \ref{residue}).

\begin{defin}
Let the subgroup $\A'((i_0\ldots i_p),\F)\subseteq C((i_0\ldots i_p),\F)$
consist of all elements
$f\in C((i_0\ldots i_p),\F)$ such that for any number
$m,0\le m\le l$ and a flag
$\eta_{m+1}\ldots\eta_p$ of type $(i_{m+1}\ldots i_p)$ on $X$,
there are only finitely
many flags $\eta_0\ldots\eta_{m}$ of type $(0\ldots m)$ on $X$ such that
the composition of the residue maps
$(\nu_{\eta_{m-1}\eta_{m}}\circ\ldots\circ\nu_{\eta_{0}\eta_{1}})
(f_{\eta_0\ldots\eta_{m}\eta_{m+1}\ldots\eta_p})\in
H^{m}_{\eta_{m}}(X,\F)$ is not zero.
These new adelic groups are called
{\it $\A'$-adelic groups}, while the old adelic groups will be called
{\it $\A$-adelic groups}.
\end{defin}

When $l=-1$, we have $\A'((i_0\ldots i_p),\F)=C((i_0\ldots i_p),\F)$.
When $l\ge 0$, for any number $m,0\le m\le l$, there is a map
$$
\nu'_{0\ldots m}\colon \A'((i_0\ldots i_p),\F)\to
\prod_{\eta_{m+1}\ldots\eta_p}
\left(\mbox{$\bigoplus\limits_{\eta\in X^{(m)}}$} H^{m}_{\eta}(X,\F)\right),
$$
where for each flag $\eta_{m+1}\ldots\eta_p$ of type $(i_{m+1}\ldots i_p)$ on $X$,
the direct sum is taken over all
points $\eta\in X^{(m)}$ such that $\eta_{m+1}\in\overline{\eta}$
(compare with Proposition \ref{residue}).

\begin{rmk}\label{tilde-properties}
\hspace{0cm}
\begin{itemize}
\item[(i)]
By Proposition \ref{residue}, the image
$\theta(\A((i_0\ldots i_p),\F))\subset C((i_0\ldots i_p),\F)$ is contained in
$\A'(i_0\ldots i_p),\F)$.
\item[(ii)]
It is readily seen that the analogue of Corollary \ref{open-local}$(i),(ii)$
with $M=(i_0\ldots i_p)$ holds for the $\A'$-adelic groups.
\item[(iii)]
By reciprocity law, for any $j,0\le j\le p$, the boundary map
$$
d^p_j:C((i_0\ldots \hat{\imath}_j\ldots i_p),\F)\to C((i_0\ldots i_p),\F)
$$
induces the map of the corresponding $\A'$-groups;
define the {\it $\A'$-adelic complex} $\A'(X,\F)^{\bullet}$ by formula
$$
\A'(X,\F)^p=\prod_{0\le i_0<\ldots <i_p\le d}\A((i_0 \ldots i_p),\F)
$$
and with the differential induced from the complex $C(X,\F)^{\bullet}_{red}$;
we have the morphism of complexes
$\A'(X,\F)^{\bullet}\stackrel{\nu'_X}
\longrightarrow Cous(X,\F)^{\bullet}$ defined in the same way as for the
$\A$-adelic complex; the composition
$\A(X,\F)^{\bullet}\to\A'(X,\F)^{\bullet}\to Cous(X,\F)^{\bullet}$
is equal to $\nu_X$.
\item[(iv)]
Given two sheaves $\F$, $\Gc$ on $X$,
the morphism of complexes $C(X,\F)_{red}^{\bullet}\otimes
C(X,\Gc)_{red}^{\bullet}\to C(X,\F\otimes\Gc)_{red}^{\bullet}$ does {\it not} induce
a morphism of complexes $\A'(X,\F)^{\bullet}\otimes
\A'(X,\Gc)^{\bullet}\to \A'(X,\F\otimes\Gc)^{\bullet}$;
also, the analogue of Corollary \ref{intersect2} is not true for the
$\A'$-adelic groups.
\item[(v)]
We may consider the sheafified version $\underline{\A'}(X,\F)^{\bullet}$
of the $\A'$-adelic complex; there is a natural morphism of complexes
$\F\to\underline{\A}'(X,\F)^{\bullet}$.
\end{itemize}
\end{rmk}

\begin{lemma}\label{inductive-tilde}
For any natural numbers $i_0,\ldots,i_l,p$ such that
$0\le i_0<\ldots< i_l< p\le d$ and $(i_0\ldots i_l)\ne (0\ldots (p-1))$,
we have
$$
\A'((i_0\ldots i_l p),\F)=
\prod_{\eta\in X^{(p)}}\A'((i_0\ldots i_l\eta),\F),
$$
where the index $(i_0\ldots i_l \eta)$ stands for
the set of all flags $\eta_0\ldots\eta_l\eta_p$ on $X$ of type $(i_0\ldots i_l p)$
such that $\eta_p=\eta$. Also, we have
$$
\A'((0\ldots (p-1) p),\F)=
\aprod_{\eta\in X^{(p)}}{\A'((0\ldots (p-1)\eta),\F)},
$$
where the restricted product means that we consider the set of
all collections $\{f_{\eta}\}\in\prod\limits_{\eta\in X^{(p)}}
\A'((0\ldots(p-1)\eta),\F)$ such that
for almost all points $\eta\in X^{(p)}$, the $\A'$-adele
$f_{\eta}\in \A'((0\ldots (p-1)\eta),\F)\subset
\A'((0\ldots (p-1)p),\F)$ belongs
to $\Ker(\nu_{0\dots p})$.
\end{lemma}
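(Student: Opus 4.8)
The plan is to prove the two displayed equalities by unwinding the definitions of the $\A'$-adelic groups and the boundary map $d^p_j$. Recall that $\A'((i_0\ldots i_l p),\F)\subseteq C((i_0\ldots i_l p),\F) = \prod_{\eta_0\ldots\eta_l\eta_p}\F_{\eta_0}$, and that the boundary map $d^{p}_{l+1}:C((i_0\ldots i_l),\F)\to C((i_0\ldots i_l p),\F)$ that arises here simply restricts the last entry of a flag to a point of codimension $p$. The set of flags of type $(i_0\ldots i_l p)$ decomposes as a disjoint union over $\eta\in X^{(p)}$ of the flags whose last entry is $\eta$; hence $C((i_0\ldots i_l p),\F) = \prod_{\eta\in X^{(p)}}C((i_0\ldots i_l\eta),\F)$ as a direct product, with each factor being the piece of the adelic group where the flag ends in $\eta$.

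The first step is to observe that the finiteness conditions defining the $\A'$-group decouple over $\eta\in X^{(p)}$ \emph{when} $p$ is strictly larger than the depth of $(i_0\ldots i_l)$, which is exactly the hypothesis $(i_0\ldots i_l)\neq(0\ldots(p-1))$. Indeed, the depth $l'$ of $(i_0\ldots i_l p)$ equals the depth $l''$ of $(i_0\ldots i_l)$ in that case, so the only values of $m$ appearing in the definition of $\A'((i_0\ldots i_l p),\F)$ are $0\le m\le l''$, i.e. $m$ never reaches $p$ itself. For each such $m$ and each flag $\eta_{m+1}\ldots\eta_l\eta_p$ of the appropriate type, the finiteness requirement on flags $\eta_0\ldots\eta_m$ is a condition that only involves the entries of the flag up to index $l$ together with the fixed endpoint $\eta_p=\eta$; crucially there is no finiteness condition tying together different choices of $\eta_p$. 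Therefore an element $f=(f_\eta)_{\eta\in X^{(p)}}$ lies in $\A'((i_0\ldots i_l p),\F)$ if and only if each component $f_\eta$ lies in $\A'((i_0\ldots i_l\eta),\F)$, which gives the first equality.

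For the second equality, the depth jumps: the depth of $(0\ldots(p-1))$ is $p-1$, while the depth of $(0\ldots(p-1)p) = (0\ldots p)$ is $p$. Now the definition of $\A'((0\ldots p),\F)$ includes, beyond the conditions for $0\le m\le p-1$ (which again decouple over $\eta\in X^{(p)}$ exactly as before), the extra condition for $m=p$: for the empty flag, there are only finitely many flags $\eta_0\ldots\eta_p$ of type $(0\ldots p)$ on $X$ with $(\nu_{\eta_{p-1}\eta_p}\circ\cdots\circ\nu_{\eta_0\eta_1})(f_{\eta_0\ldots\eta_p})\neq 0$ in $H^p_{\eta_p}(X,\F)$. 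Grouping these flags by their endpoint $\eta=\eta_p$, this sum over flags is precisely $\theta(\nu_{0\ldots p}(f_\eta))$ restricted to $\eta$, using the explicit formula in Proposition \ref{residue} (Example \ref{resexamp}); so the condition says that for almost all $\eta\in X^{(p)}$ the image $\nu_{0\ldots p}(f_\eta)=0$, i.e. $f_\eta\in\Ker(\nu_{0\ldots p})$. This is exactly the restricted-product condition, and combining it with the componentwise $\A'$-membership from the lower $m$'s yields the stated description.

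The main obstacle I anticipate is purely bookkeeping: one must check carefully that the residue maps $\nu_{\eta_{m-1}\eta_m}\circ\cdots\circ\nu_{\eta_0\eta_1}$ appearing in the definition really do only see the ``initial segment'' $\eta_0\ldots\eta_m$ of the flag plus the fixed tail, so that fixing $\eta_p=\eta$ genuinely isolates a summand, and — for the borderline values of $m$ — that the depth computation $(i_0\ldots i_l)\neq(0\ldots(p-1))$ is precisely what prevents the top finiteness condition from appearing in the first case. Once the depth comparison is pinned down, both equalities are formal. One should also note that in the second equality the inclusion $\A'((0\ldots(p-1)\eta),\F)\subset\A'((0\ldots(p-1)p),\F)$ used to make sense of $\Ker(\nu_{0\ldots p})$ on a single component is the evident one coming from Remark \ref{tilde-properties}(ii), extending an adele supported on flags ending in $\eta$ by zero.
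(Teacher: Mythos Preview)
Your proof is correct and follows essentially the same approach as the paper, which dispatches the lemma in one line by saying it ``follows immediately from the definition of $\A'$-adelic groups combined with the $\A'$-analogue of Corollary~\ref{open-local}(ii).'' You have simply unpacked that one line: the depth comparison you carry out (showing that under the hypothesis $(i_0\ldots i_l)\neq(0\ldots(p-1))$ the depth of $(i_0\ldots i_l p)$ equals that of $(i_0\ldots i_l)$, so no finiteness condition ties different $\eta_p$ together) is exactly the content of ``follows from the definition,'' and the paper's appeal to Remark~\ref{tilde-properties}(ii) corresponds to identifying $\A'((i_0\ldots i_l\eta),\F)$ with the $\A'$-group on the local scheme $X_\eta$, which you treat implicitly.
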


\begin{proof}
This follows immediately from the definition of
$\A'$-adelic groups combined with the $\A'$-analogue of
Corollary \ref{open-local}$(ii)$ (see Remark \ref{tilde-properties}(ii)).
\end{proof}

The lack of the multiplicative structure is the main disadvantage of the
$\A'$-adelic complex (see Remark \ref{tilde-properties}(iv)).
Nevertheless, the main advantage of the $\A'$-adelic complex
is the following statement.

\begin{theor}\label{prime-quasiis}
Suppose that $X$ is a Noetherian
catenary scheme such that all irreducible components of $X$
have the same finite dimension $d$ and
the sheaf $\F$ on $X$ is Cohen--Macaulay in the sense of \cite{Har}.
Then the morphism $\underline{\nu}'_X:
\underline{\A}'(X,\F)^{\bullet}\to
\underline{Cous}(X,\F)^{\bullet}$ is a quasiisomorphism.
\end{theor}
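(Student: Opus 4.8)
The statement to prove is that for a Cohen--Macaulay sheaf $\F$ on a Noetherian catenary equidimensional scheme $X$, the morphism $\underline{\nu}'_X:\underline{\A}'(X,\F)^{\bullet}\to\underline{Cous}(X,\F)^{\bullet}$ is a quasiisomorphism. Since all the sheaves involved are flasque (the components of $\underline{\A}'(X,\F)^{\bullet}$ are flasque by the $\A'$-analogue of Corollary \ref{open-local}$(i)$, see Remark \ref{tilde-properties}(ii), and the components of $\underline{Cous}(X,\F)^{\bullet}$ are flasque since they are direct sums of constant sheaves on irreducible closed subsets), it suffices to check that $\underline{\nu}'_X$ is a quasiisomorphism on stalks at every schematic point $\xi\in X$. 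By the $\A'$-analogue of Corollary \ref{open-local}$(ii)$ the stalk at $\xi$ of $\underline{\A}'(X,\F)^{\bullet}$ is the $\A'$-adelic complex of $j_\xi^*\F$ on $\Spec\OO_{X,\xi}$, and similarly for the Cousin complex; and $j_\xi^*\F$ is again Cohen--Macaulay (cf.\ the argument in the proof of Lemma \ref{surjective-nu}). So I may replace $X$ by $\Spec\OO_{X,\xi}$ and assume $X$ is local, and then it suffices to prove that $\underline{\nu}'_X$ induces an isomorphism on global cohomology, i.e.\ that the map of complexes of global sections $\A'(X,\F)^{\bullet}\to Cous(X,\F)^{\bullet}$ is a quasiisomorphism under the extra hypothesis that $X$ is local (so that global sections of flasque sheaves compute cohomology and $H^i(X,\F)=0$ for $i>0$... more precisely, we reduce to checking the statement Zariski-locally).

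The core is therefore a statement about the complex of global sections. The plan is to induct on $d=\dim X$ and to exploit the decomposition of $\A'$-adelic groups along the last vertex given by Lemma \ref{inductive-tilde}. Filter the $\A'$-adelic complex by the depth $l$ of the type $(i_0\ldots i_p)$, equivalently split off the ``$(0\ldots l)$-initial part'' of each flag. The graded pieces split, via Lemma \ref{inductive-tilde} and the $\A'$-analogue of Corollary \ref{open-local}, into a product over codimension-one points $\eta\in X^{(1)}$ (or, for the deepest stratum, a restricted product) of $\A'$-adelic complexes of $R^1\gamma_{\overline{\eta}}\F$ on $\overline{\eta}$, shifted by one; and the Cohen--Macaulay hypothesis on $\F$ guarantees, via the characterization $\F\xrightarrow{\sim}\underline{Cous}(X,\F)^{\bullet}$, that $H^i_\eta(X,\F)=0$ for $i\ne\codim\eta$, so each $R^1\gamma_{\overline\eta}\F$ (placed on $\overline\eta$) is again Cohen--Macaulay. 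The corresponding statement on the target side is the analogous filtration of $Cous(X,\F)^{\bullet}$ by depth, whose graded pieces are the shifted Cousin complexes of $R^1\gamma_{\overline\eta}\F$ on $\overline\eta$; this is exactly the content of the local-cohomology spectral sequence / the standard ``Cousin complex as an iterated cone'' description (as in \cite{Har}). Comparing the two filtrations term by term and invoking the inductive hypothesis on the lower-dimensional schemes $\overline\eta$ finishes the induction, once the base case $d=0$ (where both complexes are just $\F_\xi$ in degree zero) is dispatched.

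Concretely the steps are: (1) reduce to stalks and hence to $X$ local, using flasqueness and the behaviour of $\A'$-groups and Cousin groups under $j_\xi^*$; (2) set up the depth filtration on both $\A'(X,\F)^{\bullet}$ and $Cous(X,\F)^{\bullet}$ and check $\nu'_X$ is filtered; (3) identify the associated graded complexes on both sides using Lemma \ref{inductive-tilde}, Remark \ref{tilde-properties}(ii), and the morphism-of-complexes formula from Proposition \ref{residue}, getting in each case a (restricted) product over $\eta\in X^{(1)}$ of the corresponding complex for $R^1\gamma_{\overline\eta}\F[{-1}]$ on $\overline\eta$; (4) check $R^1\gamma_{\overline\eta}\F$ is Cohen--Macaulay on $\overline\eta$ using the hypothesis on $\F$; (5) apply the inductive hypothesis to conclude the map on associated gradeds is a quasiisomorphism, hence so is $\nu'_X$. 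The main obstacle I anticipate is step (3): matching up the restricted-product subtlety in the deepest stratum of Lemma \ref{inductive-tilde} (the ``almost all $f_\eta\in\Ker\nu_{0\ldots p}$'' condition) with the \emph{direct sum} appearing in the Cousin complex, and verifying that under $\nu'_X$ these correspond exactly — i.e.\ that an $\A'$-adele whose depth-$l$ residue is eventually zero maps precisely to a finitely-supported family in $Cous^l$. This is where the definition of $\A'$ (the finiteness-of-nonzero-residues condition built into $\A'$-adeles) is doing the real work, and it must be checked that the filtration interacts correctly with it; the remaining verifications (flasqueness, functoriality under $j_\xi^*$, the base case, and the compatibility of $\nu'_X$ with the filtration) are routine.
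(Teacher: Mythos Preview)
Your reduction (step 1) is fine, but steps (2)--(5) do not go through as written. There are two concrete problems.

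First, Lemma \ref{inductive-tilde} does not give a product decomposition over $X^{(1)}$: it decomposes $\A'((i_0\ldots i_{l}\,p),\F)$ as a (restricted) product over the \emph{last} vertex $\eta\in X^{(p)}$. There is no analogous splitting over codimension-one points, and indeed there is no natural filtration on $Cous(X,\F)^{\bullet}$ whose graded pieces are $\prod_{\eta\in X^{(1)}}Cous(\overline\eta,R^1\gamma_{\overline\eta}\F)^{\bullet}[-1]$: a point $\xi\in X^{(p)}$ with $p\ge 2$ lies in $\overline{\eta}$ for many $\eta\in X^{(1)}$, so $H^p_\xi(X,\F)$ cannot be canonically assigned to a single $\eta$.

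Second, and more fatally, the depth filtration does not induce a quasiisomorphism on graded pieces. Since $\nu'_X$ is nonzero only on the $(0\ldots p)$-component (which has depth $p$) in each degree $p$, the only compatible filtration on the target is the stupid truncation $\sigma_{\ge l}Cous$. With this, the graded map at depth $-1$ sends the nonzero complex of types with $i_0\ge 1$ (e.g.\ $\prod_{\eta\in X^{(1)}}\F_\eta$ in degree $0$) to zero; and at depth $d$ it is the single map $\nu'_{0\ldots d}:\A'((0\ldots d),\F)\to\bigoplus_{\eta\in X^{(d)}}H^d_\eta(X,\F)$ in degree $d$, which has a large kernel. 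So the associated-graded maps are not quasiisomorphisms, and no spectral-sequence comparison can conclude.

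The paper's argument is organized differently. It factors $\nu'_X=\varphi_1\circ\cdots\circ\varphi_d$ through explicit intermediate complexes $C_p^{\bullet}$ that interpolate between $\A'(X,\F)^{\bullet}$ and $Cous(X,\F)^{\bullet}$: $C_p^{\bullet}$ uses $\A'$-groups for types with all indices $\le p$ in degrees $\le p$, and Cousin terms in degrees $>p$. Each $\varphi_p$ is surjective (Lemma \ref{surjective-nu}), so one must show $\Ker(\varphi_p)^{\bullet}$ is acyclic. This kernel consists of $\A'$-groups of types whose \emph{last} index equals $p$ (plus $\Ker\nu'_{0\ldots p}$ in degree $p$), and Lemma \ref{inductive-tilde} --- the last-vertex decomposition --- identifies it with a (restricted) product over $\eta\in X^{(p)}$ of the augmented complex $0\to\F_\eta\to\cdots\to\A'((0\ldots p{-}1),j_\eta^*\F)\to H^p_\eta(X,\F)\to 0$ on $X_\eta=\Spec\OO_{X,\eta}$. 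This last complex is exact by the induction hypothesis applied to $X_\eta$ (which has dimension $p$ and carries the Cohen--Macaulay sheaf $j_\eta^*\F$). The crucial move is thus the product over codimension-$p$ points coming from the last vertex of the flag, not codimension-one points; your attempt to run the induction through $X^{(1)}$ is where the plan breaks down.
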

\begin{corol}
Under the assumptions from Theorem \ref{prime-quasiis},
the natural morphism $\F\to
\underline{\A}'(X,\F)^{\bullet}$ is a quasiisomorphism.
\end{corol}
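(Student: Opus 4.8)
The plan is to deduce the corollary directly from Theorem~\ref{prime-quasiis} together with the standard comparison between the Cousin complex and the sheaf~$\F$ encoded in the Cohen--Macaulay hypothesis. Recall that by definition (see Remark~\ref{surj-cohom} and \cite{Har}), saying that $\F$ is Cohen--Macaulay means precisely that the natural morphism of complexes of sheaves
$$
\F\to\underline{Cous}(X,\F)^{\bullet}
$$
is a quasiisomorphism, where $\F$ is placed in degree zero. On the other hand, by Remark~\ref{tilde-properties}(v) there is a natural morphism of complexes of sheaves $\F\to\underline{\A}'(X,\F)^{\bullet}$, and by the construction of $\underline{\nu}'_X$ (again Remark~\ref{tilde-properties}(iii),(v)) the triangle
$$
\begin{array}{ccc}
\F&\to&\underline{\A}'(X,\F)^{\bullet}\\
\|&&\downarrow\underline{\nu}'_X\\
\F&\to&\underline{Cous}(X,\F)^{\bullet}
\end{array}
$$
commutes, since both $\F\to\underline{\A}'(X,\F)^{0}$ and $\F\to\underline{Cous}(X,\F)^{0}$ are the canonical maps and $\underline{\nu}'_X$ is the identity on the depth-$0$ piece $H^0_\eta$ for $\eta$ a generic point.

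First I would invoke Theorem~\ref{prime-quasiis}, which gives that $\underline{\nu}'_X:\underline{\A}'(X,\F)^{\bullet}\to\underline{Cous}(X,\F)^{\bullet}$ is a quasiisomorphism. Next I would use the Cohen--Macaulay hypothesis, which by definition says that $\F\to\underline{Cous}(X,\F)^{\bullet}$ is a quasiisomorphism. Finally, by the commutativity of the triangle above and the two-out-of-three property for quasiisomorphisms, it follows that $\F\to\underline{\A}'(X,\F)^{\bullet}$ is a quasiisomorphism, which is exactly the assertion of the corollary.

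There is essentially no obstacle here: the corollary is a formal consequence of the theorem once one unwinds that the Cohen--Macaulay condition is itself stated as a quasiisomorphism assertion for the Cousin complex. The only point requiring a line of justification is the commutativity of the triangle relating the three maps $\F\to\underline{\A}'(X,\F)^{\bullet}$, $\F\to\underline{Cous}(X,\F)^{\bullet}$, and $\underline{\nu}'_X$; this is immediate from the definitions, since all the relevant maps are the canonical ones in degree zero and $\underline{\nu}'_X$ restricted to degree zero is built from the identity maps on the groups $H^0_\eta(X,\F)$ attached to the generic points of~$X$.
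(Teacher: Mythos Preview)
Your proposal is correct and is precisely the argument the paper has in mind: the corollary is stated without proof because it follows immediately from Theorem~\ref{prime-quasiis} via the commutative triangle and the two-out-of-three property for quasiisomorphisms, using that the Cohen--Macaulay hypothesis is by definition the assertion that $\F\to\underline{Cous}(X,\F)^{\bullet}$ is a quasiisomorphism.
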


\begin{proof}[Proof of Theorem \ref{prime-quasiis}]
It is enough to prove that the morphism
$\nu'_U:\A'(U,\F|_U)^{\bullet}\to
Cous(U,\F|_U)^{\bullet}$ is a quasiisomorphism for any open subset $U\subset X$.
The sheaf $\F|_U$ is Cohen--Macaulay on $U$,
hence we may suppose that $U=X$.

Using several intermediate complexes, we transform the $\A'$-adelic complex
into the Cousin complex. For each number $p$, $0\le p\le d$,
consider the following complex:
$$
C_p^{\bullet}\::0\to\prod_{0\le i\le p}\A'((i),\F)\to\ldots\to
\prod_{0\le i_0<\ldots i_l\le p}\A'((i_0\ldots i_l),\F)\to\ldots\to
\A'((0\ldots p),\F)\to $$
$$
\to\mbox{$\bigoplus\limits_{\eta\in
X^{(p+1)}}H^{p+1}_{\eta}(X,\F)$}\to\ldots\to \mbox{$\bigoplus\limits_{\eta\in
X^{(d)}}H^d_{\eta}(X,\F)$}.
$$
The differential in the first part of this
complex coincides with that in the $\A'$-adelic complex, the differential in the
second part coincides with that in the Cousin complex, and the differential in
the middle $\A'((0\ldots p),\F)\to \mbox{$\bigoplus\limits_{\eta\in
X^{(p+1)}}H^{p+1}_{\eta}(X,\F)$}$ is equal to the composition of
the boundary map $\A'((0\ldots p),\F)\to \A'((0\ldots p,(p+1)),\F)$
with the map $\nu'_{0\ldots (p+1)}$.  Thus
$C_0^{\bullet}=Cous(X,\F)^{\bullet}$ and
$C_d^{\bullet}=\A'(X,\F)^{\bullet}$.

For each $p$, $1\le p\le d$, there is a natural morphism of complexes
$\varphi_p\colon C_p^{\bullet}\to C_{p-1}^{\bullet}$ that is equal
to the natural projection for
the first part of the complex $C^{\bullet}_{p}$, is equal to
$\nu'_{0\ldots p}$ for the $p$-th terms of $C_p^{\bullet}$,
and is equal to the identity maps for
the second part of the complex $C^{\bullet}_p$.
We have ${\varphi}_{1}\circ\ldots\circ{\varphi}_{d}={\nu}'_X$.
We prove by induction on $p$ that the
morphism $\varphi_p$ is actually a quasiisomorphism
for all $X$ and $\F$ as above.

For $p=0$, there is nothing to prove. Suppose that $1\le p\le d=\dim(X)$.
By Lemma \ref{surjective-nu}, the morphism $\varphi_p$ is surjective.
So we need to show that the kernel of $\varphi_p$ is an
exact complex. By construction, $\Ker(\varphi_p)^{\bullet}$ is equal to the complex
$$
0\to\A'((p),\F)\to\prod_{0\le i<p}\A'((ip),\F)\to\ldots\to
\prod_{0\le i_0<\ldots <i_l< p} \A'((i_0\ldots i_l p),\F)\to\ldots
$$
$$
\ldots\to\prod_{0\le i< p}\A'((0\ldots\hat{\imath}\ldots p),\F) \to
\Ker(\nu'_{0\ldots p})\to 0.
$$
For each schematic point $\eta\in X^{(p)}$, consider the natural morphism
$j_{\eta}:X_{\eta}=\Spec(\OO_{X,\eta})\to X$.
Note that $j_{\eta}^*\F$ is Cohen--Macaulay on $X_{\eta}$. Hence,
by the induction hypothesis, the following complex is exact:
$$
0\to \F_{\eta}\to\prod_{0\le i<p}
\A'((i),j_{\eta}^*\F)\to\ldots\to \prod_{0\le i_0<\ldots<i_l< p}
\A'((i_0\ldots i_l),j_{\eta}^*\F)\to\ldots
$$
$$
\ldots\to\A'((0\ldots (p-1)),j_{\eta}^*\F)\to
H^p_{\eta}(X,\F)\to 0.
$$

Now we take the product of these complexes over all points $\eta\in X^{(k)}$.
We get the exact complex
$$
B_p^{\bullet}:\:0\to \prod_{\eta\in
X^{(p)}}\F_{\eta}\to\ldots\to\prod_{\eta\in X^{(p)}}
\left(\prod_{0\le i_0<\ldots<i_l< p}
\A'((i_0\ldots i_l\eta),j^*_{\eta}\F)\right)\to \ldots
$$
$$
\ldots\to
\aprod_{\eta\in X^{(p)}}
{\A'((0\ldots (p-1)\eta),j_{\eta}^*\F)}
\stackrel{\nu'_{0\ldots p}}\longrightarrow
\mbox{$\bigoplus\limits_{\eta\in X^{(p)}}H^p_{\eta}(X,\F)$}\to 0,
$$
where
the restricted product is taken in the same sense as in Lemma
\ref{inductive-tilde} and, as above, the index $(i_0\ldots i_l \eta_p)$ means that we
consider
the set of all flags $\eta_0\ldots\eta_l\eta_p$ on $X$ of type $(i_0\ldots i_l p)$
with fixed $\eta_p$. Finally, by Lemma \ref{inductive-tilde}, we see that
$\Ker(\varphi_p)^{\bullet}=\tau_{\le p}(B_p^{\bullet})$, where
$\tau_{\le p}$ is the canonical truncation of a complex, and thus the complex
$\Ker(\varphi_p)^{\bullet}$ is exact.
\end{proof}

The following technical result is needed for the sequel. For any adele $h$
and an increasing sequence of natural numbers $(j_0\ldots j_q)$,
by $h_{j_0\ldots j_q}$ denote the component of $h$ that has type $(j_0\ldots j_q)$.

\begin{lemma}\label{cobound}
Under the assumptions from Theorem \ref{prime-quasiis}, consider an increasing
sequence $(0\ldots li_{l+1}\ldots i_p)$ of depth $l$ and an adele
$g\in\A'((0\ldots li_{l+1}\ldots i_p),\F)$
such that $\nu'_{0\ldots(l+1)}(g)=0$. Then there exists an adele
$h\in \prod\limits_{0\le i\le l}
\A'((0\ldots \hat{\imath}\ldots li_{l+1}\ldots i_p),\F)$ such that
$(dh)_{0\ldots li_{l+1}\ldots i_p}=g$, where $d$ is the differential in the
$\A'$-adelic complex.
\end{lemma}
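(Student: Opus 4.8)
The plan is to recast Lemma \ref{cobound} as the exactness, at one term, of a Koszul-type complex of $\A'$-adelic groups, and then to establish that exactness by imitating the proof of Theorem \ref{prime-quasiis}.

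Fix the increasing sequence $(0\ldots l\, i_{l+1}\ldots i_p)$ of depth $l$; thus $i_{l+1}\ge l+2$. Consider the complex
$$
\tilde L^{\bullet}\colon\quad 0\to\A'((i_{l+1}\ldots i_p),\F)\to\bigoplus_{0\le i\le l}\A'((i\, i_{l+1}\ldots i_p),\F)\to\cdots
$$
$$
\cdots\to\prod_{0\le i\le l}\A'((0\ldots\hat{\imath}\ldots l\, i_{l+1}\ldots i_p),\F)\to\A'((0\ldots l\, i_{l+1}\ldots i_p),\F)\xrightarrow{\ \nu'_{0\ldots(l+1)}\ }\prod_{\eta_{i_{l+1}}\ldots\eta_{i_p}}\Big(\bigoplus_{\eta}H^{l+1}_{\eta}(X,\F)\Big)\to 0,
$$
whose term in degree $s$ (for $-1\le s\le l$) is $\bigoplus_{0\le j_0<\cdots<j_s\le l}\A'((j_0\ldots j_s\, i_{l+1}\ldots i_p),\F)$, whose differential between these terms is the component of the $\A'$-adelic differential given by inserting one head index, and whose final arrow is ``insert a point of codimension $l+1$ and then take the residue $\nu'_{0\ldots(l+1)}$''. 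An adele $h$ as in the statement lies precisely in the penultimate adelic term of $\tilde L^{\bullet}$, $g$ lies in the last adelic term, and among all boundary maps out of the types $(0\ldots\hat{\imath}\ldots l\, i_{l+1}\ldots i_p)$ only the $l+1$ insertions that restore the head can land in the type $(0\ldots l\, i_{l+1}\ldots i_p)$ (insertions creating a repeated codimension vanish in the reduced complex). Hence $(dh)_{0\ldots l\, i_{l+1}\ldots i_p}$ equals the $\tilde L^{\bullet}$-differential of $h$, and the condition $\nu'_{0\ldots(l+1)}(g)=0$ says exactly that $g$ is a cocycle in $\tilde L^{\bullet}$. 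So Lemma \ref{cobound} is equivalent to exactness of $\tilde L^{\bullet}$ at the term $\A'((0\ldots l\, i_{l+1}\ldots i_p),\F)$; I would in fact show $\tilde L^{\bullet}$ is acyclic.

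The acyclicity is proved by the same inductive machinery as Theorem \ref{prime-quasiis}. Since $i_{l+1}\ge l+2$, the sequence $(0\ldots l\, i_{l+1}\ldots i_{p-1})$ is different from $(0\ldots(i_p-1))$, so by the $\A'$-analogue of Lemma \ref{inductive-tilde} every term of $\tilde L^{\bullet}$ is an \emph{honest} product over $\eta\in X^{(i_p)}$ of the corresponding term of the analogous complex on $X_{\eta}=\Spec\OO_{X,\eta}$ built from $j_{\eta}^{*}\F$, which is again Cohen--Macaulay; one may therefore assume $X$ local with closed point of codimension $i_p$. Continuing by an induction mirroring that of Theorem \ref{prime-quasiis} — at each stage localizing at one more of the tail points, each localization preserving Cohen--Macaulayness, all the intervening products staying honest because of the gap between head and tail — one reduces to the base case $p=l+1$, a single tail point which is the closed point of a local Cohen--Macaulay scheme. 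In that case $\tilde L^{\bullet}$ is, up to the bookkeeping described below, the kernel complex $\Ker(\varphi_{l+1})^{\bullet}$ occurring inside the proof of Theorem \ref{prime-quasiis}, already shown there to be exact (it is $\tau_{\le l+1}$ of the exact complex $B_{l+1}^{\bullet}$); the one genuinely new point is that the tail point now sits in codimension $i_{l+1}\ge l+2$ rather than $l+1$, which is absorbed by a further induction on $i_{l+1}$ of the same type — decomposing over codimension-one points, invoking the surjectivity of Lemma \ref{surjective-nu} for exactness at the top, and using the canonical-truncation device to re-attach the residue map $\nu'_{0\ldots(l+1)}$.

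The main obstacle is exactly the one already met in Theorem \ref{prime-quasiis}: tracking the restricted-product conditions through all of these decompositions and checking that the augmentation by $\nu'_{0\ldots(l+1)}$ is compatible with the canonical truncation. Here this is slightly more delicate because of the gap between the head $\{0,\ldots,l\}$ and the tail $(i_{l+1}\ldots i_p)$, so one must verify that every product that arises is honest, a single restricted product appearing only at the very last stage of the reduction (as in $B_{l+1}^{\bullet}$). The signs — from the passage to the reduced complex and from the $(-1)^{p(p+1)/2}$-type factors in the residue maps — require attention but present no real difficulty once the inductive scheme is in place.
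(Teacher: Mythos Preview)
Your overall strategy---reduce over the tail flag to a local scheme and invoke exactness already established in the proof of Theorem~\ref{prime-quasiis}---is exactly the paper's, but you have mis-identified the base case and then propose an unnecessary ``further induction on $i_{l+1}$'' to repair it. After peeling off all tail points you are on the local scheme $X_{\eta_{l+1}}$ (of dimension $i_{l+1}$), and appending its closed point to a flag is a trivial operation; your $\tilde L^{\bullet}$ then reads
\[
0\to\F_{\eta_{l+1}}\to\bigoplus_{0\le i\le l}\A'((i),j^*_{\eta_{l+1}}\F)\to\cdots\to\A'((0\ldots l),j^*_{\eta_{l+1}}\F)\to\bigoplus_{\xi\in X_{\eta_{l+1}}^{(l+1)}}H^{l+1}_{\xi}\to 0.
\]
This is \emph{not} $\Ker(\varphi_{l+1})^{\bullet}$ (whose types end in codimension $l+1$ and whose last term is $\Ker(\nu'_{0\ldots(l+1)})$); it is the augmented $C_l^{\bullet}$ on $X_{\eta_{l+1}}$, cut off after degree $l+1$. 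The proof of Theorem~\ref{prime-quasiis} already shows each $\varphi_r$ ($r\le l$) is a quasiisomorphism, so $C_l^{\bullet}$ is quasiisomorphic to $Cous^{\bullet}$; taking stalks of the exact augmented Cousin resolution at the closed point gives $H^0(C_l^{\bullet})=\F_{\eta_{l+1}}$ and $H^i=0$ for $i>0$. Hence your complex is exact at the term $\A'((0\ldots l))$ with no further work---the ``decomposing over codimension-one points'' step you sketch has no clear meaning and is not needed.

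The paper's own proof is shorter still: it skips the one-tail-point-at-a-time reduction and fixes the entire tail flag $\eta_{l+1}\ldots\eta_p$ at once, observing that the corresponding component of $g$ sits in $\A'((0\ldots l),j^*_{\eta_{l+1}}\F)$ as a degree-$l$ cocycle in $C_l^{\bullet}$ on $X_{\eta_{l+1}}$, hence is a coboundary $h_{\eta_{l+1}\ldots\eta_p}$ there; a single appeal to (iterated) Lemma~\ref{inductive-tilde}---using that every type $(0\ldots\hat{\imath}\ldots l\,i_{l+1}\ldots i_p)$ has a gap, so all products are honest---then assembles $\{h_{\eta_{l+1}\ldots\eta_p}\}$ into the global $\A'$-adele. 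Your machinery reproduces this but with extra bookkeeping and a detour through a wrong identification.
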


\begin{proof}
We use notations from the proof of Theorem \ref{prime-quasiis}.
Fix a flag $\eta_{l+1}\ldots\eta_p$ of type
$(i_{l+1}\ldots i_p)$. We have $g_{0\ldots l\eta_{l+1}\ldots\eta_p}\in
\A'((0\ldots l),j^*_{\eta_{l+1}}\F)$. By the
condition of the lemma, $g_{0\ldots l\eta_{l+1}\ldots\eta_p}$ is a degree $l$
cocycle in the complex $C_l^{\bullet}$
constructed for the local scheme $X_{\eta_{l+1}}$ and the sheaf $j^*_{\eta_{l+1}}\F$.
It follows from the proof of Theorem \ref{quasiis} that
the complex $C_l^{\bullet}$ is exact for $X_{\eta_{l+1}}$ and $j^*_{\eta_{l+1}}\F$.
Therefore there exists an adele $h_{\eta_{l+1}\ldots\eta_p}\in
\prod\limits_{0\le i\le l}\A'((0\ldots\hat{\imath}\ldots l),j^*_{\eta_{l+1}}\F)$
such that
$d_{\eta_{l+1}}(h_{\eta_{l+1}\ldots\eta_p})_{0\ldots l}=
f_{0\ldots l\eta_{l+1}\ldots\eta_p}$,
where $d_{\eta_{l+1}}$ is the differential in the $\A'$-adelic complex
on $X_{\eta_{l+1}}$.
By Lemma \ref{inductive-tilde}, the collection
$$
h=\{h_{\eta_{l+1}\ldots\eta_p}\}\in
\prod_{\eta_{l+1}\ldots\eta_p}\prod_{0\le i\le l}
\A'((0\ldots\hat{\imath}\ldots l),j^*_{\eta_{l+1}}\F)
$$
belongs to the
$\A'$-adelic group $\prod\limits_{0\le i\le l}
\A'((0\ldots\hat{\imath}\ldots l i_{l+1}\ldots i_p),\F)$ and
$h$ satisfied the needed condition.
\end{proof}

\section{Adeles for homology sheaves}\label{main_theorem}

We give some particular example of a class of sheaves
on a smooth variety $X$ over a field such that for any sheaf $\F$ from this class
the morphism of complexes of sheaves
$\F\to\underline{\A}(X,\F)^{\bullet}$ is a quasiisomorphism, i.e.,
the adelic complex is in fact a flasque resolution for the sheaf $\F$.
This class of sheaves naturally arises from homology theories.
In addition, these sheaves are Cohen--Macaulay in the sense of \cite{Har}
(see Corollary \ref{corol-Gerst}$(i)$).

\subsection{Homology theories}\label{homology-theories}

Let $k$ be a field and $\V_k$ be the category of varieties over $k$.

\begin{defin}
A {\it weak homology theory over $k$} is a presheaf on $\V_k$ in the Zariski topology
with value in the category of graded abelian groups
$$
\mbox{$X\mapsto \bigoplus\limits_{n\in\Z}F_n(X)$},\quad
(j:U\hookrightarrow X) \mapsto(j^*:F_n(X)\to F_n(U))
$$
such that for any closed
embedding $f:X'\hookrightarrow X$, there is a functorial
homomorphism of graded abelian groups $f_*:F_n(X')\to F_n(X)$
satisfying the following axioms:
\begin{itemize}
\item[(WH1):]
for any open embedding $j:U\hookrightarrow X$ and a closed embedding
$f:X'\hookrightarrow X$, the following diagram commutes:
$$
\begin{array}{ccc}
F_n(U')&\stackrel{(j')^*}\longleftarrow&F_n(X')\\
\downarrow\lefteqn{f_*}&&\downarrow\lefteqn{f_*}\\
F_n(U)&\stackrel{j^*}\longleftarrow&F_n(X),\\
\end{array}
$$
where $j':U'=f^{-1}(U)\hookrightarrow X'$ is an open embedding;
\item[(WH2):]
for any closed embedding $i:Z\hookrightarrow X$
there exists a long exact localization sequence
$$
\ldots\to F_n(Z)\stackrel{i_*}\longrightarrow F_n(X)\stackrel{j^*}\longrightarrow
F_n(X\backslash Z)\stackrel{\partial_{XZ}}\longrightarrow F_{n-1}(Z)\to\ldots,
$$
where $j:X\backslash Z\hookrightarrow X$ is an open embedding; in addition,
for any for any closed embedding $f:X'\hookrightarrow X$ and any
pair of closed subsets $i:Z\hookrightarrow X$, $i':Z'\hookrightarrow X'$ such that
$f(Z')\subseteq Z$,
the following diagram commutes:
$$
\begin{array}{ccccccccc}
\ldots\to& F_n(Z')&\stackrel{i'_*}\longrightarrow&F_n(X')&\stackrel{(j')^*}
\longrightarrow
&F_n(X'\backslash Z')&\stackrel{\partial_{X'Z'}}\longrightarrow &
F_{n-1}(Z')&\to\ldots\\
&\downarrow\lefteqn{f_*}&&\downarrow\lefteqn{f_*}&&\downarrow\lefteqn{f_*\alpha^*}&
&\downarrow\lefteqn{f_*}&\\
\ldots\to& F_n(Z)&\stackrel{i_*}\longrightarrow&F_n(X)&\stackrel{j^*}\longrightarrow
&F_n(X\backslash Z)&\stackrel{\partial_{XZ}}\longrightarrow &F_{n-1}(Z)&\to\ldots,\\
\end{array}
$$
where $\alpha:f^{-1}(X\backslash Z)\hookrightarrow X\backslash Z$ is an open embedding.
\end{itemize}
\end{defin}

Note that this is a modified version of the notion of a twisted
homology theory from~\cite{BO}.

Let $F_*$ be a weak homology theory over a field $k$. For an irreducible variety
$X$ over $k$ and $n\in\Z$, we put
$F_n(k(X))=\lim\limits_{\longrightarrow\atop U}F_n(U)$,
where the limit is taken over all open non-empty subsets $U\subset X$.
Evidently, this definition is correct, i.e., $F_n(k(X))$ depends only on
the birational class of $X$.

The same reasoning as in \cite{BO}, Proposition 3.7 shows that
for any variety $X$ over $k$, there is a
homological type spectral sequence
$E_{p,q}^1(X,F_*)=\bigoplus\limits_{\eta\in X_{(p)}}F_{p+q}(k(\eta))
\Rightarrow F_{p+q}(X)$, where $X_{(p)}$ is the set of all points $\eta$ on $X$
such that $\dim(\overline{\eta})=p$. The corresponding ascending filtration on
$F_{n}(X)$ is defined by
${\rm Im}(\lim\limits_{\longrightarrow\atop Z\in X_{\le p}}
F_{n}(Z)\to F_{n}(X))\subseteq F_{n}(X)$,
where $X_{\le p}$ is the set of all closed subsets $Z$ in $X$
of dimension at most $p$. For $p,q\in \Z$, we put
$\G(X,F_*,q)_{p}=E^1_{p,q}(X,F_*)$. Thus $\G(X,F_*,q)_{\bullet}$ is a
homological type complex.

Let us say that a variety $X$ is
{\it equidimensional} if all irreducible components of $X$ have the same
dimension. For an equidimensional variety $X$ of dimension $d$, we put
$\G(X,F_*,n)^{p}=E^1_{d-p,n-d}(X,F_*)=
\bigoplus\limits_{\eta\in X^{(p)}}F_{n-p}(k(\eta))$.
The cohomological type complex $\G(X,F_*,n)^{\bullet}$
is called the {\it Gersten complex}
associated to the weak homology theory $F_*$. Given a collection
$\{f_{\eta}\}\in \bigoplus\limits_{\eta\in X^{(p)}}F_{n-p}(k(\eta))$, the set of
all schematic points $\eta\in X^{(p)}$ such that $f_{\eta}\ne 0$ is called the
{\it support of} the collection $\{f_{\eta}\}$.

It is readily seen that for any $q\in\Z$
the functors $F(q)_n(X)=H_n(\G(X,F_*,q)_{\bullet})$, $n\in\Z,n\ge 0$
also form a weak homology theory. For an irreducible variety $X$ of dimension $d$,
we have $F(q)_d(k(X))=F_{d+q}(k(X))$ and $F(q)_n(k(X))=0$ for $n\ne d$. Therefore,
$\G(X,F(q)_*,0)_{\bullet}=\G(X,F_*,q)_{\bullet}$ and
$\G(X,F(q)_*,m)_{\bullet}=0$ if $m\ne 0$.

\begin{defin}
For a weak homology theory $F_*$,
the {\it homology sheaves} $\F_n,n\in\Z$ are the sheaves on $\V_k$
in the Zariski topology associated to the presheaves $F_n,n\in\Z$.
\end{defin}

We put $\F^X_n$ to be the restriction of the sheaf $\F_n$ to the
variety $X$ if we need to distinguish sheaves on different spaces.
Thus for an irreducible variety $X$, we have $(\F_n^X)_X=F_n(k(X))$.
Note that for an open subset $i_U:U\hookrightarrow X$, we have
$(i_U)^*\F_n^X=\F_n^U$. We also denote by $\F_n^U$ the sheaf
$(\F^X_n)_U=(i_U)_*\F^U_n$ on $X$.

It is readily seen that for any irreducible variety $X$ of dimension
$d$ and any $q\in\Z$, the sheaf $\F(q)^X_d$ is $1$-pure (see Section
\ref{section-1-pure}).

\begin{rmk}
Suppose that $X$ is an equidimensional variety of dimension $d$ over
$k$, $\eta$ is a schematic point on $X$, and $D\subset X$ is a
divisor; then any element $f\in (\F(n)^{X\backslash D}_d)_{\eta}$,
$n\in\Z$, is equal to the restriction of an element from
$F_{n+d}(X_{\eta}\backslash (D\cup R))$, where $R$ is a closed
subset in $X$ such that all irreducible components of $R$ have
codimension at least two in $X$. This follows directly from
definitions and the localization sequence.
\end{rmk}

For an equidimensional variety $X$,
we may also consider the sheafified Gersten
complex $\underline{\G}(X,F_*,n)^{\bullet}$, namely
$\underline{\G}(X,F_*,n)^{p}=\bigoplus\limits_{\eta\in X^{(p)}}
(i_{\overline{\eta}})_*F_{n-p}(k(\eta))$, where for each point $\eta\in X^{(p)}$,
we consider $F_{n-p}(k(\eta))$ as a constant sheaf on $\overline{\eta}$.
There is a morphism of complexes of sheaves
$\F^X_n\to\underline{Cous}(X,\F^X_n)^{\bullet}\to \underline{\G}(X,F_*,n)^{\bullet}$,
where $\F^X_n$ is considered as a complex concentrated in the zero term. Also, we have
$H^q(\underline{\G}(X,F_*,n)^{\bullet})=\F(n-d)^X_{d-q}$ and there is
a natural morphism of sheaves $\F^X_n\to \F(n-d)^X_{d}$, where $d=\dim(X)$.

For any equidimensional closed subset $Z\subset X$ of codimension
$p$ in $X$, we have $\gamma_Z\underline{\G}(X,F_*,n)^{\bullet}=
\underline{\G}(Z,F_*,n-p)^{\bullet}[-p]$. In particular,
$R^q\gamma_Z\underline{\G}(X,F_*,n)=\F(n-d)^Z_{d-q}$, where $d=\dim(X)$. Also,
there are natural morphisms of sheaves $\F^Z_{n-p}\to
R^p\gamma_Z\underline{\G}(X,F_*,n)=\F(n-d)^Z_{d-p}$
and $R^p\gamma_Z\F^X_n\to R^p\gamma_Z\underline{\G}(X,F_*,n)$. However,
in general there is no natural morphism between the sheaves
$\F^Z_{n-p}$ and $R^p\gamma_Z\F^X_n$.

\begin{defin}
We say that a weak homology theory $F_*$ over a field $k$ is a
{\it homology theory locally acyclic in fibrations} (l.a.f. homology theory)
if the following two conditions
are satisfied:
\begin{itemize}
\item[(H):]
for any finite morphism $f:X'\to X$, there is a functorial homomorphism of graded
abelian groups $f_*:F_n(X')\to F_n(X)$ extending the one
for closed embeddings and such that the axioms (WH1) and (WH2)
are satisfied with $f$ being a finite morphism;
\item[(LAF):]
if for a closed embedding $i:Z\hookrightarrow X$ and a point
$\eta\in Z$ there exists a morphism
$\pi:X\to Z$ such that $\pi\circ i=id_Z$ and $\pi$ smooth at $\eta$, then
there exists an open subset $U\subset X$
containing $\eta$ such that the composition $F_n(Z)\to F_n(X)\to F_n(U)$ is zero
for any $n\in\Z$.
\end{itemize}
\end{defin}

We say that a scheme $Y$ is {\it of geometric type over $k$} if
$Y=\cap_{\alpha} U_{\alpha}$, where $\{U_{\alpha}\}$ is a collection
of open subsets in a variety $X$ over $k$. For such $Y$, we put
$F_n(Y)= \lim\limits_{\longrightarrow\atop U}F_n(U)$, where the
limit is taken over all open subsets $U\subset X$ containing $Y$. It
follows easily that $F_n(Y)$ is independent in $X$. If $Y$ is an
equidimensional scheme of geometric type over $k$, then we put
$F_n(Y^{\ge p})=\lim\limits_{\longrightarrow\atop Z}F_n(Z)$, where
the limit is taken over all closed subsets $Z\subset Y$ such that
all irreducible components of $Z$ have codimension at least $p$ in
$Y$.

The proof of the
next statement is the same as the proof of Theorem 5.11 in \cite{Q}
or Theorem 4.2 in \cite{BO}.

\begin{prop}\label{short-exact-sequence}
Let $F_*$ be an l.a.f. homology theory over a field $k$;
then for any regular irreducible local scheme $Y$ of geometric type over $k$ and
any $n\in\Z$, there is a natural short exact sequence
$$
0\to F_n(Y^{\ge p})\to\mbox{$\bigoplus\limits_{\eta\in
Y^{(p)}}F_n(k(\eta))$}\to F_{n-1}(Y^{\ge (p+1)})\to 0,
$$
where the first map takes each element $\alpha\in F_n(Z)$ to the
restrictions of $\alpha$ to the generic points of all components in
$Z$ that have codimension $p$ in $Y$.
\end{prop}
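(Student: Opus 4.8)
The plan is to reproduce, in the axiomatic language of weak homology theories, Quillen's proof of the geometric case of the Gersten conjecture (\cite{Q}, Theorem~5.11; see also \cite{BO}, Theorem~4.2). First I would assemble the coniveau long exact sequences at $Y$. Fix $p\ge 0$ and apply the localization sequence of axiom~(WH2) to every pair of closed subsets $Z'\subseteq Z\subseteq Y$ such that all components of $Z$ have codimension $\ge p$ and all components of $Z'$ have codimension $\ge p+1$; passing to the filtered colimit over all such pairs yields an exact sequence
$$
\cdots\to F_n(Y^{\ge p+1})\xrightarrow{\ \iota\ } F_n(Y^{\ge p})\xrightarrow{\ \rho\ }\bigoplus_{\eta\in Y^{(p)}}F_n(k(\eta))\xrightarrow{\ \partial\ } F_{n-1}(Y^{\ge p+1})\to\cdots .
$$
Here one uses that a filtered colimit of exact sequences is exact, the definition of the groups $F_n(Y^{\ge q})$ as colimits, and the fact that $F_n$ of a finite disjoint union is the direct sum --- which follows formally from~(WH1) and~(WH2), since the pushforward along an open-and-closed immersion is split by restriction, so the corresponding localization sequence breaks into split short exact sequences. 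By the same bookkeeping $\rho$ is the restriction of a class on $Z$ to the generic points of its codimension-$p$ components, i.e.\ the map named in the statement. Given these sequences, the proposition is equivalent to the vanishing $\iota=0$ for all $n$ and all $p$: if every $\iota$ vanishes then $\rho$ is injective and $\partial$ surjective, giving the asserted short exact sequences, while conversely their exactness forces ${\rm Im}\,\iota={\rm Ker}\,\rho=0$.

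Everything therefore reduces to: for any closed $Z\subseteq Y$ all of whose components have codimension $\ge p+1$ and any $\alpha\in F_n(Z)$, there is a closed $W$ with $Z\subseteq W\subseteq Y$, all components of $W$ of codimension $\ge p$, on which $\alpha$ vanishes (for $p=0$ one may take $W=Y$). To prove this I would spread $Z$ and $\alpha$ out: realize $Y$ as $\Spec\OO_{X,y}$ for a smooth affine variety $X$ of dimension $d$ over $k$ and a point $y$ (possible because $Y$ is regular; over a non-perfect field one argues as in the references), and choose a closed $Z_0\subseteq X$ restricting to $Z$, with $y\in Z_0$ and --- after dropping the components not through $y$ --- all components of $Z_0$ of codimension $\ge p+1$. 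Quillen's normalization lemma (\cite{Q}, Lemma~5.12; \cite{BO}) then provides, after shrinking $X$ to a neighbourhood of $y$, a morphism $\pi\colon X\to\Ab^{d-1}$ which is smooth at $y$ of relative dimension one and whose restriction to $Z_0$ is finite and in suitable position at $y$. Setting $W_0=\pi^{-1}(\pi(Z_0))$ one has $Z_0\subseteq W_0$ and $\codim_X W_0\ge p$ (as $\pi|_{Z_0}$ is finite and $\pi$ has relative dimension one), and the curve-fibration structure of $\pi$ over $\pi(Z_0)$, together with the finite pushforwards of axiom~(H), is used to move $\alpha$ --- inside $F_n$ of a closed subset of $W_0$ --- onto a class pulled back from a section of the fibration; such a class is killed by the homotopy invariance packaged in axiom~(LAF) on a suitable open $U\ni y$ of $W_0$. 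Restricting back to $Y$ converts this into the vanishing of $\alpha$ on $W=W_0\cap Y$; the identification of the first map of the short exact sequence with the restriction map is then immediate from the description of $\rho$.

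The hard part is the geometry: both Quillen's normalization lemma and the precise ``suitable position'' of $Z_0$ relative to $\pi$, together with the deformation step that allows the class to be fed into axiom~(LAF), are the delicate points --- the naive fibre-product construction around $\pi$ only shows that the image of $\alpha$ in $F_n(X)$ factors through a codimension-$p$ closed subset, which is strictly weaker than what is needed. This is also where the hypothesis that $k$ be infinite enters, via a generic-projection argument in the normalization lemma (or, over an arbitrary field, via Gabber's refinement of that lemma); the remainder is a formal manipulation of the axioms~(WH1), (WH2), (H), (LAF), exactly as carried out in \cite{Q} and \cite{BO}.
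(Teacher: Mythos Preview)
Your proposal is correct and follows exactly the approach the paper indicates: the paper's own proof is the single sentence that the argument is the same as Quillen's Theorem~5.11 and Bloch--Ogus's Theorem~4.2, and your outline reproduces precisely that argument --- coniveau long exact sequence via colimit of localization sequences, reduction to the vanishing of $\iota$, spreading out to a smooth affine $X$, Quillen's normalization $\pi\colon X\to\Ab^{d-1}$, and the Cartesian-square step invoking (H) and (LAF). Your acknowledgment of the subtlety over non-infinite fields (Gabber's refinement) is also apt, since the proposition is stated without that hypothesis.
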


\begin{rmk}
The differential in the Gersten complex for $Y$ equals to the
composition of two corresponding maps in the exact triples from
Proposition~\ref{short-exact-sequence} for $n$ and for $n+1$.
\end{rmk}

\begin{corol}\label{K-integrality}
Under the notations from Proposition \ref{short-exact-sequence},
any cocycle $\{f_{\eta}\}\in \bigoplus\limits_{\eta\in
Y^{(p)}}F_n(k(\eta))$ in the Gersten complex on $Y$
is equal to the restriction of an element $\alpha\in F_n(Z)$ such that the set
of all codimension $p$ components of $Z$ is equal to the support of $\{f_{\eta}\}$.
\end{corol}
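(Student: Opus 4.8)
The plan is to first produce \emph{some} lift of $\{f_\eta\}$ to a closed subset by means of the exact triple of Proposition~\ref{short-exact-sequence}, and then to prune away the unwanted components by a d\'evissage with the localization sequence, exploiting the fact that the assertion constrains only the codimension-$p$ components of $Z$.

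For the first step, recall from the remark following Proposition~\ref{short-exact-sequence} that the Gersten differential on $Y$ factors as the surjection $b\colon\bigoplus_{\eta\in Y^{(p)}}F_n(k(\eta))\to F_{n-1}(Y^{\ge p+1})$ coming from the exact triple for $(n,p)$, followed by the injection $a\colon F_{n-1}(Y^{\ge p+1})\hookrightarrow\bigoplus_{\xi\in Y^{(p+1)}}F_{n-1}(k(\xi))$ coming from the exact triple for $(n-1,p+1)$. If $\{f_\eta\}$ is a cocycle then $a(b(\{f_\eta\}))=0$, so $b(\{f_\eta\})=0$, and by exactness of the $(n,p)$-triple the element $\{f_\eta\}$ lies in the image of $F_n(Y^{\ge p})=\lim\limits_{\longrightarrow}F_n(Z)$. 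Hence there exist a closed subset $Z'\subseteq Y$ all of whose components have codimension $\ge p$, and an element $\alpha'\in F_n(Z')$ whose restrictions to the generic points of the codimension-$p$ components of $Z'$ give back $\{f_\eta\}$. In particular the support of $\{f_\eta\}$ is exactly the set of those codimension-$p$ components of $Z'$ on which $\alpha'$ restricts nontrivially; the remaining (finitely many) codimension-$p$ components $W=\overline{\eta}_0$ of $Z'$ are superfluous, i.e.\ $f_{\eta_0}=0$.

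The second step removes the superfluous components one at a time, by induction on their number. Fix one such $W$ and write $Z'=W\cup Z''$ with $Z''$ the union of the remaining components of $Z'$; since $W$ is an irreducible component, $W\cap Z''$ is a proper closed subset of $W$, hence of codimension $\ge p+1$ in $Y$. Restricting $\alpha'$ through the localization sequence of $Z''\hookrightarrow Z'$ to the generic point $\eta_0$ of the dense open $W_\circ:=W\setminus(W\cap Z'')$ yields the class $f_{\eta_0}=0$ in $F_n(k(\eta_0))=\lim\limits_{\longrightarrow}F_n(V)$; therefore $\alpha'$ already vanishes on some dense open $V_0\subseteq W_\circ$. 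Let $T$ be the closure in $Y$ of $W_\circ\setminus V_0$ --- a closed subset contained in $W$ and of codimension $\ge p+1$ --- and set $Z'''=Z''\cup T$. One checks that $Z'\setminus Z'''\subseteq V_0$, so $\alpha'$ restricts to $0$ on $Z'\setminus Z'''$, and the localization sequence of $Z'''\hookrightarrow Z'$ then produces $\tilde\alpha\in F_n(Z''')$ with $i_*\tilde\alpha=\alpha'$. Using (WH1) over a small open neighbourhood of each common codimension-$p$ generic point (chosen to meet $Z'$ only along the closure of that point) one sees that $\tilde\alpha$ restricts to $\{f_\eta\}$ as well. Since $Z'''$ is obtained from $Z'$ by discarding one codimension-$p$ component and adjoining only codimension $\ge p+1$ material, it has one fewer superfluous codimension-$p$ component. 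Iterating, we arrive at a closed set $Z$ whose codimension-$p$ components are precisely the support of $\{f_\eta\}$, together with the desired $\alpha\in F_n(Z)$.

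The main obstacle is the inductive step, and more precisely the verification that the auxiliary sets $T$ (and the localization boundary terms implicit above) all sit in codimension $\ge p+1$, so that adjoining them disturbs neither the set of codimension-$p$ components nor the restrictions to codimension-$p$ generic points; it is exactly because the statement imposes no condition in codimension $>p$ that there is enough room to carry this out. Everything else reduces to routine manipulations with axioms (WH1), (WH2) and the direct-limit descriptions of the groups $F_n(Y^{\ge q})$.
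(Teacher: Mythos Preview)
Your proof is correct and follows essentially the same approach as the paper: first lift the cocycle to some $F_n(Z')$ via the exact triple of Proposition~\ref{short-exact-sequence}, then iteratively remove each superfluous codimension-$p$ component $W$ by observing that $\alpha'$ vanishes on a dense open of $W$ (since $f_{\eta_0}=0$) and invoking the localization sequence to push $\alpha'$ down to a smaller closed set. The paper's version is terser---it compresses your d\'evissage into the single remark that $F_n(k(Z_0))=\lim\limits_{\longrightarrow}F_n(U)$ over opens $U\subset Z_0$ that are simultaneously open in $Z$---but the content is the same.
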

\begin{proof}
By Proposition \ref{short-exact-sequence}, any cocycle
$\{f_{\eta}\}\in \bigoplus\limits_{\eta\in
Y^{(p)}}F_n(k(\eta))$ is defined by an element
$\alpha\in F_n(Z)$ for a certain closed subset
$Z\subset Y$ of codimension at least $p$. Let $Z_0\subset Z$ be
a codimension $p$ irreducible component in $Y$ that is not from the support
of $\{f_{\eta}\}$. Note that $F_n(k(Z_0))=
\lim\limits_{\longrightarrow}F_n(U)$, where the limit is taken over all open
subsets $U\subset Z_0$ that are also open in $Z$. From the localization
sequence it follows that, in fact, $\alpha$ belongs to $F_n(Z')$,
where the closed subset $Z'\subset Z$ has the same irreducible components as
$Z$ except for $Z_0$ and
$Z_0$ is replaced by a proper closed subset. This concludes the proof.
\end{proof}

Proposition \ref{short-exact-sequence} also implies the following important statement.

\begin{prop}\label{Gersten-resolution}
Let $F_*$ be an l.a.f. homology theory over a field $k$; then for
any equidimensional regular variety $X$ over $k$ and for any
$n\in\Z$, the morphism of complexes of sheaves
$\F^X_n\to\underline{\G}(X,F_*,n)^{\bullet}$ is a quasiisomorphism.
\end{prop}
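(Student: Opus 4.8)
The plan is to check that $\F^X_n\to\underline{\G}(X,F_*,n)^{\bullet}$ is a quasiisomorphism on stalks, equivalently that for every schematic point $\xi\in X$ the augmented complex of stalks
\[
0\to(\F^X_n)_{\xi}\to(\underline{\G}(X,F_*,n)^0)_{\xi}\to(\underline{\G}(X,F_*,n)^1)_{\xi}\to\cdots
\]
is exact. First I would rewrite these stalks in terms of the local scheme $X_{\xi}=\Spec(\OO_{X,\xi})$. Since the stalk at $\xi$ of $(i_{\overline{\eta}})_*F_{n-p}(k(\eta))$ is $F_{n-p}(k(\eta))$ when $\xi\in\overline{\eta}$ and $0$ otherwise, and since the generizations $\eta$ of $\xi$ with $\codim_X(\overline{\eta})=p$ are exactly the points of $X_{\xi}$ of codimension $p$ (because $\OO_{X,\eta}=(\OO_{X,\xi})_{\eta}$), the stalk $(\underline{\G}(X,F_*,n)^{\bullet})_{\xi}$ is canonically the Gersten complex $\G(X_{\xi},F_*,n)^{\bullet}$ of the irreducible regular local scheme $X_{\xi}$, which is of geometric type over $k$. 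On the other hand $(\F^X_n)_{\xi}=\lim\limits_{\longrightarrow\atop U}F_n(U)=F_n(X_{\xi})$, the limit over all open $U\ni\xi$, by the definition of $F_n$ on schemes of geometric type, and the augmentation becomes the restriction map $F_n(X_{\xi})\to F_n(k(X_{\xi}))$ to the generic point.

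Thus the proposition reduces to showing that for a regular irreducible local scheme $Y$ of geometric type over $k$ the complex
\[
0\to F_n(Y)\to F_n(k(Y))\to\mbox{$\bigoplus\limits_{\eta\in Y^{(1)}}$}F_{n-1}(k(\eta))\to\mbox{$\bigoplus\limits_{\eta\in Y^{(2)}}$}F_{n-2}(k(\eta))\to\cdots
\]
is exact. This I would obtain by splicing the short exact sequences of Proposition \ref{short-exact-sequence}: applying that proposition with $n$ replaced by $n-p$ gives
\[
0\to F_{n-p}(Y^{\ge p})\to\mbox{$\bigoplus\limits_{\eta\in Y^{(p)}}$}F_{n-p}(k(\eta))\to F_{n-p-1}(Y^{\ge(p+1)})\to 0,
\]
and the cokernel term of this sequence is the kernel term of the analogous sequence for $p+1$, so the sequences concatenate. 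By the remark following Proposition \ref{short-exact-sequence}, the Gersten differential $\bigoplus_{\eta\in Y^{(p)}}F_{n-p}(k(\eta))\to\bigoplus_{\eta\in Y^{(p+1)}}F_{n-p-1}(k(\eta))$ is precisely the composite of the surjection onto $F_{n-p-1}(Y^{\ge(p+1)})$ with the injection of the next sequence, whence exactness at every term. At the left end one uses $F_n(Y^{\ge 0})=F_n(Y)$, since $Y$ itself is the largest admissible closed subset, together with the fact that the first map of the $p=0$ sequence is restriction to the generic point, which matches the augmentation identified above.

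Since Proposition \ref{short-exact-sequence} is already available, there is no serious obstacle: the content of the statement is entirely local and is supplied by that proposition. The only points requiring care are the stalk computation — in particular that passing to $X_{\xi}$ turns the sheafified Gersten complex into the ordinary Gersten complex with the correct internal degree $n$ (rather than one depending on $\dim X_{\xi}$) — and the verification that the differential of $\underline{\G}(X,F_*,n)^{\bullet}$ and its augmentation from $\F^X_n$ are compatible on stalks with the maps produced by Proposition \ref{short-exact-sequence}, which is exactly what the remark cited there records.
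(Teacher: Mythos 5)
Your argument is correct and is exactly the route the paper intends: the paper deduces Proposition \ref{Gersten-resolution} directly from Proposition \ref{short-exact-sequence} (with the remark identifying the Gersten differential), and your write-up simply makes explicit the stalkwise reduction to the local scheme $X_{\xi}$ and the splicing of the short exact sequences. The points you flag for care (the identification $(\F^X_n)_{\xi}=F_n(X_{\xi})$, the codimension bookkeeping, and compatibility of the differentials) are handled as you describe, so nothing is missing.
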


\begin{corol}\label{corol-Gerst}
Let $F_*$ be an l.a.f. homology theory over a field $k$, $X$ be an
equidimensional regular variety $X$ over $k$, and $n\in\Z$; then we
have:
\begin{itemize}
\item[(i)]
the sheaf $\F^X_n$
on $X$ is Cohen--Macaulay in the sense of \cite{Har} and the natural morphism
of complexes of sheaves
$\underline{Cous}(X,\F^X_n)^{\bullet}\to\underline{\G}(X,F_*,n)^{\bullet}$
is an isomorphism; in particular, the sheaves $\F^X_n$ are
$1$-pure on $X$.
\item[(ii)]
for any equidimensional closed subset $Z\subset X$ of codimension $p$,
the natural morphism of sheaves $R^p\gamma_Z\F^X_n\to
R^p\gamma_Z\underline{\G}(X,F_*,n)^{\bullet}=\F(n-d)^Z_{d-p}$
is an isomorphism; in particular, there is a natural morphism
$\F^Z_{n-p}\to R^p\gamma_Z\F^X_n$, which is an isomorphism at the generic points
of $Z$;
\item[(iii)]
for any $q\in\Z$, we have $\F(q)^X_d=\F^X_{d+q}$ and $\F(q)^X_m=0$ if $m\ne 0$.
\end{itemize}
\end{corol}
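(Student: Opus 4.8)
The plan is to deduce all three assertions from Proposition \ref{Gersten-resolution}, i.e.\ from the fact that $\F^X_n\to\underline{\G}(X,F_*,n)^{\bullet}$ is a quasiisomorphism, together with the explicit identities for the Gersten resolution already recorded above --- $H^q(\underline{\G}(X,F_*,n)^{\bullet})=\F(n-d)^X_{d-q}$ and $R^q\gamma_Z\underline{\G}(X,F_*,n)=\F(n-d)^Z_{d-q}$ --- and the elementary description of $F(q)_*$ on function fields: for a point $\eta$ with $\dim\overline{\eta}=d'$ one has $F(q)_{d'}(k(\eta))=F_{d'+q}(k(\eta))$ and $F(q)_m(k(\eta))=0$ for $m\ne d'$. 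For $(iii)$ I would argue as follows. Since $\F^X_n\to\underline{\G}(X,F_*,n)^{\bullet}$ is a quasiisomorphism, the cohomology sheaves of $\underline{\G}(X,F_*,n)^{\bullet}$ vanish outside degree zero, and the natural morphism $\F^X_n\to\F(n-d)^X_d=H^0(\underline{\G}(X,F_*,n)^{\bullet})$, being the factorization of that quasiisomorphism through $H^0$, is an isomorphism; comparing with $H^q(\underline{\G}(X,F_*,n)^{\bullet})=\F(n-d)^X_{d-q}$ yields $\F(n-d)^X_d\cong\F^X_n$ and $\F(n-d)^X_m=0$ for $m\ne d$, which after the substitution $q=n-d$ gives $(iii)$.

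For $(i)$ the key point is that the natural morphism of complexes $\underline{Cous}(X,\F^X_n)^{\bullet}\to\underline{\G}(X,F_*,n)^{\bullet}$ is a termwise isomorphism. In degree $p$ both sides are direct sums over $\eta\in X^{(p)}$ of sheaves pushed forward from $\overline{\eta}$; since in an equidimensional scheme two codimension-$p$ points whose closures are nested coincide, the stalk at $\eta\in X^{(p)}$ of the $p$-th term is $H^p_{\eta}(X,\F^X_n)$ on the left and $F_{n-p}(k(\eta))$ on the right, and the induced map between the stalks is obtained by applying $R^p\gamma_{\overline{\eta}}$ to $\F^X_n\to\underline{\G}(X,F_*,n)^{\bullet}$ and passing to the stalk at $\eta$. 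As the latter is a quasiisomorphism this gives an isomorphism
$$
H^p_{\eta}(X,\F^X_n)\ \xrightarrow{\ \sim\ }\ \bigl(R^p\gamma_{\overline{\eta}}\underline{\G}(X,F_*,n)\bigr)_{\eta}=\bigl(\F(n-d)^{\overline{\eta}}_{d-p}\bigr)_{\eta}=F(n-d)_{d-p}(k(\eta))=F_{n-p}(k(\eta)),
$$
using the identity for $R^p\gamma_{\overline{\eta}}\underline{\G}$ and the function-field computation with $\dim\overline{\eta}=d-p$. Hence $\underline{Cous}(X,\F^X_n)^{\bullet}\cong\underline{\G}(X,F_*,n)^{\bullet}$, and composing with the quasiisomorphism $\F^X_n\to\underline{\G}(X,F_*,n)^{\bullet}$ shows $\F^X_n\to\underline{Cous}(X,\F^X_n)^{\bullet}$ is a quasiisomorphism, i.e.\ $\F^X_n$ is Cohen--Macaulay; in particular $0\to\F^X_n\to\underline{Cous}(X,\F^X_n)^0\to\underline{Cous}(X,\F^X_n)^1$ is exact, which is exactly $1$-purity.

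Statement $(ii)$ is then a derived-functor formality: applying $R\gamma_Z$ to the quasiisomorphism $\F^X_n\to\underline{\G}(X,F_*,n)^{\bullet}$ gives an isomorphism in the derived category, hence an isomorphism of cohomology sheaves $R^p\gamma_Z\F^X_n\xrightarrow{\ \sim\ }R^p\gamma_Z\underline{\G}(X,F_*,n)=\F(n-d)^Z_{d-p}$; composing the natural morphism $\F^Z_{n-p}\to\F(n-d)^Z_{d-p}$ (the edge map from $(iii)$ applied to the $(d-p)$-dimensional variety $Z$) with the inverse of this isomorphism gives the required $\F^Z_{n-p}\to R^p\gamma_Z\F^X_n$, which at a generic point $\zeta$ of $Z$ becomes the canonical identification $F_{n-p}(k(\zeta))=F(n-d)_{d-p}(k(\zeta))$ and is therefore an isomorphism there. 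The one genuinely delicate point, and the part I expect to be the main obstacle, is in $(i)$: one must be sure that the termwise comparison is genuinely the \emph{natural} map of complexes $\underline{Cous}(X,\F^X_n)^{\bullet}\to\underline{\G}(X,F_*,n)^{\bullet}$ --- which is why I describe it via $R^p\gamma_{\overline{\eta}}$ of the canonical morphism --- so that it is automatically compatible with differentials and a termwise isomorphism already forces an isomorphism of complexes. Beyond that, all the real work is in Proposition \ref{Gersten-resolution} (hence in Proposition \ref{short-exact-sequence}), and nothing further is needed.
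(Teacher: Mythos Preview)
Your proposal is correct and is exactly the intended argument: the paper states Corollary~\ref{corol-Gerst} with no separate proof, leaving it as a formal consequence of Proposition~\ref{Gersten-resolution} together with the identities $H^q(\underline{\G}(X,F_*,n)^{\bullet})=\F(n-d)^X_{d-q}$ and $R^q\gamma_Z\underline{\G}(X,F_*,n)=\F(n-d)^Z_{d-q}$ recorded just before it. Your care in identifying the termwise map in $(i)$ as the one induced by $R^p\gamma_{\overline{\eta}}$ applied to the canonical morphism is the right way to ensure compatibility with differentials; note also that your derivation of $(iii)$ yields $\F(q)^X_m=0$ for $m\ne d$, which is the correct statement (the printed ``$m\ne 0$'' is a typo).
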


\begin{corol}
Let $F_*$ be an l.a.f. homology theory. Suppose $X$ is an
equidimensional variety that is regular outside of a codimension two
closed subset; then for any point $\eta\in X$ and any $n\in\Z$, we
have
$$
(\F(n)^X_d)_{\eta}=\hbox{$\bigcap\limits_D F_{n+d}(\OO_{X,D})\subset F_{n+d}(k(X))$},
$$
where $d=\dim(X)$ and
$D$ runs over all irreducible divisors in $X$ containing $\eta$.
\end{corol}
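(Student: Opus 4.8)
The plan is to compute the stalk $(\F(n)^X_d)_\eta$ via the Gersten resolution and to recognise each kernel that appears as the $F_\ast$-group of a discrete valuation ring. First I would reduce to the case that $X$ is irreducible of dimension $d$ by replacing $X$ with the irreducible component through $\eta$: for a point $\eta$ lying on a single component this changes neither side of the equality, since every irreducible divisor $D\ni\eta$ has generic point specialising to $\eta$ and hence lies on that component. Next, by the formula $H^q(\underline{\G}(X,F_*,m)^{\bullet})=\F(m-d)^X_{d-q}$ recorded before the corollary, applied with $q=0$ and $m=n+d$, the sheaf $\F(n)^X_d$ is the kernel of the map of sheaves $\underline{\G}(X,F_*,n+d)^0\to\underline{\G}(X,F_*,n+d)^1$. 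Passing to the stalk at $\eta$ is exact (a filtered colimit), and since $X$ is irreducible the degree-zero term becomes $F_{n+d}(k(X))$ while the degree-one term becomes $\bigoplus_{D}F_{n+d-1}(k(\eta_D))$, the sum running over the generic points $\eta_D$ of the irreducible divisors $D\subset X$ through $\eta$. Hence
$$
(\F(n)^X_d)_\eta=\bigcap_D\Ker\big(\partial_D\colon F_{n+d}(k(X))\to F_{n+d-1}(k(\eta_D))\big),
$$
where the intersection is inside $F_{n+d}(k(X))$ and $\partial_D$ is the $\eta_D$-component of the Gersten differential.

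The second step is to identify each $\Ker(\partial_D)$ with $F_{n+d}(\OO_{X,D})$. Since the singular locus of $X$ has codimension at least two, the generic point $\eta_D$ is a regular point of $X$, so $\OO_{X,D}=\OO_{X,\eta_D}$ is a discrete valuation ring; in particular $\Spec\OO_{X,D}$ is a regular irreducible local scheme of geometric type over $k$. As the Gersten complex is built from a complex of Zariski sheaves, its formation commutes with localization, so $\partial_D$ coincides with the (unique) differential of the two-term Gersten complex $\G(\Spec\OO_{X,D},F_*,n+d)^{\bullet}$. Now I would apply Proposition \ref{short-exact-sequence} to $\Spec\OO_{X,D}$ with $p=0$, together with the remark following it describing the Gersten differential as the composition of the two maps in the short exact sequences of Proposition \ref{short-exact-sequence} for the consecutive indices $n+d$ and $n+d-1$. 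Because $\Spec\OO_{X,D}$ has dimension one this produces a short exact sequence
$$
0\to F_{n+d}(\OO_{X,D})\to F_{n+d}(k(X))\stackrel{\partial_D}\longrightarrow F_{n+d-1}(k(\eta_D))\to 0 ,
$$
so that $F_{n+d}(\OO_{X,D})\hookrightarrow F_{n+d}(k(X))$ and $\Ker(\partial_D)=F_{n+d}(\OO_{X,D})$. Substituting into the previous display gives $(\F(n)^X_d)_\eta=\bigcap_D F_{n+d}(\OO_{X,D})$.

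The point that needs care, and that I expect to be the main obstacle, is the assertion that the component $\partial_D$ of the Gersten differential of $X$ depends only on a neighbourhood of $\eta_D$ and therefore agrees with the boundary map attached to $\OO_{X,D}$; this is the compatibility of the coniveau (dimension) spectral sequence with localization, standard but worth making explicit. An alternative route, using only material in the excerpt, is to exploit that $\F(n)^X_d$ is $1$-pure: Remark \ref{explicit-sheaves-pure} with $\xi=\eta$ and $\F=\F(n)^X_d$ gives an exact sequence
$$
0\to[\F(n)^X_d]_\eta\to\mbox{$\bigoplus\limits_{\zeta\in X^{(0)}}(i_{\overline{\zeta}})_*(\F(n)^X_d)_\zeta\to\bigoplus\limits_{D\ni\eta}(i_{\overline{\eta_D}})_*H^1_{\eta_D}(X,\F(n)^X_d)$},
$$
after which, working on the regular local scheme $\Spec\OO_{X,D}$ and using Corollary \ref{corol-Gerst}(i), one identifies the term $H^1_{\eta_D}(X,\F(n)^X_d)$ together with the map into it so that its kernel in $F_{n+d}(k(X))$ is again $F_{n+d}(\OO_{X,D})$. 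In either approach the essential analytic input is the localization sequence of axiom (WH2) for the pair $(\Spec\OO_{X,D},\{\eta_D\})$ together with Quillen's vanishing of $F_{n+d}(k(\eta_D))\to F_{n+d}(\OO_{X,D})$; both are packaged in Proposition \ref{short-exact-sequence} and genuinely use that $\OO_{X,D}$ is regular, which is exactly what the hypothesis ``regular outside a codimension two subset'' guarantees in codimension one.
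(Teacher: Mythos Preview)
Your proposal is correct and follows exactly the approach of the paper, which gives only a two-line sketch: ``This follows from the regularity of the discrete valuation ring $\OO_{X,D}$ for any $D$ and the exactness of the Gersten complex for $X_D=\Spec(\OO_{X,D})$.'' Your argument is precisely the unpacking of that sketch --- compute the stalk of $\F(n)^X_d=H^0(\underline{\G}(X,F_*,n+d)^{\bullet})$ as an intersection of kernels $\Ker(\partial_D)$ and identify each with $F_{n+d}(\OO_{X,D})$ via the Gersten exact sequence on the regular local scheme $\Spec\OO_{X,D}$ --- so there is nothing to add.
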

\begin{proof}
This follows from the regularity of the discrete valuation ring $\OO_{X,D}$
for any $D$ and the exactness of the Gersten complex for $X_D=\Spec(\OO_{X,D})$.
\end{proof}

\begin{examps}\label{homology_examples}
\hspace{0cm}

The following examples are homology theories
locally acyclic in fibrations:

1) $F_n(X)=K_n'(X)=\pi_{n+1}(BQ\M(X))$ for $n\ge 0$ and $F_n(X)=0$ for $n<0$,
where $\M(X)$ is the exact category of
coherent sheaves on $X$ (see \cite{Q}, proof of Theorem 5.11).

2) $F_n(X)=H_n(X,i)$ for some $i\in \Z$,
where $(H^*,H_*)$ is a Poincar\'e duality theory with
supports in the sense of Bloch--Ogus (see \cite{BO}, Proposition 4.5).

3) $F_n(X)=A_n(X;M)$, where $M$ is a cycle module over $k$ in the sense of Rost
(see \cite{Ros}, proof of Proposition 6.4). In this case we have
$\G(X,F_*,0)_{\bullet}=C_{\bullet}(X;M)$ in notations from \cite{Ros}
and $\G(X,F_*,m)_{\bullet}=0$ if $m\ne 0$.
\end{examps}

\begin{rmk}
The sheaf $\K_n(\OO_X)$ on a smooth variety $X$ over $k$ from both
Examples \ref{homology_examples}, 1) and Examples \ref{homology_examples}, 3)
for $M=\bigoplus\limits_{n\ge 0}K_n$ (compare with Corollary \ref{corol-Gerst},$(iii)$).
\end{rmk}

In the next three sections we develop some technique necessary for
the proof of Lemma~\ref{approximation}.

\subsection{Strongly locally effaceable pairs}\label{K-theory}

Let $F_*$ be a homology theory locally acyclic in fibrations over a
field $k$ and $X$ be an equidimensional variety. Consider
equidimensional subvarieties $Z\subset X$ and $\widetilde{Z}\subset
X$ of codimensions $p$ and $p-1$ in $X$, respectively, such that
$Z\subset \widetilde{Z}$.

\begin{defin}\label{defin-s.l.e.}
Suppose that for each (not necessary closed) point
$x\in Z$ and for any open subset $V\subset X$ containing $x$, there exists a
smaller open subset $W\subset X$ containing $x$ such that the natural map
$$
F_n(V\cap Z)\to F_n(W\cap\widetilde{Z})
$$
is zero for all $n\in\Z$. In addition, suppose that for any $q\ge 0$,
there exists an
assignment $R\mapsto\Lambda(R)$, where $R$ is an equidimensional
subvariety of codimension $q$ in $Z$, $\Lambda(R)$ is an
equidimensional subvariety of codimension $q$ in $\widetilde{Z}$ such that
$R\subset\Lambda(R)$ and for any (not necessary closed) point
$x\in Z$, for any open subset $V\subset X$ containing $x$, there exists a
smaller open subset $W\subset X$ containing $x$ such that the composition
$$
F_n(V\cap (Z\backslash R))\to F_n(V\cap (Z\backslash
\Lambda(R)))\to F_n(W\cap(\widetilde{Z}\backslash\Lambda(R)))
$$
is zero for all $n\in\Z$ (in fact, this condition makes sense
whenever $x\in R$). Then we say that the pair of subvarieties
$(Z,\widetilde{Z})$ is a {\it strongly locally effaceable pair}, or
an {\it s.l.e. pair} (developing the terminology from \cite{BO}).
\end{defin}

The assignment $\Lambda$ in the definition of s.l.e. pairs is needed
to establish a relation with the Gersten complex, as is stated in
the next proposition.

\begin{prop}\label{differential}
Suppose that the field $k$ is infinite and perfect. Let
$(Z,\widetilde{Z})$ be an s.l.e. pair of
subvarieties on a smooth variety $X$ over the field $k$ such
that $Z$ has codimension $p$ in $X$. Choose an arbitrary (not necessary
closed) point $x\in Z$. Suppose that the collection
$\{f_{z}\}\in\bigoplus\limits_{{z}\in Z_x^{(0)}}F_n(k(z))$ is
a cocycle in the Gersten complex on $X_{x}=\Spec(\OO_{X,x})$, i.e.,
suppose that $d_x(\{f_z\})=0$, where $d_x$ is the differential in the complex
$\G(X_x,F_*,n+p)^{\bullet}$. Then there exists a collection
$\{g_{\widetilde{z}}\}\in\bigoplus\limits_{{\widetilde{z}}\in
\widetilde{Z}_x^{(0)}}F_{n+1}(k(\widetilde{z}))$ such that
$d_x(\{g_{\widetilde{z}}\})=\{f_x\}$.
\end{prop}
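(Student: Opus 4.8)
The plan is to produce the primitive $\{g_{\widetilde z}\}$ essentially by hand: first lift the Gersten cocycle $\{f_z\}$ to a genuine homology class on an open dense part of $Z$, then push that class into $\widetilde Z$ using the s.l.e. hypothesis, and finally bound it there by a localization sequence, reading off $\{g_{\widetilde z}\}$ as the system of residues of the bounding class at the generic points of $\widetilde Z_x$.

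For the lifting step I would argue as in Proposition \ref{short-exact-sequence} and Corollary \ref{K-integrality}. Let $Z^{[p]}\subseteq Z$ be the union of those irreducible components of $Z$ whose generic points lie in the support of $\{f_z\}$. Since $F_n(k(z))$ is a filtered colimit of the groups $F_n(U)$ over dense opens $U\subseteq\overline{\{z\}}$, each $f_z$ lifts to such a $U_z$; after shrinking the $U_z$ so that they are pairwise disjoint and avoid $Z^{[p]}\cap\overline{Z\setminus Z^{[p]}}$, and then enlarging their complement inside $Z^{[p]}$ — locally at $x$ this is possible by prime avoidance, since the bad locus has codimension $\ge 1$ in every component of $Z^{[p]}$ — to a subvariety $R$ that is equidimensional of codimension one in $Z$, one obtains an element $\hat\beta\in F_n(Z\setminus R)$ whose restrictions to the generic points of $Z$ recover $\{f_z\}$ and which is $0$ off $Z^{[p]}$. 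The divisor $R$ here is exactly the ``exceptional'' locus that the assignment $\Lambda$ in Definition \ref{defin-s.l.e.} is designed to accommodate.

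Now localize at $x$. Passing to the limit over open neighbourhoods of $x$, the s.l.e. condition in its $\Lambda$-refined form says precisely that the composite
$$
F_n((Z\setminus R)_x)\longrightarrow F_n((Z\setminus\Lambda(R))_x)\stackrel{i_*}{\longrightarrow}F_n((\widetilde Z\setminus\Lambda(R))_x)
$$
vanishes, where $i\colon Z\setminus\Lambda(R)\hookrightarrow\widetilde Z\setminus\Lambda(R)$ is the (closed) embedding. Applying this to $\hat\beta$ and invoking the localization exact sequence (WH2) for $i$, whose open complement $(\widetilde Z\setminus(Z\cup\Lambda(R)))_x$ is a dense open subscheme of $\widetilde Z_x$, one gets $\gamma\in F_{n+1}((\widetilde Z\setminus(Z\cup\Lambda(R)))_x)$ with $\partial\gamma$ equal to the restriction of $\hat\beta$. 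I would then define $\{g_{\widetilde z}\}$ to be the family of restrictions of $\gamma$ to the generic points of the components of $\widetilde Z_x$. The identity $d_x\{g_{\widetilde z}\}=\{f_z\}$ then follows by comparing the Gersten differential on $X_x$ with localization boundary maps — this is the content of the remark after Proposition \ref{short-exact-sequence} together with the functoriality diagram in (WH2) — using that $\hat\beta$ restricts to $\{f_z\}$ at the codimension-$p$ points and is supported on $Z^{[p]}$.

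The diagram chases comparing localization boundaries with the Gersten differential are routine. The step that needs real care is the compatible choice of $R$ and of $\Lambda(R)$: one must arrange $R$ to be equidimensional of codimension one in $Z$ so that $\Lambda$ applies to it, and one must ensure that $\Lambda(R)$ does not swallow any of the (finitely many) support components of $Z$, since otherwise the final residue identity would fail to detect the corresponding $f_z$; the offending components, if any, are handled by a separate application of the same argument, or are avoided by first passing to a neighbourhood of $x$ on which the s.l.e. data is well behaved. Note that the only property of $X$ actually used is that $X_x=\Spec\OO_{X,x}$ is regular, which is where smoothness of $X$ enters, via Proposition \ref{short-exact-sequence}.
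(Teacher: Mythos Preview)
Your approach has a genuine gap. The element $\gamma\in F_{n+1}\bigl((\widetilde Z\setminus(Z\cup\Lambda(R)))_x\bigr)$ that you construct has its localization boundary controlled only for the closed embedding $Z\setminus\Lambda(R)\hookrightarrow\widetilde Z\setminus\Lambda(R)$. To compute the Gersten differential $d_x\{g_{\widetilde z}\}$, however, you need the boundary for the \emph{full} closed embedding $Z\cup\Lambda(R)\hookrightarrow\widetilde Z$, and the two agree only after restriction to the open part $Z\setminus\Lambda(R)$ of $Z\cup\Lambda(R)$. Since $\Lambda(R)$ is equidimensional of codimension one in $\widetilde Z$, hence of codimension $p$ in $X$, its generic points contribute to $d_x\{g_{\widetilde z}\}$, and there the boundary of $\gamma$ is completely uncontrolled. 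What you actually obtain is $d_x\{g_{\widetilde z}\}=\{f_z\}+(\text{unknown terms supported on }\Lambda(R))$. Your remark about $\Lambda(R)$ possibly ``swallowing'' support components of $Z$ addresses a different and much milder issue.

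There is a structural sign that something is missing: despite citing Corollary~\ref{K-integrality}, the lift you actually perform (each $f_z$ separately to a dense open $U_z$) never uses the hypothesis $d_x\{f_z\}=0$. If the argument worked as written it would produce, for an \emph{arbitrary} collection $\{f_z\}$, a primitive $\{g_{\widetilde z}\}$ supported on $\widetilde Z$; but then $d_x\{f_z\}=d_x^2\{g_{\widetilde z}\}=0$ would force every $\{f_z\}$ to be a cocycle, which is absurd. The paper's proof uses the cocycle condition essentially, via Corollary~\ref{K-integrality}, to lift $\{f_z\}$ to a class $\alpha\in F_n((Z\cup S)_x)$ with $S$ of codimension at least $p+1$ in $X$; it then invokes a \emph{second} s.l.e.\ pair $(\Lambda(R)\cup S,\widetilde S)$, supplied by Corollary~\ref{global_addition}, so that the resulting $\beta$ has $\partial\beta=\alpha$ living on $Z\cup S$, all of whose codimension-$p$ components lie in $Z$. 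This auxiliary pair is precisely what kills the spurious $\Lambda(R)$-contributions, and it is also where the hypothesis that $k$ is infinite and perfect genuinely enters --- so your closing remark that ``the only property of $X$ actually used is that $X_x$ is regular'' is another symptom of the gap.
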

\begin{proof}
It follows from Corollary \ref{K-integrality} that the collection
$\{f_z\}$ is defined by an element $\alpha\in F_n((Z\cup S)_x)$
for a certain closed subset $S\subset X$
such that each irreducible component of $S$ has
codimension at least $p+1$ in $X$ and is not contained in $Z$.
We may assume that $S$ is equidimensional
of codimension $p+1$ in $X$. Hence the
intersection $Z\cap S$ is contained in some equidimensional
subvariety $R\subset Z$ of codimension $p+2$ in $X$.
Furthermore, $\alpha\in F_n(V\cap (Z\cup S))$
for some open subset $V\subset X$ containing $x$.

By Corollary \ref{global_addition}, there is an equidimensional subvariety
$\widetilde{S}$ of codimension $p$ in $X$ such that the
pair $(\Lambda(R)\cup S,\widetilde{S})$ is strongly locally
effaceable. Consider the following commutative diagram, whose middle column is exact
in the middle term:
$$
\begin{array}{ccccc}
&&F_n(\Lambda(R)\cup S)&\longrightarrow& F_n(\widetilde{S})\\
&&\downarrow&&\downarrow\\ F_n(Z\cup S)&\longrightarrow&
F_n(\widetilde{Z}\cup S)&\longrightarrow& F_n(\widetilde{Z}\cup
\widetilde{S})\\ \downarrow&&\downarrow&&\\ F_n(Z\backslash R)&
\longrightarrow& F_n(\widetilde{Z}\backslash (\Lambda(R)\cup S))&&
\end{array}
$$
The map in the bottom raw is the composition
$$
F_n(Z\backslash R)\to
F_n(\widetilde{Z}\backslash\Lambda(R))\to
F_n(\widetilde{Z}\backslash(\Lambda(R)\cup~S)).
$$

Since the pairs
$(Z,\widetilde{Z})$ and $(\Lambda(R)\cup S,\widetilde{S})$ are
s.l.e., for the point $x\in Z$ and the open
subset $V\subset X$ considered above, there exists a smaller open subset $W\subset X$
containing $x$ such that the map
$$
F_n(V\cap (Z\cup S))\to
F_n(W\cap(\widetilde{Z}\cup\widetilde{S}))
$$
is zero. Therefore
$\alpha$ is a coboundary of an element $\beta\in F_{n+1}(W\cap
((\widetilde{Z}\cup\widetilde{S})\backslash(Z\cup S)))$ in the
localization exact sequence associated to the closed embedding
$W\cap(Z\cup S)\hookrightarrow
W\cap(\widetilde{Z}\cup\widetilde{S})$. In particular, $\beta$
defines a collection
$\{g_{\widetilde{z}}\}\in\bigoplus\limits_{\widetilde{z}\in
\widetilde{Z}_x^{(0)}}F_{n+1}(k(\widetilde{z}))$. Note that all codimension
$p-1$ irreducible components of $\widetilde{Z}\cup\widetilde{S}$ are contained
in $\widetilde{Z}$, while all codimension $p$ irreducible components of $Z\cup S$ are in
$Z$ (as before, codimensions are taken with respect to $X$). Therefore
$d_{x}(\{g_{\widetilde{z}}\})=\{f_z\}$ and the proposition is proved.
\end{proof}

\subsection{Existence and addition of strongly locally
effaceable pairs}\label{section-existence-addition}

Let $F_*$ be an l.a.f. homology theory over a field $k$ and $X$ be a
variety over $k$.

\begin{defin}
Let $f\ge 0$ be a natural number and $(Z,\widetilde{Z})$ be an
s.l.e. pair on $X$. Suppose that for each irreducible subvariety
$C\subset X$ and an equidimensional subvariety $R\subset Z$ of
codimension $q$ in $Z$ with $C \nsubseteq R$, we can choose an
equidimensional subvariety $\Lambda_C(R)\subset\widetilde{Z}$ of
codimension $q$ in $\widetilde{Z}$ such that $C\nsubseteq
\Lambda_C(R)$, $R\subset\Lambda_C(R)$, and the following property
holds true. For any $f$ irreducible subvarieties $C_1,\ldots,
C_f\subset\widetilde{Z}$, for any $f$ equidimensional subvarieties
$R_1,\ldots,R_f\subset Z$ (maybe of different codimensions in $Z$)
with $C_i\nsubseteq R_i$ for all $i,1\le i\le f$, for any schematic
point $x\in Z$, and an open subset $V\subset X$ containing $x$,
there exists a smaller open subset $W\subset X$ containing $x$ such
that the natural map
$$
F_n(V\cap (Z\backslash (R_1\cup\ldots\cup R_f)))\to F_n(W\cap
(\widetilde{Z}\backslash(\Lambda_{C_1}(R_1)\cup\ldots \cup
\Lambda_{C_f}(R_f))))
$$
is zero for all $n\in\Z$. Then we say that the
pair of subvarieties $(Z,\widetilde{Z})$ is {\it strongly locally
effaceable with the freedom degree at least $f$} or is an {\it $f$-s.l.e. pair}.
In particular, a {\it strongly locally effaceable pair
with the freedom degree at least zero}
is the same as a strongly locally effaceable pair.
\end{defin}

\begin{rmk}\label{subpair}
If the pair $(Z,\widetilde{Z})$ is $f$-s.l.e.
and $Z'\subset Z$ is any closed equidimensional subset of
the same dimension as $Z$, then the pair $(Z',\widetilde{Z})$ is also $f$-s.l.e.
\end{rmk}

Here is the existence theorem for strongly locally effaceable pairs
with a given freedom degree.

\begin{theor}\label{freely_effaceable}
Suppose that the field $k$ is infinite and perfect.
Let $X$ be an affine smooth variety over the field $k$.
Consider an equidimensional subvariety $Z$ of codimension $p\ge 2$ in
$X$ and a finite subset of closed points
$T\subset X\backslash Z$. Then for any natural
number $f\ge 0$, there exists a subvariety $\widetilde{Z}\supset Z$ that
does not contain any point from $T$ and such that
the pair $(Z,\widetilde{Z})$ is strongly locally effaceable with the
freedom degree at least $f$.
\end{theor}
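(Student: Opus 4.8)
The plan is to establish this by Quillen's geometric presentation technique, producing a single subvariety $\widetilde Z$ that serves uniformly for all the data in Definition~\ref{defin-s.l.e.} and in the notion of freedom degree, and to handle by induction on $\dim Z$ the locus on which the presentation degenerates.

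First I would put $d=\dim X$, $e=\dim Z=d-p$, and fix a closed embedding $X\hookrightarrow\Ab^N_k$. The geometric core is the following statement, valid because $k$ is infinite: a sufficiently general linear projection $q\colon\Ab^N_k\to\Ab^{d-1}_k$ has the properties that $q|_X$ is smooth of relative dimension one along a Zariski neighbourhood of any prescribed finite set of (not necessarily closed) points of $Z$ and along all of $T$; that $q|_Z\colon Z\to\Ab^{d-1}_k$ is finite and birational onto its image $q(Z)$; that $q(T)\cap q(Z)=\emptyset$; and that $q$ meets any further finite list of separation conditions we shall need. Here the hypothesis $p\ge 2$ — equivalently $\dim Z\le d-2$, so that $d-1\ge\dim Z+1$ — is exactly what gives the room for a general projection to $\Ab^{d-1}$ to be birational on $Z$ and for the last two conditions to hold. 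One then sets $\widetilde Z:=q^{-1}(q(Z))_{\mathrm{red}}$ (or the union of those of its components that meet $Z$, made equidimensional): since $q|_X$ has one-dimensional fibres and $\dim q(Z)=\dim Z$, this $\widetilde Z$ contains $Z$, avoids $T$, and has pure codimension $p-1$ in $X$.

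Next comes the effacement. Over the dense open $V_0\subseteq Z$ on which $q|_Z$ is an isomorphism onto its image, and over the open of $X$ on which $q$ is smooth, the morphism $\rho:=(q|_Z)^{-1}\circ q$ is a genuine retraction $\widetilde Z\to Z$ with $\rho\circ i_Z=\mathrm{id}_Z$, smooth of relative dimension one. Feeding $\rho$ into axiom (LAF), together with the localization sequence (WH2) and Corollary~\ref{K-integrality} (used to replace an arbitrary class in $F_n(V\cap Z)$ by one supported on a fixed closed subset), gives the vanishing of $F_n(V\cap Z)\to F_n(W\cap\widetilde Z)$ for a suitable smaller $W$, for every point $x\in Z$ outside the proper closed ``bad locus'' $B\subsetneq Z$, namely the union of $Z\setminus V_0$, the part of the critical locus of $q$ meeting $Z$, and the $q$-preimages of the various degeneracy sets. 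For the assignments $R\mapsto\Lambda(R)$ and $\Lambda_C$ I would take $\Lambda_C(R):=\overline{\rho^{-1}(R)}$, enlarged if necessary to be equidimensional; because $\rho$ is smooth of relative dimension one, $\rho^{-1}(R)$ has the same codimension in $\widetilde Z$ as $R$ has in $Z$, we have $R\subseteq\Lambda_C(R)$, and $\rho$ restricts to a retraction of $\widetilde Z\setminus\Lambda_C(R)$ onto $Z\setminus R$, so the second effacement condition of Definition~\ref{defin-s.l.e.} follows exactly as the first. The requirement $C\nsubseteq\Lambda_C(R)$ whenever $C\nsubseteq R$ is arranged using the remaining genericity of $q$: when $C\not\subset R$ the intersection $C\cap\rho^{-1}(R)$ is proper in $C$, and over an infinite field a suitable perturbation of $q$ (one more of the separation conditions) keeps $C$ off $\Lambda_C(R)$. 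The freedom degree $\ge f$ is then almost free of charge: since $\rho^{-1}(R_1\cup\dots\cup R_f)=\rho^{-1}(R_1)\cup\dots\cup\rho^{-1}(R_f)$, the one retraction $\rho$ induces at once a retraction of $\widetilde Z\setminus(\Lambda_{C_1}(R_1)\cup\dots\cup\Lambda_{C_f}(R_f))$ onto $Z\setminus(R_1\cup\dots\cup R_f)$, and avoiding the finitely many $C_1,\dots,C_f$ simultaneously is again only a finite list of conditions.

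It remains to deal with the bad locus $B\subsetneq Z$. Since $\dim B<\dim Z$, I would induct on $\dim Z$: apply the theorem to $B\subset X$ (with the same $T$ and the appropriate freedom degree) to obtain $\widetilde B$, and combine $\widetilde Z$, which works for $Z\setminus B$, with $\widetilde B$ into a single codimension-$(p-1)$ subvariety using the addition mechanism for strongly locally effaceable pairs (Corollary~\ref{global_addition}) and Remark~\ref{subpair}; the base case $\dim Z=0$ is a direct general-position argument with curves through the finitely many closed points of $Z$ avoiding $T$, whose local retractions onto those points are the structure maps. I expect the main obstacle to be precisely this last step together with the attendant genericity bookkeeping: one must verify that Quillen's presentation is available Zariski-locally over the infinite \emph{perfect} field $k$ (perfectness being what makes the generic-smoothness statements hold in all characteristics), that all the separation conditions — smoothness along $Z$ and along $T$, birationality of $q|_Z$ onto its image, $q(T)\cap q(Z)=\emptyset$, and avoidance of each of the finitely many $C_i$ — can be imposed together, and that the inductively produced $\widetilde B$ can be spliced with the generic $\widetilde Z$ without destroying either the s.l.e. property or the freedom degree.
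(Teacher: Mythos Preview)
Your overall strategy---Quillen's projection method---is the right one, but the attempt to make do with a \emph{single} generic projection has a genuine gap at the freedom-degree condition, and the proposed inductive repair is circular.

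The core problem is the requirement $C\nsubseteq\Lambda_C(R)$. In the definition of an $f$-s.l.e.\ pair the irreducible subvariety $C\subset X$ is arbitrary and is chosen \emph{after} $\widetilde Z$ has been fixed. With one projection $q$ and $\Lambda_C(R)=\overline{\rho^{-1}(R)}$, there is no way to guarantee this: since $\rho^{-1}(R)=q^{-1}(q(R))\cap\widetilde Z$, any irreducible curve $C$ contained in a fibre of $q|_{\widetilde Z}$ over a point of $q(R)$ satisfies $C\nsubseteq R$ yet $C\subseteq\Lambda_C(R)$. Your remark that ``a suitable perturbation of $q$ keeps $C$ off $\Lambda_C(R)$'' cannot work, because $q$ must be fixed before $C$ is presented. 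The paper's remedy is to construct not one projection but a \emph{finite family} $\Sigma$ of morphisms $\pi\colon X\to\Ab^{d-1}$, obtained by a Noetherian stratification argument, with the property that for every closed point $x\in Z$ and every $f$-tuple $y_1,\ldots,y_f\in X$ some $\pi\in\Sigma$ resolves $x$ and separates each $y_i$ from $Z\setminus\{y_i\}$. One then takes $\widetilde Z=\bigcup_{\pi\in\Sigma}\pi^{-1}(\pi(Z))$, and for a given $C$ defines $\Lambda_C(R)$ using only those $\pi$ in the subfamily $\Sigma_y$ determined by a chosen point $y\in C\setminus R$; this is precisely what forces $y\notin\Lambda_C(R)$ and hence $C\nsubseteq\Lambda_C(R)$.

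Your treatment of the bad locus $B$ is also problematic. First, you invoke Corollary~\ref{global_addition} to splice $\widetilde B$ with $\widetilde Z$, but that corollary (and Proposition~\ref{addition}) is deduced \emph{from} the present theorem, so the argument is circular. Second, even granting an addition mechanism, the induction on $\dim Z$ does not close: knowing that $(B,\widetilde B)$ is s.l.e.\ only controls maps $F_n(V\cap B)\to F_n(W\cap\widetilde B)$, whereas at a point $x\in B$ the s.l.e.\ condition for $(Z,\widetilde Z\cup\widetilde B)$ demands the vanishing of $F_n(V\cap Z)\to F_n(W\cap(\widetilde Z\cup\widetilde B))$. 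The paper's finite-family construction sidesteps this entirely: since for every $x\in Z$ some $\pi\in\Sigma$ resolves $x$, there is no residual bad locus to treat.
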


\begin{corol}\label{global_addition}
Suppose that the field $k$ is infinite and perfect.
Let $X$ be a smooth variety over the field $k$.
Consider an equidimensional subvariety $Z$ of codimension $p\ge 2$ in
$X$ and a closed subset $T\subset X$ such that
no irreducible component of $T$
is contained in $Z$ and $T$ has codimension at most $p-1$ in $X$.
Then there exists a subvariety $\widetilde{Z}\supset Z$ that
does not contain any irreducible component of $T$ and such that
the pair $(Z,\widetilde{Z})$ is s.l.e.
\end{corol}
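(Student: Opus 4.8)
The plan is to deduce the statement from the affine existence result, Theorem~\ref{freely_effaceable}, by patching over a finite affine cover; the observation that makes this work is that all vanishing conditions in the definition of an s.l.e. pair concern maps \emph{into} groups $F_n(W\cap\widetilde{Z})$ obtained by pushforward along a closed embedding, so they are preserved when $\widetilde{Z}$ is enlarged keeping $Z\subseteq\widetilde{Z}$. First I would fix a finite cover $X=U_1\cup\ldots\cup U_N$ by smooth affine opens, discarding those $U_\alpha$ with $Z\cap U_\alpha=\emptyset$, so that $Z\cap U_\alpha$ is equidimensional of codimension $p\ge 2$ in $U_\alpha$. Next I would replace the condition on $T$ by an avoidance condition at finitely many closed points off $Z$: a component $T_i$ of $T$ of codimension $<p-1$ cannot lie in any codimension $p-1$ subvariety for dimension reasons, while for a component $T_i$ of codimension exactly $p-1$ I choose, in each chart $U_\alpha$ meeting $T_i$, a closed point $t_{i,\alpha}\in(T_i\cap U_\alpha)\setminus Z$ (nonempty since $T_i\not\subseteq Z$); a dimension count --- a codimension $p-1$ component of $\overline{\widetilde{Z}_\alpha}$ is the closure of a component of $\widetilde{Z}_\alpha$, while $\overline{\widetilde{Z}_\alpha}\setminus U_\alpha$ has strictly smaller dimension --- then shows that if each $\widetilde{Z}_\alpha$ avoids the relevant $t_{i,\alpha}$, the union $\bigcup_\alpha\overline{\widetilde{Z}_\alpha}$ contains no $T_i$. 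Let $T_0\subset X\setminus Z$ be the finite set of all these points.

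Now I would apply Theorem~\ref{freely_effaceable} (freedom degree $0$ already suffices) on each $U_\alpha$ to $Z\cap U_\alpha$ and $T_0\cap U_\alpha$, obtaining $\widetilde{Z}_\alpha\supseteq Z\cap U_\alpha$ of codimension $p-1$, avoiding $T_0\cap U_\alpha$, with $(Z\cap U_\alpha,\widetilde{Z}_\alpha)$ s.l.e. and carrying an assignment $R'\mapsto\Lambda^\alpha(R')$. I set $\widetilde{Z}=\bigcup_\alpha\overline{\widetilde{Z}_\alpha}$ and, for an equidimensional $R\subseteq Z$ of codimension $q$, put $\Lambda(R)=\bigcup_\alpha\overline{\Lambda^\alpha(R\cap U_\alpha)}$ (omitting charts with $R\cap U_\alpha=\emptyset$). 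Passing to closures preserves (co)dimension, so $\widetilde{Z}$ is equidimensional of codimension $p-1$ and $\Lambda(R)$ is equidimensional of codimension $q$ in $\widetilde{Z}$ with $R\subseteq\Lambda(R)$; moreover $Z\subseteq\widetilde{Z}$ (each component of $Z$ is the closure of its trace in some $U_\alpha$), $\widetilde{Z}$ contains no irreducible component of $T$ by the previous paragraph, and $\Lambda^\alpha(R\cap U_\alpha)\subseteq\Lambda(R)\cap U_\alpha$ for every $\alpha$.

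Finally I would check the s.l.e. conditions for $(Z,\widetilde{Z})$ with the assignment $\Lambda$. Given $x\in Z$, pick a chart $U_\alpha\ni x$ and shrink the given $V$ so that $V\subseteq U_\alpha$. For the first condition the natural map $F_n(V\cap Z)\to F_n(W\cap\widetilde{Z})$ factors as $F_n(V\cap Z)\to F_n(W\cap Z)\to F_n(W\cap\widetilde{Z}_\alpha)\to F_n(W\cap\widetilde{Z})$, the last arrow being pushforward along the closed embedding $W\cap\widetilde{Z}_\alpha\hookrightarrow W\cap\widetilde{Z}$ (valid since $W\subseteq U_\alpha$ and $\widetilde{Z}_\alpha\subseteq\widetilde{Z}\cap U_\alpha$); but the composite of the first two arrows is exactly the effacement map governed by the s.l.e. property of $(Z\cap U_\alpha,\widetilde{Z}_\alpha)$, which becomes zero after shrinking $W$, so the whole composite vanishes. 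The second condition is handled identically, factoring through $F_n(W\cap(\widetilde{Z}_\alpha\setminus\Lambda^\alpha(R\cap U_\alpha)))$ and using the inclusion $\Lambda^\alpha(R\cap U_\alpha)\subseteq\Lambda(R)\cap U_\alpha$ to produce the needed open-restriction and closed-pushforward maps, whose compatibility is axiom (WH1). The main obstacle, I expect, is precisely this bookkeeping: verifying that the several squares of open-restriction and closed-pushforward maps commute --- which reduces to axioms (WH1)--(WH2) and functoriality of $f_*$ for closed embeddings --- together with the (co)dimension claims for the unions of closures defining $\widetilde{Z}$ and $\Lambda(R)$. The conceptual content is that the spurious irreducible components of $\widetilde{Z}$ contributed by the other charts are harmless, since the effacement is only required to vanish after pushing forward into all of $\widetilde{Z}$; this is why, in contrast to the inductive proof of Theorem~\ref{freely_effaceable} itself, no positive freedom degree is needed for the patching.
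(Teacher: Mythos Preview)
Your proof is correct and follows essentially the same approach as the paper: choose a finite affine open cover, apply Theorem~\ref{freely_effaceable} on each chart with a finite set of closed points witnessing the components of $T$, and set $\widetilde{Z}=\bigcup_\alpha\overline{\widetilde{Z}_\alpha}$ and $\Lambda(R)=\bigcup_\alpha\overline{\Lambda^\alpha(R\cap U_\alpha)}$. The paper states the last two verifications (that $\widetilde{Z}$ misses every component of $T$ and that $(Z,\widetilde{Z})$ is s.l.e.) without details, so your careful unpacking of the dimension count and of the factoring through $F_n(W\cap\widetilde{Z}_\alpha)$ via (WH1) is a welcome expansion rather than a deviation.
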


\begin{proof}
Consider a finite open affine covering $X=\cup_{\alpha}
U_{\alpha}$. For each $\alpha$ and for each irreducible component
of $T\cap U_{\alpha}$, choose a closed point on it
outside of $Z$ and thus get a finite subset $T'_{\alpha}\subset U_{\alpha}\backslash Z$.
The application of Theorem \ref{freely_effaceable}
for the intersection of all data with $U_{\alpha}$ yields the existence
of a closed subset $\widetilde{Z}_{\alpha}\subset U_{\alpha}$ such that
$\widetilde{Z}_{\alpha}$ does not contain any point from $T'_{\alpha}$
and the pair $(Z\cap U_{\alpha},\widetilde{Z}_{\alpha})$ is s.l.e.
We put $\widetilde{Z}=\cup_{\alpha}
\overline{\widetilde{Z}_{\alpha}}$, where the bar denotes the closure in $X$.
By the codimension assumption, $\widetilde{Z}$
does not contain any irreducible component of $T$. Also, the pair
$(Z,\widetilde{Z})$ is
s.l.e., where $\Lambda(R)$ can be taken as the union
over $\alpha$ of the closures of $\Lambda_{\alpha}(R\cap U_{\alpha})$ for
an equidimensional subvariety $R\subset Z$.
\end{proof}

\begin{corol}\label{global_addition-2}
Suppose that the field $k$ is infinite and perfect.
Let $X$ be a smooth quasiprojective variety over the field $k$.
Consider an equidimensional subvariety $Z$ of codimension $p\ge 2$ in
$X$ and a closed subset $T\subset X$ such that
no irreducible component of $T$
is contained in $Z$. Then there exists a subvariety $\widetilde{Z}\supset Z$ that
does not contain any irreducible component of $T$ and such that
the pair $(Z,\widetilde{Z})$ is s.l.e.
\end{corol}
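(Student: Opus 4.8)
The plan is to run the argument of Corollary \ref{global_addition} essentially verbatim, replacing the arbitrary finite affine cover used there by one adapted to $T$ via the quasiprojectivity of $X$. The hypothesis ``$T$ has codimension at most $p-1$'' enters the proof of Corollary \ref{global_addition} at exactly one point: after the local pieces $\widetilde{Z}_\alpha\subset U_\alpha$ have been produced and glued into $\widetilde{Z}=\bigcup_\alpha\overline{\widetilde{Z}_\alpha}$, one must exclude that an irreducible component $T_j$ of $T$ lies in $\widetilde{Z}$. If $T_j\subseteq\widetilde{Z}$ then $T_j\subseteq\overline{\widetilde{Z}_\alpha}$ for some $\alpha$, hence $T_j\cap U_\alpha\subseteq\widetilde{Z}_\alpha$; but a closed point of $(T_j\cap U_\alpha)\setminus Z$ was explicitly removed from $\widetilde{Z}_\alpha$ \emph{as long as} $T_j\cap U_\alpha\neq\emptyset$. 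The codimension restriction forces $T_j\cap U_\alpha\neq\emptyset$ by the dimension count $\dim\bigl(\overline{\widetilde{Z}_\alpha}\setminus U_\alpha\bigr)\le\dim X-p$. So it suffices to produce a finite affine open cover $X=\bigcup_\alpha U_\alpha$ such that $T_j\cap U_\alpha\neq\emptyset$ for every irreducible component $T_j$ of $T$ and every $\alpha$.

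Such a cover exists because $X$ is quasiprojective. First I would fix a locally closed embedding $X\hookrightarrow\P^N$ with projective closure $\overline{X}$, and set $Y=\overline{X}\setminus X$, a closed subset of $\P^N$ that is disjoint from $X$ and hence from every $T_j$; in particular the Zariski closure $\overline{T}_j\subseteq\overline{X}$ is not contained in $Y$. For $d\gg 0$ the degree-$d$ forms vanishing on $Y$ cut out $Y$ set-theoretically and have base locus exactly $Y$, so a general such form does not vanish identically on any of the finitely many $\overline{T}_j$. Choosing finitely many general forms $g_1,\dots,g_s$ of degree $d$ vanishing on $Y$ with $V(g_1,\dots,g_s)\cap\overline{X}=Y$ and with $\overline{T}_j\nsubseteq V(g_\alpha)$ for all $j,\alpha$, put $H_\alpha=V(g_\alpha)$ and $U_\alpha=X\setminus H_\alpha$. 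Since $Y\subseteq H_\alpha$ we have $U_\alpha=\overline{X}\setminus H_\alpha$, a closed subvariety of the affine variety $\P^N\setminus H_\alpha$, hence affine (and smooth, equidimensional, being open in $X$); the $U_\alpha$ cover $X$ because $\bigcap_\alpha H_\alpha\cap\overline{X}=Y$ is disjoint from $X$; and $T_j\cap U_\alpha=T_j\setminus H_\alpha\neq\emptyset$ because $\overline{T}_j\nsubseteq H_\alpha$ while $T_j$ is dense in $\overline{T}_j$.

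With this cover fixed, the construction is that of Corollary \ref{global_addition}: on each $U_\alpha$ apply Theorem \ref{freely_effaceable} to $Z\cap U_\alpha$ (of codimension $p\ge 2$ in the affine smooth variety $U_\alpha$) and to the finite set $T'_\alpha$ consisting of one closed point outside $Z$ on each irreducible component of $T\cap U_\alpha$, obtaining an s.l.e. pair $(Z\cap U_\alpha,\widetilde{Z}_\alpha)$ with $\widetilde{Z}_\alpha\cap T'_\alpha=\emptyset$; then set $\widetilde{Z}=\bigcup_\alpha\overline{\widetilde{Z}_\alpha}\supseteq Z$, with $\Lambda(R)$ taken as the union over $\alpha$ of the closures of the local assignments $\Lambda_\alpha(R\cap U_\alpha)$. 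The verification that $(Z,\widetilde{Z})$ is s.l.e. is identical to the one in Corollary \ref{global_addition}. Finally, if some component $T_j$ of $T$ were contained in $\widetilde{Z}$, then $T_j\subseteq\overline{\widetilde{Z}_\alpha}$ for some $\alpha$, so $T_j\cap U_\alpha\subseteq\widetilde{Z}_\alpha$; since $T_j\cap U_\alpha\neq\emptyset$ by the choice of cover, it is an irreducible component of $T\cap U_\alpha$ (no containment $T_j\cap U_\alpha\subseteq T_{j'}\cap U_\alpha$ can occur, as taking closures would give $T_j\subseteq T_{j'}$), hence it meets $T'_\alpha$, contradicting $\widetilde{Z}_\alpha\cap T'_\alpha=\emptyset$.

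The main obstacle is the step in the second paragraph: extracting from quasiprojectivity a finite affine cover each member of which meets every component of $T$. This is precisely what fails for a general variety and is the reason for the codimension hypothesis in Corollary \ref{global_addition}; the required input is the standard fact that for $d\gg0$ the forms of degree $d$ through $Y$ have base locus $Y$, together with a generic-position argument to also avoid the (finitely many) $\overline{T}_j$. Once the cover is in place, the rest is a routine transcription of the earlier proof.
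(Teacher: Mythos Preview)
Your proof is correct and follows essentially the same approach as the paper: use the quasiprojectivity of $X$ to manufacture an affine cover compatible with the components of $T$, then run the argument of Corollary~\ref{global_addition}. The paper's version is organized slightly more simply: it first chooses one closed point on each irreducible component of $T$ outside $Z$, giving a finite set $T'\subset X\setminus Z$, and then invokes the standard fact that a quasiprojective variety admits a finite affine open cover $X=\bigcup_\alpha U_\alpha$ with $T'\subset U_\alpha$ for every $\alpha$; with the points fixed once and for all, the verification that no $T_j$ lies in $\widetilde{Z}$ is immediate (if $T_j\subset\overline{\widetilde{Z}_\alpha}$ then the chosen point $t_j\in T'\subset U_\alpha$ lands in $\widetilde{Z}_\alpha$, contradiction), and one avoids your bookkeeping about components of $T\cap U_\alpha$.
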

\begin{proof}
For each irreducible component of $T$ we choose a closed point on it outside of $Z$.
Thus we
get a finite set of closed points $T'\subset X\backslash Z$. Since $X$ is quasiprojective,
there exists a finite open affine covering $X=\cup_{\alpha}
U_{\alpha}$ such that for each $\alpha$ we have $T'\subset U_{\alpha}$.
To conclude the proof, we repeat the same argument as in the proof of
Corollary \ref{global_addition}.
\end{proof}

Combining Proposition \ref{differential} and Corollary \ref{global_addition},
we get the following statement,
which could be considered as a uniform version of Gersten
conjecture for smooth varieties and has interest in its own
right.

\begin{corol}\label{uniform_Gersten}
Suppose that the field $k$ is infinite and perfect.
Let $X$ be a smooth variety over the field $k$. Then for
any equidimensional subvariety $Z\subset X$ of codimension $p$ in $X$
there exists an equidimensional subvariety $\widetilde{Z}\supset Z$ of
codimension $p-1$ in $X$ with the following property. Suppose we
are given an arbitrary (not necessary closed) point $x\in Z$ and a
collection $\{f_{z}\}\in\bigoplus\limits_{{z}\in Z_x^{(0)}}F_n(k(z))$
such that $\{f_{z}\}$ is a cocycle in the local Gersten
resolution at $x$, i.e., that $d_x(\{f_z\})=0$. Then there exists a
collection $\{g_{\widetilde{z}}\}\in\bigoplus\limits_{{\widetilde{z}}\in
\widetilde{Z}_x^{(0)}}F_{n+1}(k(\widetilde{z}))$ such that
$d_x(\{g_{\widetilde{z}}\})=\{f_z\}$.
\end{corol}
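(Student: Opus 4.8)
The corollary is obtained by feeding the output of Corollary~\ref{global_addition} into Proposition~\ref{differential}. Since a subvariety of codimension $p-1$ is requested, we may assume $p\ge 1$. If $p=1$ one takes $\widetilde Z=X$ (or, when $X$ is reducible, the union of the irreducible components of $X$ meeting $Z$): here $\widetilde Z_x^{(0)}$ is the single generic point of $X_x$, and the existence of the desired collection $\{g_{\widetilde z}\}$ is precisely the exactness in degree one of the Gersten complex on the regular local scheme $X_x$, which is part of Proposition~\ref{Gersten-resolution} (equivalently Proposition~\ref{short-exact-sequence}). So from now on assume $p\ge 2$.

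First I would apply Corollary~\ref{global_addition} to $Z\subset X$, say with $T=\emptyset$ so that the hypotheses on $T$ are vacuous. This produces an equidimensional subvariety $\widetilde Z\supset Z$ of codimension $p-1$ in $X$, together with an assignment $R\mapsto\Lambda(R)$, so that $(Z,\widetilde Z)$ is a strongly locally effaceable pair. Note that this $\widetilde Z$ is selected before any point $x$ is fixed; the uniformity over $x$ demanded in the statement is therefore already encoded in the definition of an s.l.e.\ pair, whose conditions quantify over all (not necessarily closed) points of $Z$.

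Now fix an arbitrary point $x\in Z$ and a collection $\{f_z\}\in\bigoplus_{z\in Z_x^{(0)}}F_n(k(z))$ with $d_x(\{f_z\})=0$. Because $k$ is infinite and perfect, $X$ is smooth, $(Z,\widetilde Z)$ is s.l.e.\ and $Z$ has codimension $p$ in $X$, Proposition~\ref{differential} applies directly and furnishes $\{g_{\widetilde z}\}\in\bigoplus_{\widetilde z\in\widetilde Z_x^{(0)}}F_{n+1}(k(\widetilde z))$ with $d_x(\{g_{\widetilde z}\})=\{f_z\}$. This is exactly the asserted lift, and the proof is complete.

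I do not expect a genuine obstacle: all the substance is already contained in Theorem~\ref{freely_effaceable} (hence Corollary~\ref{global_addition}) and in Proposition~\ref{differential}. The only points requiring care are (i) that a single $\widetilde Z$ coming from the global construction works for every point $x$ at once --- immediate, as explained above, from the definition of an s.l.e.\ pair --- and (ii) the degenerate low-codimension cases $p\le 1$, where the claim reduces to the classical exactness of the Gersten resolution over a regular local ring.
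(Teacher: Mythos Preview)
Your proof is correct and follows exactly the paper's approach: the paper simply states that the corollary follows by combining Proposition~\ref{differential} with Corollary~\ref{global_addition}, and you have spelled out this combination together with the $p=1$ boundary case (which is covered by Remark~\ref{codimension-one} and the classical Gersten exactness).
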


\begin{rmk}
Corollary \ref{uniform_Gersten} is stronger than Theorem 4.2 in \cite{BO}
or Theorem 5.11 in \cite{Q}. Namely in \cite{BO} the analogous result was shown
for a fixed subvariety $Z$, a fixed point $x\in Z$, and a fixed
collection $\{f_z\}$ on $Z_x$. The proof in \cite{BO} does not
seem to imply directly Corollary \ref{uniform_Gersten} and that is
why we use some different geometrical method during the proof of
Theorem \ref{freely_effaceable}.
\end{rmk}

\begin{proof}[Proof of Theorem \ref{freely_effaceable}]
The proof is in two steps.

{\it Step 1.} During this step ``a point'' always means ``a closed point''.
Recall that $d=\dim X$. We say that a morphism $\pi:X\to\Ab^{d-1}$
{\it resolves} a point $x\in Z$ if $\pi$ is smooth of relative
dimension one at $x$, the restriction $\varphi=\pi|_{Z}$ is
finite, $\varphi^{-1}(\varphi(x))=\{x\}$, and
$\pi(T)\cap\pi(Z)=\emptyset$.
The following geometric result is a globalization of Quillen's
construction used in his proof of Gersten conjecture, see
\cite{Q}, Lemma 5.12 and \cite{BO}, Claim on p.191.

\begin{prop}\label{projections}
Under the above assumptions, there exists a finite set $\Sigma$ of
morphisms $\pi:X\to\Ab^{d-1}$ such that for any $f$ points
$y_1,\ldots,y_f\in X$ and any point $x\in Z$, there exists $\pi\in
\Sigma$ such that $\pi$ resolves $x$ and
$\pi(y_i)\notin\pi(Z\backslash\{y_i\})$ for all $i,1\le i\le f$.
\end{prop}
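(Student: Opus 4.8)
The plan is to take $\Sigma$ to consist of suitably chosen linear projections. First I would fix a closed embedding $X\hookrightarrow\Ab^N$ (possible as $X$ is affine) and let $\mathbf L$ be the affine space over $k$ parametrizing linear maps $L\colon\Ab^N\to\Ab^{d-1}$, writing $\pi_L=L|_X\colon X\to\Ab^{d-1}$. Since $\codim_X Z=p\ge 2$, one has $\dim Z=d-p\le d-2$, and the classical Quillen/Bloch--Ogus transversality arguments show that a general $L\in\mathbf L$ has the following properties: $\pi_L|_Z$ is finite and an isomorphism onto its image over a dense open of $Z$ (here $\dim Z\le d-2<d-1$ is used), $\pi_L$ is smooth of relative dimension one along a dense open of $X$ meeting $Z$ in a dense open, and $\pi_L(T)\cap\pi_L(Z)=\emptyset$; one uses here that $k$ is infinite (to get $k$-points of the relevant dense open loci) and perfect (generic smoothness). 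Let $\mathcal U\subseteq\mathbf L$ be the corresponding dense open. For $L\in\mathcal U$ there is a dense open $V_L\subseteq Z$ such that $\pi_L$ resolves every point of $V_L$, and, applying the same reasoning at the generic points of the components of an arbitrary closed subset $Z'\subseteq Z$, the set $\{L\in\mathcal U:\pi_L\text{ resolves a dense open of }Z'\}$ is again dense open in $\mathbf L$.

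For the separation of the $y_i$, I would record that for a closed point $y\in X$ the set $\mathcal B_y=\{L\in\mathbf L:\pi_L(y)\in\pi_L(Z\setminus\{y\})\}$ is the locus of $L$ whose kernel meets the ``bad direction'' variety $W_y=\overline{\{[z-y]:z\in Z,\ z\ne y\}}\subset\P^{N-1}$, which has dimension $\le\dim Z\le d-2$; since $N-d+\dim W_y<N-1$, each $\mathcal B_y$ is a proper closed subset of $\mathbf L$. The varieties $W_y$, hence the $\mathcal B_y$, form an algebraic family as $y$ runs over $X$, so (enlarging $N$ if needed so the incidence dimension counts go through) there is a uniform degree bound $\delta$ for the $\mathcal B_y$ and a line $\ell\subset\mathbf L$ contained in no $\mathcal B_y$; such a line meets each $\mathcal B_y$ in at most $\delta$ points while meeting every fixed dense open of $\mathbf L$ in a dense open of $\ell$. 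Put $r=f\delta+1$.

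Now I would run a Noetherian induction on $Z$. Put $Z_0=Z$; at stage $j\ge 1$, given $Z_{j-1}$, choose a line $\ell_j\subset\mathbf L$ as above meeting $\mathcal U^{(j)}:=\{L\in\mathcal U:\pi_L\text{ resolves a dense open of }Z_{j-1}\}$ and contained in no $\mathcal B_y$, and pick $r$ distinct $k$-points $L_{j,1},\dots,L_{j,r}\in\ell_j\cap\mathcal U^{(j)}$ (possible since $k$ is infinite and $\ell_j\cap\mathcal U^{(j)}$ is a dense open of $\ell_j\cong\Ab^1$). Each $L_{j,t}$ resolves a dense open $V_{j,t}\subseteq Z_{j-1}$; set $V'_j=\bigcap_{t=1}^r V_{j,t}$ and $Z_j=Z_{j-1}\setminus V'_j$. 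The chain $Z_0\supsetneq Z_1\supsetneq\cdots$ stabilizes at $\emptyset$ after $m$ steps, and I claim $\Sigma=\{\pi_{L_{j,t}}:1\le j\le m,\ 1\le t\le r\}$ works: given a closed point $x\in Z$ and closed points $y_1,\dots,y_f\in X$, let $j$ be the index with $x\in V'_j$; then all of $\pi_{L_{j,1}},\dots,\pi_{L_{j,r}}$ resolve $x$. Were all $L_{j,t}$ in $\mathcal B_{y_1}\cup\dots\cup\mathcal B_{y_f}$, pigeonhole would force some $\mathcal B_{y_i}$ to contain at least $\lceil r/f\rceil=\delta+1$ of them, contradicting $|\mathcal B_{y_i}\cap\ell_j|\le\delta$; hence some $L_{j,t}$ avoids all the $\mathcal B_{y_i}$, i.e. $\pi_{L_{j,t}}(y_i)\notin\pi_{L_{j,t}}(Z\setminus\{y_i\})$ for all $i$, while $\pi_{L_{j,t}}$ resolves $x$.

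The main obstacle is the genericity input of the first paragraph: showing that a general linear projection resolves a dense open subset of a prescribed closed subset $Z'\subseteq Z$ — in particular that $\pi_L|_{Z'}$ is generically injective with $\varphi_L^{-1}\varphi_L(x)=\{x\}$ in $Z$ on a dense open of $Z'$, that the smooth locus of $\pi_L$ meets every component of $Z'$, and that the various bad-direction loci genuinely have dimension $\le d-2$ in the relevant fibered situations. This is precisely the globalization of Quillen's Lemma 5.12 (and the point where perfectness of $k$ enters), and carrying it out uniformly over $Z'$, rather than for a single subvariety and point as in \cite{BO}, is the technical heart of the argument; by contrast, the line bookkeeping of the second and third paragraphs (the uniform degree bound on $\mathcal B_y$ and the existence of a line avoiding every $\mathcal B_y$) is routine once $N$ is taken large enough.
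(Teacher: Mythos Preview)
Your approach is genuinely different from the paper's, and the part you call ``routine'' is where it breaks.

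Write $\ell=\{L_0+tL_1:t\in k\}$. Then $\ell\subset H_{y,z}:=\{L:L(y-z)=0\}\subset\mathcal B_y$ precisely when $y-z\in\ker L_0\cap\ker L_1$. So a necessary condition for $\ell$ to lie in no $\mathcal B_y$ is that the projective linear space $\Pi_\ell:=\P(\ker L_0\cap\ker L_1)\subset\P^{N-1}$ miss the \emph{global} secant variety $W_X:=\overline{\{[y-z]:y\in X,\ z\in Z,\ y\ne z\}}$. One has $\dim\Pi_\ell\ge N-2(d-1)-1=N-2d+1$ and $\dim W_X\le d+(d-p)=2d-p$, with equality in the generic situation. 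For $p=2$ these have complementary dimensions in $\P^{N-1}$, and the projective dimension theorem forces $\Pi_\ell\cap W_X\ne\emptyset$ for \emph{every} line $\ell$. Concretely, when $d=p=2$ and $X\subset\Ab^N$ has degree $>1$, the condition ``$\ell$ in no $\mathcal B_y$'' amounts to asking that the map $(L_0,L_1)\colon X\to\Ab^2$ have fibre $\{z\}$ over each $z\in Z$; a generic linear projection of a surface of degree $>1$ has all fibres of size $>1$, so a generic line fails, and in fact the complementary--dimension count above shows every line fails. Enlarging $N$ (e.g.\ by a Veronese re-embedding) does not help: each $\mathcal B_y$ remains a union of linear subspaces $H_{y,z}$ of codimension $d-1$ in $\mathbf L$, and $\dim W_X$ is unchanged, so the same obstruction persists. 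Thus your pigeonhole step cannot get off the ground when $p=2$, and this is exactly the case that matters most in the sequel.

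The paper avoids any uniform-in-$y$ statement of this kind. Its basic building block, Claim~\ref{noetherian_induction}, only handles a \emph{fixed} finite set $y'_1,\dots,y'_f\in X\setminus Z$ together with closed subsets $Z_0,\dots,Z_f\subset Z$: via two successive linear projections from centers in the projective closure (and the entry-locus Lemma~\ref{entry_locus}) it produces a single $\pi$ resolving an open $U_0\subset Z_0$ and separating the $y'_i$ and an open $U_i\subset Z_i$ from $Z$. Proposition~\ref{projections} is then obtained by a double induction: a finite open covering of $(X\setminus Z)^{\times f}$ to vary the $y'_i$, and a decreasing induction on the number $e$ of $y_i$ constrained to lie on given irreducible pieces of $Z$, with a Noetherian stratification of $Z$ at each step. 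The separation conditions are thus only ever imposed for finitely many configurations at a time, which is why no uniform ``line contained in no $\mathcal B_y$'' is needed.
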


\begin{proof}
Using Claim \ref{noetherian_induction}, we prove by decreasing
induction on $e$, $-1\le e\le f$ that for any $e+1$ irreducible
subsets $Z_0\ldots,Z_{e}$ in $Z$ there exist non-empty open subsets
$U_0\subset Z_0,\ldots,U_e\subset Z_{e}$ and a finite set of
morphisms $\Sigma$ such that the statement of Proposition
\ref{projections} is true for all collections of points
$(x,y_1,\ldots,y_f,y_1',\ldots,y_f')$ satisfying
$x\in U_0$, $y_i\in U_i$ for all $i,1\le i\le e$, $y_i\in Z$ for
all $i,e+1\le i\le f$, and $y'_i\in X\backslash Z$ for all $i,0\le i\le f$.
Note that for $e=-1$ this immediately implies the needed statement of Proposition
\ref{projections}.

Suppose that $e=f$ and $y'_1,\ldots,y'_f$ are $f$ arbitrary points in
$X\backslash Z$. The application of Claim \ref{noetherian_induction} for
the points $y'_1,\ldots,y'_f$ and the closed subsets $Z_0,\ldots, Z_f$ in $Z$
yields the existence of non-empty open subsets $U_0\subset Z_0,\ldots,
U_f\subset Z_f$ and a morphism $\pi$ satisfying the conditions from
Claim \ref{noetherian_induction}. Since the conditions $y'_i\notin
\pi^{-1}(\pi(Z))$, $1\le i\le f$ are open,
there is a finite open covering $\cup_{\alpha}V_{\alpha}$ of the direct product
$(X\backslash Z)^{\times f}$ such that for all $\alpha$, any collection
$(y'_1,\ldots,y'_f)$ from the open subset $V_{\alpha}$ satisfies the
conditions from Claim \ref{noetherian_induction} with respect to some non-empty
open subsets $U^{\alpha}_0\subset Z_0,\ldots,U^{\alpha}_f\subset Z_f$ and
a morphism $\pi^{\alpha}$. Taking the finite intersections
$U_i=\cap_{\alpha}U^{\alpha}_i$ for each $i,0\le i\le f$, we get the needed
open subsets in $Z_i$, $0\le i\le f$, while the needed finite set
of morphisms is $\Sigma=\{\pi_{\alpha}\}$.

Now let us do the induction step from $e$ to $e-1$. Choose any
irreducible component $C_1$ of $Z$. By the inductive hypothesis,
there exist non-empty open subsets
$U^1_0\subset Z_0,\ldots,U^1_{e-1}\subset Z_{e-1},U^1_{e}\subset C_1$
and a finite set of morphisms $\Sigma_1$ such that they
satisfy the conditions stated above. We may
assume that the subset $U^1_e\subset C_1$ is also open in $Z$. Let $C_2$
be one of the irreducible components of $Z\backslash U^1_e$. Again, by
the inductive hypothesis, there exist other open subsets
$U^2_0\subset Z_0,\ldots,U^2_{e-1}\subset Z_{e-1},U^2_e\subset C_2$
and a finite set of morphisms $\Sigma_2$
such that they satisfy the conditions stated above.
We repeat the same step until we come
to the end of the obtained finite stratification of $Z$ by open
subsets $U_e^j$ in $C_j$. Taking the finite intersections
$U_i=\cap_jU^j_i$ for each $i,0\le i\le e-1$, we get the needed
open subsets in $Z_i$, $0\le i\le e$, while the needed finite set of morphisms
is equal to the finite union $\Sigma=\cup_j\Sigma_j$.
\end{proof}

\begin{claim}\label{noetherian_induction}
For any $f$ points $y'_1\ldots,y'_f\in X\backslash Z$ and $f+1$
closed subsets $Z_0,\ldots,Z_f\subset Z$ there exist non-empty open
subsets $U_i\subset Z_i$, $0\le i\le f$ and a morphism $\pi:X\to
\Ab^{d-1}$ such that $\pi$ resolves all point $x$ from $U_0$ (with
respect to $Z$), $\pi(y_i)\notin\pi(Z\backslash\{y_i\})$ for
any point $y_i\in U_i$, $1\le i\le f$, and $\pi(y'_i)\notin\pi(Z)$ for all
$i,1\le i\le f$.
\end{claim}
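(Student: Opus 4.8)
The plan is to construct $\pi$ as a sufficiently general linear projection, globalising Quillen's argument (\cite{Q}, Lemma~5.12; see also the Claim on p.~191 of \cite{BO}). First I would fix a closed embedding $X\hookrightarrow\Ab^N$ and let $\mathcal{P}$ be the affine space over $k$ parametrising $(d-1)$-tuples $(\ell_1,\ldots,\ell_{d-1})$ of affine-linear forms on $\Ab^N$; each point of $\mathcal{P}$ gives a morphism $\pi=(\ell_1,\ldots,\ell_{d-1})\colon X\to\Ab^{d-1}$. Since $k$ is infinite, every non-empty open subscheme of $\mathcal{P}$ has a $k$-point, so it suffices to show that the locus of $\pi$ satisfying each of the required properties is non-empty open in $\mathcal{P}$, to choose $\pi$ in their finite intersection, and only afterwards to define the subsets $U_i\subset Z_i$ in terms of this $\pi$. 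Here I read ``$\pi$ resolves $x$ with respect to $Z$'' as the three conditions from the definition of ``resolves'' that do not mention $T$, namely that $\pi$ is smooth of relative dimension one at $x$, that $\varphi:=\pi|_Z$ is finite, and that $\varphi^{-1}(\varphi(x))=\{x\}$; the condition $\pi(T)\cap\pi(Z)=\emptyset$ is taken over here by the requirements $\pi(y'_i)\notin\pi(Z)$.

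For the finiteness of $\varphi$ I would pass to the projective closures $\overline X,\overline Z\subset\P^N$ and homogenise the $\ell_i$; then $\pi$ extends to the linear projection $\P^N\dashrightarrow\P^{d-1}$ with centre $\Lambda=V(x_0,\ell_1,\ldots,\ell_{d-1})$, a linear subspace of dimension $N-d$ lying in the hyperplane at infinity $H_\infty$. As the $\ell_i$ vary, $\Lambda$ runs through the general codimension-$(d-1)$ linear subspaces of $H_\infty\cong\P^{N-1}$, whereas $\overline Z\cap H_\infty$ has dimension at most $\dim Z-1=d-p-1$; since $(N-d)+(d-p-1)<N-1$, the condition $\Lambda\cap\overline Z=\emptyset$ holds on a non-empty open of $\mathcal{P}$. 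For such $\pi$ the projective dimension theorem forces every fibre of $\overline Z\to\P^{d-1}$ to be finite, so that morphism is finite and hence so is its restriction $\varphi=\pi|_Z$. Independently, for each point $x\in X$ the set of $\pi$ with $d\pi_x\colon T_xX\to k^{d-1}$ surjective is non-empty open in $\mathcal{P}$, and a routine incidence argument then shows that for $\pi$ in a non-empty open of $\mathcal{P}$ the open locus $\{x\in X:\pi\text{ is smooth of relative dimension one at }x\}$ meets every irreducible component of each $Z_i$.

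The heart of the proof, and the only place the hypothesis $p\ge 2$ enters, is the injectivity of $\varphi$ along its fibres. For a fixed $i$ consider the incidence variety $W_i=\{(\pi,x,x')\in\mathcal{P}\times Z_i\times Z:\ x\ne x',\ \pi(x)=\pi(x')\}$: over a pair $(x,x')$ with $x\ne x'$ the relation $\pi(x)=\pi(x')$ is $d-1$ independent linear conditions on $\pi$, so $\dim W_i=\dim\mathcal{P}+\dim Z_i+\dim Z-(d-1)$. Projecting $W_i$ to $\mathcal{P}$ and using $\dim Z_i,\dim Z\le d-p$, the generic fibre has dimension at most $2(d-p)-(d-1)=d-2p+1$, which is strictly smaller than $\dim Z_i$ exactly because $p\ge 2$; hence for $\pi$ in a non-empty open of $\mathcal{P}$ the image of that fibre in $Z_i$ is contained in a proper closed subset, whose complement contains a non-empty open $U_i\subset Z_i$ (for $i=0$ one also intersects with the smoothness locus of the previous paragraph, which meets every component of $Z_0$, and takes $U_0$ to be the resulting non-empty open subset). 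The same count applied to $\{(\pi,z)\in\mathcal{P}\times Z:\ \pi(z)=\pi(y'_i)\}$ gives dimension $\dim\mathcal{P}+(d-p)-(d-1)=\dim\mathcal{P}+1-p<\dim\mathcal{P}$, again by $p\ge 2$, so $\pi(y'_i)\notin\pi(Z)$ on a non-empty open of $\mathcal{P}$. Taking $\pi$ in the intersection of these finitely many non-empty open subsets, with the corresponding $U_0,\ldots,U_f$, yields all the required properties.

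I expect the numerical inequality $2\dim Z-(d-1)<\dim Z$, i.e.\ $\codim_X Z\ge 2$, to be the real content: this is precisely what prevents the ``double-point'' incidence varieties $W_i$ from dominating the $Z_i$, and thereby forces $\varphi$ to be injective over a dense open of each $Z_i$. Everything else --- finiteness of $\varphi$ via a centre disjoint from $\overline Z$, genericity of surjectivity of the differential, and producing an actual $k$-rational $\pi$ out of ``generic $\pi$'' --- is routine once $k$ is infinite.
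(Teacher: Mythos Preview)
Your parameter-space dimension count is a genuinely different route from the paper's and, once a small slip is repaired, it works. The paper instead builds $\pi$ in two explicit steps: it first picks a centre $L'\subset H$ of codimension $d-2$ so that $\pi_{L'}|_{\overline Z}$ is finite and forms the pull-backs $Z'_i=\varphi_{L'}^{-1}(\varphi_{L'}(\overline{Z_i}))$; it then chooses a hyperplane $L\subset L'$ disjoint from each join $x_i*Z'_i$ (and from $T*\overline Z$), and invokes the entry-locus Lemma~\ref{entry_locus} to get the fibrewise injectivity on an open $U_i\ni x_i$. Your incidence-variety argument trades this constructive two-step picture and Lemma~\ref{entry_locus} for a shorter, more uniform count in the space of all linear projections; the paper's version, in exchange, makes the geometric mechanism (secants and joins) visible and yields Lemma~\ref{entry_locus} as a reusable by-product.

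Two small repairs are needed. First, the bound on the generic fibre of $W_i\to\mathcal P$ should be $\dim Z_i+\dim Z-(d-1)=\dim Z_i-(p-1)$, not $2(d-p)-(d-1)$: the claim allows arbitrary closed $Z_i\subset Z$, and in the Noetherian induction of Proposition~\ref{projections} their dimensions genuinely drop, so you cannot assume $\dim Z_i=d-p$; with the sharper bound one has $\dim Z_i-(p-1)<\dim Z_i$ for every $\dim Z_i\ge 0$ precisely when $p\ge 2$, which is the intended content. Second, ``resolves'' in this paper includes the requirement $\pi(T)\cap\pi(Z)=\emptyset$ for the ambient finite set $T\subset X\setminus Z$ from Theorem~\ref{freely_effaceable}; this is not automatically absorbed by the $y'_i$-conditions (the paper handles it by enlarging $T$ to contain the $y'_i$), but your own codimension count for $y'_i$ applies verbatim to each point of $T$, so only the bookkeeping is missing.
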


\begin{proof}
Let $\overline{X}\subset\P^N$ be a projective variety such that
$X=\overline{X}\backslash H$, where $H\subset\P^N$ is a hyperplane.
In what follows the bar denotes the projective closure in $\P^N$ and the star
denotes a join of two projective subvarieties in $\P^N$.

Without loss of generality we may assume that $T$ contains the points
$y'_1\ldots,y'_f$ and that $Z_i$ are irreducible for all $i,0\le i\le f$.
Let $x'_i$ be an arbitrary smooth point on $Z_i$ for each $i,0\le i\le f$
(such $x'_i$ exist because the field is perfect). We have the
following dimension conditions: $\dim(H\cap\overline{Z})\le d-3$,
$\dim(H\cap (T*\overline{Z}))\le d-2$,
$\dim(H\cap \overline{T_{x'_0} X})=d-1$, and $\dim(H\cap
\overline{T_{x'_i} Z_i})\le d-3$ for all $i,0\le i\le f$. Since the ground
field is infinite, there exists a projective subspace
$L'\subset H$ of codimension $d-2$ in $H$ such that $L'$ does not
intersect with $\overline{Z}$, intersects $T*\overline{Z}$ in a finite set of
points, intersects $\overline{T_{x'_0} X}$ in a line, and does not intersect
with any $\overline{T_{x'_i}Z_i}$ for $0\le i\le f$. Note that the
projection $\pi_{L'}$ with the center at $L'$ defines on
$\overline{Z}$ a finite morphism $\varphi_{L'}$. Put
$Z_i'=\varphi_{L'}^{-1}(\varphi_{L'}(\overline{Z_i}))\subset
\overline{Z}$ for $0\le i\le f$. For each $i,0\le i\le f$, let $x_i$ be an arbitrary
point on $Z_i\subset Z_i'$ such that $x_i$ is smooth on $Z'_i$, $\overline{T_{x_i}Z_i}$
does not intersect with $L'$, and $\overline{T_{x_0} X}$ intersects $L'$ in a line.

We claim that the intersection $L'\cap (x_i*Z_i')$ is a finite
set of points for all $i,0\le i\le f$.
Indeed, each join $x_i*Z'_i$ is the union two subsets. The first
one is the tangent space to $Z'_i$ at $x_i$ and does not
intersect with $L'$. The second one is the union of lines passing
through $x_i$ and other points from $Z_i'$. The intersection of
this union of lines with $L'$ corresponds to the fiber of $x_i$
under the finite morphism $\varphi_{L'}$ and therefore is finite.
Hence there exists a hyperplane $L\subset L'$ that does not
intersect with the joins $T*\overline{Z}$ and $x_i* Z_i'$ for any $i,0\le
i \le f$ and that intersects the tangent spaces
$\overline{T_{x_0}X}$ in one point. Since the variety $X$ is smooth, the projection
$\pi_L$ with the center at $L$ is smooth at $x_0$. Besides, the map $\pi_L$
can not glue points from $Z_i'$
with points from $\overline{Z}\backslash Z_i'$ for any $i,0\le i\le
f$. Therefore the application of Lemma \ref{entry_locus} with $Y=Z'_i$ yields that
there exist non-empty open subsets $U_i\subset Z_i$ containing $x_i$ such that
$\pi_L$ resolves all points $x$ from $U_0$ and
$\varphi_L^{-1}(\varphi_L(y_i))=\{y_i\}$ for all points $y_i$ from
$U_i$, $1\le i\le f$, where $\varphi_L=\pi_L|_{Z}$. In addition,
$\pi(T)$ does not intersect with $\pi(Z)$, and, in particular, $\pi(y'_i)\notin
\pi(Z)$ for all $i,1\le i\le f$.
\end{proof}

We have used the following fact from projective geometry.

\begin{lemma}\label{entry_locus}
Let $Y\subset\P^N$ be a projective variety, $x\in Y$ be a smooth
point on $Y$. Suppose that a projective subspace $M\subset \P^N$
does not intersect with the join $x*Y$. Then there exists an open
subset $W\subset Y$ containing $x$ such that
$\varphi^{-1}(\varphi(y))=\{y\}$ for all $y\in W$, where $\varphi$
is the restriction to $Y$ of the projection $\pi_{M}$ with the
center at $M$.
\end{lemma}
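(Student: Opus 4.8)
Write $m=N-1-\dim M$, so that $\pi_M$ has target $\P^m$. The plan is to exhibit the locus where $\varphi=\pi_M|_Y$ fails to be injective as a closed subset of $Y$, and then to use the hypothesis to push the point $x$ out of it. First I would note that every chord $\overline{xy}$ with $y\in Y$, and the point $x$ itself, lies in $x*Y$; hence $M\cap(x*Y)=\emptyset$ forces $M\cap Y=\emptyset$ and $x\notin M$, so $\varphi\colon Y\to\P^m$ is a genuine morphism (in suitable homogeneous coordinates $\pi_M$ is an affine morphism on $\P^N\setminus M$, so $\varphi$ is affine; being also proper it is finite, although this will not be needed below). If $y\neq y'$ in $Y$ satisfy $\varphi(y)=\varphi(y')$, then $y,y'$ lie in a common fibre of $\pi_M$, which is a linear subspace $L\supseteq M$ with $\dim L=\dim M+1$; since $Y\cap M=\emptyset$, the line $\overline{yy'}\subseteq L$ is not contained in $M$, so it meets $M$ in exactly one point. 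Taking $y'=x$ yields $\varphi^{-1}(\varphi(x))=\{x\}$, because $\overline{xy}\subseteq x*Y$ avoids $M$ for every $y\neq x$.

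Next I would set $\Gamma\subseteq Y\times Y$ to be the closure of the locally closed set $\{(y,y')\in Y\times_{\P^m}Y:\ y\neq y'\}$ of ordered pairs of distinct points of $Y$ lying in a common fibre of $\varphi$, and prove that $(x,x)\notin\Gamma$. To this end, consider the closure $\mathcal{I}$ in $Y\times Y\times\mathbb{G}(1,N)$ of the graph of $(y,y')\mapsto\overline{yy'}$ over that locally closed set, with first projection $q\colon\mathcal{I}\to Y\times Y$; then $q$ is proper with image $\Gamma$. Since over the dense open locus $\{y\neq y',\ \varphi(y)=\varphi(y')\}$ every line $\overline{yy'}$ meets $M$ by the first paragraph, and the condition that a line meet $M$ is closed, every line occurring in $\mathcal{I}$ meets $M$. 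If $(x,x)$ belonged to $\Gamma$, we could choose $\ell$ in the non-empty fibre $q^{-1}(x,x)$; then $\ell\cap M\neq\emptyset$, whereas by the classical description of limits of chords at a smooth point $x\in\ell\subseteq\overline{T_xY}\subseteq x*Y$, contradicting $M\cap(x*Y)=\emptyset$. Moreover, if $x$ lay in the image $p_1(\Gamma)$ under the first projection $p_1\colon Y\times Y\to Y$, there would be $(x,y')\in\Gamma$, forcing $\varphi(x)=\varphi(y')$, hence $y'=x$ by the first paragraph and $(x,x)\in\Gamma$, which we have just excluded. Thus $x\notin p_1(\Gamma)$.

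Finally, $Y\times Y$ is projective, so $p_1(\Gamma)$ is closed in $Y$ and $W:=Y\setminus p_1(\Gamma)$ is an open neighbourhood of $x$. Given $y\in W$, any $y'\in\varphi^{-1}(\varphi(y))$ with $y'\neq y$ would give $(y,y')\in\{(z,z')\in Y\times_{\P^m}Y:\ z\neq z'\}\subseteq\Gamma$, hence $y\in p_1(\Gamma)$, a contradiction; therefore $\varphi^{-1}(\varphi(y))=\{y\}$ for all $y\in W$, as desired.

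The one genuinely delicate step is the degeneration argument of the second paragraph: one must verify that the incidence variety $\mathcal{I}$ is proper over $Y\times Y$, and that its fibre over the diagonal point $(x,x)$ consists of lines through $x$ lying in the embedded projective tangent space $\overline{T_xY}$ — together with the classical fact that $\overline{T_xY}\subseteq x*Y$, because tangent lines at a smooth point are limits of chords of $Y$ through $x$. Everything else is formal: the elementary geometry of the linear projection $\pi_M$ and the properness of the projections $q$ and $p_1$.
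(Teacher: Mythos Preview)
The paper does not supply a proof of this lemma; it is stated without argument as ``the following fact from projective geometry'' and then invoked in the proof of Claim~\ref{noetherian_induction}. So there is no paper proof to compare against, and your task was effectively to reconstruct a proof from scratch.

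Your argument is correct. The structure --- showing first that $\varphi^{-1}(\varphi(x))=\{x\}$ because any chord $\overline{xy}$ lies in $x*Y$ yet would have to meet the hyperplane $M$ inside its $\pi_M$-fibre, then excluding $(x,x)$ from the closure $\Gamma$ of the off-diagonal part of $Y\times_{\P^m}Y$ via the incidence correspondence $\mathcal{I}\subseteq Y\times Y\times\mathbb{G}(1,N)$, and finally taking $W=Y\setminus p_1(\Gamma)$ --- is the natural one. Two small points worth making explicit: the inclusion $\Gamma\subseteq Y\times_{\P^m}Y$ (needed for ``$(x,y')\in\Gamma\Rightarrow\varphi(x)=\varphi(y')$'') holds because the fibre product is already closed in $Y\times Y$; and the equality $q(\mathcal{I})=\Gamma$ follows since $q$ is closed and $q^{-1}(\Gamma)$ is a closed set containing the graph. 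The only substantive step, which you rightly flag, is that any line $\ell$ in the fibre $q^{-1}(x,x)$ lies in $\overline{T_xY}$. This is the classical statement that the tangent star equals the embedded tangent space at a smooth point; one quick justification is to pass to an affine chart with $x=0$ and write $Y$ locally as the graph of a map $\psi\colon T_xY\to (T_xY)^\perp$ with $\psi(0)=0$ and $d\psi(0)=0$, so that $|\psi(u)-\psi(u')|=o(|u-u'|)$ as $u,u'\to 0$ and secant directions limit into $T_xY$. Combined with $\overline{T_xY}\subseteq x*Y$ (tangent lines at a smooth point are limits of chords through $x$, and $x*Y$ is closed), this yields the contradiction with $M\cap(x*Y)=\emptyset$.
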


{\it Step 2.} In notations from Proposition \ref{projections}, consider the finite set
$\Sigma$ of morphisms $\pi:X\to\Ab^{d-1}$. Put
$\widetilde{Z}=\cup\pi^{-1}(\pi(Z))$,
where the union is taken over all $\pi\in \Sigma$. By construction,
$\widetilde{Z}$ does not contain any irreducible component of $T$.

\begin{prop}\label{BlOg}
The pair $(Z,\widetilde{Z})$ is $f$-s.l.e.
\end{prop}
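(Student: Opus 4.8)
The plan is to verify the two requirements in the definition of an $f$-s.l.e. pair directly, reducing everything to the geometry packaged in Proposition \ref{projections} together with a relative form of Quillen's effacement argument built out of the axioms (H) and (LAF). I first fix the assignment $\Lambda$. For an irreducible subvariety $C\subset\widetilde{Z}$ and an equidimensional subvariety $R\subset Z$ of codimension $q$ in $Z$ with $C\nsubseteq R$, let $\Sigma_{C,R}\subset\Sigma$ be the set of those $\pi$ whose restriction to $Z$ is finite and which satisfy $\pi(C)\nsubseteq\pi(R)$, and put $\Lambda_C(R)=\bigcup_{\pi\in\Sigma_{C,R}}\pi^{-1}(\pi(R))$. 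Since each such $\pi$ is smooth of relative dimension one where it resolves points and $\pi|_Z$, hence $\pi|_R$, is finite, each $\pi^{-1}(\pi(R))$ is equidimensional of codimension $q$ in $\widetilde{Z}$, so $\Lambda_C(R)$ has codimension $q$ in $\widetilde{Z}$; moreover $R\subset\pi^{-1}(\pi(R))$ gives $R\subset\Lambda_C(R)$, while $\pi(C)\nsubseteq\pi(R)$ together with the irreducibility of $C$ gives $C\nsubseteq\pi^{-1}(\pi(R))$ for every $\pi\in\Sigma_{C,R}$, hence $C\nsubseteq\Lambda_C(R)$. The nonemptiness of $\Sigma_{C,R}$, needed when $R\ne\emptyset$, falls out of the next step applied to a closed point of $R$.

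Now fix $C_1,\ldots,C_f\subset\widetilde{Z}$ irreducible, $R_1,\ldots,R_f\subset Z$ equidimensional with $C_i\nsubseteq R_i$, a point $x\in Z$ and an open $V\ni x$. Since open subsets are stable under generization, it is enough to produce the required $W$ when $x$ is a closed point of $X$: choose a closed point $x'\in\overline{\{x\}}\cap V$, and any open $W\ni x'$ is then automatically a neighbourhood of $x$. So assume $x$ closed, pick closed points $y_i\in C_i\setminus R_i$, and apply Proposition \ref{projections} to $y_1,\ldots,y_f$ and $x$: it yields $\pi\in\Sigma$ that resolves $x$ and has $\pi(y_i)\notin\pi(Z\setminus\{y_i\})$ for all $i$. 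As $R_i\subset Z\setminus\{y_i\}$, this forces $\pi(y_i)\notin\pi(R_i)$, hence $\pi(C_i)\nsubseteq\pi(R_i)$, so $\pi\in\Sigma_{C_i,R_i}$ and $\pi^{-1}(\pi(R_i))\subset\Lambda_{C_i}(R_i)$ for every $i$. The whole point of the ``freedom degree $f$'' in Proposition \ref{projections} is exactly to make a single $\pi$ adapted to all the data at once.

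It then remains to run the effacement for this $\pi$. Shrinking $V$ and using that $\varphi=\pi|_Z$ is finite with $\varphi^{-1}(\varphi(x))=\{x\}$, I arrange $V=\pi^{-1}(S_0)$ for an open $S_0\subset\Ab^{d-1}$ with $\pi|_V\colon V\to S_0$ smooth of relative dimension one, $\varphi_0\colon Z\cap V\to S_0$ finite, and $\varphi_0^{-1}(\varphi_0(R_i\cap V))=R_i\cap V$. Put $\widetilde{Z}_0=\pi^{-1}(\pi(Z\cap V))\subset\widetilde{Z}\cap V$ and form $X'=\widetilde{Z}_0\times_{S_0}(Z\cap V)$ with $p_1\colon X'\to\widetilde{Z}_0$ finite and $p_2\colon X'\to Z\cap V$ smooth of relative dimension one (a base change of $\pi|_{\widetilde{Z}_0}$); the inclusion $Z\cap V\hookrightarrow\widetilde{Z}_0$ and the identity give a section $s$ of $p_2$, and $p_1^{-1}(x)=\{s(x)\}$ by $\varphi_0^{-1}(\varphi_0(x))=\{x\}$. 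Axiom (LAF) applied to the closed embedding $s$ at $s(x)$ (using $p_2\circ s=\mathrm{id}$ and $p_2$ smooth at $s(x)$) provides an open $U'\ni s(x)$ on which $F_n(Z\cap V)\xrightarrow{s_*}F_n(X')\to F_n(U')$ vanishes for all $n$, and the finiteness of $p_1$ lets one choose $W$ with $x\in W\subset V$ and $p_1^{-1}(W)\subset U'$. Pushing forward along the finite $p_1$ by axiom (H) (so that $p_{1*}s_*$ equals the pushforward along $Z\hookrightarrow\widetilde{Z}_0$, compatible with restriction by (WH1)), and performing the same construction over $S_0\setminus\pi(R_1\cup\ldots\cup R_f)$ — i.e. after deleting the preimages of $\bigcup_i\pi(R_i)$ from all three schemes — one obtains that
$$
F_n\bigl(V\cap(Z\setminus\bigcup_i R_i)\bigr)\longrightarrow
F_n\bigl(W\cap(\widetilde{Z}_0\setminus\bigcup_i\pi^{-1}(\pi(R_i)))\bigr)\longrightarrow
F_n\bigl(W\cap(\widetilde{Z}\setminus\bigcup_i\Lambda_{C_i}(R_i))\bigr)
$$
is zero for all $n$, the second arrow being the restriction–and–pushforward coming from $\pi^{-1}(\pi(R_i))\subset\Lambda_{C_i}(R_i)$; after one further shrinking of $W$ so that $W\cap Z\cap\Lambda_{C_i}(R_i)=W\cap R_i$ this is exactly the map that Definition \ref{defin-s.l.e.} asks to vanish. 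The basic s.l.e. conditions are the instances with few, or empty, $R_i$.

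I expect the main obstacle to be the \emph{relative} effacement: the fibre-product-and-section construction has to be carried out not only over $S_0$ but simultaneously over $S_0\setminus\bigcup_i\pi(R_i)$ while preserving the section, the finiteness of $p_1$ and the one-point fibre over $x$ — with a little extra care in the awkward case where $x$ itself lies on some $R_i$, in which $s(x)$ gets deleted. This is precisely what forces the full strength of Proposition \ref{projections} (the avoidance conditions $\pi(y_i)\notin\pi(Z\setminus\{y_i\})$ and the single uniform finite set $\Sigma$) and the careful definition of $\Lambda_{C_i}$, so that deleting $R_i$ downstairs corresponds to deleting $\Lambda_{C_i}(R_i)$ upstairs without destroying the hypotheses of (LAF).
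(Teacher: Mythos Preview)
Your argument is essentially the paper's own proof: reduce to closed $x$, pick closed points $y_i\in C_i\setminus R_i$, use Proposition~\ref{projections} to get a single $\pi\in\Sigma$ resolving $x$ with $\pi(y_i)\notin\pi(Z\setminus\{y_i\})$, form the Cartesian square over $Z$ (your $X'$ is the paper's $Y$), apply (LAF) at the section point, and push forward along the finite $p_1$ via (H) and (WH1). The only real difference is the assignment: the paper sets $\Lambda_C(R)=\bigcup_{y\in C\setminus R}\bigcup_{\pi\in\Sigma_y}\pi^{-1}(\pi(R))$ with $\Sigma_y=\{\pi:\pi(y)\notin\pi(Z\setminus\{y\})\}$, which is contained in your $\Lambda_C(R)$ (since $\Sigma_y\subset\Sigma_{C,R}$ for any $y\in C\setminus R$), but both choices satisfy the required properties and contain $\pi^{-1}(\pi(R_i))$ for the chosen $\pi$; the relative effacement step---including the case $x\in\bigcup_i R_i$ that you rightly flag---is handled in the paper with exactly the same brevity (``the same argument with the analogous Cartesian diagram'').
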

\begin{proof}
Essentially, we repeat the proof of Theorem 5.11 in \cite{Q} with
some modifications.

First we note that after we choose a suitable
closed point $x'$ on $\overline{x}\subset Z$ we may suppose that the
given open subset $V\ni x$ actually contains $x'$.
Thus we may suppose $x$ to be closed.

Choose $\pi\in \Sigma$ that resolves $x$ and, as before, put
$\varphi=\pi|_{Z}$. Following the construction of Quillen,
consider the Cartesian square:
$$
\begin{array}{ccc}
Y&\stackrel{\varphi'}\longrightarrow&X \\
\downarrow\lefteqn{\pi'}&&\downarrow\lefteqn{\pi}\\
Z&\stackrel{\varphi}\longrightarrow&\Ab^{d-1}.
\end{array}
$$
Note that $\varphi'$ is finite onto its image,
$\varphi'(Y)=\pi^{-1}(\pi(Z))$ is a closed subset in
$\widetilde{Z}$, and $(\varphi')^{-1}(x)$ consists of one point, which we
denote by $z$. Besides, the morphism $\pi'$ is smooth at $z$ and admits a
canonical section $\sigma:Z\to Y$ (with $\sigma(x)=z$). Since the homology theory
$F_*$ is locally acyclic in fibrations, the composition
$F_n(Z)\stackrel{\sigma_*}\longrightarrow
F_n(Y)\to F_n(Y')$ is zero for all $n\in\Z$ and for some
suitable open subset $Y'\subset Y$ containing $z$. Hence the map
$F_n(Z)\to F_n(\widetilde{Z}\cap U)$ is also zero for all
$n\in\Z$ and for some suitable open subset $U\subset X$ containing $x$
such that $(\varphi')^{-1}(U)\subset Y'$ (such $U$ exists, since
$\varphi'$ is finite and $(\varphi')^{-1}(x)=\{z\}$).

Take an arbitrary open subset $V\subset X$ containing $x$. Since
$\varphi^{-1}(\varphi(x))=\{x\}$ and $\varphi$ is finite, there exists
an open subset $D\subset\Ab^{d-1}$ such that $x\in
\varphi^{-1}(D)\subset V$. Restricting the Cartesian diagram
from $\Ab^{d-1}$ to $D$, we get that the natural map $F_n(V\cap
Z)\to F_n(W\cap\widetilde{Z})$ is zero for all $n\in\Z$ and for some
suitable open subset $W\subset V$ containing $x$.

Further, consider an irreducible subvariety $C\subset X$ and an
equidimensional subvariety $R\subset Z$ of codimension $q$ in $Z$ such
that $C\nsubseteq R$. We put
$$
\Lambda_C(R)=\bigcup_{y\in C\backslash R}
\left(\bigcup_{\pi\in\Sigma_y}\pi^{-1}(\pi(R))\right),
$$
where $\Sigma_y$
is the set of all $\pi\in\Sigma$ such that
$\pi(y)\notin\pi(Z\backslash\{y\})$. For instance, if $C$ is not
contained in $\widetilde{Z}$, then $\Lambda_C(R)=\cup
\pi^{-1}(\pi(R))$ where the union is taken over all
$\pi\in\Sigma$. For irreducible subvarieties $C_1,\ldots, C_f$ in
$X$ and subvarieties $R_1,\ldots,R_f$ in $Z$ satisfying the needed
conditions, we choose closed points $y_i\in C_i\backslash R_i$. By
construction, for any closed point $x\in Z$, there is a morphism
$\pi\in\Sigma$ such that it resolves $x$ and belongs to
$\Sigma_{y_1}\cap\ldots\cap\Sigma_{y_f}$. The same argument
with the analogous Cartesian diagram as before leads to the
needed result.
\end{proof}

This completes the proof of Theorem \ref{freely_effaceable}.
\end{proof}

\begin{rmk}\label{rmk-irreducible-sle}
In Theorem~\ref{freely_effaceable} one may also require that each
irreducible component in $\widetilde{Z}$ contains some irreducible
component in $Z$. This follows from the fact that for each
irreducible component $Z_0$ in $Z$, the variety $\pi^{-1}(\pi(Z_0))$
is irreducible in an open neighborhood of a given point $x$, where
$\pi:X\to \Ab^{d-1}$ is a morphism that resolves the point $x$ and,
in particular, is smooth at $x$.
\end{rmk}

The following proposition allows to add strongly locally effaceable
pairs.

\begin{prop}\label{addition}
Suppose that the field $k$ is infinite and perfect.
Let $X$ be an affine smooth variety over the field $k$.
Consider two equidimensional subvarieties $Z_1$ and $Z_2$ of the same codimension
$p\ge 2$ in $X$. Suppose that we are given a subvariety
$\widetilde{Z}_1\supset Z_1$ such that the pair
$(Z_1,\widetilde{Z}_1)$ is strongly locally effaceable with the
freedom degree at least $f\ge 2$. Consider a closed subset $T\subset X$
such that all irreducible components of $T$ have codimension at
most $p-1$ in $X$, and an
irreducible subvariety $K\subset X$ such that $K$ is not contained
in $Z_2$. Then there exists a subvariety $\widetilde{Z}_2$ such
that no irreducible component of $T$ and $K$ is contained in
$\widetilde{Z}_2$ and the pair $(Z_1\cup
Z_2,\widetilde{Z}_1\cup\widetilde{Z}_2)$ is strongly locally
effaceable with the freedom degree at least $f-1$.
\end{prop}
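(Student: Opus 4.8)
The plan is to obtain $\widetilde{Z}_2$ by a single application of Theorem~\ref{freely_effaceable} to $Z_2$, and then to deduce the $(f-1)$-s.l.e.\ property of $(Z_1\cup Z_2,\widetilde{Z}_1\cup\widetilde{Z}_2)$ from the two given pairs by a two-step effacement: first push a local class off the $Z_1$-part using the $f$-s.l.e.\ pair $(Z_1,\widetilde{Z}_1)$, then push the remainder off the $Z_2$-part using $(Z_2,\widetilde{Z}_2)$. Each step is modelled on the diagram chase in the proof of Proposition~\ref{differential}: one restricts a class to a complementary open subset, kills it there by the relevant s.l.e.\ property, and then uses the localization sequence (axiom (WH2)) to conclude that the class comes from the expected closed piece.

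First I would make a few harmless reductions. By Remark~\ref{subpair} we may delete from $Z_2$ every irreducible component that lies in $Z_1$ (such a component is already controlled by $\widetilde{Z}_1$), so that $Z_1\cap Z_2$ has codimension $\ge p+1$ in $X$; removing superfluous components of $\widetilde{Z}_1$ we may also assume that each of its components meets $Z_1$ in codimension one. Choosing one closed point outside $Z_2$ on each irreducible component of $T$, on $K$, and on each component of $\widetilde{Z}_1$ — none of which is contained in $Z_2$, since they have codimension $\le p-1<p$ or by hypothesis — yields a finite set $T'\subset X\setminus Z_2$. Applying Theorem~\ref{freely_effaceable} to the affine variety $X$, the subvariety $Z_2$, the set $T'$, and a sufficiently large freedom parameter (together with Remark~\ref{rmk-irreducible-sle}) produces an equidimensional $\widetilde{Z}_2\supset Z_2$ of codimension $p-1$ that avoids $T'$ — hence contains no irreducible component of $T$ or of $K$ — and such that $(Z_2,\widetilde{Z}_2)$ is $f$-s.l.e.\ with some compatible assignment $R\mapsto\Lambda^{(2)}_C(R)$.

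Next I would define the assignment for the union. For an equidimensional $R\subset Z_1\cup Z_2$ of codimension $q$ and an irreducible $C\subset X$ with $C\nsubseteq R$, let $R^i$ be an equidimensional subvariety of $Z_i$ of codimension $q$ containing $R\cap Z_i$, choose an irreducible $C'\subset\widetilde{Z}_1$ with $C'\nsubseteq R^1$ (a component of $\widetilde{Z}_1$ works), taking $C'=C$ if $C\subset\widetilde{Z}_1$, and put $\Lambda_C(R)=\Lambda^{(1)}_{C'}(R^1)\cup\Lambda^{(2)}_C(R^2)$; this is equidimensional of codimension $q$ in $\widetilde{Z}_1\cup\widetilde{Z}_2$, contains $R$, and avoids $C$. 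To verify the $(f-1)$-s.l.e.\ condition, fix $x\in Z_1\cup Z_2$, an open $V\ni x$, subvarieties $R_1,\dots,R_{f-1}\subset Z_1\cup Z_2$, and irreducible $C_1,\dots,C_{f-1}$ with $C_j\nsubseteq R_j$, and let $W\ni x$ shrink as the argument requires. Applying the $f$-s.l.e.\ property of $(Z_1,\widetilde{Z}_1)$ to the $f$ subvarieties $R^1_1,\dots,R^1_{f-1}$ and an equidimensional subvariety of $Z_1$ containing the interface $Z_1\cap Z_2$ (with companions $C'_1,\dots,C'_{f-1}$ and a component of $\widetilde{Z}_1$), and combining this with the localization sequence for the closed embedding $(Z_1\cup Z_2)\setminus\bigcup_j R_j\hookrightarrow(\widetilde{Z}_1\cup Z_2)\setminus\bigcup_j(\Lambda^{(1)}_{C'_j}(R^1_j)\cup R^2_j)$, shows that a local class $\alpha$ passes to a class supported on $Z_2\cup\Lambda^{(1)}(Z_1\cap Z_2)$. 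The $f$-s.l.e.\ property of $(Z_2,\widetilde{Z}_2)$, applied to the $R^2_j$ together with the extra pieces produced at the interface, then kills this remainder inside $F_n(W\cap((\widetilde{Z}_1\cup\widetilde{Z}_2)\setminus\bigcup_j\Lambda_{C_j}(R_j)))$. Here the freedom degree $f\ge2$ is precisely what allows the first step to absorb the $f-1$ given subvarieties together with the interface one, and the freedom built into $\widetilde{Z}_2$ serves the analogous purpose in the second step.

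The main obstacle is this interface bookkeeping. After the first step the class is supported not on $Z_2$ alone but on $Z_2$ together with the auxiliary subvariety $\Lambda^{(1)}(Z_1\cap Z_2)\subset\widetilde{Z}_1$ of codimension $\ge p$ in $X$, which in general is not among the removed sets $\Lambda_{C_j}(R_j)$; one must arrange that the second step — possibly after a further, iterated use of the $(Z_1,\widetilde{Z}_1)$-effacement on this piece — cleans it up, while keeping the total number of auxiliary subvarieties within the freedom budget of each pair and tracking all codimensions correctly (in particular checking that the relevant intersections with $Z_1$ and $Z_2$ really have the codimension the definition of an s.l.e.\ pair demands). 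Once this is set up, the vanishing of the transition map follows by chasing a commutative diagram of localization sequences exactly as in the proof of Proposition~\ref{differential}, which gives the claimed $(f-1)$-s.l.e.\ property.
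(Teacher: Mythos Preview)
Your two-step effacement strategy is the right shape, and the paper argues in exactly this spirit; but the ``interface bookkeeping'' you flag is not just bookkeeping --- it is a genuine gap, and your proposed cure (a further iterated use of $(Z_1,\widetilde Z_1)$) does not work.

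Concretely: after your first step the class lives on (an open piece of) $Z_2\cup Z_3$, where $Z_3=\Lambda^{(1)}(Z_1\cap Z_2)\subset\widetilde Z_1$ has codimension $p$ in $X$. Your pair $(Z_2,\widetilde Z_2)$ was produced by Theorem~\ref{freely_effaceable} applied to $Z_2$ alone; by Definition~\ref{defin-s.l.e.} it only effaces classes supported on $Z_2$, so it says nothing about the $Z_3$-part. Nor can $(Z_1,\widetilde Z_1)$ help: $Z_3$ sits in $\widetilde Z_1$, not in $Z_1$, so there is nothing to iterate. No amount of extra freedom degree in either pair changes this, because the freedom parameter governs how many subvarieties $R_i$ you may remove from the \emph{given} first member of the pair, not which ambient subvariety the class may be supported on.

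The paper's fix is to anticipate $Z_3$ \emph{before} constructing $\widetilde Z_2$. It first disposes of the easy case $K\not\subset Z_1$ by a single application of Theorem~\ref{freely_effaceable} to $Z_1\cup Z_2$ (choosing a closed point on each component of $T$ and on $K$ outside $Z_1\cup Z_2$); here the freedom degree does not drop. In the hard case $K\subset Z_1$, one fixes a codimension-one $Z_2'\subset Z_1$ containing $Z_1\cap Z_2$ with $K\not\subset Z_2'$, sets $Z_3=\Lambda_K(Z_2')\subset\widetilde Z_1$ (so $K\not\subset Z_3$ by construction), and \emph{then} applies Theorem~\ref{freely_effaceable} to $Z_2\cup Z_3$ to obtain $\widetilde Z_2$ with $(Z_2\cup Z_3,\widetilde Z_2)$ $(f-1)$-s.l.e.\ and avoiding $T$, $K$. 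Now the two-step diagram chase goes through cleanly: the first step uses the $f$-s.l.e.\ property of $(Z_1,\widetilde Z_1)$ with the $f-1$ given constraints $(C_i,R_i')$ together with the single interface constraint $(K,Z_2')$ --- this is where the budget $f$ is spent and why the output has freedom degree $f-1$ --- to push the class into $F_n$ of $Z_2\cup Z_3$; the second step then effaces it into $\widetilde Z_2$ because $\widetilde Z_2$ was built for exactly that. The assignment $\Lambda_C(R)$ for the union is $\Lambda'_C(R')\cup\Lambda_C(R'')$, with $R''\subset Z_2\cup Z_3$ chosen to absorb $(\Lambda'_C(R')\cup R)\cap(Z_2\cup Z_3)$.

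One minor point: you cannot ``remove superfluous components of $\widetilde Z_1$''. That variety is given in the hypothesis, and Remark~\ref{rmk-irreducible-sle} is about the output of Theorem~\ref{freely_effaceable}, not a post-hoc pruning that preserves the s.l.e.\ property.
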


\begin{proof}
If $K$ is not contained in $Z_1$, then the statement of the
proposition follows directly from Theorem \ref{freely_effaceable}
after we choose a closed point on each irreducible component of $T$ and $K$
outside of $Z_1\cup Z_2$ (in this case the freedom degree does
not decrease). Otherwise we use the same construction as in
the proof of Proposition \ref{differential}.

Suppose that $K\subset Z_1$. Then there is a
codimension one subvariety $Z'_2$ in $Z_1$ such that $Z'_2$ does not
contain $K$ and contains the intersection of $Z_1$ with each
irreducible component of $Z_2$ that is not contained in $Z_1$.
Put $Z_3=\Lambda_K(Z_2')\subset\widetilde{Z}_1$. By the codimension
assumption, $Z_2\cup Z_3$ does not
contain any irreducible component of $T$.
Choosing closed points on each irreducible component of $T$ and on $K$
outside of $Z_2\cup Z_3$, we see that, by Theorem \ref{freely_effaceable},
there exists
a subvariety $\widetilde{Z}_2\subset X$ such that the pair $(Z_2\cup
Z_3,\widetilde{Z}_2)$ is $(f-1)$-s.l.e. and $\widetilde{Z}_2$ does not contain any
irreducible component of $T$ and $K$.

We claim that the pair
$(Z_1\cup Z_2,\widetilde{Z}_1\cup\widetilde{Z}_2)$ is s.l.e.
This is implied by the following commutative diagram,
whose middle column is exact in the middle term:
$$
\begin{array}{ccccc}
&&F_n(Z_2\cup Z_3)&\longrightarrow& F_n(\widetilde{Z}_2)\\
&&\downarrow&&\downarrow\\ F_n(Z_1\cup Z_2)&\longrightarrow&
F_n(\widetilde{Z}_1\cup Z_2)&\longrightarrow&
F_n(\widetilde{Z}_1\cup\widetilde{Z}_2)\\
\downarrow&&\downarrow&&\\ F_n(Z_1\backslash
Z'_2)&\longrightarrow& F_n(\widetilde{Z}_1\backslash (Z_2\cup
Z_3))&&
\end{array}
$$
The map in the bottom raw is the composition
$$
F_n(Z_1\backslash
Z'_2)\to F_n(\widetilde{Z}_1\backslash Z_3)\to
F_n(\widetilde{Z}_1\backslash (Z_2\cup Z_3)).
$$
Since the pairs
$(Z_1,\widetilde{Z}_1)$ and $(Z_2\cup Z_3,\widetilde{Z}_2)$ are
s.l.e., for any point $x\in Z_1\cup Z_2$ and any
open subset $V\subset X$ containing $x$, there exists a smaller open subset
$x\in W\subset X$ such that the map
$$
F_n(V\cap (Z_1\cup
Z_2))\to F_n(W\cap(\widetilde{Z}_1\cup\widetilde{Z}_2))
$$
is zero for all $n\in\Z$.

Now consider an irreducible subvariety $C\subset X$ and an
equidimensional subvariety $R\subset Z_1\cup Z_2$ of codimension $q$ in $Z_1\cup Z_2$
such that $C\nsubseteq R$. Let $R'$ be the union of all
irreducible components in $R$ that are not contained in $Z_2'\cup Z_2$. Let
$\Lambda'_{C}(R')$ be the union of all irreducible components in
$\Lambda_{C}(R')\subset\widetilde{Z}_1$ that are not contained in $Z_2\cup Z_3$.
Then there exists an equidimensional subvariety $R''\subset Z_2\cup Z_3$ of codimension $q$
in $Z_2\cup Z_3$ such that $R''$ contains the intersection
$(\Lambda'_{C}(R')\cup R)\cap (Z_2\cup Z_3)$ and does not contain
$C$. Consider $\Lambda_C(R'')\subset\widetilde{Z}_2$, where now $\Lambda_C$ is taken with
respect to the $(f-1)$-s.l.e. pair
$(Z_2\cup Z_3,\widetilde{Z}_2)$.

We claim that $(Z_1\cup Z_2,\widetilde{Z}_1\cup\widetilde{Z}_2)$ is
an $(f-1)$-s.l.e. pair with
respect to the assignment
$(R,C)\mapsto\Lambda_C(R)=\Lambda'_C(R')\cup\Lambda_C(R'')$.
This is implied by the commutative diagram, which analogous to the previous one.
This new diagram is the combination of two following diagrams:
$$
\begin{array}{ccc}
F_n((Z_1\cup Z_2)\backslash \{R_i\})& \longrightarrow&
F_n((\widetilde{Z}_1\cup Z_2)\backslash\{\Lambda'_{C_i}(R'_i)\cup
R_i\})\\ \downarrow&&\downarrow\\ F_n(Z_1\backslash (Z'_2\cup
\{R_i\}))&\longrightarrow& F_n(\widetilde{Z}_1\backslash (Z_2\cup
Z_3\cup\{\Lambda'_{C_i}(R'_i)\})),
\end{array}
$$
$$
\begin{array}{ccc}
F_n((Z_2\cup Z_3)\backslash \{\Lambda'_{C_i}(R'_i)\cup R_i\})&
\longrightarrow&
F_n(\widetilde{Z}_2\backslash\{\Lambda_{C_i}(R_i)\})\\
\downarrow&&\downarrow\\ F_n((\widetilde{Z}_1\cup
Z_2)\backslash\{\Lambda'_{C_i}(R'_i)\cup R_i\})& \longrightarrow&
F_n((\widetilde{Z}_1\cup\widetilde{Z}_2)\backslash
\{\Lambda_{C_i}(R_i)\}).
\end{array}
$$
We glue these diagrams together using the following sequence, which is exact
in the middle term:
$$
F_n((Z_2\cup Z_3)\backslash
\{\Lambda'_{C_i}(R'_i)\cup R_i\})\to F_n((\widetilde{Z}_1\cup
Z_2)\backslash\{\Lambda'_{C_i}(R'_i)\cup R_i\})\to
$$
$$
\to F_n(\widetilde{Z}_1\backslash (Z_2\cup
Z_3\cup\{\Lambda'_{C_i}(R'_i)\})).
$$
Here $\{O_i\}$ means the union of objects $O_i$ over all
$i,1\le i\le f-1$ and the
horizontal maps in the diagrams are the compositions of direct images
under closed embedding and restrictions to open subsets. The
new diagram is analyzed in the same way as the previous one.
\end{proof}

\begin{rmk}\label{codimension-one}
For $p=1$ by Quillen's result the only possible pair $(Z,X)$ is
s.l.e. However there is no analogue of
Theorem \ref{freely_effaceable} and Proposition \ref{addition} in this case
with non-empty $T$ and $K$.
\end{rmk}

%\subsection{Proof of Lemma \ref{approximation} in the affine case}\label{approxim}

\subsection{Patching systems}\label{patching-systems-section}

Let $k$ be a field and $X$ be an equidimensional variety over $k$.
Consider an equidimensional
subvariety $Z\subset X$ of codimension $p$ in $X$.

\begin{defin}
Suppose that the
system of equidimensional subvarieties $\{Z^{1,2}_r\}$, $1\le r\le p-1$
of codimension $r$ in $X$, respectively, satisfies the following conditions:
\begin{itemize}
\item[(i)]
the variety $Z$ is contained in both varieties $Z^1_{p-1}$ and $Z_{p-1}^2$,
and the variety $Z_{r}^1\cup Z_r^2$ is contained in both varieties $Z^1_{r-1}$ and
$Z^2_{r-1}$ for all $r,2\le r\le p-1$;
\item[(ii)]
the pairs $(Z,Z^1_{p-1})$, $(Z,Z^2_{p-1})$, $(Z^1_{r}\cup
Z^2_r,Z^1_{r-1})$, and $(Z^1_{r}\cup Z^2_r,Z^2_{r-1})$ are strongly
locally effaceable with the freedom degree at least $f$ for all $r,2\le r\le p-1$;
\item[(iii)]
the varieties $Z_{r}^1$ and $Z_r^2$ have no common irreducible
components for all $r,1\le r\le p-1$.
\end{itemize}
Then we say that the system of subvarieties $\{Z^{1,2}_r\}$, $1\le r\le p-1$ is
a {\it patching system for the subvariety $Z$ with the freedom degree at least $f$}.
\end{defin}

\begin{prop}\label{patching systems}
Suppose that the field $k$ is infinite and perfect; then
for any integer $f\ge 0$ and any equidimensional subvariety $Z$ of codimension $p$
in the affine smooth variety $X$ over the field $k$, there exists a
patching system $\{Z^{1,2}_{r}\}$, $1\le r\le p-1$ on $X$ for the subvariety $Z$
with the freedom degree at least $f$.
\end{prop}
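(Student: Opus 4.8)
The plan is to build the patching system by descending induction on $r$, using Theorem~\ref{freely_effaceable} as essentially the only tool. If $p\le 1$ the index set $\{r:1\le r\le p-1\}$ is empty and there is nothing to prove, so assume $p\ge 2$.

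\emph{Base step.} First I would apply Theorem~\ref{freely_effaceable} to the subvariety $Z$ (with an empty finite set of points) to obtain an equidimensional subvariety $Z^1_{p-1}\supseteq Z$ of codimension $p-1$ in $X$ such that the pair $(Z,Z^1_{p-1})$ is strongly locally effaceable with freedom degree at least $f$. Since $Z^1_{p-1}$ has codimension $p-1<p$, no irreducible component $E$ of $Z^1_{p-1}$ can be contained in the codimension-$p$ variety $Z$ (it would then lie in a component of $Z$ of strictly smaller dimension), so $E\setminus Z$ is a nonempty open subset of $E$ and contains a closed point of $X$. Choosing one such point on each component yields a finite set $T_{p-1}\subset X\setminus Z$. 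A second application of Theorem~\ref{freely_effaceable}, now to $Z$ with the finite point set $T_{p-1}$, produces an equidimensional $Z^2_{p-1}\supseteq Z$ of codimension $p-1$, avoiding $T_{p-1}$, with $(Z,Z^2_{p-1})$ again strongly locally effaceable with freedom degree at least $f$. Because $Z^2_{p-1}$ misses the chosen point of every component of $Z^1_{p-1}$, the varieties $Z^1_{p-1}$ and $Z^2_{p-1}$ have no common irreducible component, so conditions (i), (ii), (iii) of the definition hold at level $p-1$.

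\emph{Inductive step.} Suppose $Z^1_r,Z^2_r$ have been constructed for some $r$ with $2\le r\le p-1$, and set $W=Z^1_r\cup Z^2_r$, which is equidimensional of codimension $r\ge 2$ in $X$. Applying Theorem~\ref{freely_effaceable} to $W$ gives an equidimensional $Z^1_{r-1}\supseteq W$ of codimension $r-1$ with $(W,Z^1_{r-1})$ strongly locally effaceable with freedom degree at least $f$; exactly as above, each component of $Z^1_{r-1}$ has a closed point outside $W$ for dimension reasons, so I obtain a finite set $T_{r-1}\subset X\setminus W$ and then, by a further application of Theorem~\ref{freely_effaceable} to $W$ avoiding $T_{r-1}$, an equidimensional $Z^2_{r-1}\supseteq W$ of codimension $r-1$ with $(W,Z^2_{r-1})$ strongly locally effaceable with freedom degree at least $f$ and with no component in common with $Z^1_{r-1}$. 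Running this from $r=p-1$ down to $r=2$ constructs all the $Z^{1,2}_s$ for $1\le s\le p-1$; the induction is legitimate since Theorem~\ref{freely_effaceable} requires codimension at least $2$ and I only apply it to $Z$ (codimension $p\ge 2$) and to the unions $Z^1_r\cup Z^2_r$ (codimension $r\ge 2$).

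It then remains to check the three conditions of the definition, which is immediate: (i) holds because $Z\subseteq Z^1_{p-1}\cap Z^2_{p-1}$ by construction and $Z^1_r\cup Z^2_r=W\subseteq Z^1_{r-1}\cap Z^2_{r-1}$ at every step; (ii) holds because each of the pairs $(Z,Z^1_{p-1})$, $(Z,Z^2_{p-1})$, $(W,Z^1_{r-1})$, $(W,Z^2_{r-1})$ was produced by Theorem~\ref{freely_effaceable} with freedom degree at least $f$; and (iii) holds by the point-avoidance device. The argument is mostly bookkeeping once Theorem~\ref{freely_effaceable} is in hand; the only point that needs care is condition (iii), and the hard part there — forcing the second variety at each level to miss a point on every component of the first — is handled precisely by the observation that a codimension-$(r-1)$ variety has no component inside a codimension-$r$ variety, so the required closed points always exist, together with the $T$-avoidance clause in Theorem~\ref{freely_effaceable}.
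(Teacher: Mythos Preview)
Your proof is correct and follows essentially the same approach as the paper's own proof: descending induction on $r$, using Theorem~\ref{freely_effaceable} twice at each stage---first with $T=\emptyset$ to produce $Z^1_{r}$, then with $T$ a finite set of closed points, one on each component of $Z^1_{r}$, to produce a $Z^2_{r}$ sharing no component with $Z^1_{r}$. Your argument is in fact slightly more explicit than the paper's, since you spell out the dimension reason why every component of $Z^1_{r}$ has a closed point outside the previously built codimension-$(r+1)$ variety (the paper simply asserts that one can choose such points).
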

\begin{proof}
We construct the needed system of subvarieties $\{Z^{1,2}_r\}$
by decreasing induction on $r$, $1\le r\le p-1$.

Suppose that $r=p-1$. The application of Theorem \ref{freely_effaceable} with
empty $T$ yields the existence of an equidimensional subvariety
$Z_{p-1}^1=\widetilde{Z}\subset X$ of codimension $p-1$ in $X$ such that
$(Z,Z_{p-1}^1)$ is an $f$-s.l.e pair.
Choosing a closed point on each irreducible component of
$Z^1_{p-1}$, we see that the application of Theorem \ref{freely_effaceable}
yields the existence of an equidimensional subvariety $Z^2_{p-1}=\widetilde{Z}$ of
codimension $p-1$ in $X$ such that $Z^2_{p-1}$ has no common irreducible
components with $Z^1_{p-1}$ and $(Z,Z^2_{p-1})$ is an $f$-s.l.e. pair.

The induction step from $r+1$ to $r$, $1\le r< p-1$ is analogous to the case $r=p-1$
with $Z$ replaced by $Z^1_{r+1}\cup Z^2_{r+1}$.
\end{proof}

\begin{rmk}\label{global_patching_systems}
Using Corollary \ref{global_addition}
instead of Theorem \ref{freely_effaceable} in the proof of
Proposition \ref{patching systems}, it is possible to show that for
any equidimensional subvariety $Z$ on a (not necessary affine) smooth
variety $X$ over an infinite perfect field there exists a patching system
with the freedom degree at least zero.
\end{rmk}

\begin{rmk}\label{rmk-irreducible-patchingsyst}
\hspace{0cm}
\begin{itemize}
\item[(i)]
By Remark~\ref{rmk-irreducible-sle}, in Proposition~\ref{patching
systems} one may also require that each irreducible component in
$Z^1_{p-1}$ and $Z^2_{p-1}$ contains some irreducible component in
$Z$ and that for all $r$, $1\le r\le p-2$, each irreducible
component in $Z^1_r$ and $Z^2_r$ contains some irreducible component
in $Z^1_{r+1}\cup Z^2_{r+1}$.
\item[(ii)]
Let $W\subset X$ be an equidimensional subvariety such that $W$
meets $Z$ properly; if the patching system $\{Z^{1,2}_r\}$ satisfies
the condition from (i), then $W$ meets $Z^i_r$ properly for all $r$,
$1\le r\le p-1$ and $i=1,2$. Combining this fact with
Corollary~\ref{global_addition-2}, we get that under conditions of
Proposition~\ref{patching systems} one may also require that no
irreducible component in $W\cap Z^1_r$ is contained in $Z^2_r$ for
all $r$, $1\le r\le p-1$.
\end{itemize}
\end{rmk}

Let us introduce the following notation.
Suppose that $Z_i\subset X$, $p\le
i\le q$ are equidimensional subvarieties of codimension $i$ in an
equidimensional variety $X$ over $k$ such that
$Z_q\subset\ldots\subset Z_p$. Consider a collection
$f=\{f_{\eta}\}\in\bigoplus\limits_{\eta\in X^{(p)}}F_n(k(\eta))$.
We put $\nu_{Z_p\ldots Z_q}(f)=
\displaystyle\sum_{z_p\ldots z_q}(\nu_{k(z_{q-1})k(z_{q})}\circ\ldots\circ
\nu_{k(z_p)k(z_{p-1})})(f_{z_p})$, where $z_p,\ldots, z_q$ range over all
collections of generic points in $Z_p,\ldots,Z_q$ such that for any $i,p<i\le
q$, we have $z_i\in\overline{z}_{i-1}$.
For $f\in F_n(k(X))$, let $\sing(f)$ be the set of irreducible divisors
$D$ on $X$ such that $\nu_{XD}(f)\ne 0$.

Here is the main property of patching systems.

\begin{prop}\label{differential-patching-systems}
Let $\{Z^{1,2}_r\}$, $1\le r\le p-1$ be a patching system on $X$ for the
equidimensional subvariety $Z\subset X$ of codimension $p$ in $X$ with the freedom
degree at least zero. Given a (not necessary closed) point $x\in Z$, suppose
that a collection $g\in \bigoplus\limits_{\eta\in X^{(p)}}F_{n-p}(k(\eta))$
is a cocycle in the local Gersten
resolution $\G(Y,F_*,n)^{\bullet}$ at $x$, where $Y=X_{x}$, and that the
support of the collection $g$ is contained in $Z$. Then there exists a
collection $f\in\bigoplus\limits_{\eta\in X^{(0)}}F_n(k(\eta))$ such that
the subvariety $\sing(\nu_{XZ^1_1\ldots Z^1_{r-1}}(f))\subset Z^1_{r-1}$
is contained in $Z^1_r\cup Z^2_r$ for all $r,1\le r\le p-1$, and
$d_{x}\nu_{XZ^1_1\ldots Z^1_{p-1}}(f)=g$, where $d_{x}$ is the differential
in the local Gersten complex $\G(Y,F_*,n)^{\bullet}$.
\end{prop}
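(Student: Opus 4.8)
The plan is to lift $g$ up the chain $Z\subset Z^1_{p-1}\subset\ldots\subset Z^1_1\subset X$ one codimension at a time, applying Proposition~\ref{differential} at each step, and using the parallel chain $Z^2_\bullet$ to convert each partial lift into a genuine cocycle — which is exactly what makes the next application of Proposition~\ref{differential} possible. The case $p=1$ needs nothing beyond exactness: the patching system is then empty and the conditions on $\sing$ are vacuous, $g$ is a degree-one cocycle in $\G(Y,F_*,n)^\bullet$, and since $\F^X_n\to\underline{\G}(X,F_*,n)^\bullet$ is a quasiisomorphism (Proposition~\ref{Gersten-resolution}) this complex is exact in positive degrees, so $g=d_x f$ for a degree-zero collection $f$. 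From now on $p\ge 2$; we work over $X_x$, extending all collections below by zero to $X$.

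\emph{Construction.} Since $x\in Z$ and, by the defining conditions of a patching system with freedom degree zero, the pairs $(Z,Z^1_{p-1})$ and $(Z,Z^2_{p-1})$ are strongly locally effaceable, Proposition~\ref{differential} applied twice to the cocycle $g$ yields collections $h_{p-1}$ supported on $Z^1_{p-1}$ and $\xi_{p-1}$ supported on $Z^2_{p-1}$, both of codimension $p-1$, with $d_x h_{p-1}=g=d_x\xi_{p-1}$. Because $Z^1_{p-1}$ and $Z^2_{p-1}$ have no common irreducible components, $H_{p-1}:=h_{p-1}-\xi_{p-1}$ is a collection supported on $Z^1_{p-1}\cup Z^2_{p-1}$ that restricts to $h_{p-1}$ on the components of $Z^1_{p-1}$ and satisfies $d_x H_{p-1}=0$. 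Inductively, given for $2\le r\le p-1$ a cocycle $H_r$ at $x$ supported on $Z^1_r\cup Z^2_r$ whose $Z^1_r$-part is $h_r$ (note $x\in Z\subseteq Z^1_r\cup Z^2_r$, which follows from the nesting inclusions in the definition of a patching system), the pairs $(Z^1_r\cup Z^2_r,Z^1_{r-1})$ and $(Z^1_r\cup Z^2_r,Z^2_{r-1})$ are strongly locally effaceable, so two more applications of Proposition~\ref{differential} give $h_{r-1}$ on $Z^1_{r-1}$ and $\xi_{r-1}$ on $Z^2_{r-1}$ with $d_x h_{r-1}=H_r=d_x\xi_{r-1}$; put $H_{r-1}:=h_{r-1}-\xi_{r-1}$, again a cocycle supported on $Z^1_{r-1}\cup Z^2_{r-1}$ with $Z^1_{r-1}$-part $h_{r-1}$. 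Running down to $r=2$ produces a degree-one cocycle $H_1$ supported on $Z^1_1\cup Z^2_1$, and exactness of the local Gersten resolution gives $H_1=d_x f$ for a degree-zero collection $f$, which will be the required one.

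\emph{Verification.} For each $j$ the part of $d_x h_{j-1}$ (which equals $H_j$, or $g$ when $j=p$) supported on the components of $Z^1_j$ is $h_j$; as $Z^1_j$ and $Z^2_j$ have no common components, this part is precisely $\nu_{Z^1_{j-1}Z^1_j}(h_{j-1})$, so $\nu_{Z^1_{j-1}Z^1_j}(h_{j-1})=h_j$. Composing the residue maps along the chain gives $\nu_{XZ^1_1\ldots Z^1_{r-1}}(f)=h_{r-1}$ for every $r$, whence $d_x\nu_{XZ^1_1\ldots Z^1_{p-1}}(f)=d_x h_{p-1}=g$. It remains to bound $\sing(h_{r-1})=\sing(\nu_{XZ^1_1\ldots Z^1_{r-1}}(f))$: inspecting the proof of Proposition~\ref{differential}, on each component $\overline{\widetilde z}$ of $Z^1_{r-1}$ the lift $h_{r-1}$ is represented by a class defined away from (the pullback of) the effaced subvariety $Z^1_r\cup Z^2_r$ together with an auxiliary subvariety of codimension at least $r+1$ in $X$; the latter has codimension at least two in $\overline{\widetilde z}$ and contributes no divisor to the residue, so $\sing(h_{r-1})\subseteq Z^1_r\cup Z^2_r$, which is the assertion for the index $r$.

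\emph{The main obstacle.} The delicate point is exactly this last bound: it forces one to look inside the construction in Proposition~\ref{differential} rather than use it as a black box, and to check that the auxiliary ``spreading-out'' subvariety appearing there (obtained via Corollary~\ref{K-integrality} and Corollary~\ref{global_addition}) always has codimension one more than the subvariety being effaced, so that it is invisible to $\sing$. Alternatively, and probably more cleanly, one re-runs the effacement keeping track of a prescribed forbidden locus taken along the chain $Z^2_\bullet$, using the assignments $R\mapsto\Lambda(R)$ built into the definition of a strongly locally effaceable pair — this is precisely the feature of that definition that gets used here. Everything else — that $x$ lies in all the subvarieties that occur, the grading shift ($F_{n-r}$ at codimension $r$), compatibility of the maps $\nu$ with composition along a chain, and the degenerate codimension-one endpoint — is routine bookkeeping.
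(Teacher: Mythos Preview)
Your argument is correct and is exactly the paper's proof with the induction on $p$ unrolled: the paper applies Proposition~\ref{differential} to the s.l.e.\ pairs $(Z,Z^1_{p-1})$ and $(Z,Z^2_{p-1})$ to obtain $g^1,g^2$ (your $h_{p-1},\xi_{p-1}$), observes $d_x(g^1-g^2)=0$, and then invokes the inductive hypothesis for the codimension-$(p-1)$ subvariety $Z^1_{p-1}\cup Z^2_{p-1}$ with its patching system $\{Z^{1,2}_r\}_{1\le r\le p-2}$.

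One remark: your ``main obstacle'' paragraph worries about a non-issue. Proposition~\ref{differential} asserts $d_x(\{g_{\widetilde z}\})=\{f_z\}$ as an equality in the full group $\bigoplus_{\eta\in X^{(p)}}F_{n-p}(k(\eta))$, not merely that the restrictions to the support of $\{f_z\}$ agree. Hence in your notation $d_x h_{r-1}=H_r$ holds exactly, and since $H_r$ is supported on $Z^1_r\cup Z^2_r$ the bound $\sing(h_{r-1})\subseteq Z^1_r\cup Z^2_r$ is immediate --- there is no need to open up the proof of Proposition~\ref{differential} or to invoke the assignments $\Lambda$. (Those assignments are used \emph{inside} the proof of Proposition~\ref{differential} to guarantee precisely this clean conclusion, but once you cite the proposition you get it for free.)
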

\begin{proof}
The proof is by induction on $p\ge 1$. For $p=1$, by Remark \ref{codimension-one},
there is nothing to prove.

Suppose that $p>1$. Then, by Proposition \ref{differential}, there exist two
collections $g^1,g^2\in\bigoplus\limits_{\eta\in X^{(p-1)}}F_{n-p+1}(k(\eta))$
with the support on $Z^1_{p-1}$ and $Z^2_{p-1}$, respectively, such that
$d_x(g^i)=g$ for $i=1,2$.
Therefore, $d_{x}(g^1-g^2)=0$ and, by the inductive assumption, there exists
a collection $f\in \G(X,F_*,n)^0$ such that
$\sing(\nu_{XZ^1_1\ldots Z^1_{r-1}}(f))\subset Z^1_r\cup Z^2_r$ for
all $r,1\le r\le p-2$ and $d_{x}\nu_{XZ^1_1\ldots
Z^1_{p-2}}(f)=g^1-g^2$, where $Z_{p-1}=Z^1_{p-1}\cup Z^2_{p-1}$. Such $f$
satisfies the needed conditions with respect to the initial collection $g$.
\end{proof}

\subsection{Main theorem}\label{section-main-theorem}

Let $F_*$ be a homology theory locally acyclic in fibrations over a field $k$.

\begin{theor}\label{quasiis}
Suppose that $k$ is an infinite perfect field and
that $X$ is an irreducible smooth variety over $k$; then for any $n\in\Z$,
the morphism $\underline{\nu}_X:\underline{\A}(X,\F^X_n)^{\bullet}\to
\underline{Cous}(X,\F^X_n)^{\bullet}=\underline{\G}(X,F_*,n)^{\bullet}$
is a quasiisomorphism.
\end{theor}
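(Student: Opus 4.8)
The plan is to deduce the theorem from the quasiisomorphism already established for $\A'$-adeles, by a chain of localizations, and then to produce enough cocycles in the $\A$-adelic complex with the help of patching systems. By Corollary \ref{corol-Gerst}$(i)$ one has $\underline{Cous}(X,\F^X_n)^{\bullet}=\underline{\G}(X,F_*,n)^{\bullet}$ and the sheaf $\F^X_n$ is Cohen--Macaulay in the sense of \cite{Har}; hence Theorem \ref{prime-quasiis} gives that $\underline{\nu}'_X\colon\underline{\A}'(X,\F^X_n)^{\bullet}\to\underline{Cous}(X,\F^X_n)^{\bullet}$ is a quasiisomorphism. Since $\F^X_n$ is $1$-pure it has controllable support, so $\theta$ is injective (Proposition \ref{theta-emb}) and $\underline{\A}(X,\F^X_n)^{\bullet}$ is a subcomplex of $\underline{\A}'(X,\F^X_n)^{\bullet}$; as $\underline{\nu}_X$ is the composition with $\underline{\nu}'_X$ (Remark \ref{tilde-properties}$(iii)$), it suffices to show that this inclusion is a quasiisomorphism. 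This can be checked on stalks, and by the localization statements of Corollary \ref{open-local} together with their $\A'$-analogues (Remark \ref{tilde-properties}$(ii)$), the stalk at $x\in X$ is the corresponding complex on the regular local scheme $Y=X_x=\Spec\OO_{X,x}$ for the sheaf $\Gc=\F^Y_n$, which is again $1$-pure and Cohen--Macaulay. Whenever geometric data are needed one replaces $X$ by an affine open neighbourhood of $x$, so the patching-system machinery of Section \ref{patching-systems-section} is available.

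Because the localized form of Theorem \ref{prime-quasiis} exhibits $\A'(Y,\Gc)^{\bullet}$ as a resolution of $\Gc_x=(\F^Y_n)_x$, and because $\Gc_x$ already lies in $\A(Y,\Gc)^0$ and is killed by the differential, the inclusion $\A(Y,\Gc)^{\bullet}\hookrightarrow\A'(Y,\Gc)^{\bullet}$ is a quasiisomorphism if and only if $H^p(\A(Y,\Gc)^{\bullet})=0$ for all $p\ge 1$; i.e., every $\A$-cocycle of positive degree on $Y$ should be an $\A$-coboundary. The difference between an $\A$-adele and an $\A'$-adele is precisely that the former carries a coherent system of denominators --- a compatible family of divisors attached to the left parts of flags, satisfying the monotonicity condition $(*)$ of Proposition \ref{systemofd} --- while an $\A'$-adele only obeys the finiteness condition on iterated residues. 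A soft argument using the resolution $\A'(Y,\Gc)^{\bullet}$ alone cannot produce such denominators, so the required $\A$-coboundary must be constructed explicitly.

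This construction is exactly what patching systems provide. By Proposition \ref{patching systems} (or Remark \ref{global_patching_systems}) there exists, for any equidimensional subvariety $Z$ of codimension $p$ in $Y$, a patching system $\{Z^{1,2}_r\}$, $1\le r\le p-1$, of freedom degree $\ge 0$; and by its main property Proposition \ref{differential-patching-systems}, every cocycle of the local Gersten complex supported on $Z$ is the iterated residue, along the chain $Y\supset Z^1_1\supset\cdots\supset Z^1_{p-1}\supset Z$, of an explicit codimension-zero adele whose singularities at each intermediate stage stay inside $Z^1_r\cup Z^2_r$. Combining this with the surjectivity of $\nu_p$ (Lemma \ref{surjective-nu}) and the local cobounding statement Lemma \ref{cobound} yields the approximation result, Lemma \ref{approximation}, on which the proof rests. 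The main theorem then follows by a d\'evissage parallel to the proof of Theorem \ref{prime-quasiis}: one interpolates between $\A(Y,\Gc)^{\bullet}$ and $\A'(Y,\Gc)^{\bullet}$ by intermediate complexes in which, codimension by codimension and in decreasing order, the $\A$-conditions are relaxed to the weaker $\A'$-conditions (this is legitimate by Corollary \ref{intersect2}), and one proves by induction on the codimension that each transition morphism is a quasiisomorphism --- its surjectivity coming from Lemma \ref{surjective-nu}, and the exactness of its kernel from the approximation lemma.

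The hard part is the bookkeeping in this last step. A single $\A$-adele must satisfy condition $(*)$ of Proposition \ref{systemofd} simultaneously along infinitely many flags while, at each level of the residue tower, keeping its singularities inside the subvarieties prescribed by the patching system and away from the finitely many auxiliary subvarieties that arise when two local primitives of one Gersten cocycle are compared; it is the existence of patching systems of arbitrarily large freedom degree, furnished by Theorem \ref{freely_effaceable} together with the addition result Proposition \ref{addition}, that makes such a coherent simultaneous choice possible. Granting this, $H^{p}(\A(Y,\Gc)^{\bullet})=0$ for all $p\ge 1$ and all local schemes $Y=X_x$, and therefore $\underline{\nu}_X$ is a quasiisomorphism.
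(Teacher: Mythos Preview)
Your overall strategy is correct and matches the paper's: reduce to comparing $\A$ with $\A'$ (using Theorem~\ref{prime-quasiis} and the fact that $\F^X_n$ is Cohen--Macaulay and $1$-pure), and then invoke Lemma~\ref{approximation} together with Lemma~\ref{cobound} as the technical engine. Your reduction to stalks $Y=X_x$ is a legitimate alternative to the paper's reduction to affine opens, though the identification of the stalk of $\underline{\A}(X,\F^X_n)^{\bullet}$ at $x$ with $\A(X_x,j_x^*\F^X_n)^{\bullet}$ is not quite what Corollary~\ref{open-local}$(ii)$ says and deserves a line of justification (it involves commuting a filtered colimit with the adelic construction). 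The paper stays on the affine open so that the patching-system machinery applies directly without passing back and forth.

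The genuine gap is in your main step. You propose to ``interpolate between $\A(Y,\Gc)^{\bullet}$ and $\A'(Y,\Gc)^{\bullet}$ by intermediate complexes in which, codimension by codimension and in decreasing order, the $\A$-conditions are relaxed to the weaker $\A'$-conditions,'' and you cite Corollary~\ref{intersect2} and Lemma~\ref{surjective-nu} for the legitimacy and the surjectivity. But no such intermediate complexes are defined, and it is not clear they can be: the distinction between an $\A$-adele and an $\A'$-adele is the existence of a single coherent system of divisors $\{D_{\eta_0\ldots\eta_k}\}$ satisfying condition~$(*)$ of Proposition~\ref{systemofd} for \emph{all} left parts of flags simultaneously, which is not a condition that can be switched on or off one codimension at a time. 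The intermediate complexes $C_p^{\bullet}$ in the proof of Theorem~\ref{prime-quasiis} interpolate between $\A'$ and $Cous$, and that filtration exploits Lemma~\ref{inductive-tilde}, which has no analogue for $\A$-groups. Corollary~\ref{intersect2} concerns boundary maps between $\A$-groups of different types and does not produce hybrid $\A$/$\A'$ complexes; Lemma~\ref{surjective-nu} is about the surjectivity of $\nu_p$ onto Cousin terms, not about maps between adelic groups.

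What the paper actually does is different in architecture. It first uses Remark~\ref{surj-cohom} to observe that $\nu_X$ is already surjective on cohomology, so it suffices to show that $H^p(\A(X,\F^X_n)^{\bullet})\to H^p(\A'(X,\F^X_n)^{\bullet})$ is injective: given $f\in\A^p$ with $f=dg'$ for some $g'\in(\A')^{p-1}$, one must produce $g\in\A^{p-1}$ with $dg=f$. This is done by induction on the maximal \emph{depth} $l$ of the types $(i_0\ldots i_{p-1})$ appearing in $g'$. For a component $g'_{0\ldots l\,i_{l+1}\ldots i_{p-1}}$ of maximal depth, one checks that $\nu'_{0\ldots(l+1)}$ applied to it lands (via $f$) in the $\A$-adelic target required by Lemma~\ref{approximation}; that lemma then supplies $g^0\in\A$ of the same type with the same $(l{+}1)$-residue, and Lemma~\ref{cobound} cobounds the difference $g'-g^0$ by an $h'\in\A'$ whose components have strictly smaller depth. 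Replacing $g'$ by $g'-g^0-dh'$ decreases the number of depth-$l$ components, and the induction closes (the base case $l=-1$ being Corollary~\ref{intersect2}). This depth induction on the $\A'$-primitive is the organizing idea you are missing; once you supply it, your identification of the key lemmas is exactly right.
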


\begin{corol}
Under the assumptions from Theorem \ref{quasiis}, the natural morphism
$\F_n^X\to\underline{\A}(X,\F^X_n)^{\bullet}$ is a quasiisomorphism; in particular,
the cohomology groups $H^i(\A(X,\F^X_n)^{\bullet})$ are canonically isomorphic
to the cohomology groups $H^i(X,\F^X_n)$.
\end{corol}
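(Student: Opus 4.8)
The plan is to obtain the corollary as a formal consequence of Theorem \ref{quasiis}, using the Cohen--Macaulay property of $\F^X_n$ recorded in Corollary \ref{corol-Gerst}(i) together with a two-out-of-three argument, and then to read off the cohomology statement from flasqueness.

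First I would write down the canonical triangle of morphisms of complexes of sheaves on $X$,
$$
\F^X_n\longrightarrow\underline{\A}(X,\F^X_n)^{\bullet}
\stackrel{\underline{\nu}_X}{\longrightarrow}\underline{Cous}(X,\F^X_n)^{\bullet},
$$
in which the first arrow is the natural map $\F\to\underline{\A}(X,\F)^{\bullet}$ from Section \ref{defin-basic-adeles} and $\underline{\nu}_X$ is the sheafified comparison map of Section \ref{Cousin-relation}. The point to verify here is that this triangle commutes, i.e. that the composite coincides with the natural map $\F^X_n\to\underline{Cous}(X,\F^X_n)^{\bullet}$; this is immediate from the constructions, since all three maps are induced by the inclusion of $\F^X_n$ as the degree-zero term. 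Note that the irreducible smooth variety $X$ is Noetherian, catenary and equidimensional, so the hypotheses of Section \ref{Cousin-relation} and of Theorem \ref{quasiis} are in force, and by Corollary \ref{corol-Gerst}(i) the target is identified with $\underline{\G}(X,F_*,n)^{\bullet}$.

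Next I would invoke the two inputs. By Corollary \ref{corol-Gerst}(i) the sheaf $\F^X_n$ is Cohen--Macaulay in the sense of \cite{Har}, which, by the definition recalled in Remark \ref{surj-cohom}, means precisely that the composite $\F^X_n\to\underline{\A}(X,\F^X_n)^{\bullet}\to\underline{Cous}(X,\F^X_n)^{\bullet}$ is a quasiisomorphism. On the other hand, Theorem \ref{quasiis} asserts that $\underline{\nu}_X$ is itself a quasiisomorphism. Applying the two-out-of-three property to the commuting triangle then forces the remaining arrow $\F^X_n\to\underline{\A}(X,\F^X_n)^{\bullet}$ to be a quasiisomorphism, which is the first assertion of the corollary.

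For the cohomology computation I would argue as follows. Since $\underline{\A}(X,\F^X_n)^{\bullet}$ is a complex of flasque sheaves (flasqueness being established in Section \ref{defin-basic-adeles}) and is, by the above, a resolution of $\F^X_n$, it is a flasque resolution; hence $H^i(X,\F^X_n)$ is canonically computed by the complex of global sections $\Gamma(X,\underline{\A}(X,\F^X_n)^{\bullet})$. I would then identify this with $\A(X,\F^X_n)^{\bullet}$: by the definition of the sheafification one has $\underline{\A}_s(X,\F^X_n)^p(U)=\A_s(S(X)_p(U),i_U^*\F^X_n)$ (compare Corollary \ref{open-local}(i)), so evaluating at $U=X$ and passing to the reduced quotient, which commutes with $\Gamma$, gives $\Gamma(X,\underline{\A}(X,\F^X_n)^{\bullet})=\A(X,\F^X_n)^{\bullet}$, whence the canonical isomorphisms $H^i(X,\F^X_n)\cong H^i(\A(X,\F^X_n)^{\bullet})$. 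I expect no serious obstacle: the whole difficulty is concentrated in Theorem \ref{quasiis}, which I may assume, and the only points requiring care are purely formal, namely the commutativity of the triangle (so that the Cohen--Macaulay hypothesis applies to the composite factoring through $\underline{\A}$) and the compatibility of $\Gamma$ with the reduction $\underline{\A}_s\mapsto\underline{\A}$.
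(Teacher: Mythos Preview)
Your argument is correct and is exactly the intended one: the paper states the corollary without proof because it is an immediate consequence of Theorem \ref{quasiis} together with the Cohen--Macaulay property of $\F^X_n$ (Corollary \ref{corol-Gerst}(i) and Remark \ref{surj-cohom}) via two-out-of-three, and the cohomology identification then follows from the flasqueness of $\underline{\A}(X,\F^X_n)^{\bullet}$ established in Section \ref{defin-basic-adeles}. There is nothing to add.
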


\begin{rmk}
In Theorem \ref{quasiis} we make a strong restriction on the ground
field to be infinite and perfect. In fact, the only one place where
we use this is the geometric proof of
Claim~\ref{noetherian_induction}. It seems possible to prove the
same result for smooth varieties over a finite field, and, then, to
reduce the case of regular varieties over an arbitrary field to that
case by a standard argument of choosing a model. On the other hand,
the author can prove Theorem~\ref{quasiis} for $\dim X\le 3$ over an
arbitrary field, avoiding Claim~\ref{noetherian_induction}.
\end{rmk}

\begin{proof}[Proof of Theorem \ref{quasiis}]
In is enough to prove that the morphism $\nu_U:\A(U,\F^U_n)\to Cous(U,\F^X_n)$
is a quasiisomorphism for any affine open subset $U\subset X$. We may put $X=U$.
Since $\F^X_n$ is a subsheaf in a constant sheaf, by Proposition \ref{theta-emb},
the complex $\A(X,\F^X_n)^{\bullet}$ is a subcomplex in the complex
$\A'(X,\F^X_n)^{\bullet}$. By Remark \ref{surj-cohom} and Theorem \ref{prime-quasiis},
it is enough to show that for any $p$, $0\le p\le d$, the natural homomorphism
$H^p(\A(X,\F^X_n)^{\bullet})\to H^p(\A'(X,\F^X_n)^{\bullet})$ is injective.

For $p=0$, there is nothing to prove. Suppose that $1\le p\le d$ and
consider an element $f\in \A(X,\F^X_n)^{p}$. Suppose that $f=d(g')$,
where $g'\in \A'(X,\F^X_n)^{p-1}$. We want to show that there exists
$g\in \A(X,\F^X_n)^{p-1}$ such that $d(g)=f$. We prove this by
induction on the maximal depth $l$, $-1\le l\le p-1$, of types for
non-zero components of $g'$. Recall that for any adele $h$ and an
increasing sequence of natural numbers $(j_0\ldots j_q)$, by
$h_{j_0\ldots j_q}$ we denote the component of $h$ that has type
$(j_0\ldots j_q)$.

Suppose that $l=-1$. Let $(i_0\ldots i_{p-1})$ be a sequence such that
$g'_{i_0\ldots i_{p-1}}\ne 0$; then $i_0>0$ and we have
$g'_{i_0\ldots i_{p-1}}=f_{0i_0\ldots i_{p-1}}\in
\A((0i_0\ldots i_{p-1}),\F^X_n)$. By Corollary \ref{intersect2},
$g'_{i_0\ldots i_{p-1}}\in\A((i_0\ldots i_{p-1}),\F_n^X)$ and thus
$g'\in\A(X,\F^X_n)^{p-1}$.

Now suppose that $l\ge 0$. Let $(0\ldots l i_{l+1}\ldots i_{p-1})$
be a sequence such that \mbox{$g'_{0\ldots li_{l+1}\ldots
i_{p-1}}\ne 0$}. Suppose that $l<p-1$.
%The reasoning is the same as for $l=p-1$ with
%$\nu'_{0\ldots p-1}=\nu'_{0\ldots l}$ replaced by $\nu'_{0\ldots
%l+1}$.
Combining Proposition \ref{residue} and Corollary
\ref{corol-Gerst}$(ii)$, we get that the element $\nu_{0\ldots
l+1}(f_{0\ldots l+1i_{l+1}\ldots i_{p-1}})$ belong to the group
$$
\mbox{$\bigoplus\limits_{\eta_{l+1}\in X^{(l+1)}}
\A((0(i_{l+1}-l-1)\ldots(i_{p-1}-l-1)),\F(n-d)_{d-l-1}^{\overline{\eta}_{l+1}})$}
\subset
$$
$$
\subset \prod_{\eta_{i_{l+1}}\ldots\eta_{i_p}}
\left(\mbox{$\bigoplus\limits_{\eta_{l+1}\in X^{(l+1)}}$}
F_{n-l-1}(k(\eta_{l+1}))\right),
$$
where $d=\dim(X)$ and the product is taken over all flags
$\eta_{i_{l+1}}\ldots\eta_{i_p}$ of type $(i_{l+1}\ldots i_p)$ and
$\eta_{i_{l+1}}\in\overline{\eta}_{l+1}$. On the other hand, the
reciprocity law implies that
$$
\nu_{0\ldots l+1}(f_{0\ldots l+1i_{l+1}\ldots i_{p-1}})=
\nu'_{0\ldots l+1}(-1)^{l+1}g'_{0\ldots l i_{l+1}\ldots i_{p-1}}.
$$
By Lemma \ref{approximation}, there exists an element
$g^0\in\A((0\ldots l i_{l+1}\ldots i_{p-1}),\F_n^X)$ such that
$$
\nu'_{0\ldots l+1}(g'_{0\ldots l i_{l+1}\ldots i_{p-1}}-g^0)=0.
$$
By Lemma \ref{cobound}, there exists an element
$h'\in\prod\limits_{0\le i\le l}
\A'((0\ldots \hat{\imath}\ldots l i_{l+1}\ldots i_{p-1}),\F_n^X)$
such that $d(h')_{0\ldots li_{l+1}\ldots i_{p-1}}=
g'_{0\ldots li_{l+1}\ldots i_{p-1}}- g^0$. Note that
$d(g'-g^0-d(h'))\in\A(X,\F^X_n)^p$ and the adele $g'-g^0-d(h')$
has a strictly less nonzero depth $l$ components than the adele $g'$.
Therefore, by the inductive assumption,
there exists an element $g^1\in \A(X,\F^X_n)^{p-1}$ such that
$d(g^1)=d(g'-g^0-d(h'))$ and we put $g=g^0+g^1$.

Now suppose that $l=p-1$; then, by Lemma \ref{surjective-nu}, there
exists an element $g^0\in\A((0\ldots p-1),\F^X_n)$ such that
$\nu'_{0\ldots p-1}(g'-g^0)=0$. By Lemma~\ref{cobound}, there exists
an element $h'\in\prod\limits_{0\le i<
p-1}\A'((0\ldots\hat{\imath}\ldots p-1),\F^X_n)$ such that
$d(h')_{0\ldots p-1}=(g'-g^0)_{0\ldots p-1}$. Note that
$d(g'-g^0-d(h'))\in\A(X,\F^X_n)^p$ and the adele $g'-g^0-d(h')$ has
no components of depth $p-1$. Therefore, by the inductive
assumption, there exists an element $g^{1}\in \A(X,\F^X_n)^{p-1}$
such that $d(g^1)=d(g'-g^0-d(h'))$ and we put $g=g^0+g^1$.
\end{proof}

The essential part in the proof of Theorem \ref{quasiis} is the
following approximation type lemma.

\begin{lemma}\label{approximation}
Under the assumptions from Theorem \ref{quasiis}, consider an adele
\mbox{$f\in \A'((i_0\ldots i_p),\F^X_n)$} such that the depth of the
sequence $(i_0\ldots i_p)$ is $l<p$ and
$$
\nu'_{0\ldots (l+1)}(f)\in
\mbox{$\bigoplus\limits_{\eta_{l+1}\in X^{(l+1)}}
\A((0(i_{l+1}-l-1)\ldots(i_p-l-1)),\F(n-d)_{d-l-1}^{\overline{\eta}_{l+1}})$}\subset
$$
$$
\subset \prod_{\eta_{i_{l+1}}\ldots\eta_{i_p}}
\left(\mbox{$\bigoplus\limits_{\eta_{l+1}\in X^{(l+1)}}$}
F_{n-l-1}(k(\eta_{l+1}))\right),
$$
where $d=\dim(X)$ and
the product is taken over all flags $\eta_{i_{l+1}}\ldots\eta_{i_p}$ of type
$(i_{l+1}\ldots i_p)$ and with $\eta_{i_{l+1}}\in\overline{\eta}_{l+1}$.
Then there
exists an adele $g\in \A((i_0\ldots i_p),\F_n^X)$ of the same
type as $f$ such that $\nu_{0\ldots(l+1)}(g)=\nu'_{0\ldots(l+1)}(f)$.
\end{lemma}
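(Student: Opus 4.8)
The plan is to construct $g$ by the patching‑systems technique of Section~\ref{patching-systems-section}, feeding into it the rational data produced by Proposition~\ref{differential-patching-systems}, and then to check that what comes out is an honest $\A$-adele. First I would reduce to $X$ affine, exactly as in the proof of Theorem~\ref{quasiis}. By Corollary~\ref{corol-Gerst}, $\F^X_n$ is a $1$-pure, Cohen--Macaulay subsheaf of the constant sheaf $\underline{F_n(k(X))}$, and $R^{l+1}\gamma_{\overline{\eta}_{l+1}}\F^X_n\cong\F(n-d)^{\overline{\eta}_{l+1}}_{d-l-1}$; so by Proposition~\ref{theta-emb} the desired $g$ is determined by its generic components $g_{\eta_0\ldots\eta_l\eta_{i_{l+1}}\ldots\eta_{i_p}}\in F_n(k(X))$, and by Proposition~\ref{systemofd} the requirement that $g$ be an $\A$-adele amounts to the existence of a system of divisors $\{D_{\eta_0\ldots\eta_k}\}$ on $X$ bounding the poles of these components along flags and satisfying the compatibility $(*)$. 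Writing $r:=\nu'_{0\ldots(l+1)}(f)$ and using the splitting of Proposition~\ref{Properties-M}$(i)$, I may assume that $r$ is supported on a single irreducible codimension-$(l+1)$ subvariety $Z=\overline{\eta}_{l+1}\subset X$, adding up the finitely many contributions at the end. Finally, the reciprocity law gives $r=\pm\, d_{\mathrm{Gersten}}\bigl(\nu'_{0\ldots l}(f)\bigr)$, so for every point $x\in Z$ and every tail flag $\eta_{i_{l+1}}\ldots\eta_{i_p}$ the corresponding component of $r$, localized at $x$, is a degree-$(l+1)$ cocycle in the local Gersten complex $\G(X_x,F_*,n)^{\bullet}$ supported on $Z_x$.

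Now fix $f_0\geq 0$ large enough to accommodate the finitely many irreducible components of the closures occurring in $r$ together with its tail data, and use Proposition~\ref{patching systems} and Remark~\ref{rmk-irreducible-patchingsyst} to pick a patching system $\{Z^{1,2}_r\}_{1\le r\le l}$ for $Z$ of freedom degree at least $f_0$ whose members meet the relevant tail subvarieties properly. Applying Proposition~\ref{differential-patching-systems} (with $p=l+1$) to the localizations of $r$ produces, uniformly in the tail flags, rational elements of $F_n(k(X))$ whose iterated residue along $X\supset Z^1_1\supset\cdots\supset Z^1_l$ recovers $r$ and which obey the singularity bound $\sing\bigl(\nu_{XZ^1_1\ldots Z^1_{k-1}}(\cdot)\bigr)\subset Z^1_k\cup Z^2_k$ for $1\le k\le l$ (for $l=0$ this is just the exactness of the sheafified Gersten resolution, Proposition~\ref{Gersten-resolution}, via Remark~\ref{codimension-one}). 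I then define $g$ to be supported, as an adele, on flags $\eta_0\ldots\eta_l$ ranging over the generic points of components of $X,Z^1_1,\ldots,Z^1_l$, with the rational elements above as the corresponding components; the top case $l=p-1$ additionally invokes Lemma~\ref{surjective-nu}. Here the head divisors $D_{\eta_0\ldots\eta_k}$ (for $k\le l$) are taken inside $Z^1_{k}\cup Z^2_{k}$ as dictated by the singularity bound, while the remaining (tail) divisors are obtained by lifting to $X$ the divisor systems on $Z$ that witness that $r$ is itself an $\A$-adele.

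With $g$ in hand, its residue $\nu_{0\ldots(l+1)}(g)$ — the boundary map into type $(0\ldots l\,(l+1)\,i_{l+1}\ldots i_p)$ followed by the map of Proposition~\ref{residue} — equals $r$ by the explicit residue formula there together with the identity $d_x\nu_{XZ^1_1\ldots Z^1_l}(\cdot)=r$ from Proposition~\ref{differential-patching-systems}, the summation over flags $\eta_0\ldots\eta_l$ being finite because $g$ is supported only on the components of the $Z^1_k$. The real work, and the main obstacle, is to verify that $g$ is genuinely an $\A$-adele, i.e.\ that the head and tail divisors above can be chosen to fit into a single system $\{D_{\eta_0\ldots\eta_k}\}$ satisfying the condition $(*)$ of Proposition~\ref{systemofd} and Claim~\ref{integrality}: the rational elements were produced point-by-point and tail-flag-by-tail-flag, and these infinitely many local choices must be organized coherently. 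This is exactly what the positive freedom degree of the patching system provides — it allows the finitely many tail specializations $\Lambda_{C_1}(R_1),\ldots,\Lambda_{C_{f_0}}(R_{f_0})$ of the effaceable-pair data to be effaced simultaneously, which is the content of the notion of an $f$-s.l.e.\ pair — and it is the one place where the hypothesis that $k$ is infinite and perfect enters, through Proposition~\ref{patching systems}; the compatibility between head and tail divisors is handled using the properness conditions in Remark~\ref{rmk-irreducible-patchingsyst}$(ii)$.
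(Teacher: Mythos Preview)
Your proposal has a genuine gap at the point where you assert that ``for every point $x\in Z$ and every tail flag $\eta_{i_{l+1}}\ldots\eta_{i_p}$ the corresponding component of $r$, localized at $x$, is a degree-$(l+1)$ cocycle in $\G(X_x,F_*,n)^{\bullet}$''. This is only true for $x=\eta_{i_{l+1}}$. The collection $r_{\text{tail}}=\{h_{\eta_{l+1}\eta_{i_{l+1}}\ldots\eta_{i_p}}\}$ is indexed by those $\eta_{l+1}$ whose closure contains $\eta_{i_{l+1}}$; when you pass to the deeper point $\eta_{i_p}$, there are additional codimension-$(l+1)$ points through $\eta_{i_p}$ not through $\eta_{i_{l+1}}$, and on those the extension-by-zero of $r_{\text{tail}}$ need not be a cocycle. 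The paper computes this obstruction explicitly in the proof of Corollary~\ref{transforming-collection}: $d_{\eta_{i_p}}(\{h\})$ is supported on the subvariety $E_{\eta_{i_{l+1}}\ldots\eta_{i_{p-1}}}$ built from the divisor system on each $\overline{\eta}_{l+1}$ that witnesses the $\A$-adelic structure of $r$. So Proposition~\ref{differential-patching-systems} can be invoked only at $x=\eta_{i_{l+1}}$, and the singularity bound it yields is then local at $\eta_{i_{l+1}}$ --- whereas the adelic condition from Proposition~\ref{systemofd} requires $g_{\eta_0\ldots\eta_l\eta_{i_{l+1}}\ldots\eta_{i_p}}\in(\F^{X\setminus D}_n)_{\eta_{i_p}}$, i.e.\ control at the \emph{last} point of the flag. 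A single patching system of high freedom degree does not bridge this gap, because the obstruction $E_{\eta_{i_{l+1}}\ldots\eta_{i_{p-1}}}$ depends on the tail flag in an unbounded way.

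This is precisely why the paper's proof is organized in two steps rather than one. Step~1 (Proposition~\ref{systemofc} and Corollary~\ref{transforming-collection}) enlarges $Z_{l+1}$ to a tail-dependent family $Z_{l+1;\eta_{i_{l+1}}\ldots\eta_{i_k}}$, built inductively in $k$ via Theorem~\ref{freely_effaceable} and Proposition~\ref{addition}, so that the modified collection becomes a genuine cocycle at $\eta_{i_p}$. Step~2 (Proposition~\ref{extension-patching-system}) then extends the fixed patching system for $Z_{l+1}$ to a tail-dependent family of patching systems for each $Z_{l+1;\eta_{i_{l+1}}\ldots\eta_{i_{p-1}}}$; this is where Proposition~\ref{addition} is used repeatedly, spending one unit of freedom degree at each step, and it is what forces the resulting divisor system $D_{\eta_{i_{l+1}}\ldots\eta_{i_k}}=Z^1_{1;\ldots}\cup Z^2_{1;\ldots}$ to satisfy condition~$(*)$. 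Your sketch collapses these two steps into one application of Proposition~\ref{differential-patching-systems}, and the appeal to ``large enough freedom degree'' is not doing the work you want it to: the freedom degree is consumed inductively along the tail, not all at once. Two smaller points: the reduction to a single irreducible $\overline{\eta}_{l+1}$ also breaks the cocycle condition at $\eta_{i_{l+1}}$ (cross-terms in codimension $l+2$), and ``lifting to $X$ the divisor systems on $Z$'' produces codimension-$(l+2)$ subvarieties, not divisors on $X$.
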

\begin{proof}
During the proof $\eta_s$ denotes a
schematic point on $X$ of codimension $s$ in $X$
(though sometimes points are considered on proper closed subvarieties in $X$).
Further, $d_{\eta_s}$ is the differential
in the local Gersten resolution at $\eta_s$, i.e., in the complex
$\G(X_{\eta_s},F_*,n)^{\bullet}$. For any two subvarieties $C_1$, $C_2$ in
$X$, denote by $C_1-C_2$ the union of all irreducible components of $C_1$ that
are not contained in $C_2$. Notice that for any two subvarieties $C_1$, $C_2$ in $X$,
we have $(C_1-C_2)\cup C_2=C_1\cup C_2$.

The proof is in two steps.

{\it Step 1.} Consider the collection of $\A$-adeles
$$
\{h_{\eta_{l+1}(i_{l+1}-l-
1)\ldots(i_p-l-1)}\}=\nu'_{0\ldots (l+1)}(f)\in
\mbox{$\bigoplus\limits_{\eta_{l+1}\in
X^{(l+1)}}\A((0(i_{l+1}-l-1)\ldots (i_p-l-1)),\F_{n-l-1}^{\overline{\eta}_{l+1}})$}.
$$
Let $Z_{l+1}=\cup\overline{\eta}_{l+1}$
be the union of the closures over the finite set of
schematic points $\eta_{l+1}\in X^{(l+1)}$ such that
$h_{\eta_{l+1}(i_{l+1}-l-1)\ldots(i_p-l-1)}$ is a non-zero adele on
$\overline{\eta}_{l+1}$. For
each schematic point $\eta_{l+1}\in Z^{(0)}_{l+1}$ let
$\{D_{\eta_{l+1}},D_{\eta_{l+1}\eta_{i_{l+1}}\ldots\eta_{i_{k}}}\}$, $l+1\le k< p$
be the system of divisors on $\overline{\eta}_{l+1}$
arising from the adelic condition for the $\A$-adele
$h_{\eta_{l+1}(i_{l+1}-l-1)\ldots(i_p-l-1)}$
on $\overline{\eta}_{l+1}$ (see Proposition \ref{systemofd}).

\begin{prop}\label{systemofc}
For any flag $\eta_{i_{l+1}}\ldots\eta_{i_k}$, $l+1\le k<p$ on $Z_{l+1}$, there
exists an equidimensional subvariety $Z_{l+1;\eta_{i_{l+1}}\ldots\eta_{i_{k}}}\subset
X$ of codimension $l+1$ in $X$ such that the system of subvarieties
$\{Z_{l+1;\eta_{i_{l+1}}\ldots\eta_{i_{k}}}\}$, $l+1\le k<p$ on $X$
satisfies the following conditions:
\begin{itemize}
\item[(i)]
for any point $\eta_{i_{l+1}}$ on $Z_{l+1}$, we have
$Z_{l+1;\eta_{i_{l+1}}}(\eta_{i_{l+1}})\subseteq Z_{l+1}(\eta_{i_{l+1}})$ and
for any flag $\eta_{i_{l+1}}\ldots\eta_{i_k}$, $l+1<k<p$ on $Z_{l+1}$ we have
$Z_{l+1;\eta_{i_{l+1}}\ldots\eta_{i_{k}}}(\eta_{i_{k}})\subseteq
Z_{l+1;\eta_{i_{l+1}}\ldots\eta_{i_{k-1}}}(\eta_{i_{k}})$;
\item[(ii)]
for any flag $\eta_{i_{l+1}}\ldots\eta_{i_{k}}$, $l+1\le k<p$ on $Z_{l+1}$,
the subvariety
$Z_{l+1;\eta_{i_{l+1}}\ldots\eta_{i_{k}}}-Z_{l+1}$ contains the equidimensional subvariety
$$
E_{\eta_{i_{l+1}}\ldots\eta_{i_k}}=
\bigcup_{\eta_{l+1}\in Z_{l+1}^{(0)}}(D_{\eta_{l+1}\eta_{i_{l+1}}
\ldots\eta_{i_{k}}}-D_{\eta_{l+1}})
$$
of codimension $l+2$ in $X$ and the pair
$(E_{\eta_{i_{l+1}}\ldots\eta_{i_k}},
Z_{l+1;\eta_{i_{l+1}}\ldots\eta_{i_{k}}}-Z_{l+1})$ is strongly locally
effaceable with the freedom degree at least $p-k$.
\end{itemize}
\end{prop}

\begin{proof}
We construct the needed system of subvarieties $\{Z_{l+1;\eta_{i_{l+1}}\ldots\eta_{i_{k}}}\}$
by induction on $k$, $l+1\le k<p$.

Suppose that $k=l+1$. Since for each point $\eta_{l+1}\in Z_{l+1}^{(0)}$
the system of divisors
$\{D_{\eta_{l+1}},D_{\eta_{l+1}\eta_{i_{l+1}}\dots\eta_{i_k}}\}$, $l+1\le k<p$
on $\overline{\eta}_{l+1}$ satisfies condition $(*)$
from Proposition \ref{systemofd},
we get that for any point $\eta_{i_{l+1}}$ on $X$, the defined above subvariety
$E_{\eta_{i_{l+1}}}$ does not contain $\eta_{i_{l+1}}$. For each point $\eta_{i_{l+1}}$
on $Z_{l+1}$, we choose closed points on
each irreducible component of $Z_{l+1}$ and on $\overline{\eta}_{i_{l+1}}$
outside of $E_{\eta_{i_{l+1}}}$ and thus get a finite set of closed points
$T_{\eta_{i_{l+1}}}\subset X\backslash E_{\eta_{i_{l+1}}}$.
The application of Theorem \ref{freely_effaceable} with $f=p-l-1$,
$Z=E_{\eta_{i_{l+1}}}$,
and $T=T_{\eta_{i_{l+1}}}$ yields the existence of an
equidimensional subvariety $z_{\eta_{i_{l+1}}}=\widetilde{Z}$ of codimension
$l+1$ in $X$ such that $z_{\eta_{i_{l+1}}}$
does not contain $\overline{\eta}_{i_{l+1}}$, has no common
irreducible components with $Z_{l+1}$,
and $(E_{\eta_{i_{l+1}}},z_{\eta_{i_{l+1}}})$
is an $(p-l-1)$-s.l.e. pair. We put
$Z_{l+1;\eta_{i_{l+1}}}=Z_{l+1}\cup z_{\eta_{i_{l+1}}}$.

Now we do the induction step from $k-1$ to $k$, $l+1<k<p$. As before,
by condition $(*)$ from Proposition \ref{systemofd},
for any flag $\eta_{i_{l+1}}\ldots\eta_{i_{k}}$ on $Z_{l+1}$, the subvariety
$$
E_{\eta_{i_{l+1}}\ldots\eta_{i_{k}}}-E_{\eta_{i_{l+1}}\ldots\eta_{i_{k-1}}}=
\bigcup_{\eta_{l+1}\in
Z_{l+1}^{(0)}}(D_{\eta_{l+1}\eta_{i_{l+1}}\ldots\eta_{i_k}}-
(D_{\eta_{l+1}\eta_{i_{l+1}}\ldots\eta_{i_{k-1}}}\cup D_{\eta_{l+1}}))
$$
does not contain $\eta_{i_{k}}$. For each flag $\eta_{i_{l+1}}\ldots\eta_{i_k}$,
the application of Proposition \ref{addition} with
$Z_1=E_{\eta_{i_{l+1}}\ldots\eta_{i_{k-1}}}$,
$Z_2=E_{\eta_{i_{l+1}}\ldots\eta_{i_{k}}}-E_{\eta_{i_{l+1}}\ldots\eta_{i_{k-1}}}$,
$\widetilde{Z}_1=Z_{l+1;\eta_{i_{l+1}}\ldots\eta_{i_{k-1}}}-Z_{l+1}$,
$T=Z_{l+1}$, and $C=\overline{\eta}_{i_{k}}$ yields the existence
of an equidimensional
subvariety $z_{l+1;\eta_{i_{l+1}}\ldots\eta_{i_{k}}}=\widetilde{Z}_2$ of
codimension $l+1$ in $X$ such that
$z_{l+1;\eta_{i_{l+1}}\ldots\eta_{i_{k}}}$ does not contain
$\overline{\eta}_{i_{k}}$, has no common irreducible components
with $Z_{l+1}$, and
$$
(E_{\eta_{i_{l+1}}\ldots\eta_{i_{k-1}}}\cup E_{\eta_{i_{l+1}}\ldots\eta_{i_k}},
(Z_{l+1;\eta_{i_{l+1}}\ldots\eta_{i_{k-1}}}-Z_{l+1})\cup
z_{l+1;\eta_{i_{l+1}}\ldots\eta_{i_k}})
$$
is a $(p-k)$-s.l.e. pair. We put $Z_{l+1;\eta_{i_{l+1}}\ldots\eta_{i_{k}}}=
Z_{l+1;\eta_{i_{l+1}}\ldots\eta_{i_{k-1}}}\cup
z_{l+1;\eta_{i_{l+1}}\ldots\eta_{i_{k}}}$. By Remark \ref{subpair},
$$
(E_{\eta_{i_{l+1}}\ldots\eta_{i_k}},
Z_{l+1;\eta_{i_{l+1}}\ldots\eta_{i_{k}}}-Z_{l+1})
$$
is a $(p-k)$-s.l.e. pair and
$Z_{l+1;\eta_{i_{l+1}}\ldots\eta_{i_{k}}}(\eta_{i_k})\subseteq
Z_{l+1;\eta_{i_{l+1}}\ldots\eta_{i_{k-1}}}(\eta_k)$.
\end{proof}

\begin{corol}\label{transforming-collection}
For any flag $\eta_{i_{l+1}}\ldots\eta_{i_p}$ on $Z_{l+1}$, there exists a collection
$$
\{g_{\eta_{l+1};\eta_{i_{l+1}}\ldots\eta_{i_p}}\}\in
\mbox{$\bigoplus\limits_{\eta_{l+1}\in X^{(l+1)}_{\eta_{i_p}}}
F_{n-l-1}(k(\eta_{l+1}))$},
$$
satisfying the following conditions (note that the closure of the point
$\eta_{l+1}$ from the index of $g$
may not contain $\eta_{i_{l+1}}$):
\begin{itemize}
\item[(i)]
$d_{\eta_{i_p}}(\{g_{\eta_{l+1};\eta_{i_{l+1}}\ldots\eta_{i_{p}}}\})=0$;
\item[(ii)]
if $\overline{\eta}_{l+1}$ contains $\eta_{i_{l+1}}$, then
$g_{\eta_{l+1};\eta_{i_{l+1}}\ldots\eta_{i_p}}=
h_{\eta_{l+1}\eta_{i_{l+1}}\ldots\eta_{i_p}}$;
\item[(iii)]
the support of the
collection $\{g_{\eta_{l+1};\eta_{i_{l+1}}\ldots\eta_{i_p}}\}$ is contained in
the subvariety $Z_{l+1;\eta_{i_{l+1}}\ldots \eta_{i_{p-1}}}$.
\end{itemize}
\end{corol}

\begin{proof}
We use the notations from Proposition \ref{systemofc}.
Since the $\A'$-adele $f$
has type $(0\ldots l i_{l+1}\ldots i_p)$, $i_{l+1}>l+1$, by reciprocity law,
we have $d_{\eta_{i_{l+1}}}(\{h_{\eta_{l+1}\eta_{i_{l+1}}\ldots\eta_{i_p}}\})=0$
for any flag $\eta_{i_{l+1}}\ldots\eta_{i_p}$ on $Z_{l+1}$, where
$\{h_{\eta_{l+1}\eta_{i_{l+1}}\ldots\eta_{i_p}}\}$ is considered as a collection from
$\G(X_{\eta_{i_{l+1}}},F_*,n)^{l+1}$.
Therefore the support of
the collection $\{g_{\eta_{l+2};\eta_{i_{l+1}}\ldots\eta_{i_p}}\}=
d_{\eta_{i_{p}}}(\{h_{\eta_{l+1}\eta_{i_{l+1}}\ldots\eta_{i_p}}\})\in
\G(X_{\eta_{i_p}},F_*,n)^{l+2}$
is contained in the subvariety $E_{\eta_{i_{l+1}}\ldots\eta_{i_{p-1}}}$.
Hence, by Propositions \ref{differential} and \ref{systemofc},
there exists a collection $\{g'_{\eta_{l+1};\eta_{i_{l+1}}\ldots\eta_{i_p}}\}\in
\G(X_{\eta_p},F_*,n)^{l+1}$ with
the support on $Z_{l+1;\eta_{i_{l+1}}\ldots\eta_{i_{p-1}}}-Z_{l+1}$ such that
$d_{\eta_{i_p}}(\{g'_{\eta_{l+1};\eta_{i_{l+1}}\ldots\eta_{i_p}}\})=
g_{\eta_{l+2};\eta_{i_{l+1}}\ldots\eta_{i_p}}$.
Finally, we put
$g_{\eta_{l+1};\eta_{i_{l+1}}\ldots\eta_{i_p}}=
\{h_{\eta_{l+1}\eta_{i_{l+1}}\ldots
\eta_{i_p}}\}-\{g'_{\eta_{l+1};\eta_{i_{l+1}}\ldots\eta_{i_p}}\}$.
\end{proof}

{\it Step 2.} By Proposition \ref{patching systems}, there exists a patching
system $\{Z_{r}^{1,2}\}$, $1\le r\le l$ on $X$ for the subvariety $Z_{l+1}$ with
the freedom degree at least $p-l$. We extend this patching system to patching
systems for all subvarieties $Z_{l+1;\eta_{i_{l+1}}\ldots\eta_{i_k}}$, $l+1\le k<p$.

\begin{prop}\label{extension-patching-system}
For each $k$, $l+1\le k<p$, and for each flag $\eta_{i_{l+1}}\ldots\eta_{i_k}$
on $Z_{l+1}$, there exists a patching system
$Z^{1,2}_{r;\eta_{i_{l+1}}\ldots\eta_{i_k}}$, $1\le r\le l$ on $X$
for the subvariety $Z_{l+1;\eta_{i_{l+1}}\ldots\eta_{i_k}}$ with the freedom degree at least $p-k$, satisfying the following condition.
Put $D=Z^1_{1}\cup Z^2_1$ and
$D_{\eta_{i_{l+1}}\ldots\eta_{i_k}}=Z^1_{1;\eta_{i_{l+1}}\ldots\eta_{i_k}}\cup
Z^2_{1;\eta_{i_{l+1}}\ldots\eta_{i_k}}$ for any flag
$\eta_{i_{l+1}}\ldots\eta_{i_k}$, $l+1\le k< p$ on $Z_{l+1}$. Then for any point
$\eta_{i_{l+1}}$ on $Z_{l+1}$, we have
$D_{\eta_{i_{l+1}}}(\eta_{i_{l+1}})\subseteq D(\eta_{i_{l+1}})$
and for any flag $\eta_{i_{l+1}}\ldots\eta_{i_k}$, $l+1<k<p$ on $Z_{l+1}$, we have
$
D_{\eta_{i_{l+1}}\ldots\eta_{i_k}}(\eta_{i_k})\subseteq
D_{\eta_{i_{l+1}}\ldots\eta_{i_{k-1}}}(\eta_{i_k}).
$
\end{prop}
\begin{proof}
We use the notations from the proof of Proposition \ref{systemofc}. The proof
is by double induction on $k$ and $r$, $l+1\le k<p$, $1\le r\le l$ (the
induction on $r$ is decreasing, as in the proof of Proposition \ref{patching
systems}).

Suppose that $k=l+1$, $r=l$. For each point $\eta_{i_{l+1}}$ on $Z_{l+1}$, the
application of Proposition \ref{addition} with $Z_1=Z_{l+1}$,
$Z_2=z_{l+1;\eta_{i_{l+1}}}$, $\widetilde{Z}_1=Z_{l}^1$, $T=Z^2_{l}$, and
$C=\overline{\eta}_{i_{l+1}}$ yields the existence of an equidimensional subvariety
$z^1_{l;\eta_{i_{l+1}}}=\widetilde{Z}_2\subset X$ of codimension $l$ in $X$
such that $z^1_{l;\eta_{i_{l+1}}}$ does not contain
$\overline{\eta}_{i_{l+1}}$, has no common irreducible components with
$Z^2_l$, and $(Z_{l+1;\eta_{i_{l+1}}}, Z_{l}^1\cup
z^1_{l;\eta_{i_{l+1}}})$ is a $(p-l-1)$-s.l.e. pair.
We put $Z^1_{l;\eta_{i_{l+1}}}=Z^1_l\cup
z^1_{l;\eta_{i_{l+1}}}$. Using Proposition \ref{addition} with $Z_1=Z_{l+1}$,
$Z_2=z_{l+1;\eta_{i_{l+1}}}$, $\widetilde{Z}_1=Z_{l}^2$,
$T=Z_{l+1;\eta_{i_{l+1}}}^1$, and $C=\overline{\eta}_{i_{l+1}}$, we get an
equidimensional subvariety $z_{l;\eta_{i_{l+1}}}^2=\widetilde{Z}_2\subset X$ of
codimension $l$ in $X$ such that $z_{l;\eta_{i_{l+1}}}^2$ does not contain
$\overline{\eta}_{i_{l+1}}$, has no common irreducible components with
$Z_{l+1;\eta_{i_{l+1}}}^1$, and $(Z_{l+1;\eta_{i_{l+1}}},Z^2_l\cup
z^2_{l;\eta_{i_{l+1}}})$ is a $(p-l-1)$-s.l.e. pair.
We put $Z^2_{l;\eta_{i_{l+1}}}=Z^2_l\cup
z^2_{l;\eta_{i_{l+1}}}$.

The induction step from $r+1$ to $r$ for $k=l+1$, $1\le r< l$ is analogous
with $Z_{l+1}$ replaced by $Z^1_{r+1}\cup Z^2_{r+1}$, $z_{l+1;\eta_{i_{l+1}}}$
replaced by $z^1_{r+1;\eta_{i_{l+1}}}\cup z^2_{r+1;\eta_{i_{l+1}}}$, and
$Z^j_{l}$ replaced by $Z^j_r$, $j=1,2$.

The reasoning for arbitrary $k$, $l+1<k<p$ is the same as for $k=l+1$ with the
subvarieties $\overline{\eta}_{i_{l+1}}$, $Z_{l+1;\eta_{i_{l+1}}}$,
$z_{l+1;\eta_{i_{l+1}}}$ replaced by the subvarieties $\overline{\eta}_{i_k}$,
$Z_{l+1;\eta_{i_{l+1}}\ldots\eta_{i_k}}$,
$z_{l+1;\eta_{i_{l+1}}\ldots\eta_{i_k}}$, respectively, for each flag
$\eta_{i_{l+1}}\ldots\eta_{i_k}$ on $Z_{l+1}$ and with the patching system
$\{Z^{1,2}_{r}\}$, $1\le r\le l$ replaced by the inductively defined patching
system $\{Z^{1,2}_{r;\eta_{i_{l+1}}\ldots\eta_{i_{k-1}}}\}$.

By construction, the system of divisors
$\{D,D_{\eta_{i_{l+1},\ldots,\eta_{i_k}}}\}$, $l+1\le k<p$ on $X$ satisfies
the needed condition.
\end{proof}

\begin{corol}\label{patching-collection}
For any flag $(\eta_{i_{l+1}}\ldots\eta_{i_p})$ on $Z_{l+1}$, there exists a collection
$$
\{g_{\eta_{0};\eta_{i_{l+1}}\ldots\eta_{i_p}}\}\in
\mbox{$\bigoplus\limits_{\eta_{0}\in X^{(0)}}
F_{n}(k(\eta_{0}))$},
$$
such that $g_{\eta_{0};\eta_{i_{l+1}}\ldots\eta_{i_p}}\in
(\F_n^{X\backslash D_{\eta_{i_{l+1}}\ldots \eta_{i_{p-1}}}})_{\eta_{i_p}}$
and
$$
d_{\eta_{i_{l+1}}}\nu_{XZ^1_{1;F}\ldots Z^1_{l;F}}(\{g_{\eta_{0};\eta_{i_{l+1}}\ldots
\eta_{i_p}}\})=\{h_{\eta_{l+1}\eta_{i_{l+1}}\ldots\eta_{i_p}}\}.
$$
\end{corol}
\begin{proof}
The corollary follows from the direct application of
Proposition \ref{differential-patching-systems}
for the collection $\{g_{\eta_{l+1};\eta_{i_{l+1}}\ldots\eta_{i_k}}\}$
from Corollary \ref{transforming-collection} and the
patching system $\{Z^{1,2}_{r;\eta_{i_{l+1}}\ldots\eta_{i_{p-1}}}\}$, $1\le r\le l$ from Proposition
\ref{extension-patching-system}.
\end{proof}

Now we are ready to define the needed adele $g\in\A((i_0\ldots i_p),\F_n^X)$.
Let $\eta_0\ldots\eta_l\eta_{i_{l+1}}\ldots\eta_{i_p}$ be a flag of type
$(i_0\ldots i_p)$ on $X$. Then we put
$g_{\eta_0\ldots\eta_l\eta_{i_{l+1}}\ldots\eta_{i_p}}=
g_{\eta_0;(\eta_{i_{l+1}}\ldots\eta_{i_p})}$
if $\eta_{i_{l+1}}\ldots\eta_{i_p}$ is a flag on $Z_{l+1}$ and
$\eta_r\in (Z_{r;F}^1)^{(0)}$ for all
$1\le r\le l$. Otherwise we put
$g_{\eta_0\ldots\eta_l\eta_{i_{l+1}}\ldots \eta_p}=0$.

For any flag $\eta_0\ldots\eta_k$ of type $(i_0\ldots i_k)$, $0\le k\le l$ on $X$,
we put $D_{\eta_0\ldots\eta_k}=D$. For any flag
$\eta_0\ldots\eta_l\eta_{i_{l+1}}\ldots\eta_{i_k}$ of type
$(i_0\ldots i_k)$ on $X$, $l+1\le k< p$, we put
$D_{\eta_0\ldots\eta_{l}\eta_{i_{l+1}}\ldots\eta_k}=
D_{\eta_{i_{l+1}}\ldots\eta_{i_k}}$ if $\eta_{i_{l+1}}\ldots\eta_{i_k}$ is a
flag on $Z_{l+1}$. Otherwise we put
$D_{\eta_0\ldots\eta_{l}\eta_{i_{l+1}}\ldots\eta_k}=\emptyset$.

By Proposition \ref{extension-patching-system},
the system of divisors $D_{\eta_0\ldots\eta_k}$, $0\le k< p$ on $X$
satisfies condition $(*)$ from Proposition \ref{systemofd}.
By Corollary \ref{patching-collection}, the distribution
$g$ satisfies the adelic condition with respect to the system of
divisors $D_{\eta_0\ldots\eta_k}$, $0\le k< p$ on $X$
and we have $\nu_{0\ldots (l+1)}(g)=\nu'_{0\ldots (l+1)}(f)$.

This concludes the proof of Lemma \ref{approximation}.
\end{proof}

\subsection{Explicit cocycles}\label{explicit-classes}

In this section we construct explicitly certain cocycles in the
adelic complex corresponding to the given cocycles in the Gersten
complex. Suppose $k$ is an infinite perfect field, $F_*$ is an
l.a.f. homology theory over $k$, $X$ is an irreducible smooth
variety over the field $k$, and $Y\subset X$ is an equidimensional
subvariety of codimension $p$ in $X$. Consider a collection
$\{f_y\}\in \bigoplus\limits_{y\in Y^{(0)}}F_m(k(y))$ such that
$\{f_{\eta}\}$ is a cocycle in the Gersten complex
$\G(X,F_*,p+m)^{\bullet}$ on $X$. By Remark
\ref{global_patching_systems}, there exists a patching system
$\{Y^{1,2}_r\}$, $1\le r\le p-1$ on $X$ for the subvariety $Y$ with
the freedom degree at least zero. We put $Y^1_p=Y$.

\begin{prop}\label{adelic-class}
There exists an adele $f=[\{f_y\}]\in \A(X,\F^X_{p+m})^p$ such that $f$ is a
cocycle in the adelic complex $\A(X,\F^X_{p+m})^{\bullet}$ with
$\nu_X(f)=\{f_y\}$, where
$\nu_X:\A(X,\F^X_{p+m})^{\bullet}\to\G(X,F_*,p+m)^{\bullet}$
is the morphism from Theorem \ref{quasiis}, and $f$ satisfies the
following conditions for any flag $\eta_{i_0}\ldots\eta_{i_p}$ on $X$:
\begin{itemize}
\item[(i)]
$f_{\eta_{i_0}\ldots\eta_{i_p}}=0$ unless $\eta_{i_r}\in Y_r^1$ for all $r,1\le r\le
p$;
%in addition, $\sing(\nu_{})$
\item[(ii)]
suppose that $\eta_{i_0}\notin Y^1_1$, $\eta_{i_r}\in Y_r^1$,
$\eta_{i_r}\notin Y^2_{r}$ for all $r,1\le r\le p-1$ and
$\eta_{i_p}\in Y$; then
$f_{\eta_{i_0}\ldots\eta_{i_p}}=\widetilde{f}_{\eta_{i_p}}\in
F_{p+m}(k(X))$, where $\widetilde{f}_{\eta_{i_p}}$ depends only on
$\eta_{i_p}$ and satisfies the condition
$d_{\eta_{i_p}}\nu_{XY_1^1\ldots
Y_{p-1}^1}(\widetilde{f}_{\eta_{i_p}})=\{f_y\}_{\eta_{i_p}}$. Here
$d_{\eta_{i_p}}$ is the differential in the Gersten resolution on
$X_{\eta_{i_p}}=\Spec(\OO_{X,\eta_{i_p}})$, the notation $\nu_{X
Y^1_1\ldots Y^1_{p-1}}$ was introduced in Section
\ref{patching-systems-section}, and the index $\eta_{i_p}$ by a
collection means that we consider the restriction of the collection
to $X_{\eta_{i_p}}$;
\item[(iii)]
we have $\sing(\nu_{XY_1^1\ldots Y^1_{r-1}}(f_{\eta_{i_0}\ldots\eta_{i_{p}}}))
\subset Y_r^1\cup Y_r^2$ for all $r,1\le r\le l$, where $l$ is the depth of the
type $(i_0\ldots i_p)$.
\end{itemize}
\end{prop}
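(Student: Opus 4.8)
The plan is to construct the adele $f$ by prescribing its components flag by flag, imitating Step~2 of the proof of Lemma~\ref{approximation}, and then to check in turn that the resulting distribution is a genuine adele, that it is a cocycle, that $\nu_X(f)=\{f_y\}$, and that it satisfies (i)--(iii). Since $X$ is irreducible and smooth, Corollary~\ref{corol-Gerst} shows that $\F^X_{p+m}$ is $1$-pure and is a subsheaf of the constant sheaf $F_{p+m}(k(X))$, so by Proposition~\ref{theta-emb} the map $\theta$ is injective and an adele of this sheaf is determined by its components $f_{\eta_{i_0}\ldots\eta_{i_p}}\in F_{p+m}(k(X))$; it therefore suffices to give these components and to verify the stated conditions on their residues and poles.

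First I would produce the local ingredients. For each schematic point $\eta\in Y$, restrict the Gersten cocycle $\{f_y\}$ and the patching system $\{Y^{1,2}_r\}$ to the local scheme $X_\eta=\Spec(\OO_{X,\eta})$; the restricted system is again a patching system (of freedom degree at least zero) for $Y\cap X_\eta$, which has codimension $p$ in $X_\eta$, and $\{f_y\}_\eta$ is a cocycle in the local Gersten complex supported on $Y\cap X_\eta$. Applying Proposition~\ref{differential-patching-systems} then yields an element $\widetilde{f}_\eta\in F_{p+m}(k(X))$ with $d_\eta\,\nu_{XY^1_1\ldots Y^1_{p-1}}(\widetilde{f}_\eta)=\{f_y\}_\eta$ and $\sing(\nu_{XY^1_1\ldots Y^1_{r-1}}(\widetilde{f}_\eta))\subset Y^1_r\cup Y^2_r$ for $1\le r\le p-1$; this is the quantity $\widetilde{f}_{\eta_{i_p}}$ appearing in property (ii). It is convenient to note here that condition (iii) in the definition of a patching system gives $(Y^1_r)^{(0)}\cap Y^2_r=\emptyset$, so along a flag of type $(0\ldots p)$ the hypotheses of (ii) hold automatically once $\eta_{i_r}\in(Y^1_r)^{(0)}$ for all $r$.

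Then I would set, for a flag $F=\eta_{i_0}\ldots\eta_{i_p}$: $f_F=\widetilde{f}_{\eta_{i_p}}$ in the general-position case of (ii), namely when $\eta_{i_p}\in Y$, $\eta_{i_0}\notin Y^1_1$ and $\eta_{i_r}\in Y^1_r\setminus Y^2_r$ for $1\le r\le p-1$; a correction term supplied by the second patching varieties $Y^2_r$, arising exactly as $g^1-g^2$ does in the proof of Proposition~\ref{differential-patching-systems}, in the remaining cases where $\eta_{i_p}\in Y$ and $\eta_{i_r}\in Y^1_r$ for all $1\le r\le p-1$; and $f_F=0$ in all other cases. Properties (i) and (ii) are then built into the definition, while (iii) follows from the pole control recorded above for $\widetilde{f}_\eta$ and, for the correction terms, from that in Proposition~\ref{differential-patching-systems}. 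That $f$ actually lies in $\A((i_0\ldots i_p),\F^X_{p+m})$ and not merely in $C((i_0\ldots i_p),\F^X_{p+m})$ is verified by exhibiting, through Proposition~\ref{systemofd}, the system of divisors $D_{\eta_{i_0}\ldots\eta_{i_k}}$ assembled from the divisors that govern the adelic condition for each $\widetilde{f}_\eta$ together with the divisors $Y^1_1\cup Y^2_1,\dots$ attached to the patching subvarieties, and by checking condition $(*)$; this is the computation concluding the proof of Lemma~\ref{approximation}, combined with Corollary~\ref{patching-collection}. The identity $\nu_X(f)=\{f_y\}$ then follows from the residue formula of Proposition~\ref{residue}: the $(0\ldots p)$-component of $f$ at a point $\eta_p\in Y^{(0)}$ contributes $\sum_{\eta_0\ldots\eta_{p-1}}(\nu_{\eta_{p-1}\eta_p}\circ\cdots\circ\nu_{\eta_0\eta_1})(\widetilde{f}_{\eta_p})=d_{\eta_p}\,\nu_{XY^1_1\ldots Y^1_{p-1}}(\widetilde{f}_{\eta_p})=\{f_y\}_{\eta_p}$ by the defining property of $\widetilde{f}_{\eta_p}$, the sign $(-1)^{p(p+1)/2}$ in the definition of $\nu_X$ being absorbed into the choice of the $\widetilde{f}_\eta$.

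The main obstacle is the cocycle condition $df=0$. I would check it componentwise: for a flag $\eta_0\ldots\eta_{p+1}$, property (i) forces all but a few of the subflags $\eta_0\ldots\hat{\eta}_j\ldots\eta_{p+1}$ to contribute zero to $(df)_{\eta_0\ldots\eta_{p+1}}$, and the vanishing of the remaining alternating sum is established by induction on the depth, reducing to the local assertion that $d_\eta$ applied to the data constructed above is as prescribed. This is precisely where both the reciprocity law and the hypothesis that $\{f_y\}$ is a \emph{global} Gersten cocycle --- so that its restrictions to the various $X_\eta$ are mutually compatible --- enter, and where the presence of the two branches $Y^1_r$, $Y^2_r$ in the patching system is indispensable in order for the correction terms to cancel. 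I expect this verification, disentangling which subflags contribute and matching the residues in parallel with the arguments behind Corollaries~\ref{transforming-collection} and \ref{patching-collection}, to be the bulk of the work, the adelic condition and the compatibility with $\nu_X$ being comparatively formal given the results already established.
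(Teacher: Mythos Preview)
Your proposal has the right starting point --- constructing $\widetilde f_\eta$ for $\eta\in Y$ via Proposition~\ref{differential-patching-systems} --- but it is missing the main mechanism of the paper's proof, and the gap is genuine rather than merely expository.

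In the paper the components of $f$ are built by \emph{decreasing induction on the depth $l$ of the flag type}, and at depth $l<p-1$ the value $f_{\eta_0\ldots\eta_l\eta_{i_1}\ldots\eta_{i_{p-l}}}$ is an element $\widetilde f_{\eta_{i_1}\ldots\eta_{i_{p-l}}}$ depending on the \emph{entire right tail} $\eta_{i_1}\ldots\eta_{i_{p-l}}$, not merely on the last vertex $\eta_{i_p}$. It is characterised by
\[
d_{\eta_{i_1}}\nu_{XY^1_1\ldots Y^1_l}\bigl(\widetilde f_{\eta_{i_1}\ldots\eta_{i_{p-l}}}\bigr)
\;=\;
\nu_{l+1}\Bigl(\sum_{j=1}^{p-l}(-1)^{j+1}f_{0\ldots(l+1)\,\eta_{i_1}\ldots\widehat{\eta_{i_j}}\ldots\eta_{i_{p-l}}}\Bigr)_{\eta_{i_1}},
\]
where the right-hand side involves the components already defined at depth $l+1$. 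This is precisely what forces the auxiliary identity $(**)$ at each stage, and $(**)$ \emph{is} the cocycle condition in disguise; the paper does not verify $df=0$ after the fact but encodes it into the inductive construction. Only in the general-position situation of (ii) (when $\eta_{i_r}\in Y^1_{r}\setminus Y^2_{r}$) does this inductive element collapse to the single quantity $\widetilde f_{\eta_{i_p}}$ that you describe. Your phrase ``a correction term supplied by the second patching varieties $Y^2_r$'' does not account for this dependence on the whole tail, and a uniform prescription that depends only on $\eta_{i_p}$ cannot be a cocycle: for a flag of type $(0\ldots l\,i_1\ldots i_{p-l})$ with $i_1>l+1$, the alternating sum $(df)_{0\ldots(l+1)i_1\ldots i_{p-l}}$ involves the faces $f_{0\ldots(l+1)\eta_{i_1}\ldots\widehat{\eta_{i_j}}\ldots\eta_{i_{p-l}}}$ with different last vertices, and these do not cancel unless the lower-depth components have been tuned as above.

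Two smaller points. First, the adelic condition here is much simpler than in Lemma~\ref{approximation}: since every nonzero component satisfies $\sing(f_F)\subset Y^1_1\cup Y^2_1$, one takes the \emph{constant} system of divisors $D_{\eta_0\ldots\eta_k}=Y^1_1\cup Y^2_1$ in Proposition~\ref{systemofd}; there is no need to assemble a flag-dependent system. Second, the sign $(-1)^{p(p+1)/2}$ in $\nu_X$ is handled in the paper by inserting it explicitly into the top-depth components $f_{\eta_0\ldots\eta_{p-1}\eta_i}=(-1)^{p(p+1)/2}\widetilde f_{\eta_i}$, not by absorbing it into the choice of $\widetilde f_\eta$; keeping it explicit avoids sign errors when propagating through the induction.
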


\begin{rmk}
Since $d(f)=0$, we have $\nu_{0\ldots(l+1)}(d(f))=0$ for any
integer $l,0\le l\le p-1$. Explicitly, for any flag
$\eta_{i_1}\ldots\eta_{i_{p-l+1}}$ of type
$(i_1\ldots i_{p-l+1})$ on $X$ with $i_1>l$, we have
$$
0=d_{\eta_{i_1}}\nu_{0\dots l}(\sum_{j=1}^{p-l+1}(-1)^{j+1}f_{01\ldots l
\eta_{i_1}\ldots\hat{\eta}_{i_{j}}\ldots\eta_{i_{p-l+1}}})\in
\mbox{$\bigoplus\limits_{\eta\in
X^{(l+1)}_{\eta_{i_1}}}F_{p+m-l-1}(k(\eta))$},\eqno (**)
$$
where for any flag $\Phi$ of
type $T$ on $X$, the index $(0\ldots l \Phi)$ means that we consider the set of all
flags $\eta_0\ldots\eta_{l}\Phi$ of type $(0\ldots l T)$ on $X$ with the
fixed $T$-part $\Phi$.
\end{rmk}

\begin{proof}[Proof of Proposition \ref{adelic-class}]

We define the components of the
adele $f$ by decreasing induction on the depth $l,-1\le l\le p$
of the type of a component. Moreover, we enlarge the induction hypothesis by
condition $(**)$.

Let $\eta_0\ldots\eta_{p-1}\eta_i$ be a flag on $X$ with the type depth
$l\ge p-1$, i.e.,
$i\ge p$. If $\eta_i\notin Y$, then we put
$f_{\eta_0\ldots\eta_{p-1}\eta_i}=0$. Suppose that $\eta_i\in Y$.
Then, by Proposition \ref{differential-patching-systems},
there exists an element $\widetilde{f}_{\eta_i}\in F_{p+m}(k(X))$ such that
$\sing(\nu_{XY^1_1\ldots Y_{r-1}^1}(\widetilde{f}_{\eta_i}))\subset Y_r^1\cup Y_r^2$
for all $r,1\le r\le p-1$ and
$$
d_{\eta_i}\nu_{X Y_1^1\ldots
Y_{p-1}^1}(\widetilde{f}_{\eta_i}) =\{f_y\}_{\eta_i}\in
\mbox{$\bigoplus\limits_{\eta\in X^{(p)}_{\eta_{i}}}F_{m}(k(\eta))$}.
$$
We put
$f_{\eta_0\ldots\eta_{p-1}\eta_i}=(-1)^{\frac{p(p+1)}{2}}\widetilde{f}_{\eta_i}$ if $\eta_r$ is
a generic point of some irreducible component of
$Y_r^1$ for all $r,1\le r\le p-1$. Otherwise, we put
$f_{\eta_0\ldots\eta_{p-1}\eta_i}=0$. It is readily seen that
conditions $(i)$, $(ii)$ are satisfied for the defined above
$(0\ldots p-1, i)$-type component of $f$,
and also condition $(**)$ holds for $l=p-1$ and any flag
$\eta_{i_1}\eta_{i_{2}}$ of type $(i_1 i_2)$ on $X$ such
that $i_1>p-1$.

Now we do the induction step from $l+1$ to $l$, $0\le l\le p-2$. Let
$\eta_0\ldots\eta_{l}\eta_{i_1}\ldots \eta_{i_{p-l}}$ be a flag of
type $(0\ldots l i_1 \ldots i_{p-l})$ on $X$, $i_1>l+1$. If
$\eta_{i_1}\notin Y_{l+1}^1$, then we put
$f_{\eta_0\ldots\eta_{l}\eta_{i_1}\ldots \eta_{i_{p-l}}}=0$. Suppose
that $\eta_{i_1}\in Y_{l+1}^1$. Then, by the inductive assumption
and by Proposition \ref{differential-patching-systems} applied to
the patching system $\{Y^{1,2}_r\}$, $1\le r\le l$ on $X$ for the
subvariety $Y^1_{l+1}$ and the point $\eta_{i_1}$, there exists an
element $\widetilde{f}_{\eta_{i_1}\ldots\eta_{i_{p-l}}} \in
F_{p+m}(k(X))$ such that $\sing(\nu_{XY_1^1\ldots
Y^1_{r-1}}(\widetilde{f}_{\eta_{i_1}\ldots\eta_{i_{p-l}}})) \subset
Y_r^1\cup Y_r^2$ for all $r,1\le r\le l$ and we have
$$
d_{\eta_{i_1}}\nu_{X Y_1^1\ldots
Y^1_l}(\widetilde{f}_{\eta_{i_1}\ldots\eta_{i_{p-l}}})=
\nu_{l+1}(\sum_{j=1}^{p-l}(-1)^{j+1}f_{01\ldots l+1
\eta_{i_1}\ldots\hat{\eta}_{i_{j}}\ldots\eta_{i_{p-l}}})_{\eta_{i_1}}\in
\mbox{$\bigoplus\limits_{\eta\in X^{(l+1)}_{\eta_{i_1}}}F_{p+m-l-1}(k(\eta))$}.
$$
We put
$f_{\eta_0\ldots\eta_{l}\eta_{i_1}\ldots \eta_{i_{p-l}}}=
\widetilde{f}_{\eta_{i_1}\ldots \eta_{i_{p-l}}}$ if $\eta_r$ is
a generic point of some irreducible component of
$Y_r^1$ for all $r,1\le r\le l$. Otherwise, we put
$f_{\eta_0\ldots\eta_{l}\eta_{i_1}\ldots \eta_{i_{p-l}}}=0$.

In the above notation suppose that
$\eta_{i_r}\notin Y^1_{l+r}$ for some $r,1<r\le p-l$. Since for any $j,1\le j<r$, we have
$\eta_{i_r}\notin Y^1_{l+r}$
and for any $j,r\le j\le p-l$, we have $\eta_{i_{r-1}}\notin Y^1_{l+r}$, by
the induction hypothesis, we get that $f_{0\ldots l+1\eta_{i_1}\ldots\hat{\eta}_{i_j}
\ldots\eta_{i_{p-l}}}=0$ for any $j,1\le j\le p-l$. Therefore we may put
$\widetilde{f}_{\eta_{i_1}\ldots\eta_{i_{p-l}}}=0$ and hence
the $(0\ldots l i_1\ldots i_{p-l})$-type component of $f$ satisfies condition $(i)$.

Further, suppose that $\eta_{i_r}\in Y^1_{l+r}$, $\eta_{i_r}\notin Y^2_{l+r}$
for all $r,1\le r\le p-1$ and $\eta_{i_p}\in Y$; then $\eta_{i_1}\notin
Y^1_{l+2}$ and, by the inductive hypothesis, $f_{0\ldots l+1\eta_{i_1}\ldots
\hat{\eta}_{i_j}\ldots\eta_{i_{p-l}}}=0$ for all $j,1<j\le p-l$ and
$\widetilde{f}_{0\ldots l+1\eta_{i_2}\ldots\eta_{i_{p-l}}}=
\widetilde{f}_{\eta_{i_{p-l}}}$, where
$\widetilde{f}_{\eta_{i_{p-l}}}\in F_{p+m}(k(X))$ satisfies condition $(ii)$.
Therefore we have the condition
$$
d_{\eta_{i_1}}\nu_{X Y_1^1\ldots
Y^1_l}(\widetilde{f}_{\eta_{i_1}\ldots\eta_{i_{p-l}}})=
\nu_{X Y^1_{1}\ldots Y^1_{l+1}}
(\widetilde{f}_{\eta_{i_{p-l}}}).
$$
By the inductive hypothesis, we have
$\sing(\nu_{XY^1_1\ldots Y^1_l}(\widetilde{f}_{\eta_{i_{p-l}}}))
\subset Y^1_{l+1}\cup Y^2_{l+1}$. Therefore we may put
$\widetilde{f}_{\eta_{i_1}\ldots\eta_{i_{p-l}}}=
\widetilde{f}_{\eta_{i_{p-l}}}$ and hence
the $(0\ldots l i_1\ldots i_{p-l})$-type component of $f$ satisfies
condition $(ii)$.

As above, it is a trivial check that condition $(**)$ holds for $l-1$ and any flag
$\eta_{i_1}\ldots\eta_{i_{p-l-1}}$ of type $(i_1\ldots i_{p-l-1})$ on $X$ such
that $i_1>l-1$.

Finally, we put $f_{\eta_{i_0}\ldots\eta_{i_p}}=
\displaystyle\sum_{j=0}^p(-1)^j f_{0\eta_{i_0}\ldots
\hat{\eta}_{i_{j}}\ldots\eta_{i_p}}$ for any flag $\eta_{i_0}\ldots\eta_{i_p}$
of type $(i_0\ldots i_p)$ on $X$ with $i_0>0$. By the induction hypothesis, we have
$f_{\eta_{i_0}\ldots\eta_{i_p}}\in(\F_{p+m}^X)_{\eta_{i_0}}$.
The same reasoning as above shows that conditions $(i)$ and $(ii)$ hold for the
$(i_0\ldots i_p)$-type component of $f$.

Since $\sing(f_{\eta_{i_0}\ldots\eta_{i_p}})\subset Y_1^1\cup Y_1^2$
for any flag $\eta_{i_0}\ldots\eta_{i_p}$ on $X$, we see that the distribution $f$ satisfies
the adelic condition with respect to the constant system of divisors $Y^1_1\cup Y^2_1$.
Thus we have defined the needed cocycle $f\in\A(X,\F_{p+m}^X)^p$.
\end{proof}

The following claim is necessary for the proof of Theorem
\ref{intersecting_cycles}.

\begin{claim}\label{integrality2}
Under the above assumptions, consider a schematic point $\eta\in Y$
and suppose that the cocycle $\{f_y\}_{\eta}\in
\G(X_{\eta},F_*,p+m)^p$ in the local Gersten resolution on
$X_{\eta}$ is the restriction of an element $\alpha\in
F_m(Y_{\eta})$. Then the element $\widetilde{f}_{\eta}\in
F_{p+m}(k(X))$ in condition $(ii)$ from Proposition
\ref{adelic-class} may be chosen such that the collection
$d_{\eta}\nu_{XY_1^1\ldots Y_{r-1}^1}({\widetilde{f}_{\eta}})$ is
the restriction of an element $\alpha_r\in F_{p+m-r}((Y^1_r\cup
Y^2_r)_{\eta})$ for all $r,1\le r\le p-1$. Moreover, for all $r,1\le
r< p-1$, the restriction of $\alpha_{r}$ to $F_{p+m-
r}((Y^1_r)_{\eta}\backslash Y^2_r)$ is equal to the restriction of
an element from $F_{p+m-r}((Y^1_r)_{\eta}\backslash (Y^1_{r+1}\cup
Y^2_{r+1}))$ and the restriction of $\alpha_{p-1}$ to
$F_{p+1}((Y^1_{p-1})_{\eta}\backslash Y^2_{p-1})$ is equal to the
restriction of an element from $F_{p+1}((Y^1_{p-1})_{\eta}\backslash
Y)$. Finally, $\widetilde{f}_{\eta}$ is the restriction of an
element $\alpha_0\in F_{p+m}(X_{\eta}\backslash (Y_1^1\cup Y_1^2))$.
\end{claim}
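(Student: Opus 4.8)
The plan is to re-run the inductive construction that underlies Proposition~\ref{adelic-class} — which rests on Proposition~\ref{differential-patching-systems}, hence ultimately on Proposition~\ref{differential} — but to carry through it, at every stage, not merely the residue collections at generic points but honest classes in the homology groups of the closed subvarieties involved. The two facts that make this upgrade possible are: (a) in the proof of Proposition~\ref{differential} the lifted collection is, by construction, the restriction to generic points of a class $\beta$ that is a coboundary in the localization sequence (WH2) of $W\cap(Z\cup S)\hookrightarrow W\cap(\widetilde Z\cup\widetilde S)$, hence lies in $F_*$ of an \emph{open} subvariety avoiding the subvariety one started from; and (b) Corollary~\ref{K-integrality}, which turns any Gersten cocycle whose support lies in a closed subset $Z$ of a regular local scheme of geometric type into the restriction of an element of $F_*(Z)$ whose codimension-$r$ components are exactly that support. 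Taken together these say that the lift in Proposition~\ref{differential} is already ``integral'', and the claim asserts that this integrality survives the patching induction.

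I would argue by induction on $p\ge 1$. For $p=1$ the patching system is empty and $Y^1_1=Y$: since $\{f_y\}_\eta$ is supported on the codimension-one subvariety $Y$, functoriality (WH1) shows that $\alpha\in F_m(Y_\eta)$ dies in $F_m(X_\eta\setminus Y)$ and hence in $F_m(X_\eta)$, so in the localization sequence of $Y_\eta\hookrightarrow X_\eta$ there is $\beta\in F_{m+1}(X_\eta\setminus Y)$ with $\partial\beta=\alpha$; by the compatibility of the Gersten differential with the localization boundary maps (the remark after Proposition~\ref{short-exact-sequence}) its image $\widetilde f_\eta\in F_{m+1}(k(X))$ satisfies $d_\eta\widetilde f_\eta=\{f_y\}_\eta$, and one sets $\alpha_0:=\beta$, the remaining assertions being vacuous. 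For $p>1$, I first apply Proposition~\ref{differential} to the two s.l.e. pairs $(Y,Y^1_{p-1})$ and $(Y,Y^2_{p-1})$ of the patching system (with no auxiliary $S$, since $\{f_y\}_\eta$ is already integral on the codimension-$p$ variety $Y$), obtaining codimension-$(p-1)$ cocycles $g^1,g^2$ on $X_\eta$ supported on $Y^1_{p-1}$ and $Y^2_{p-1}$, each the restriction of a class $\beta^i$ living off $Y$, with $d_\eta g^i=\{f_y\}_\eta$. Then $g^1-g^2$ is a codimension-$(p-1)$ cocycle supported on $Y^1_{p-1}\cup Y^2_{p-1}$; Corollary~\ref{K-integrality} writes it as the restriction of some $\alpha_{p-1}\in F_{m+1}((Y^1_{p-1}\cup Y^2_{p-1})_\eta)$, and — using that $Y\subset Y^2_{p-1}$ and that $Y^1_{p-1},Y^2_{p-1}$ have no common irreducible component — I arrange $\alpha_{p-1}$ so that over $(Y^1_{p-1})_\eta\setminus Y^2_{p-1}$ it is the restriction of a class living off $Y$, which is the assertion of the claim about $\alpha_{p-1}$. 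Finally I feed $g^1-g^2$ and the patching subsystem $\{Y^{1,2}_r\}_{1\le r\le p-2}$ into the inductive hypothesis (with $p,m,Y,\alpha$ replaced by $p-1,m+1,Y^1_{p-1}\cup Y^2_{p-1},\alpha_{p-1}$): this yields $\widetilde f_\eta\in F_{p+m}(k(X))$ with $d_\eta\nu_{XY^1_1\cdots Y^1_{r-1}}(\widetilde f_\eta)$ the restriction of $\alpha_r\in F_{p+m-r}((Y^1_r\cup Y^2_r)_\eta)$ for $1\le r\le p-2$ together with the corresponding extension property for each such $\alpha_r$, and $\widetilde f_\eta=$ restriction of $\alpha_0\in F_{p+m}(X_\eta\setminus(Y^1_1\cup Y^2_1))$; combining this with the freshly built $\alpha_{p-1}$ and the identity $d_\eta\nu_{XY^1_1\cdots Y^1_{p-2}}(\widetilde f_\eta)=g^1-g^2$ from the proof of Proposition~\ref{differential-patching-systems} gives all the assertions of the claim. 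Note that the ``extension off the bottom variety'' produced at recursion level $p'$ is exactly the claim's statement about $\alpha_{p'-1}$, so the assertions for $\alpha_1,\dots,\alpha_{p-1}$ are accounted for level by level, and the last assertion is produced at the base $p'=1$.

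The only place genuine work beyond bookkeeping is needed — and the main obstacle — is ensuring the \emph{compatibility} of the integral representatives furnished by Corollary~\ref{K-integrality} at successive stages and over the overlapping pieces $Y^1_r$, $Y^2_r$: that corollary gives existence of an integral model with a prescribed top-dimensional support, not uniqueness, so to force $\alpha_r|_{(Y^1_r)_\eta\setminus Y^2_r}$ to be the restriction of a class defined off $Y^1_{r+1}\cup Y^2_{r+1}$ I must glue the ``$Y^1_r$-side'' class $\beta^1$ (which lives off $Y^1_{r+1}\cup Y^2_{r+1}$ once its lower-dimensional extra support, coming from the auxiliary $\widetilde S$ in Proposition~\ref{differential}, is absorbed via Corollary~\ref{K-integrality}) with an integral model of the ``$Y^2_r$-side'' term, along the localization sequence of $(Y^1_r\cap Y^2_r)_\eta\hookrightarrow(Y^1_r\cup Y^2_r)_\eta$, using that $Y^1_r\cap Y^2_r$ has codimension $\ge 1$ in each of $Y^1_r,Y^2_r$ and that $Y^1_{r+1}\cup Y^2_{r+1}\subset Y^2_r$. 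Any residual ambiguity in the chosen model is then supported in codimension $\ge 1$ on $Y^1_r$, hence affects neither the generic-point conditions nor the restriction over $(Y^1_r)_\eta\setminus Y^2_r$. This gluing is routine but must be executed carefully to keep the whole chain $\alpha_{p-1},\dots,\alpha_1,\alpha_0$ mutually consistent down the recursion.
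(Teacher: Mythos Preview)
Your proposal is correct and follows essentially the same approach as the paper: the paper argues by decreasing induction on $r$ (from $p-1$ down to $0$), which is precisely what your induction on $p$ unwinds to when the recursive call to the shorter patching system is expanded. The key steps coincide --- the s.l.e.\ property kills the pushforward $F_m(Y_\eta)\to F_m((Y^i_{p-1})_\eta)$, the localization sequence produces lifts $\alpha^i_{p-1}\in F_{m+1}((Y^i_{p-1})_\eta\setminus Y)$, and the localization sequence for $Y^1_{p-1}\cap Y^2_{p-1}\hookrightarrow Y^1_{p-1}\cup Y^2_{p-1}$ glues them to the desired $\alpha_{p-1}$; your final paragraph is exactly this gluing, and the paper does it directly without the detour through Corollary~\ref{K-integrality} (which, as you note, does not by itself land you in $F_{m+1}((Y^1_{p-1}\cup Y^2_{p-1})_\eta)$ and so requires the gluing anyway).
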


\begin{proof}
The proof is by decreasing induction on $r$, $1\le r\le p-1$.

Suppose that $r=p-1$. Since $\{Y^{1,2}_r\}$ is a patching system
with the freedom degree at least zero, we see that the natural maps
$F_m(Y_{\eta})\to F_m((Y^1_{p-1})_{\eta})$ and $F_m(Y_{\eta})\to
F_m((Y^2_{p-1})_{\eta})$ are equal to zero. Hence there are elements
$\alpha_{p-1}^i\in F_{m+1}((Y_{p-1}^i)_{\eta}\backslash Y)$, $i=1,2$
such that their coboundary is equal to $\alpha\in F_m(Y_{\eta})$.
The localization sequence associated to the closed embedding
$(Y^1_{p-1}\cap Y^2_{p-1}\hookrightarrow Y^1_{p-1}\cup Y^2_{p-1})$
implies that both elements $\alpha^1_{p-1}$ and $\alpha^2_{p-1}$ are
restrictions of an element $\alpha_{p-1}\in F_{m+1}((Y^1_{p-1}\cup
Y^2_{p-1})_{\eta})$.

The induction step from $r+1$ to $r$, $1\le r< p-1$ is analogous to the case $r=p-1$
with the subvarieties $Y$ and $Y^{1,2}_{p-1}$ replaced by the subvarieties
$Y^1_{r+1}\cup Y^2_{r+1}$
and $Y^{1,2}_r$, respectively. At the end, for $r=0$, we repeat the same with $Y$ and
$Y^{1,2}_{p-1}$ replaced by $Y^1_1\cup Y^2_1$ and $X$, respectively.
\end{proof}

\begin{rmk}
The condition from Claim \ref{integrality2} is satisfied for all
$\eta\in X^{(p)}$. Indeed, in this case one puts $\alpha=f_{\eta}\in
F_m(k(\eta))=F_m(Y_{\eta})$.
\end{rmk}

\begin{defin}\label{defin-goodcocycle}
Let $\{f_y\}\in\G(X,F_*,p+m)^p$ be a cocycle in the Gersten complex,
and $\{Y^{1,2}_r\}$, $1\le r\le p-1$ be a patching system on $X$ for
the support $Y$ of $\{f_y\}$ with the freedom degree ar least zero;
then a cocycle $[\{f_y\}]\in\A(X,\F^X_{p+m})^p$ is called a {\it
good cocycle} for $\{f_y\}\in\G(X,F_*,p+m)^p$ with respect to the
patching system $\{Y^{1,2}_r\}$, $1\le r\le p-1$, if it satisfies
all conditions from Proposition~\ref{adelic-class} and
Claim~\ref{integrality2} (for each point $\eta\in Y$).
\end{defin}

It follows from Proposition~\ref{adelic-class} and
Claim~\ref{integrality2} that good cocycles always exist.

\begin{claim}\label{claim-adelicgoodcocycles}
Let $X$ be a smooth variety over an infinite perfect field; then for
any cocycle \mbox{$\{f_y\}\in \G(X,F_*,m)^p$} in the Gersten complex
and a patching system $\{Y^{1,2}_r\}$, $1\le r\le p-1$ on $X$ for
the support $Y$ of $\{f_y\}$, there exists a good cocycle
$[\{f_y\}]\in\A(X,\F^X_m)^p$ for $\{f_y\}$ with respect to the
patching system $\{Y^{1,2}_r\}$.
\end{claim}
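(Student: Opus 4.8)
The plan is to read the claim off from Proposition~\ref{adelic-class} and Claim~\ref{integrality2}, the only genuinely new ingredient being a harmless shift of the Gersten index. First I would rewrite $\G(X,F_*,m)^p=\bigoplus\limits_{\eta\in X^{(p)}}F_{m-p}(k(\eta))$, so that Proposition~\ref{adelic-class} applied with its ``$m$'' set equal to $m-p$ (hence its ``$p+m$'' equal to $m$), with $Y$ the support of $\{f_y\}$ and $Y^1_p=Y$, and with the given patching system $\{Y^{1,2}_r\}$, $1\le r\le p-1$, produces an adele $f=[\{f_y\}]\in\A(X,\F^X_m)^p$ that is a cocycle in the adelic complex, satisfies $\nu_X(f)=\{f_y\}$, and obeys conditions $(i)$, $(ii)$, $(iii)$ of Proposition~\ref{adelic-class}.

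It then remains to see that $f$ can be arranged to satisfy, in addition, the integrality assertions of Claim~\ref{integrality2} at every point $\eta\in Y$ for which the hypothesis of that claim holds --- which, by the remark following Claim~\ref{integrality2}, includes all $\eta\in X^{(p)}$. For this I would revisit the decreasing induction on depth in the proof of Proposition~\ref{adelic-class}: the components of $f$ attached to the flags occurring in condition $(ii)$ are the elements $\widetilde f_{\eta}\in F_m(k(X))$ supplied by Proposition~\ref{differential-patching-systems}, and at a codimension-$p$ point $\eta$ one has $\{f_y\}_\eta=f_\eta\in F_{m-p}(k(\eta))=F_{m-p}(Y_\eta)$, so the hypothesis of Claim~\ref{integrality2} is met there and its proof selects an admissible $\widetilde f_\eta$. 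I would then carry these selected lifts through the remaining inductive steps of Proposition~\ref{adelic-class} (the compatibility being forced by condition $(ii)$ itself), and for a deeper point $\eta\in Y$ satisfying the hypothesis of Claim~\ref{integrality2} deduce the required integrality at $\eta$ by restricting to $X_\eta=\Spec(\OO_{X,\eta})$ and running the localization exact sequences exactly as in the proof of Claim~\ref{integrality2}. The resulting $f$ meets every condition of Definition~\ref{defin-goodcocycle}, so it is a good cocycle for $\{f_y\}$ with respect to $\{Y^{1,2}_r\}$.

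The main --- and essentially only --- obstacle is bookkeeping: confirming that the inductive choices dictated by Proposition~\ref{adelic-class} and those dictated by Claim~\ref{integrality2} can be met by a single adele $f$ simultaneously at all the points $\eta$ in question. This is not a real difficulty, since every choice ultimately comes down to a choice of lift $\widetilde f_\eta$ of the fixed local cocycle $\{f_y\}_\eta$ at a codimension-$p$ point, and the integrality statements at deeper points are formal consequences of the codimension-$p$ ones via localization; all the substantive work is already contained in Propositions~\ref{adelic-class} and~\ref{differential-patching-systems} and in Claim~\ref{integrality2}.
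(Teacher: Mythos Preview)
Your proposal is correct and matches the paper's own argument, which consists of the single sentence ``It follows from Proposition~\ref{adelic-class} and Claim~\ref{integrality2} that good cocycles always exist'' placed immediately before the claim. Your only slight over-complication is the suggestion that the integrality at deeper points is \emph{deduced} from the codimension-$p$ case via localization; in fact the choices $\widetilde f_\eta$ in the proof of Proposition~\ref{adelic-class} are made independently for each $\eta\in Y$, so one simply applies the construction of Claim~\ref{integrality2} directly at every $\eta\in Y$ where its hypothesis is met, with no compatibility issue to resolve.
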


The next technical lemma illustrates the freedom of choice in
calculations with adeles.

\begin{lemma}\label{suitable_cycles}
Let $X$ be a smooth variety over an infinite perfect field and let
the collection $\{f_y\}\in\G(X,F_*,m)^{p}$ be supported on an
equidimensional subvariety $Y\subset X$. Suppose that
$d\{\widetilde{f}_{\widetilde{y}}\}=\{f_y\}$, where
$\{\widetilde{f}_{\widetilde{y}}\}\in \G(X,F_*,m)^{p-1}$. Suppose
that $f\in \A(X,\F^X_m)^{p}$ is such that $\nu_X(f)=\{f_y\}$ and
$f_U=0$, where $f_U\in \A(U,\F_m^U)^p$ is the restriction of $f$ to
$U=X\backslash Y$. Let $\{Y^{1,2}_r\}$ be a patching system on $X$
for $Y$; then there exists an adele $\widetilde{f}\in
\A(X,\F^X_m)^{p-1}$ such that $d\widetilde{f}=f$,
$\nu_X(\widetilde{f})=\{\widetilde{f}_{\widetilde{y}}\}$ and
$\widetilde{f}_U$ is a good cocycle on $U$ for
$\{\widetilde{f}_{\widetilde{y}}\}_U\in\G(U,F_*,m)^{p-1}$ with
respect to the restriction of the patching system $\{Y^{1,2}_r\}$ to
$U$.
\end{lemma}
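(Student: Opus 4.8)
The statement asserts a lifting: we are given $f\in\A(X,\F^X_m)^p$ with $\nu_X(f)=\{f_y\}$ and $f|_U=0$, a primitive $\{\widetilde f_{\widetilde y}\}$ of $\{f_y\}$ in the Gersten complex, and a patching system $\{Y^{1,2}_r\}$ for $Y$; we must produce $\widetilde f\in\A(X,\F^X_m)^{p-1}$ with $d\widetilde f=f$, $\nu_X(\widetilde f)=\{\widetilde f_{\widetilde y}\}$, and $\widetilde f|_U$ a good cocycle for $\{\widetilde f_{\widetilde y}\}|_U$. The plan is to combine the quasiisomorphism $\underline\nu_X$ (Theorem~\ref{quasiis}) with the explicit construction of good cocycles (Proposition~\ref{adelic-class}, Claim~\ref{claim-adelicgoodcocycles}). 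First I would apply Claim~\ref{claim-adelicgoodcocycles} to the cocycle $\{\widetilde f_{\widetilde y}\}|_U\in\G(U,F_*,m)^{p-1}$ and the restricted patching system to obtain a good cocycle $\widetilde f_0\in\A(U,\F^U_m)^{p-1}$ on $U$; since $f|_U=0$, this $\widetilde f_0$ is a cocycle and $d\widetilde f_0=f|_U=0$, while $\nu_U(\widetilde f_0)=\{\widetilde f_{\widetilde y}\}|_U$.

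The core of the argument is then to extend $\widetilde f_0$ from $U$ to $X$ while correcting the differential. Because $\underline\A(X,\F^X_m)^{\bullet}$ is a complex of flasque sheaves (the sheafified version being flasque by the proposition before Definition of the adelic complex), the restriction map $\A(X,\F^X_m)^{p-1}\to\A(U,\F^U_m)^{p-1}$ is surjective, so choose any $\widetilde f_1\in\A(X,\F^X_m)^{p-1}$ with $\widetilde f_1|_U=\widetilde f_0$. Replacing $f$ by $f-d\widetilde f_1$, we reduce to the case where $f|_U=0$ and we must write $f=d\widetilde f$ with $\widetilde f|_U=0$; equivalently, $f$ lies in the subcomplex $\A_Y(X,\F^X_m)^{\bullet}$ of adeles whose restriction to $U$ vanishes, which by Corollary~\ref{open-local}(i),(iii) and the devissage in the definition of the adelic complex is the adelic complex $\underline\A(X,\F^X_m)^{\bullet}$ "with supports in $Y$", i.e. a mapping cone / relative version computing $R\Gamma_Y$. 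Here I would invoke the acyclicity coming from Theorem~\ref{quasiis}: since $\F^X_m\to\underline\A(X,\F^X_m)^{\bullet}$ and $\F^X_m\to\underline{Cous}(X,\F^X_m)^{\bullet}=\underline\G(X,F_*,m)^{\bullet}$ are both quasiisomorphisms compatible via $\underline\nu_X$, the relative complexes with supports agree up to quasiisomorphism, and the fact that $\{f_y\}=d\{\widetilde f_{\widetilde y}\}$ is a coboundary in the Gersten complex forces $f$ to be a coboundary in $\A_Y(X,\F^X_m)^{\bullet}$. This produces $\widetilde f$ with $d\widetilde f=f$ and $\widetilde f|_U=0$; adding back $\widetilde f_1$ gives the desired extension whose restriction to $U$ is $\widetilde f_0$.

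It remains to arrange $\nu_X(\widetilde f)=\{\widetilde f_{\widetilde y}\}$ on all of $X$, not merely on $U$. The class $\nu_X(\widetilde f)$ is some primitive of $\{f_y\}$ in $\G(X,F_*,m)^{p-1}$ whose restriction to $U$ equals $\{\widetilde f_{\widetilde y}\}|_U$; hence $\nu_X(\widetilde f)-\{\widetilde f_{\widetilde y}\}$ is a cocycle in $\G(X,F_*,m)^{p-1}$ supported on $Y$, i.e. an element of $\G_Y(X,F_*,m)^{p-1}$. By Lemma~\ref{surjective-nu} the map $\nu_{p-1}$ is surjective, and more precisely the sheafified $\underline\nu_X$ is surjective on the relevant terms, so I can modify $\widetilde f$ by the $d$-image of an adele supported on $Y$ (which does not change $d\widetilde f=f$, since that correction is itself a cocycle with support in $Y$ pushed one degree, hence its own differential vanishes on the nose after the adjustment — this is where I'd use the good-cocycle normalization and condition $(**)$ from Proposition~\ref{adelic-class}) to make $\nu_X(\widetilde f)$ agree with $\{\widetilde f_{\widetilde y}\}$ exactly. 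Finally, because the correction was supported on $Y$, it leaves $\widetilde f|_U=\widetilde f_0$ untouched, so $\widetilde f|_U$ is still the good cocycle we started with.

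\textbf{Main obstacle.} The delicate point is the last paragraph: matching $\nu_X(\widetilde f)$ with $\{\widetilde f_{\widetilde y}\}$ \emph{on the nose} (not just up to coboundary) while simultaneously preserving both $d\widetilde f=f$ and $\widetilde f|_U=\widetilde f_0$. One must check that the corrections needed to fix $\nu_X$ can be chosen supported on $Y$ and compatible with the flag-wise adelic conditions (Proposition~\ref{systemofd}) dictated by the patching system; this is exactly the kind of bookkeeping that Proposition~\ref{adelic-class} and Claim~\ref{integrality2} were designed to handle, so I expect the proof to reduce to citing those constructions applied to the auxiliary cocycle $\nu_X(\widetilde f)-\{\widetilde f_{\widetilde y}\}$ on $X$ and its chosen primitive, rather than to any genuinely new geometric input.
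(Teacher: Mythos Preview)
Your approach is workable but takes a different route than the paper, and your identification of the ``main obstacle'' is misplaced.

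The paper's argument is more direct. It sets $B^{\bullet}=\ker(\nu_X)$, which is exact on $X$ (and on $U$) because $\nu_X$ is a surjective quasiisomorphism (Lemma~\ref{surjective-nu} and Theorem~\ref{quasiis}). First it picks any $\widetilde f_1$ with $\nu_X(\widetilde f_1)=\{\widetilde f_{\widetilde y}\}$; then $d\widetilde f_1-f$ is a cocycle in $B^p$, hence equals $dh$ for some $h\in B^{p-1}$, and $\widetilde f_2=\widetilde f_1-h$ already satisfies $d\widetilde f_2=f$ and $\nu_X(\widetilde f_2)=\{\widetilde f_{\widetilde y}\}$ on all of $X$. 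Only then does it introduce the good cocycle $\widetilde f_3$ on $U$: the difference $\widetilde f_3-(\widetilde f_2)_U$ lies in $B_U^{p-1}$ and is a cocycle, hence equals $d_U h$ for $h\in B_U^{p-2}$; one extends $h$ \emph{by zero} to $h'\in B^{p-2}$ (using the direct-sum splitting of Corollary~\ref{open-local}(i)) and sets $\widetilde f=\widetilde f_2+dh'$. No relative complex with supports is needed.

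Your route (good cocycle on $U$ first, extend, then work in a ``supports in $Y$'' complex) can be made to work: the short exact sequence $0\to\A_Y^\bullet\to\A(X)^\bullet\to\A(U)^\bullet\to 0$ together with Theorem~\ref{quasiis} on $X$ and $U$ does give a quasiisomorphism $\A_Y^\bullet\to\G_Y^\bullet$, and your $f'=f-d\widetilde f_1$ has $\nu_X(f')=0$ (see below), so it is a coboundary in $\A_Y^\bullet$. But two points deserve correction. First, Corollary~\ref{open-local}(iii) concerns adeles \emph{on} a closed subscheme $Z$, not the kernel of restriction to $X\setminus Z$; these are different objects, so your citation there is off. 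Second, and more importantly, the ``delicate point'' you flag is a non-issue: any element of $\G(X,F_*,m)^{p-1}=\bigoplus_{\eta\in X^{(p-1)}}F_{m-p+1}(k(\eta))$ that vanishes on $U=X\setminus Y$ is supported on codimension $p-1$ points of $Y$, of which there are none since $Y$ has codimension $p$. Hence once $\nu_X(\widetilde f)$ and $\{\widetilde f_{\widetilde y}\}$ agree on $U$, they agree on $X$; the same codimension count shows $\nu_X(f')=0$. The elaborate correction you sketch (modifying by $d$ of an adele supported on $Y$, invoking condition~$(**)$, etc.) is unnecessary.
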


\begin{proof}
We put $B^{\bullet}={\rm Ker}(\nu_X:\A(X,\F^X_m)^{\bullet}\to
\G(X,F_*,m)^{\bullet})$. Since by Lemma~\ref{surjective-nu}, $\nu_X$
is surjective, we see that the complex $B^{\bullet}$ is exact by
Theorem \ref{quasiis}. Let $\widetilde{f}_1\in\A(X,\F^X_m)^{p-1}$ be
such that
$\nu_X(\widetilde{f}_1)=\{\widetilde{f}_{\widetilde{x}}\}$. We have
$d(d(\widetilde{f}_1)-f)=0$ and $\nu_X(d(\widetilde{f}_1)-f)=0$,
hence there exists $h\in B^{p-1}$ such that $dh=
d(\widetilde{f}_1)-f$. The adele $\widetilde{f}_2=\widetilde{f}_1-h$
satisfies $d\widetilde{f}_2=f$,
$\nu_X(\widetilde{f}_2)=\{\widetilde{f}_{\widetilde{y}}\}$.

If $p=1$, then the adele $(\widetilde{f}_2)_U$ is a good cocycle for
$\{\widetilde{f}_{\widetilde{y}}\}_U\in\G(U,m)^{p-1}$ on $U$.
Suppose that $p\ge 2$. Let $\widetilde{f}_3\in\A(U,\F^U_m)^{p-1}$ be
a good cocycle for
$\{\widetilde{f}_{\widetilde{y}}\}_U\in\G(U,F_*,m)^{p-1}$ with
respect to the restriction of the patching system $\{Y^{1,2}_r\}$ to
$U$. We have $d_U(\widetilde{f}_3-(\widetilde{f}_2)_U)=-f_U=0$,
$\nu_U(\widetilde{f}_3-(\widetilde{f}_2)_U)=0$. Therefore there
exists $h\in B_U^{p-2}$ such that $d_U
(h)=\widetilde{f}_3-(\widetilde{f}_2)_U$. Here we put
$B^{\bullet}_U=\ker(\nu_U)$ and $d_U$ denotes the differential in
the adelic complex on $U$. Let $h'\in\A(X,\F^X_m)^{p-2}$ be the
extension by zero of $h$ from $U$ to $X$, i.e., we put
$h'_{\eta_0\ldots\eta_{p-2}}=h_{\eta_0\ldots\eta_{p-2}}$ if
$\eta_0\ldots\eta_{p-2}$ is a flag on $U$ and, otherwise, we put
$h'_{\eta_0\ldots\eta_{p-2}}=0$ (see Corollary
\ref{open-local}$(i)$). It follows easily that $h'\in B^{p-2}$ and
the restriction of $h'$ from $X$ to $U$ is equal to $h$. Thus the
adele $\widetilde{f}=\widetilde{f}_2+dh'\in \A(X,\F_m^X)^{p-1}$
satisfies all needed conditions.
\end{proof}

\section{Applications to $K$-cohomology}\label{explicit_products}

\subsection{Generalities on $K$-cohomology and $K$-adeles}\label{section-K-generalities}

Recall several standard facts on sheaves of $K$-groups and $K$-cohomology.

Consider a weak homology theory for Noetherian schemes given by $F_*=K'_*$.
The corresponding Zariski homology sheaves
will be denoted by $\K'_n$, $n\in\Z$. We put $\G(X,n)^{\bullet}=
\G(X,K'_*,n)^{\bullet}$, i.e.,
$\G(X,n)^p=\bigoplus\limits_{\eta\in X^{(p)}}K_{n-p}(k(\eta))$.
For a scheme $X$ and an integer $n\ge 0$,
let $\K^X_n$ be the sheaf associated to the presheaf given by the formula
$U\mapsto K_n(U)$ for any open subset $U\subset X$, where
$K_n(U)=\pi_{n+1}(BQ{\mathcal P}(U))$ and ${\mathcal P}(U)$
is the exact category of coherent locally free sheaves on $U$.
The Zariski cohomology groups $H^{\bullet}(X,\K^X_n)$ are called the
{\it $K$-cohomology} of $X$. Evidently, there is a morphism of sheaves
$\K_n^X\to (\K'_n)^X$ for any $n\ge 0$, which is an isomorphism if $X$
is regular and separated.
The sheaf $\K^X=\bigoplus\limits_{n\ge 0}\K^X_n$ is the sheaf of
supercommutative associative rings. Any morphism of schemes
$f:X\to Y$ defines a homomorphism of sheaves of
algebras $f^*:\K^Y\to f_*\K^X$.

For any integers $m,n\ge 0$, there is a morphism of complexes of
sheaves $\underline{\G}(X,m)^{\bullet}\otimes \K^X_n\to
\underline{\G}(X,m+n)^{\bullet}$ given by the formula
$\{f_{\eta}\}\otimes g\mapsto \{f_{\eta}\cdot i_{\eta}^*{g}\}$,
where $i^*_{\eta}$ is the natural morphism of sheaves
$i^*_{\eta}:\K_n^X\to (i_{\overline{\eta}})_*K_n(k(\eta))$,
$i_{\overline{\eta}}:\overline{\eta}\hookrightarrow X$. Thus the
complex of sheaves
$\underline{\G}(X)^{\bullet}=\bigoplus\limits_{m\ge
0}\underline{\G}(X,m)^{\bullet}$ is a right module over the sheaf of
associative rings $\K^X$ and the natural morphism
$\bigoplus\limits_{m\ge 0}(\K'_m)^X\to \underline{\G}(X)^{\bullet}$
is a homomorphism of $\K^X$-modules. For any proper morphism $f:X\to
Y$ of irreducible schemes, there is a canonical morphism of
complexes of sheaves
$Rf_*\underline{\G}(X)^{\bullet}[d]=f_*\underline{\G}(X)^{\bullet}[d]\stackrel{f_*}
\longrightarrow\underline{\G}(Y)^{\bullet}$, where
$d=\dim(f)=\dim(X)-\dim(Y)$. The projection formula tells that this
morphism is a homomorphism of $\K^Y$-modules via the homomorphism
$\K^Y\to f_*\K^X$. Therefore general properties of resolutions of
sheaves imply the following fact.

\begin{lemma}\label{product_formula_Massey}
Let $f:X\to Y$ be a proper morphism of schemes.
Let $a_1\in H^{p_1}(X,\underline{\G}(X)^{\bullet})=
H^{p_1}(Y,Rf_*\underline{\G}(X)^{\bullet})$, $b_i\in
H^{p_i}(Y,\K^Y)$, $2\le i\le k$ be classes in $K$-cohomology
groups such that their $k$-th higher product
$m_k(a_1,b_2,\ldots,b_k)$ is well defined, where we consider
$Rf_*\underline{\G}(X)^{\bullet}$ as a module over $\K^Y$ via the homomorphism
$\K^Y\to f_*\K^X\to Rf_*\K^X$.
Then the higher products $m_k(a_1,f^*(b_2),\ldots,f^*(b_k))$ and
$m_k(f_*(a_1),b_2,\ldots,b_k)$ are also well defined and
there is an equality
$$
f_*(m_k(a_1,f^*(b_2),\ldots,f^*(b_k)))=(-1)^{d(p_2+\ldots+p_k)}
m_k(f_*(a_1),b_2,\ldots,b_k).
$$
\end{lemma}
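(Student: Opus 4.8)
The plan is to deduce Lemma \ref{product_formula_Massey} from the projection formula for the complexes $\underline{\G}(X)^{\bullet}$ (Proposition \ref{product_formula}, in its sheafified/derived form) by a purely formal argument about Massey products in the derived category of sheaves of $\K^Y$-modules. First I would set up the framework: since $\underline{\A}(X,\F)^{\bullet}$ and $\underline{Cous}(X,\F)^{\bullet}$ are flasque resolutions and all the maps in sight ($f^*$ on adelic complexes, $\mu$, $f_*$ on Cousin/Gersten complexes) are honest maps of complexes of sheaves, the cohomology groups in the statement compute $\Ext$-groups in the derived category $D(Y)$, and the module structures of $Rf_*\underline{\G}(X)^{\bullet}$ over $\K^Y$ and of $\underline{\G}(Y)^{\bullet}$ over $\K^Y$ are realized by genuine DG-module structures at the level of complexes (using the DG-ring structure on $\A(Y,\K^Y)^{\bullet}$ and the DG-module structures established in Section \ref{Cousin-relation} and Remark \ref{surj-cohom}). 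The pushforward $f_*:Rf_*\underline{\G}(X)^{\bullet}\to\underline{\G}(Y)^{\bullet}$ is then a morphism of DG-modules over $\K^Y$ up to the explicit sign $(-1)^{d\cdot\deg_{\A}}$ recorded in Proposition \ref{product_formula}.

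The key steps, in order, are: (1) Fix representative cocycles: $a_1$ by a cocycle $\tilde a_1\in\Gamma(Y,Rf_*\underline{\G}(X)^{\bullet})$ and each $b_i$ by a cocycle $\tilde b_i\in\Gamma(Y,\underline{\A}(Y,\K^Y)^{\bullet})$, using that the adelic complex is a flasque resolution (Theorem \ref{quasiis}). (2) Recall the definition of the $k$-fold Massey product: it is the class of $\sum$ of products of the $\tilde b_i$ with chosen primitives $c_{ij}$ of lower products (the defining system), and it is well-defined precisely when all the relevant lower products vanish in cohomology. (3) Observe that applying $f^*$ to the $\tilde b_i$ and keeping $\tilde a_1$ on the left, versus applying $f_*$ to $\tilde a_1$ and keeping the $\tilde b_i$, produce two defining systems related term-by-term: $f_*$ intertwines left multiplication by $f^*(\tilde b_i)$ on $Rf_*\underline{\G}(X)^{\bullet}$ with left multiplication by $\tilde b_i$ on $\underline{\G}(Y)^{\bullet}$, up to the sign $(-1)^{d\cdot\deg}$ from Proposition \ref{product_formula}. (4) Chase the signs: each term in the Massey product defining system for $m_k(a_1,f^*(b_2),\ldots,f^*(b_k))$ has total adelic degree $p_2+\ldots+p_k$ (minus one for each primitive, but these cancel in the bookkeeping since the degrees telescope), so applying $f_*$ globally multiplies the whole defining system by $(-1)^{d(p_2+\ldots+p_k)}$, giving a valid defining system for $m_k(f_*(a_1),b_2,\ldots,b_k)$; hence the well-definedness of one implies that of the other and the displayed equality holds. (5) Conclude that the indeterminacies also match up, so the equality is an equality of cosets, i.e. makes sense as stated.

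The main obstacle will be step (4): getting the sign exactly right. The subtlety is that a defining system for a $k$-fold Massey product involves primitives of partial products $m_j(a_1,f^*(b_{i_1}),\ldots)$ living in various cohomological degrees, and $f_*$ changes degrees by $d$ and introduces the sign $(-1)^{d\cdot\deg_{\A}}$ which depends on the adelic degree of the argument it is applied to. One must check that when $f_*$ is applied to each primitive and each product term in the defining system, the accumulated signs assemble into the single global sign $(-1)^{d(p_2+\ldots+p_k)}$ and do not obstruct the cocycle/coboundary relations that define the Massey product. I expect this to reduce to the observation that $f_*$ is a strict morphism of DG-modules after twisting the grading appropriately (e.g. shifting by $[d]$ and inserting the sign $(-1)^{d\cdot\deg}$ once and for all), at which point the Massey product compatibility is the standard formal fact that a morphism of $A_\infty$-modules (here strictly a DG-module morphism up to a global twist) commutes with Massey products up to the corresponding sign. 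I would cite this as a general property of higher products under DG-module morphisms, referring back to Proposition \ref{product_formula} and the remark that it extends to complexes of sheaves, rather than re-deriving the sign combinatorics from scratch.

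Finally, I would remark that the hypotheses are exactly what is needed for all objects to make sense: properness of $f$ guarantees $Rf_*=f_*$ on the flasque complexes and the existence of $f_*$ on Cousin/Gersten complexes via Section \ref{Cousin-relation}; irreducibility (or equidimensionality) enters only through the definition of $d=\dim(f)$ and the shift $[d]$. No smoothness is required here because we are working with the Cousin/adelic machinery formally and not invoking the Gersten resolution quasiisomorphism for $f_*$ itself.
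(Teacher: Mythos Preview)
Your proposal is correct and matches the paper's reasoning in spirit, but it follows the \emph{alternative} route that the paper mentions in the remark immediately after the lemma, not the (very terse) main argument.

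The paper's proof is a single sentence preceding the lemma: the pushforward $f_*\colon Rf_*\underline{\G}(X)^{\bullet}[d]\to\underline{\G}(Y)^{\bullet}$ is a homomorphism of sheaves of $\K^Y$-modules (this is the projection formula for the Gersten complex, stated just above the lemma), and then ``general properties of resolutions of sheaves'' give the Massey-product compatibility. No adelic machinery enters; one computes the higher products using any multiplicative resolution (e.g.\ Godement, as in Remark~\ref{rmk-Massey}), and a morphism of sheaves of modules automatically induces a morphism of DG-modules on such resolutions, compatible with defining systems up to the degree shift.

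Your argument instead represents the $b_i$ by adelic cocycles via Theorem~\ref{quasiis}, uses the DG-module structure of the Cousin complex over the adelic DG-ring, and invokes Proposition~\ref{product_formula} for the sign. This is exactly what the paper records as ``an alternative, more direct, way \ldots\ for smooth varieties over an infinite perfect field.'' The trade-off: the paper's approach needs only the sheaf-level projection formula and works without any smoothness hypothesis on $Y$, while your approach is more hands-on and makes the sign $(-1)^{d(p_2+\ldots+p_k)}$ visible through Proposition~\ref{product_formula}, but requires Theorem~\ref{quasiis} and hence the additional assumption that $Y$ is smooth over an infinite perfect field. Since the applications in the paper take place over such fields anyway, the loss of generality is harmless in practice.
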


\begin{rmk}
An alternative, more direct, way to show Lemma \ref{product_formula_Massey}
for smooth varieties over an infinite perfect field is to use Theorem \ref{quasiis}
and the adelic projection formula from Proposition \ref{product_formula}.
\end{rmk}

\begin{rmk}\label{rmk-Massey}
Let us recall that Massey higher products for a right DG-module
$M^{\bullet}$ over a DG-ring $A^{\bullet}$ are defined via the
higher differentials in the spectral sequence associated with the
Hochschild bicomplex $(M^{\bullet}\otimes (A^{\bullet})^{\otimes
(p-1)})^q$. More precisely, Massey higher products have the form
$$
m_k:(H^{i_1}(M^{\bullet})\otimes H^{i_2}(A^{\bullet})\otimes\ldots
\otimes H^{i_k}(A^{\bullet}))^{\circ}\to
{}^{\circ}(H^{i_1+\ldots+i_k-k}(M^{\bullet})),
$$
where for a group $G$, the notation $(G)^{\circ}$ means that we take
a certain subgroup in $G$ and ${}^{\circ}(G)$ means that we take a
certain quotient of $G$. In particular, for a sheaf of associative
algebras $\Ac$ on a topological space $X$ and a sheaf $\M$ of right
modules over $\Ac$, there are Massey higher products in cohomology
groups $H^{\bullet}(X,\Ac)$ and $H^{\bullet}(X,\M)$; to define them
one should take multiplicative resolutions for sheaves $\Ac$ and
$\M$ on $X$ (e.g., Godement resolutions), see more details
in~\cite{Den}.
\end{rmk}

If $X$ is a regular scheme of finite type over a field, then
$\K^X_n=(\K'_n)^X$, the complex of sheaves
$\underline{\G}(X,n)^{\bullet}$ is quasiisomorphic to $\K^X_n$, and
$H^n(X,\K^X_n)=CH^n(X)$ for any $n\ge 0$ (see \cite{Q} and also
Proposition \ref{Gersten-resolution}).

By Section \ref{defin-basic-adeles}, the complex
$\A(X,\K^X)^{\bullet}$ is a DG-ring, any morphism of schemes $f:X\to
Y$ defines a DG-homomorphism $\A(Y,\K^Y)^{\bullet}\to
\A(X,\K^X)^{\bullet}$, and the complex $\G(X)^{\bullet}$ is a right
DG-module over the DG-ring $\A(X,\K^X)^{\bullet}$. By Proposition
\ref{product_formula}, for any proper morphism $f:X\to Y$ of
irreducible schemes, the morphism
$\G(X)^{\bullet}[d]\to\G(Y)^{\bullet}$ is a homomorphism of right
DG-modules over $\A(Y,\K^Y)^{\bullet}$ via the homomorphism
$\A(Y,\K^Y)^{\bullet}\to\A(X,\K^X)^{\bullet}$, where $d=\dim(f)$.

\begin{rmk}
It seems that there is no way to define a direct image map on the
adelic complexes $\A(X,\K^X)^{\bullet}$ for proper morphisms of
smooth varieties. This fact can be already seen in the simplest
cases of finite morphisms or a closed embeddings. Nevertheless it is
expected that there exists a {\it complete} version of $K$-adeles
such that the complete adelic complex would have a (non-canonical)
direct image map. Also, completed $K$-adeles should correspond to
the global class field theory of arithmetical schemes, see
\cite{Par78}. Some particular cases were treated in \cite{Osi}.
However the ``complete'' theory is still to be built.
\end{rmk}

\begin{rmk}
It follows from what is said above that for each $p\ge 0$, there is
a canonical map $\alpha_p$ from $H^p(\A(X,\K_p^X)^{\bullet})$ to the
bivariant Chow group $A^p(X\stackrel{id}\longrightarrow X)$ (see
\cite{Ful}). In addition, the natural map $\beta_p:H^p(X,\K_p)\to
A^p(X\stackrel{id}\longrightarrow X)$ factors through $\alpha_p$.
\end{rmk}

\begin{quest}
Does there exist a singular variety $X$ such that the image ${\rm
Im}(\alpha_p:H^p(\A(X,\K_p^X)^{\bullet})\to
A^p(X\stackrel{id}\longrightarrow X))$ is strictly bigger than the
image ${\rm Im}(\beta_p:H^p(X,\K_p)\to
A^p(X\stackrel{id}\longrightarrow X))$, i.e., such that adelic
cocycles define new elements in the bivariant Chow groups?
\end{quest}

Now let us fix an infinite perfect field $k$ and consider $K'_*$ as
an l.a.f. homology theory over $k$, see Example
\ref{homology_examples}, 1). Let $X$ be an irreducible smooth
variety over $k$; then by Proposition \ref{adelic-class} and Claim
\ref{integrality2}, for any algebraic cycle $Y=\sum n_i Y_i$ of
codimension $p$ on $X$, there is a good cocycle $[Y]=[\{1_Y\}]\in
\A(X,\K_{p}^X)^p$, where $\{1_Y\}$ denotes a collection from
$\bigoplus\limits_{\eta\in X^{(p)}} \Z$ that equals $n_i\in\Z$ at
the generic point $\eta_i$ of $Y_i$ for each $i$ and equals $0\in\Z$
at all other schematic points $\eta\in X^{(p)}$. Let us give two
examples for adelic classes of subvarieties.

Let $D$ be a (not necessary reduced or effective) divisor on $X$,
$d=\dim(X)$. For each schematic point $\eta\in X$, consider a local
equation $s_{\eta}\in k(X)^*$ of $D$ in
$X_{\eta}=\Spec(\OO_{X,\eta})$. Evidently, $s_{\xi}/s_{\eta}\in
\OO_{X,\eta}^*$ whenever $\xi\in \overline{\eta}$. Thus we get a
1-cocycle $[D]\in \A(X,\K_1^X)^1$ such that the $(X\eta)$-component
of $[D]$ is $s^{-1}_{\eta}$ for $\eta\ne X$ and the
$(\eta\xi)$-component of $[D]$ is $s_{\eta}/s_{\xi}$ for $\eta\ne
X$, $\xi\in\overline{\eta}$, $\xi\ne\eta$. By construction, the
class of $[D]$ in $H^1(\A(X,\K_1^X)^{\bullet})=CH^1(X)$ coincides
with the class of $D$ in the first Chow group under the map $\nu_X$.
In \cite{Gil}, \cite{Gra}, and Corollary \ref{Gersten_product} it is
proved that the intersection product in Chow groups coincides up to
sign with the natural product in the corresponding $K$-cohomology
groups. Thus we get the following adelic formula for the
intersection index of divisors $D_1,\ldots,D_d$ when $X$ is proper:
$$
(D_1,\ldots,D_d)=
-\sum_{\eta_0\ldots\eta_d}[k(\eta_d):k] \nu_{\eta_0\ldots\eta_d}
\{s^{-1}_{1,\eta_1},s_{2,\eta_1}/s_{2,\eta_2},\ldots,
s_{d,\eta_{d-1}}/s_{d,\eta_{d}}\}= $$ $$
=-\sum_{\eta_0\ldots\eta_d}[k(\eta_d):k] \nu_{\eta_0\ldots\eta_d}
\{s^{-1}_{1,\eta_1},s^{-1}_{2,\eta_2},\ldots,s^{-1}_{d,\eta_d}\},
$$
where the last identity follows from reciprocity law. This formula
was proved by different methods
first for $d=2$ in \cite{Par} and for arbitrary $d$ in \cite{Lom}.
We generalize the explicit computations from \cite{Par} and \cite{Lom}
in the proof of Theorem \ref{intersecting_cycles}.

The next example is the intersection of a 1-cycle $C$ and a divisor
$D$ in the three-dimensional irreducible smooth variety $X$ over
$k$. We describe explicitly a 2-cocycle $[C]$ in the adelic complex
$\A(X,\K_2^X)^{\bullet}$ that represents $C$. Let us choose an
effective reduced divisor $E$ with the following properties: for
each schematic point $\eta\in X$ of codimension at least two in $X$,
there exists an element $t_{\eta}\in K_2(k(X))$ and a subdivisor
$E_{\eta}\subset E$ such that $\sing(t_{\eta})\subset E$ and
$d_{\eta}(\nu_{XE_{\eta}}(t_{\eta}))=C_{\eta}$. Recall that
$\nu_{XE_{\eta}}$ denotes the residue map from $K_2(k(X))$ to the
direct sum of multiplicative groups of fields of rational functions
on all irreducible components of $E_{\eta}$ (see Section
\ref{patching-systems-section}), $C_{\eta}$ is the restriction of
$C$ to $X_{\eta}$, and $d_{\eta}$ is the differential in the local
Gersten resolution on $X_{\eta}$. The existence of such divisor $E$
follows from Proposition \ref{differential} and Remark
\ref{global_patching_systems}. Further, we define the adeles
$f_{012}$ and $f_{013}$ such that
$f_{XE_{\eta}\eta}=t^{-1}_{\eta}\in K_2(k(X))$, where $t_{\eta}$ is
as above. We put all the other components of $f_{012}$ and $f_{013}$
to be any elements from $(\K_2^X)_{\eta}=K_2(\OO_{X,\eta})$. For
each flag $\eta\xi$ of type $(23)$, we have
$$
d_{\eta}(\nu_{XE_{\eta}}(t_{\eta})/\nu_{XE_{\xi}}(t_{\xi})) =0,
$$
hence there exists an element $t_{\eta\xi}\in K_2(k(X))$ such that
$$
d_{\eta}(t_{\eta\xi})=\nu_{XE_{\eta}}(t_{\eta})/
\nu_{XE_{\xi}}(t_{\xi}).
$$
This defines the adele $f_{023}$.
Finally, we see that for each flag $\mu\eta\xi$ of type $(123)$,
the product $f_{\mu\eta\xi}=f_{X\eta\xi}f^{-1}_{X\mu\xi}f_{X\mu\eta}$ belongs to
$(\K_2^X)_{\mu}$ and is also an adele. Thus we have defined the
cocycle $[C]=f\in\A(X,\K_2^X)^2$ such that $[C]$ represents the class of
$C$ in $H^2(\A(X,\K_2^X)^{\bullet})=CH^2(X)$ with respect to the map $\nu_X$.
From this we get the following intersection
formula when $X$ is proper:
$$
(D,C)=\sum_{\mu\eta\xi}[k(\xi):k]\nu_{X\mu\eta\xi}\{s^{-1}_{\mu} ,
f_{\mu\eta\xi}\}=
\sum_{\mu\eta\xi}[k(\xi):k]\nu_{X\mu\eta\xi}\{s^{-1}_{\mu},
t_{\eta\xi}\},
$$
where, as above, $s_{\mu}$ is the local equation of the
divisor $D$ at the point $\mu$ and $\mu\eta\xi$ ranges over all flags of type $(123)$ on $X$.
As in the previous case, the last equality follows from reciprocity law.

Further, let us indicate a link between the adelic complex $\A(X,\K_n^X)^{\bullet}$
and coherent adeles.

\begin{prop}\label{Parshin-Beilinson}
Let $X$ be a smooth variety over a field $k$;
then for any $n\ge 0$, there is a natural morphism of complexes
$$
{\rm dlog}:\A(X,\K_n^X)^{\bullet}\to a(X,\Omega_X^n)^{\bullet},
$$
where $a(X,\Omega_X^n)^{\bullet}$ is the complex of rational coherent
adeles (see \cite{Par76} and \cite{Hub}, Proposition 5.2.1)
and the local component of this morphism for a flag
$\eta_0\ldots\eta_p$ is equal to the natural map
$K_n(\OO_{\eta_0})\to \Omega_{\OO_{\eta_0}/k}^n$ (see \cite{Blo77}).
\end{prop}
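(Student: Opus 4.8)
The plan is to get the morphism for free from the functoriality of the adelic construction, combined with the functoriality of Bloch's $d\log$ map. Recall from \cite{Blo77} that for any $k$-algebra $R$ there is a homomorphism $d\log\colon K_n(R)\to\Omega^n_{R/k}$, natural in $R$ (on Milnor symbols it is $\{a_1,\ldots,a_n\}\mapsto\frac{da_1}{a_1}\wedge\cdots\wedge\frac{da_n}{a_n}$). Applying it levelwise to the presheaf $U\mapsto K_n(\OO_X(U))$ and to the presheaf $U\mapsto\Omega^n_{\OO_X(U)/k}$ on $X$, whose Zariski sheafifications are $\K^X_n$ and the coherent sheaf $\Omega^n_{X/k}$ respectively, I obtain a morphism of sheaves $d\log\colon\K^X_n\to\Omega^n_{X/k}$ on $X$ whose stalk at a point $\eta_0$ is the map $d\log\colon K_n(\OO_{X,\eta_0})\to\Omega^n_{\OO_{X,\eta_0}/k}$. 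By Proposition~\ref{Properties-M} and Corollary~\ref{Properties-S} the assignment $\F\mapsto\A(X,\F)^{\bullet}$ is functorial, so this sheaf morphism induces a morphism of complexes $\A(X,\K^X_n)^{\bullet}\to\A(X,\Omega^n_{X/k})^{\bullet}$ whose local component along a flag $\eta_0\ldots\eta_p$ is, by construction of the adelic functor, the stalk map, i.e. $d\log$ on $K_n(\OO_{\eta_0})$. Commutation with the differentials is automatic, since both differentials are assembled from the boundary maps of Proposition~\ref{Properties-M}(ii), which are induced by sheaf inclusions, and $d\log$ is a morphism of sheaves; the passage to the reduced complexes is then formal.

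The substance of the statement is that this morphism in fact lands in the subcomplex of rational coherent adeles $a(X,\Omega^n_X)^{\bullet}\subseteq\A(X,\Omega^n_{X/k})^{\bullet}$ — this inclusion being available because, as noted after the definition of adelic groups, for a coherent sheaf the $\A$-adeles contain the rational adeles. To check this I would use the explicit description of Proposition~\ref{systemofd}: a component of an $\A$-adele of $\K^X_n$ along a flag $(\eta_0\ldots\eta_p)$ is the germ at $\eta_p$ of a $K_n$-class of functions regular off a reduced divisor $D=D_{\eta_0\ldots\eta_{p-1}}$ on $X$. Since $X$ is smooth, $D$ is Cartier; a local computation with a local parameter $t$ of a component of $D$ gives $\frac{df_i}{f_i}=m_i\frac{dt}{t}+(\text{regular})$ for $f_i=t^{m_i}\cdot(\text{unit})$, and in the $n$-fold wedge at most one factor $\frac{dt}{t}$ survives, so $d\log$ of a class regular off $D$ is a section of the coherent sheaf $\Omega^n_X(D_{\mathrm{red}})$, i.e. has at worst logarithmic poles along $D$. (For a general Quillen $K$-class rather than a symbol, the same pole bound follows from the compatibility of $d\log$ with the residue maps along codimension-one points, which reduces the assertion to the discrete valuation ring case.) Consequently the image of an $\A$-adele satisfies exactly the denominator-bounded conditions defining rational coherent adeles in \cite{Hub}, Proposition~5.2.1, and the nested conditions $(*)$ of Proposition~\ref{systemofd} are transported because $d\log$ commutes with the restriction maps $(\K^X_n)_{X\setminus D}\to(\K^X_n)_{X\setminus D'}$ for $D\subseteq D'$. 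It then remains only to match the indexing and degeneracy conventions of \cite{Hub}, Proposition~5.2.1 with those used here, which is routine.

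The main obstacle I expect is precisely this last refinement: verifying that the ``rough'', reduced-divisor-indexed adelic conditions on $\A(X,\K^X_n)$ are carried by $d\log$ into the a priori finer, coherent pole-bounded conditions defining $a(X,\Omega^n_X)$, rather than merely into the larger group $\A(X,\Omega^n_{X/k})$. The reason this succeeds is the logarithmic-pole phenomenon above — a \emph{reduced} bounding divisor on the $K$-theory side produces a \emph{coherent} bounding sheaf $\Omega^n_X(D)$ on the differential-forms side, which is just enough; and the remaining work is the bookkeeping of comparing the two explicit adelic descriptions and the conventions in \cite{Hub}.
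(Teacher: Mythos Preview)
Your overall strategy --- produce the map by functoriality, then show the image lands in the rational coherent adeles thanks to a logarithmic-pole bound --- is exactly the content of the paper's proof. The paper organizes the argument as a direct induction on $p$, using that both $\A(M,\K_n^U)$ and $a(M,\Omega^n_U)$ have the same recursive shape $\prod_\eta\lim_{\to V}(\ldots)_{U\cap V}$ (for coherent adeles this uses that they commute with direct limits of quasicoherent sheaves), so the whole comparison reduces to the $p=0$ case, which is precisely the statement that $d\log$ of a class in $K_n(k(X))$ regular off a divisor $D$ has at most a first-order pole along $D$. This is cleaner than your route through Proposition~\ref{systemofd}, since it avoids having to match the divisor-indexed description of $\A$-adeles against the coherent-sheaf-indexed description of $a$-adeles.

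The one genuine gap is your justification of the logarithmic-pole bound for a general Quillen $K$-class. Your argument for Milnor symbols is fine, but ``compatibility of $d\log$ with residue maps'' does not control higher-order poles: knowing the residue only fixes the coefficient of $dt/t$, not the absence of $dt/t^2$ terms. The paper isolates this as a separate lemma (Lemma~\ref{Tate_map}) and proves it using Suslin's stability theorem: the map $K_n(F)\to\Omega^n_{F}$ factors through $H_n(GL(F),\Z)$, is trivial on $H_n(GL_{n-1}(F),\Z)$, and by Suslin $K_n^M(F)\cong H_n(GL_n(F),\Z)/H_n(GL_{n-1}(F),\Z)$, so the image of the whole of $K_n(F)$ under $d\log$ coincides with the image of Milnor $K$-theory. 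This is nontrivial input you are missing; once you have it, your proof is complete and essentially equivalent to the paper's.
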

\begin{proof}
Let us prove by induction on $p$ that for any natural number $p\ge 0$, any subset $M\subset S(X)_p$,
and any open subset
$U\subset X$, the map ${\rm dlog}:\A(M,\K_n^U)\to a(M,\Omega_{U}^n)$ is
well defined. Since the sheaf $\Omega^n_X$ is locally free, it is $1$-pure and
we may suppose that $X\backslash U=D$ is a divisor.

Suppose that $p=0$. We have $\A(M,\K_n^U)=\prod\limits_{\eta\in M}(\K_n^U)_{\eta}$
and $\A(M,\Omega^n_U)=\lim\limits_{\longrightarrow\atop l\ge 0}
\prod\limits_{\eta\in M}(\Omega^n_X(lD))_{\eta}$ and, by Lemma
\ref{Tate_map}, we get the needed result. For $p>0$, we have
$$
a(M,\Omega_{U}^n)=\prod_{\eta\in P(X)}a({_\eta M},(\Omega_{U}^n)_{\eta})=
\prod_{\eta\in P(X)} \lim_{\longrightarrow\atop V}a({_\eta M},\Omega_{U\cap V}^n),
$$
where for each schematic point $\eta\in P(X)$ the limit is taken
over all open subsets $V\subset X$ containing $\eta$ (for the second equality we
use that the adelic functor commutes with direct limits of
quasicoherent sheaves). Since the same equality holds for the adelic groups
for the sheaf $\K^X_n$, the induction step is proved.
\end{proof}

The author is grateful to C.\,Soul\'e for explaining the proof of
the following lemma.

\begin{lemma}\label{Tate_map}
Rational differential forms from the image of the map
$K_n(k(X))\to \Omega^n_{k(X)/k}$ have pole of order at most one
along each irreducible divisor $D\subset X$.
\end{lemma}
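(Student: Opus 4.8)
The plan is to reduce the statement to a computation in the discrete valuation ring $R=\OO_{X,D}$ (the local ring of $X$ at the generic point $\eta_D$ of $D$), with fraction field $F=k(X)$, a uniformizer $\pi$, and residue field $\kappa=k(D)$. Since $X$ is smooth over $k$, the ring $R$ is a localization of a smooth $k$-algebra, so $\Omega^1_{R/k}$ is free of rank $d=\dim(X)$ over $R$ and the natural map $\Omega^n_{R/k}\to\Omega^n_{F/k}$ is injective; we identify $\Omega^n_{R/k}$ with its image, the module of $n$-forms regular along $D$. With this identification, ``$\omega\in\Omega^n_{F/k}$ has pole of order at most one along $D$'' means precisely $\omega\in\tfrac1\pi\,\Omega^n_{R/k}$, i.e.\ $\omega\in(\Omega^n_X(D))_{\eta_D}$, which is the twist appearing in the proof of Proposition~\ref{Parshin-Beilinson}. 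So it suffices to show ${\rm dlog}(K_n(F))\subseteq\tfrac1\pi\,\Omega^n_{R/k}$.

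First I would pass to the completion: the map ${\rm dlog}$ is compatible with the flat morphism $R\to\widehat R$ and with $K_n(F)\to K_n(\widehat F)$, $\widehat F={\rm Frac}(\widehat R)$, and regularity of an $n$-form along $D$ is detected after completion (since $\widehat\Omega^1_{\widehat R/k}=\widehat R\otimes_R\Omega^1_{R/k}$), so one may assume $R$ is complete. Because $X$ is smooth over $k$, the ring $R$ is formally smooth over $k$, so by Cohen's structure theorem $R$ contains a coefficient field $\kappa'\cong\kappa$ with $k\subseteq\kappa'\subseteq R$, and $R=\kappa'[[\pi]]$, $F=\kappa'((\pi))$. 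By Quillen's theorem (Gersten's conjecture for discrete valuation rings, see \cite{Q}) the localization sequence yields a short exact sequence
$$
0\longrightarrow K_n(R)\stackrel{j_*}\longrightarrow K_n(F)\stackrel{\partial}\longrightarrow K_{n-1}(\kappa')\longrightarrow 0 .
$$
The coefficient field $\kappa'$ splits it: set $\sigma(\beta)=\{\pi\}\cdot\iota_*(\beta)$ for $\beta\in K_{n-1}(\kappa')$, where $\iota_*\colon K_{n-1}(\kappa')\to K_{n-1}(F)$ is induced by $\kappa'\hookrightarrow F$ and $\{\pi\}\in K_1(F)$. Since $\partial\{\pi\}=1$ and $\iota_*(\beta)$ is a ``unit'' class whose specialization along the projection $R\to\kappa'$ is $\beta$, one gets $\partial\sigma=\mathrm{id}$, hence $K_n(F)=j_*K_n(R)\oplus\sigma\big(K_{n-1}(\kappa')\big)$.

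It then remains to evaluate ${\rm dlog}$ on each summand. On $j_*K_n(R)$, functoriality of ${\rm dlog}$ gives ${\rm dlog}(j_*\alpha_0)\in\Omega^n_{R/k}$ for $\alpha_0\in K_n(R)$, i.e.\ a form regular along $D$. On $\sigma(\beta)=\{\pi\}\cdot\iota_*(\beta)$, using that ${\rm dlog}$ is a homomorphism of graded rings with ${\rm dlog}\{\pi\}=\tfrac{d\pi}{\pi}$ (\cite{Blo77}) and that ${\rm dlog}(\iota_*\beta)$ lies in the image of $\Omega^{n-1}_{R/k}$, we obtain
$$
{\rm dlog}(\sigma(\beta))=\frac{d\pi}{\pi}\wedge{\rm dlog}(\iota_*\beta)=\frac1\pi\big(d\pi\wedge{\rm dlog}(\iota_*\beta)\big)\in\frac1\pi\,\Omega^n_{R/k},
$$
a form with at most a simple pole along $D$. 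Hence for arbitrary $\alpha\in K_n(F)$ one has ${\rm dlog}(\alpha)\in\Omega^n_{R/k}+\tfrac1\pi\,\Omega^n_{R/k}=\tfrac1\pi\,\Omega^n_{R/k}$, which is the claim. The main obstacle is the middle step: producing the coefficient field and splitting the localization sequence compatibly with ${\rm dlog}$. This is what forces the passage to the complete (or henselian) local ring and uses the smoothness of $X$ over $k$ in an essential way, since smoothness is exactly what guarantees formal smoothness over $k$ and hence a coefficient field containing $k$; one must also check that the pole-order condition and the map ${\rm dlog}$ behave well under completion, though these points are routine.
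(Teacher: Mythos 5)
Your argument is correct in substance but takes a genuinely different route from the paper. The paper never localizes at $D$: it recalls that the map is induced by the universal classes $c_n$, that $c_n$ vanishes on $H_n(GL_{n-1}(R),\Z)$, and then invokes Suslin's stability theorem \cite{Sus}, which identifies $H_n(GL_n(F),\Z)$ with $H_n(GL(F),\Z)$ and $K_n^M(F)$ with $H_n(GL_n(F),\Z)/H_n(GL_{n-1}(F),\Z)$; consequently every form in the image of ${\rm dlog}$ is a sum of forms $\frac{df_1}{f_1}\wedge\ldots\wedge\frac{df_n}{f_n}$, which visibly have at most a simple pole along every irreducible divisor at once. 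You instead argue locally at each $D$: complete $\OO_{X,D}$, produce a Cohen coefficient field, split the localization sequence by $\beta\mapsto\{\pi\}\cdot\iota_*\beta$, and evaluate ${\rm dlog}$ on the two pieces. What your route buys is independence from Suslin's theorem; what it costs is the passage to the completion (needed for the coefficient field, and requiring the routine but nontrivial bookkeeping with continuous differentials that you mention) and an extra input about how the universal classes behave on products, which the paper's argument avoids entirely.

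Two points in your write-up need repair, though neither is fatal. First, ${\rm dlog}$ is \emph{not} a homomorphism of graded rings: as the paper records in a remark right after Lemma~\ref{Tate_map}, for classes of degrees $m,n\ge 1$ one has ${\rm dlog}(x\cdot y)=-\frac{(m+n-1)!}{(m-1)!(n-1)!}\,{\rm dlog}(x)\wedge{\rm dlog}(y)$, reflecting the standard product behavior of the Chern classes $c_n$ on higher $K$-theory. You must quote this formula (a universal integer multiple of the wedge) rather than ring multiplicativity; since a constant multiple does not change the pole order, your conclusion ${\rm dlog}(\{\pi\}\cdot\iota_*\beta)\in\frac{1}{\pi}\,\Omega^n_{R/k}$ survives, but this formula is a genuine additional ingredient of your approach and should be justified or referenced, not asserted as ring multiplicativity. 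Second, you neither need nor should cite Quillen for Gersten's conjecture for the complete DVR $\kappa'[[\pi]]$, which is not covered by the geometric case in \cite{Q}: exactness of the long localization sequence at $K_n(\widehat F)$ together with $\partial\circ\sigma={\rm id}$ (which follows from the projection formula for the boundary map, cf.\ \cite{Gra}) already gives $K_n(\widehat F)=j_*K_n(\widehat R)+\sigma\bigl(K_{n-1}(\kappa')\bigr)$, and that is all your argument uses. With these corrections the proof goes through.
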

\begin{proof}
Let us recall the construction of the map $K_n(R)\to \Omega^n_{R/\Z}$
and its properties.
There are universal classes
$c_n\in\lim\limits_{\longleftarrow}H^n(GL_m(R),\Omega^n_{R/ \Z})$, where
the limit is taken over $m\ge 0$; they define the canonical maps
$K_n(R)\to H_n(GL(R),\Z)\stackrel{c_n}\longrightarrow
\Omega^n_{R/\Z}$.  The map $c_n$ is trivial on
$H_{n}(GL_{n-1}(R),\Z)$. Moreover, the composition
$R^*\times\ldots\times R^*\to H_1(GL_1(R),\Z)\times\ldots\times
H_1(GL_1(R),\Z)\to H_n(GL_n(R),\Z)\to H_n(GL(R),\Z)
\stackrel{c_n}\longrightarrow \Omega^n_{R/\Z}$ is given by the formula
$(r_1,\ldots,r_n)\mapsto
\frac{dr_1}{r_1}\wedge\ldots\wedge\frac{dr_n}{r_n}$.
Since one may suppose that $\dim X>0$, the field $F=k(X)$ is
infinite. By the results of Suslin, see \cite{Sus}, there is an
isomorphism $H_n(GL_n(F),\Z)\cong H_n(GL(F),\Z)$ and the natural
map constructed above $F^*\times\ldots\times F^*\to
H_n(GL_n(F),\Z)$ induces an isomorphism $K_n^M(F)\cong
H_n(GL_n(F),\Z)/H_n(GL_{n-1}(F),\Z)$. Since for any non-zero
rational functions $f_1,\ldots,f_n\in k(X)^*$, the differential form
$\frac{df_1}{f_1}\wedge\ldots\wedge\frac{df_n}{f_n}$ has pole of
order at most one along each irreducible divisor $D\subset X$, the
lemma is proved.
\end{proof}

\begin{rmk}
It follows from \cite{Sus} that for any field $F$ the natural
composition $K_n^M(F)\to K_n(F)\to
\Omega^n_{F/\Z}$ is given by the formula $\{f_1,\ldots,f_n\}\mapsto
(-1)^n(n-1)!\frac{df_1}{f_1}\wedge\ldots\wedge\frac{df_n}{f_n}$.
\end{rmk}

\begin{rmk}
There is an equality ${\rm dlog}(f\cdot g)=
-\frac{(m+n-1)!}{(m-1)!(n-1)!}{\rm dlog}(f)\cdot{\rm dlog}(g)$,
where $f\in \A(X,\K^X_m)^{\bullet}$, $g\in \A(X,\K^X_n)^{\bullet}$,
and in the right hand side we consider the product in the DG-ring
$\bigoplus\limits_{n\ge 0}a(X,\Omega^n_X)$.
\end{rmk}

\begin{rmk}
Let $Y$ be an algebraic cycle of codimension $p$ on
a smooth variety $X$ over an infinite perfect field $k$.
Then there is an explicit construction for the class of $Y$ in the
rational adelic group $a(X,\Omega_X^p)^p$. Indeed, one
should take the image under the map ${\rm dlog}$
of the explicit (good) class $[Y]$ of $Y$ in $\A(X,\K_p^X)^p$ constructed in
Proposition \ref{adelic-class}.
\end{rmk}

\subsection{Euler characteristic with support for $K$-groups}\label{Euler_charact}

The construction and the results of this section are needed for the proof of Theorem
\ref{intersecting_cycles} given in the next section. These results are
not new; for example, they follow from Waldhausen $K$-theory of
perfect complexes, developed in \cite{Tho} or they may be obtained
by using $R$-spaces constructed in \cite{Blo84}. However the author
did not find a reference for an explicit construction,
that is why this section is written.

By $\Omega S$ denote the loop space of a
pointed space $(S,s_0)$. Let $f:(X,x_0)\to (Y,y_0)$ be a continuous map of pointed
topological spaces. Consider the mapping path fibration
$$
M(f)=\{(x,\varphi)|x\in X,\varphi:I\to Y, \varphi(0)=f(x)\},
$$
where $I$ is the interval $[0,1]$. Recall that the homotopy fiber $F(f)$
is the fiber over $y_0$ of the natural map $M(f)\to Y$,
$(x,\varphi)\mapsto \varphi(1)$. Notice that $F(f)$ and $M(f)$ are
pointed spaces with the point $(x_0,\varphi_0)$, where $\varphi_0$
is the constant map to $y_0$. There is a natural map
$\Omega Y\to F(f)$, defined by $\gamma\mapsto (x_0,\gamma)$.
Moreover, the composition $\Omega X\to \Omega Y\to F(f)$ is
canonically homotopic to the constant map to $(x_0,\varphi_0)\in
F(f)$. Indeed, the homotopy
$$
G:\Omega X\times I\to F(f)
$$
is
given by
$$
(\gamma,t)\mapsto (\gamma(t),\varphi_t),
$$
where $\varphi_t(s)=(f\circ\gamma)(t+s(1-t))$.

Let $\M$ be an exact category, $\E_3$ be the exact category of
exact triples of objects in $\M$. The exact functors
$$
\{0\to M'\to M\to M''\to 0\}\mapsto (M',M'')
$$
and
$$
(M',M'')\mapsto\{0\to M'\to M'\oplus M''\to M''\to 0\}
$$
induce the maps $BQ\E_3\to
BQ\M\times BQ\M$ and $BQ\M\times BQ\M\to BQ\E_3$, respectively.
The well-known result of Quillen says that these two maps of pointed spaces are homotopy
inverse (see \cite{Q}, Theorem 2). Furthermore, let $\M'$ and $\M''$
be two exact subcategories in
$\M$ and let $\E'_3$ be the category of exact triples in $\M$ such
that in the above notations the object $M'$ is in $\M'$ and the object $M''$ is in $\M''$.
Then, analogously, $BQ\E'_3$ is
homotopy equivalent to $BQ\M'\times BQ\M''$.

In what follows we suppose for simplicity that $\M$ is an
abelian category (which is enough for further applications).

\begin{lemma}\label{Thom}
Let $\Cc_n$ be the exact category of length $n$ complexes of
objects in $\M$ and let $\E_n$ be the full subcategory in $\Cc_n$ consisting
of all exact complexes. We put $B^i=\Imm(M^{i-1}\to M^{i})$ for a
complex $M^{\bullet}$. Then the natural maps $BQ\Cc_n\to
BQ\M^{n+1}$, $BQ\E_n\to BQ\M^{n}$ induced by the exact functors
$$
\{0\to M^0\to\ldots\to M^n\to 0\}\mapsto (M^0,\ldots,M^n),
$$
$$
\{0\to M^0\to\ldots\to M^n\to 0\}\mapsto (B^1,\ldots, B^{n}),
$$
respectively, are homotopy equivalences. Moreover, the
following diagram of pointed spaces is commutative up to homotopy:
$$
\begin{array}{ccc}
BQ\E_n&\longrightarrow&BQ\M^{n}\\
\downarrow&&\downarrow\lefteqn{i}\\
BQ\Cc_n&\longrightarrow&BQ\M^{n+1},
\end{array}
$$
where the horizontal maps are as defined above, the left
vertical arrow is the natural inclusion, and $i$ is induced by the exact functor
$$
(B^1,\ldots,B^{n})\mapsto(B^1,B^1\oplus B^2,\ldots, B^{n-1}\oplus
B^n,B^n).
$$
\end{lemma}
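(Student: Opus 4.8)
The plan is to deduce all three assertions from the repeated use of Quillen's theorem on exact triples (the fact recalled just before the lemma, that $BQ\E'_3 \simeq BQ\M' \times BQ\M''$ for exact subcategories $\M', \M'' \subset \M$), together with an induction on the length $n$. First I would treat the statement about $BQ\Cc_n \to BQ\M^{n+1}$. Filtering a length-$n$ complex by the subcomplex sitting in degrees $0,\dots,n-1$ (with zero in degree $n$), one obtains an exact functor $\Cc_n \to \E'_3$, where $\E'_3$ is the category of exact triples whose sub-object lies in $\Cc_{n-1}$ and whose quotient lies in $\M$ (placed in degree $n$); the point is that \emph{any} map $M^{n-1}\to M^n$ of objects of $\M$ fits in such a triple, so this functor is an equivalence of categories onto $\E'_3$. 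Hence $BQ\Cc_n \simeq BQ\Cc_{n-1}\times BQ\M$, and the induction hypothesis gives $BQ\Cc_n \simeq BQ\M^{n+1}$; unwinding the equivalences shows this composite homotopy equivalence is exactly the one induced by $\{M^\bullet\}\mapsto (M^0,\dots,M^n)$.

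Next I would handle $BQ\E_n \to BQ\M^n$. The key observation is that an exact complex $0\to M^0 \to \dots \to M^n \to 0$ is determined, up to the obvious data, by the chain of short exact sequences $0 \to B^i \to M^i \to B^{i+1}\to 0$ (with $B^0 = B^{n+1}=0$, so $M^0 = B^1$ and $M^n = B^n$). Splitting off the last such triple gives an exact functor $\E_n \to \E'_3$ sending an exact complex to the triple $0 \to (\text{truncation lying in }\E_{n-1}) \to M^\bullet \to (B^n \text{ in degrees } n-1,n) \to 0$; again this realizes $\E_n$ as the category of exact triples with prescribed sub and quotient, so $BQ\E_n \simeq BQ\E_{n-1}\times BQ\M$. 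By induction this yields $BQ\E_n \simeq BQ\M^n$, and tracking the functors shows the equivalence is induced by $\{M^\bullet\}\mapsto (B^1,\dots,B^n)$. Here one must be a little careful that the induction is set up so that the ``$B$-invariants'' of the truncated complex agree with $(B^1,\dots,B^{n-1})$ and that the newly split-off factor contributes $B^n$; this bookkeeping is where I expect most of the routine work to be.

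Finally, for the commutativity of the square up to homotopy, I would compare the two composites $\E_n \to \M^{n+1}$ directly at the level of exact functors. Going down then right sends $M^\bullet$ to $(M^0,\dots,M^n) = (B^1, M^1,\dots,M^{n-1}, B^n)$; going right then down sends it to $(B^1, B^1\oplus B^2, \dots, B^{n-1}\oplus B^n, B^n)$. For each intermediate degree $i$ with $1\le i\le n-1$ the two objects $M^i$ and $B^i\oplus B^{i+1}$ are connected by the short exact sequence $0\to B^i\to M^i\to B^{i+1}\to 0$, and the content of Quillen's exact-triple theorem is precisely that on $BQ\M$ the class of $M^i$ and the class of $B^i\oplus B^{i+1}$ are joined by a canonical homotopy (the homotopy produced by the homotopy inverse between $BQ\E_3$ and $BQ\M\times BQ\M$). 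Assembling these homotopies componentwise — and using that in degrees $0$ and $n$ the two composites literally agree — gives the desired homotopy commutativity of the square. The only genuine subtlety is checking that these componentwise homotopies are compatible as the functor varies, i.e. that they glue to a homotopy of maps of classifying spaces and not merely a pointwise statement; this follows because the homotopy is induced by a natural transformation of exact functors (the one underlying Quillen's homotopy inverse), and $B$ of a natural transformation is a homotopy. I expect this last compatibility check to be the main obstacle, but it is formal once one phrases everything in terms of exact functors and natural transformations between them.
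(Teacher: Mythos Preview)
Your proposal is correct and follows essentially the same approach as the paper: both argue by induction on $n$, using the b\^ete filtration to split $\Cc_n$ as $\Cc_{n-1}\times\M$ and the canonical truncation $\tau_{\le n-1}$ to split $\E_n$ as $\E_{n-1}\times\M$, in each case invoking Quillen's result that $BQ\E'_3\simeq BQ\M'\times BQ\M''$; and both obtain the homotopy commutativity of the square from the short exact sequences $0\to B^i\to M^i\to B^{i+1}\to 0$ via Quillen's additivity (the paper cites Corollary~1, \S3 of \cite{Q}, which is exactly the statement that an exact sequence of exact functors yields the required homotopy, i.e.\ your ``natural transformation'' point).
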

\begin{proof}
We follow the proof of Theorem 1.11.7 from \cite{Tho}.
Nevertheless we do not use the language of $K$-theory
spectra of Waldhausen categories.

The proof is by induction on $n\ge 3$. The case $n=3$ is the result of Quillen
mentioned above. For arbitrary $n\ge 4$, consider the natural inclusions of categories $\E_{n-1}\hookrightarrow \E_n$ and
$\M \hookrightarrow \E_n$
given by the functors
$$
\{0 \to M^0\to\ldots M^{n-1}\to 0\}\mapsto\{0\to M^0\to\ldots\to
M^{n-1}\to 0\to 0\}
$$
and
$$
M \mapsto\{0\to 0\ldots\to 0\to M\to M \to 0\},
$$
respectively. The category $\E_n$ is equivalent to the category of
exact triples in $\E_n$
that start with an object from $\E_{n-1}\hookrightarrow \E_{n}$ and end with an object
in $\E_2=\M\hookrightarrow\E_n$. Indeed, the explicit equivalence is given by
the functor
$$
M^{\bullet}\mapsto \{0\to \tau_{\le (n-1)}(M^{\bullet})\to
M^{\bullet}\to \{0\to B^n\to B^n\to 0\}\},
$$
where $\tau_{\le i}$
is the usual truncation functor associated to the canonical
filtration on complexes. Thus, applying the result of Quillen
modified above, we get that $BQ\E_n$ is homotopy equivalent to
$BQ\E_{n-1}\times BQ\E$. Combining the explicit view of this
homotopy and the inductive hypothesis,
we get the desired result for $\E_n$.

The analogous reasoning leads to the needed result for
$\Cc_n$. In this case we should replace the canonical filtration on complexes
by the ``b\^ete'' filtration and consider the inclusion of categories
$\M \hookrightarrow \Cc_n$ given by the functor
$$
M \mapsto\{0\to \ldots\to 0\to M \to 0\}.
$$

Finally, for any exact complex $M^{\bullet}$ from $\E_n$, we have the exact sequences
$$
0\to B^{i}\to M^i\to B^{i+1}\to 0
$$
for all $0< i< n$. It follows from the proof of Corollary 1, $\S$3, \cite{Q} that
this leads to the needed homotopy equivalence in the diagram from the lemma.
\end{proof}

Let $F$ be the homotopy fiber of the natural map $BQ\E_n\to BQ\Cc_n$ and
put $\Kb\N=\Omega BQ{\mathcal N}$ for any exact category ${\mathcal N}$.

\begin{corol}\label{equivalence-1}
The inclusion of the categories $\M\hookrightarrow \Cc_n$ given by the functor
$M\mapsto \{0\to M\to 0\to\ldots\to 0\}$ induces the map $\Kb\M\to\Kb\Cc_n\to F$ such that the
composition is a homotopy equivalence.
\end{corol}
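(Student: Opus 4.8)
The plan is to deduce Corollary~\ref{equivalence-1} from Lemma~\ref{Thom} by a standard argument with homotopy fibers, using the elementary properties of the homotopy-fiber construction recorded just before Lemma~\ref{Thom}. The key point is that both the inclusion $\M\hookrightarrow\Cc_n$ via $M\mapsto\{0\to M\to 0\to\ldots\to 0\}$ and the projection $\M\hookrightarrow\Cc_n\to BQ\M^{n+1}$ (first coordinate of the equivalence from Lemma~\ref{Thom}) are compatible, so that one may transport the situation along the homotopy equivalences $BQ\E_n\simeq BQ\M^n$ and $BQ\Cc_n\simeq BQ\M^{n+1}$.

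First I would observe that, under the identifications of Lemma~\ref{Thom}, the map $BQ\E_n\to BQ\Cc_n$ becomes (up to homotopy) the map $i:BQ\M^n\to BQ\M^{n+1}$ given by $(B^1,\ldots,B^n)\mapsto(B^1,B^1\oplus B^2,\ldots,B^{n-1}\oplus B^n,B^n)$; this is precisely the content of the commutative square in Lemma~\ref{Thom}. Hence the homotopy fiber $F$ of $BQ\E_n\to BQ\Cc_n$ is homotopy equivalent to the homotopy fiber of $i$. Now $i$ is a split injection on the level of spaces up to homotopy: write $BQ\M^{n+1}=BQ\M\times BQ\M^n$ by remembering the first coordinate $M^0=B^1$ and the remaining $n$ coordinates $(M^1,\ldots,M^n)$; then $i$ is, up to homotopy, the composition $BQ\M^n\xrightarrow{g}BQ\M\times BQ\M^n$ where $g$ lands in the second factor via an automorphism of $BQ\M^n$ (the shear $(B^1,\ldots,B^n)\mapsto(B^1\oplus B^2,\ldots,B^{n-1}\oplus B^n,B^n)$, which is a homotopy equivalence since each $B^i\oplus B^{i+1}$ is built from direct sums and $BQ$ takes direct sums to the $H$-space multiplication) and is constant (at the basepoint) in the first $BQ\M$ factor. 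Therefore the homotopy fiber of $i$ is $\Omega BQ\M=\Kb\M$, the loop space of the omitted first coordinate, and this identification is natural enough to match the stated inclusion $M\mapsto\{0\to M\to 0\to\ldots\to 0\}$, whose image in $BQ\M^{n+1}$ is exactly the first coordinate $M^0$.

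More concretely, I would run the general nonsense from the paragraph preceding Lemma~\ref{Thom}: for the map $f:BQ\E_n\to BQ\Cc_n$ there is a natural map $\Omega BQ\Cc_n\to F(f)$, and the composite $\Omega BQ\E_n\to\Omega BQ\Cc_n\to F(f)$ is canonically nullhomotopic. Applying this with the splitting above, the factor of $\Omega BQ\Cc_n$ coming from the $M^0$-coordinate, namely $\Omega BQ\M=\Kb\M$, is not hit by $\Omega BQ\E_n$ (since $\E_n\to\Cc_n$ followed by $M^\bullet\mapsto M^0=B^1$ is the map $(B^1,\ldots,B^n)\mapsto B^1$, and the composite into $F(f)$ is null by the displayed homotopy $G$), and one checks it maps isomorphically onto $F(f)=F$. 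Chasing through, the inclusion $\M\hookrightarrow\Cc_n$ of the statement induces $\Kb\M\to\Kb\Cc_n\to F$ which is exactly this isomorphism; hence it is a homotopy equivalence.

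The main obstacle is bookkeeping rather than anything deep: one must check that the homotopy in the square of Lemma~\ref{Thom} can be chosen compatibly with the explicit nullhomotopy $G$ of $\Omega X\to\Omega Y\to F(f)$, so that the resulting map $\Kb\M\to F$ really is induced by the naive inclusion $M\mapsto\{0\to M\to 0\to\ldots\to 0\}$ and not merely by some map in the same homotopy class up to an automorphism of $\Kb\M$. This is handled exactly as in the proof of Theorem~1.11.7 of \cite{Tho}: one arranges everything on the level of the $Q$-construction and uses that the shear automorphisms $(B^i)\mapsto(B^i\oplus B^{i+1})$ are realized by exact functors, so all homotopies are induced by exact functors and natural transformations, and the comparison of basepoints and of the $M^0$-coordinate is literal. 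Once this compatibility is in place, the statement follows formally.
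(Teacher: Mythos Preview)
Your overall strategy---transport the map $BQ\E_n\to BQ\Cc_n$ along the equivalences of Lemma~\ref{Thom} and identify the homotopy fiber of the resulting explicit map $i:BQ\M^n\to BQ\M^{n+1}$---is sound, but the key step is mis-executed. You claim that after writing $BQ\M^{n+1}=BQ\M\times BQ\M^n$ via the coordinates $(M^0,(M^1,\ldots,M^n))$, the map $i$ is constant on the first factor. This is false: the $M^0$-component of $i(B^1,\ldots,B^n)$ is $B^1$, i.e.\ the first projection $BQ\M^n\to BQ\M$, which is certainly not nullhomotopic. The same confusion recurs in your second paragraph, where you say the $M^0$-factor is ``not hit by $\Omega BQ\E_n$''; in fact the composite $\E_n\to\Cc_n\to\M$, $M^\bullet\mapsto M^0$, is exactly $(B^1,\ldots,B^n)\mapsto B^1$.

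The argument can be repaired, but it requires a genuine use of the $H$-space inverse on $BQ\M^{n+1}$, not just the obvious product splitting. For instance, if you postcompose $i$ with the automorphism $\mathrm{id}\times s^{-1}$ of $BQ\M\times BQ\M^n$ (where $s$ is your shear), then $i$ becomes $(B^1,\ldots,B^n)\mapsto(B^1,B^1,B^2,\ldots,B^n)$, whose homotopy fiber is visibly that of the diagonal $\Delta:BQ\M\to BQ\M\times BQ\M$, namely $\Omega BQ\M=\Kb\M$; and one can then check that the inclusion $M\mapsto\{0\to M\to 0\to\ldots\}$ induces the standard equivalence. Alternatively, the coordinate on $BQ\M^{n+1}$ that does vanish on the image of $i$ is the alternating sum $\sum(-1)^jM^j$, not $M^0$. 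Either way you need the loop-space inverse to make this precise at the space level.

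The paper takes a different, shorter route: it stays at the level of homotopy groups, uses the long exact sequence of the fibration together with Lemma~\ref{Thom} to identify $\pi_i(F)\cong\pi_{i+1}(BQ\M)$ via the alternating sum of projections, checks directly that the composite $\pi_i(\Kb\M)\to\pi_i(\Kb\Cc_n)\to\pi_i(F)$ is the identity under this isomorphism, and then invokes Whitehead's theorem (noting that $\Kb\M$ has the homotopy type of a CW-complex). This sidesteps all the space-level bookkeeping you flag in your last paragraph.
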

\begin{proof}
Let us compute the induced map on homotopy groups. By Lemma
\ref{Thom}, for each $i\ge 0$, there is a commutative diagram
$$
\begin{array}{ccc}
\pi_i(\Kb\E_n)&\longrightarrow&\pi_{i+1}(BQ\M)^{n}\\
\downarrow&&\downarrow\lefteqn{i_*}\\
\pi_i(\Kb\Cc_n)&\longrightarrow&\pi_{i+1}(BQ\M)^{n+1},\\
\end{array}
$$
where the horizontal arrows are the isomorphisms induced by the maps
described in Lemma \ref{Thom}. Thus there is a canonical isomorphism
$\pi_i(F)\cong \pi_{i+1}(BQ\M)$ given by the alternated sum of
projections $\pi_{i+1}(BQ\M)^{n+1}\to \pi_{i+1}(BQ\M)$. Moreover,
the composition
$\pi_{i+1}(BQ\M)\cong\pi_{i}(\Kb\M)\to\pi_i(\Kb\Cc_n)\to
\pi_i(F)\cong \pi_{i+1}(BQ\M)$ is the identity map. Since by
Milnor's result, $\Kb\M$ has the homotopy type of a CW-complex, we
conclude by the well known theorem of Whitehead.
\end{proof}

By $\M(S)$ denote the abelian category of coherent sheaves on a
scheme $S$. We put $\E_n(S)=\E_n$, $\Cc_n(S)=\Cc_n$, $F(S)=F$,
and $\Kb(S)=\Kb\M$ for $\M=\M(S)$.

\begin{prop}\label{chi}
Let $S$ be a closed subscheme in the
scheme $T$ and let $\Cc_n(T,S)$ be a full subcategory in $\Cc_n(T)$ consisting of
complexes whose cohomology sheaves have support on $S$, i.e.,
complexes whose restriction to $T\backslash S$ is in
$\E_n(T\backslash S)$. Then there exists a well defined up to
homotopy the ``Euler characteristic with support'' map $\chi:\Kb\Cc_n(T,S)\to
\Kb(S)$ with the following properties:
\begin{itemize}
\item[(i)]
the induced homomorphism $\chi_*:K_0(\Cc_n(T,S))\to K'_0(S)$ is
equal to
$$
[\F^{\bullet}]\mapsto \sum_{i=0}^n(-1)^i [H^i(\F^{\bullet})],
$$
where $\F^{\bullet}$ is in $\Cc_n(T,S)$ (here we imply the canonical
isomorphism $K_0'(S)\cong K_0'(\widetilde{S})$, induced by the
closed embeddings $S_{red}\hookrightarrow S$ and
$\widetilde{S}_{red}\hookrightarrow \widetilde{S}$,
 where $\widetilde{S}$ is any closed subscheme in
$T$ such that $S_{red}=\widetilde{S}_{red}$);
\item[(ii)]
$\chi$ commutes with the direct image under closed embeddings; namely consider a closed
subscheme $i:T'\hookrightarrow T$ and put $S'=S\times_{T}T'$. Then the
following diagram of pointed spaces is commutative up to homotopy:
$$
\begin{array}{ccc}
\Kb\Cc_n(T,S)&\stackrel{\chi}\longrightarrow& \Kb(S)\\
\uparrow\lefteqn{i_*}&&\uparrow\lefteqn{i_*}\\
\Kb\Cc_n(T',S')&\stackrel{\chi}\longrightarrow& \Kb(S');
\end{array}
$$
\item[(iii)]
$\chi$ commutes with the restriction to open subsets; namely consider
an open subset $j:U\hookrightarrow T$ and put $V=S\times_T U$. Then the
following diagram of pointed spaces is commutative up to homotopy:
$$
\begin{array}{ccc}
\Kb\Cc_n(T,S)&\stackrel{\chi}\longrightarrow& \Kb(S)\\
\downarrow\lefteqn{j^*}&&\downarrow\lefteqn{j^*}\\
\Kb\Cc_n(U,V)&\stackrel{\chi}\longrightarrow& \Kb(V).
\end{array}
$$
\end{itemize}
\end{prop}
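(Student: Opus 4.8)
The plan is to upgrade the ``total'' Euler characteristic supplied by Corollary \ref{equivalence-1} to one with support, by feeding it through Quillen's localization and d\'evissage theorems for $G$-theory. First, for every (Noetherian) scheme $S$ let $g_S:\Kb\Cc_n(S)\to\Kb(S)$ be the composite of the canonical map $\Kb\Cc_n(S)=\Omega BQ\Cc_n(S)\to F(S)$ into the homotopy fibre of $BQ\E_n(S)\to BQ\Cc_n(S)$ (see the paragraph preceding Lemma \ref{Thom}) with the homotopy inverse of the equivalence $\Kb(S)\xrightarrow{\sim}F(S)$ of Corollary \ref{equivalence-1}. The inclusion $\M(S)\hookrightarrow\Cc_n(S)$, the formation of $\E_n(S)$ and $\Cc_n(S)$, and the formation of homotopy fibres all commute with restriction to open subschemes and with pushforward along closed embeddings, so $g_S$ is natural up to homotopy in $S$ for both kinds of morphisms; moreover the proof of Corollary \ref{equivalence-1} shows that $g_S$ restricted along $\M(S)\hookrightarrow\Cc_n(S)$ is homotopic to the identity, and on $\pi_0$ the map $g_S$ is $[\F^\bullet]\mapsto\sum_i(-1)^i[H^i(\F^\bullet)]$.

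Now fix $S\subset T$, put $U=T\setminus S$ and let $j:U\hookrightarrow T$ be the inclusion. Since restriction carries $\Cc_n(T,S)$ into $\E_n(U)$, naturality of $g$ shows that the composite $\Kb\Cc_n(T,S)\to\Kb\Cc_n(T)\xrightarrow{g_T}\Kb(T)\xrightarrow{j^*}\Kb(U)$ equals $\Kb\Cc_n(T,S)\to\Kb\E_n(U)\to\Kb\Cc_n(U)\xrightarrow{g_U}\Kb(U)$, which is canonically null-homotopic because already $\Kb\E_n(U)\to\Kb\Cc_n(U)\to F(U)$ is canonically null-homotopic via the homotopy $G$ of the paragraph preceding Lemma \ref{Thom}. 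A choice of this null-homotopy produces a canonical lift $\widetilde\chi:\Kb\Cc_n(T,S)\to\mathrm{hofib}\bigl(\Kb(T)\xrightarrow{j^*}\Kb(U)\bigr)$. By Quillen's localization theorem this homotopy fibre is identified with $\Kb\M_S(T)$, where $\M_S(T)\subset\M(T)$ is the Serre subcategory of coherent sheaves supported on $S$, and by Quillen's d\'evissage theorem every inclusion $\M(\widetilde S)\hookrightarrow\M_S(T)$ with $\widetilde S\subset T$ closed and $\widetilde S_{red}=S_{red}$ induces an equivalence $\Kb(S)\xrightarrow{\sim}\Kb\M_S(T)$. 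We define $\chi$ to be $\widetilde\chi$ followed by the inverse of one such equivalence.

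It remains to check (i)--(iii). For (i), filtering a complex from $\Cc_n(T,S)$ by its canonical truncations (which again lie in $\Cc_n(T,S)$, their cohomology being still supported on $S$) together with the relation $[M\text{ in degree }j]=(-1)^j[M\text{ in degree }0]$ in $K_0\Cc_n(T,S)$---valid because the acyclic complex $M\xrightarrow{\mathrm{id}}M$ lies in $\Cc_n(T,S)$ whenever $M$ is supported on $S$---gives $[\F^\bullet]=\sum_i(-1)^i[H^i(\F^\bullet)]$ in $K_0\Cc_n(T,S)$; combining this with the fact that $\chi$ restricted along $\M_S(T)\hookrightarrow\Cc_n(T,S)$ is homotopic to the inverse of the d\'evissage equivalence (itself a consequence of the identity statement in Corollary \ref{equivalence-1} and of $j^*$ annihilating sheaves supported on $S$) yields the formula, with the identification of $K_0'(S)$ prescribed in the statement. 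Properties (ii) and (iii) follow from the naturality, with respect to pushforward along closed embeddings and to restriction to open subschemes, of each of the three ingredients of the construction: the Euler characteristic $g$, the Quillen localization sequence, and the d\'evissage equivalence; for (ii) one uses in addition that $S\times_T T'$ is the scheme-theoretic preimage of $S$, so that the $\mathcal I_S$-adic filtrations underlying d\'evissage are compatible with $i_*$.

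The hard part is not any single ingredient but their coherent assembly: one must pin down the null-homotopy of $j^*\circ g_T|_{\Cc_n(T,S)}$ precisely enough that the resulting lift $\widetilde\chi$ remains natural up to homotopy for both closed pushforwards and open restrictions, and that its effect on $\pi_0$ is exactly the Euler characteristic under the identification $K_0'(S)\cong K_0'(\widetilde S)$. Once this homotopy-fibre bookkeeping is in place, everything else is routine diagram chasing.
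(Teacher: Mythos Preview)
Your proof is correct and follows essentially the same route as the paper: construct a map from $\Kb\Cc_n(T,S)$ into the homotopy fibre of $\Kb(T)\to\Kb(T\backslash S)$ via the canonical null-homotopy coming from the factorization through $\Kb\E_n(T\backslash S)$, then identify that fibre with $\Kb(S)$ using Quillen localization. The only cosmetic difference is that the paper works with the fibres $F(T)$ and $F(T\backslash S)$ of $BQ\E_n\to BQ\Cc_n$ directly and invokes Corollary~\ref{equivalence-1} at the end, whereas you first collapse those fibres to $\Kb(T)$ and $\Kb(U)$ via the map $g$ and then take the homotopy fibre; you are also more explicit than the paper in separating the localization step from the d\'evissage step identifying $\Kb\M_S(T)$ with $\Kb(S)$, which is a small expository improvement.
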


\begin{proof}
The natural map $\Kb\Cc_n(T,S)\to F(T\backslash S)$, defined by
the diagram
$$
\begin{array}{ccc}
\Kb\Cc_n(T,S)&\longrightarrow&\Kb\E_n(T\backslash S)\\
\downarrow&&\downarrow\\
\Kb\Cc_n(T)&\longrightarrow&\Kb\Cc_n(T\backslash S)\\
\downarrow&&\downarrow\\ F(T)&\longrightarrow& F(T\backslash S),
\end{array}
$$
is canonically homotopy trivial. Hence there is a well defined
map
$$
\Kb\Cc_n(T,S)\to F\left\{F(T)\to F(T\backslash S)\right\}.
$$
On the other hand, by Corollary \ref{equivalence-1} and by Quillen's localization
lemma (see \cite{Q}), the diagram
$$
\begin{array}{ccc}
\Kb(T)&\longrightarrow&\Kb(T\backslash S)\\
\downarrow&&\downarrow\\ F(T)&\longrightarrow&F(T\backslash S)
\end{array}
$$
induces a homotopy equivalence $\Kb(S)\to F\left\{F(T)\to
F(T\backslash S)\right\}$. This defines the map $\chi:\Kb\Cc_n(T,S)\to\Kb(S)$
uniquely up to homotopy.

Now we prove $(i)$, i.e., we compute explicitly $\chi_*$ on
$\pi_0$-groups. Consider a point $[\F^{\bullet}]$ in
$\Kb\Cc_n(T,S)$ corresponding to a loop in $BQ\Cc_n(T,S)$ defined
by a complex $\F^{\bullet}$ from $\Cc_n(T,S)$ in the standard way.
There exists a homotopy inside
$BQ\Cc_n(T,S)$ between the loop $[\F^{\bullet}]$ and the sum of
loops
$$
[\tau_{\le (n-1)}\F^{\bullet}]+ [\{0\to\ldots\to B^n\to
B^n\to 0\}]+ [H^n(\F^{\bullet})[-n]].
$$
In addition,
$[H^n(\F^{\bullet})[-n]]$ is homotopic inside $BQ\Cc_n(T,S)$ to
the sum
$$
(-1)^n[H^n(\F^{\bullet})]+\sum_{j=0}^{n-1}(-1)^{j}
[\{0\to H^n(\F^{\bullet})\to H^n(\F^{\bullet})\to 0\}[-j]],
$$
where the short complexes have support in degrees 0 and 1.
Continuing, we show by induction that the initial loop may be
homotoped inside $BQ\Cc_n(T,S)$ to the sum of $\sum(-1)^i
[H^i(\F^{\bullet})]$ and some loops in $BQ\E_n(T)$. The
classes of points in $\Kb\Cc_n(T,S)$, corresponding to loops in $BQ\E_n(T)$,
evidently have zero image under $\chi_*$ on $\pi_0$-groups, and we
are done.

For the proof of $(ii)$ and $(iii)$ one uses that the natural maps
$$
T'\backslash
S'=(T\backslash S)\times_T T'\hookrightarrow T\backslash S
$$
and
$$
U\backslash V=(T\backslash S)\times_T
U\hookrightarrow T\backslash S
$$
are closed embedding and open embedding, respectively. Also, one
uses the fact that if in the commutative diagram of pointed
spaces
$$
\begin{array}{ccc}
X&\longrightarrow&Y\\ \downarrow&&\downarrow\\ Z&\longrightarrow&W
\end{array}
$$
the vertical arrows are homotopy equivalences, then the diagram remains
commutative up to homotopy after
we take the homotopy inverse to the vertical homotopy equivalences.
\end{proof}

\begin{rmk}
A similar way to define the Euler characteristic with support on $K$-groups
$K_*(\Cc_n(T,S))\to K_*(S)$ is to use the $R$-space construction from
\cite{Blo84}, Section 1. Recall that for any exact category $\M$,
the space $R\M$ has the same homotopy type as $BQ\M$ and the $H$-space $R\M$
has a canonical inverse map, not just an inverse up to homotopy.
Thus there is the Euler characteristic map
$R\Cc_n(T,S)\to R\M(T)$ such that its composition with the natural map
$R\M(T)\to R\M(T\backslash S)$ is canonically homotopy trivial.
This gives a well defined map $R\Cc_n(T,S)\to R\M(S)$ up to homotopy.
\end{rmk}

Now consider a complex $\Pc^{\bullet}$ from $\Cc_n(T,S)$ such that $\Pc^{\bullet}$
consists of flat sheaves on $T$. Let the map $*\cdot \Pc^{\bullet}:\Kb(T)\to
\Kb(S)$ be the composition of the map induced by the exact functor
$*\otimes_{\OO_T}\Pc^{\bullet}:\M(T)\to \Cc_n(T,S),\F\mapsto
\F\otimes_{\OO_T}\Pc^{\bullet}$ and the map $\chi:\Kb\Cc_n(T,S)\to \Kb(S)$
from Proposition \ref{chi}. The map $*\cdot \Pc^{\bullet}$ is well defined up to
homotopy. A standard argument shows that the maps $*\cdot \Pc^{\bullet}$ and
$*\cdot (\Pc')^{\bullet}$ are homotopy equivalent for quasiisomorphic complexes
$\Pc^{\bullet}$ and $(\Pc')^{\bullet}$.
By Proposition \ref{chi}$(i)$, for any class $[\F]$ from
$K_0'(T)$ we have $[\F]\cdot\Pc^{\bullet}=\sum(-1)^i
H^i(\F\otimes_{\OO_T}\Pc^{\bullet})\in K_0'(S)$.

\begin{prop}
Let $\Pc^{\bullet}$ be a finite flat resolution of $\OO_S$ on
$T$ (here the complex $\Pc^{\bullet}$ has
support in non-positive terms) and suppose that $T$ admits an ample line bundle
(for example, $T$ is quasi-projective). Then the map $*\cdot \Pc^{\bullet}$ is
homotopic to the map $f^*$, where $f:S\hookrightarrow T$ is the
closed embedding (see \cite{Q}, $\S$7, 2.5).
\end{prop}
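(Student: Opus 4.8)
The plan is to identify the map $*\cdot\Pc^\bullet:\Kb(T)\to\Kb(S)$ with the derived pullback $Lf^*=f^*$ along the closed embedding $f:S\hookrightarrow T$ by tracing both maps through the same localization square. First I would recall the setup of Quillen's $\S 7$, 2.5: when $T$ has an ample line bundle, every coherent sheaf on $T$ has a finite resolution by sheaves in some fixed exact subcategory $\Hc\subset\M(T)$ closed under the relevant operations, and for $\F\in\Hc$ the tensor product $\F\otimes_{\OO_T}\Pc^\bullet$ is already exact off $S$; moreover $f^*\F=\F\otimes_{\OO_T}\OO_S$ has a natural resolution, so that $\F\mapsto \F\otimes_{\OO_T}\Pc^\bullet$ is a model for $Lf^*$ on $\Hc$. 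By the resolution theorem $\Kb(T)\simeq\Kb\Hc$, so it suffices to compare the two maps on $\Kb\Hc$.

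The key step is to compute $*\cdot\Pc^\bullet$ restricted to $\Hc$ via the definition of $\chi$ from Proposition~\ref{chi}. By construction $\chi$ is the composite
$$
\Kb\Cc_n(T,S)\to F\{F(T)\to F(T\backslash S)\}\xleftarrow{\ \sim\ }\Kb(S),
$$
where the right arrow is the homotopy equivalence coming from Quillen's localization sequence for $S\hookrightarrow T$ together with Corollary~\ref{equivalence-1}. So I would show that the composite
$$
\Kb\Hc\xrightarrow{*\otimes\Pc^\bullet}\Kb\Cc_n(T,S)\to F\{F(T)\to F(T\backslash S)\}
$$
agrees, under the equivalence $\Kb(S)\simeq F\{F(T)\to F(T\backslash S)\}$, with $f^*:\Kb\Hc\subset\Kb(T)\to\Kb(S)$. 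This is a diagram chase: the homotopy triviality of $\Kb\Cc_n(T,S)\to F(T\backslash S)$ factors through the fact that for $\F\in\Hc$ the complex $\F\otimes\Pc^\bullet$ restricts to an \emph{exact} complex on $T\backslash S$ (since $\Pc^\bullet|_{T\backslash S}$ is a resolution of $0$), and the resulting lift to $F\{F(T)\to F(T\backslash S)\}$ is exactly the class represented by the total complex, which under localization is the class of $f^*\F$ in $\Kb(S)$. The naturality assertions Proposition~\ref{chi}(ii),(iii) guarantee these identifications are compatible with the homotopy equivalences used to set up $\chi$.

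I expect the main obstacle to be the bookkeeping at the level of spaces rather than $\pi_0$: one must check that the two maps are homotopic, not merely that they induce the same map on $K_0$ (which is immediate from Proposition~\ref{chi}(i), since $[\F]\cdot\Pc^\bullet=\sum(-1)^iH^i(\F\otimes\Pc^\bullet)=[f^*\F]$ in $K_0'(S)$ because $\Pc^\bullet$ is a flat resolution of $\OO_S$). To upgrade to a homotopy equivalence I would invoke the now-standard device used already in the proof of Proposition~\ref{chi}: replace all the relevant spaces by CW-homotopy types (Milnor), reduce to checking that both maps factor through the localization square in the same way, and then apply the fact — stated at the end of the proof of Proposition~\ref{chi} — that a commutative square of pointed spaces with vertical homotopy equivalences stays commutative up to homotopy after inverting those equivalences. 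Combined with Whitehead's theorem this yields the desired homotopy $*\cdot\Pc^\bullet\simeq f^*$. The only genuinely delicate point is choosing $\Hc$ and the flat resolution $\Pc^\bullet$ compatibly so that $\F\otimes\Pc^\bullet$ really lands in $\Cc_n(T,S)$ for all $\F\in\Hc$ and has the correct cohomology; this is exactly where the ampleness hypothesis on $T$ is used, following Quillen.
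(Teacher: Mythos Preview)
Your reduction step is correct and matches the paper: restrict via Quillen's resolution theorem to the full subcategory $\F(T)\subset\M(T)$ of sheaves Tor-independent with $\OO_S$ (this is where ampleness is used). But from there the paper takes a much shorter route than your diagram chase through the localization square. On $\F(T)$ there is an exact sequence of exact functors into $\Cc_n(T,S)$:
\[
0\longrightarrow \tau'_{\le 0}(*\otimes_{\OO_T}\Pc^\bullet)\longrightarrow *\otimes_{\OO_T}\Pc^\bullet\longrightarrow *\otimes_{\OO_T}\OO_S\longrightarrow 0,
\]
where $\tau'_{\le 0}(A^\bullet)=\{\cdots\to A^{-1}\to B^0\to 0\}$. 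Quillen's additivity theorem then gives a homotopy $\Kb(*\otimes\Pc^\bullet)\simeq \Kb(\tau'_{\le 0}(*\otimes\Pc^\bullet))+\Kb(*\otimes\OO_S)$. The first summand factors through $\Kb\E_n(T)$ (the truncated complex is exact precisely because we are on Tor-independent sheaves), hence dies under $\chi$, and the second summand is literally $f^*$. No analysis of the homotopy fiber $F\{F(T)\to F(T\backslash S)\}$ is needed.

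The gap in your proposal is the sentence ``the resulting lift\ldots is exactly the class represented by the total complex, which under localization is the class of $f^*\F$ in $\Kb(S)$.'' This is not a computation; it is a restatement of the proposition in the language of the localization square, and you give no mechanism for why it holds at the level of spaces. Your later appeals to Whitehead and to the square-inversion remark from Proposition~\ref{chi} only tell you that once you have identified two maps into the same homotopy type you may transport the identification across equivalences; they do not produce the identification itself. The missing ingredient is precisely additivity (or an equivalent explicit nullhomotopy coming from the filtration of $\Pc^\bullet$), and once you have it the rest of your apparatus becomes unnecessary.
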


\begin{proof}
By Quillen resolution theorem (see \cite{Q}, $\S$4, Corollary 1),
the space $BQ\M(T)$ is homotopy equivalent to its subspace $BQ\F(T)$,
where $\F(T)$ is the exact
category of coherent sheaves on $T$ that are Tor-independent with $\OO_S$. For
$BQ\F(T)$ we have the exact sequence of functors from $\F(T)$ to $\Cc_n(T,S)$
$$
0\to\tau'_{\le
0}(*\otimes_{\OO_T}\Pc^{\bullet})\to *\otimes_{\OO_T}\Pc^{\bullet}\to
*\otimes_{\OO_T}\OO_S\to 0
$$
where we put
$$
\tau'_{\le 0}(A^{\bullet})=\{\ldots\to A^i\to\ldots\to
A^{-1}\to B^0\to 0\},
$$
for any complex $A^{\bullet}$. It follows from the proof of
Corollary 1, $\S$3, \cite{Q} that the map $\Kb(*\otimes_{\OO_T}\Pc^{\bullet})$
is homotopic to the map
$\Kb(\tau'_{\le 0}(*\otimes_{\OO_T}\Pc^{\bullet}))+\Kb(*\otimes_{\OO_T}\OO_S)$,
where the sum is taken with respect to the natural $H$-structures on
$\Kb$-spaces of exact categories. Moreover, the image of the map
$\Kb(\tau'_{\le 0}(*\otimes_{\OO_T}\Pc^{\bullet}))$ is in the space
$\Kb(\E_n(T))\subset \Kb(\Cc_n(T,S))$ and the map $\Kb(*\otimes_{\OO_T}\OO_S)$ equals $f^*$
on $\Kb(\F(T))$. This concludes the proof.
\end{proof}

\begin{rmk}\label{K-groups_multiplication}
For the elements from $K_*(T)$, the map $*\cdot\Pc^{\bullet}$ is
equal to the composition of the usual restriction to $S$ with
multiplication by $\chi_*([\Pc^{\bullet}])\in K'_0(S)$.
\end{rmk}

\begin{prop}\label{residue_restrict}
Let $i:T'\hookrightarrow T$ be a closed embedding and
put $S'=S\times_T T'$, $j:U=T\backslash T'\hookrightarrow T$,
$V=S\times_T U$. Consider arbitrary elements $x\in K_m(S)$, $y\in
K_n'(U)$, where $m,n\ge 0$, $m+n\ge 1$. Then we have
$$
\nu(x\cdot (y\cdot \Pc^{\bullet}))=(-1)^{m}x\cdot (\nu(y)\cdot
i^*\Pc^{\bullet})\in K'_{m+n-1}(S'),
$$
where $\nu:K'_*(U)\to
K'_{*-1}(T')$ denotes the usual boundary map (the same for $V$
and $S'$).
\end{prop}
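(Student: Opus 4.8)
The plan is to derive the formula by combining two separate compatibilities of the boundary map $\nu$: its compatibility with the operation $*\cdot\Pc^{\bullet}$ coming from Proposition~\ref{chi}, and its compatibility with the module structure over the $K$-theory of the base. For the first, I would use that $\Pc^{\bullet}$ is a bounded complex of flat $\OO_T$-modules, so that the exact functor $\F\mapsto\F\otimes_{\OO_T}\Pc^{\bullet}$ commutes strictly with the open restriction $j^{*}$ (since $j^{*}(\F\otimes_{\OO_T}\Pc^{\bullet})=j^{*}\F\otimes j^{*}\Pc^{\bullet}$) and, by the projection formula, carries the closed push-forward into itself, the natural isomorphism $i_{*}\F'\otimes_{\OO_T}\Pc^{\bullet}\cong i_{*}(\F'\otimes_{\OO_{T'}}i^{*}\Pc^{\bullet})$ holding already at the level of complexes precisely because $\Pc^{\bullet}$ is flat. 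Together with parts $(ii)$ and $(iii)$ of Proposition~\ref{chi}, this shows that the maps $*\cdot\Pc^{\bullet}$, $*\cdot i^{*}\Pc^{\bullet}$, $*\cdot j^{*}\Pc^{\bullet}$ assemble into a map of Quillen localization fibre sequences
$$
\begin{array}{ccccc}
\Kb(T')&\longrightarrow&\Kb(T)&\longrightarrow&\Kb(U)\\
\downarrow&&\downarrow&&\downarrow\\
\Kb(S')&\longrightarrow&\Kb(S)&\longrightarrow&\Kb(V),
\end{array}
$$
all squares commuting up to homotopy. Since $*\cdot\Pc^{\bullet}$ preserves degree, passing to homotopy groups yields
$$
\nu\bigl(z\cdot j^{*}\Pc^{\bullet}\bigr)=\nu(z)\cdot i^{*}\Pc^{\bullet}\qquad(z\in K'_{*}(U)),
$$
which is exactly the special case $x=1$, $m=0$ of the proposition.

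For the second compatibility I would record that the lower row of the diagram is a sequence of modules over $K_{*}(S)$: a locally free sheaf on $S$ acts by tensoring on the $K'$-theory of $S$, of $S'$ and of $V$, and this action commutes with $i_{*}$ and with $j^{*}$ by the same projection-formula argument, applied now with a single vector bundle in place of $\Pc^{\bullet}$. Hence the connecting map $\nu\colon K'_{m+n}(V)\to K'_{m+n-1}(S')$ is $K_{*}(S)$-linear up to the usual sign of the graded Leibniz rule for a boundary operator, so that
$$
\nu(x\cdot w)=(-1)^{m}\,x\cdot\nu(w),\qquad x\in K_{m}(S),\ w\in K'_{*}(V),
$$
where $x$ acts on $K'_{*}(V)$ through the open embedding $V\hookrightarrow S$ and on $K'_{*}(S')$ through the closed embedding $S'\hookrightarrow S$. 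For $m+n=0$ the group $K'_{m+n-1}(S')$ vanishes and there is nothing to prove, which explains the hypothesis $m+n\ge1$.

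Putting $w=y\cdot j^{*}\Pc^{\bullet}\in K'_{n}(V)$ in the last identity and then applying the previous one with $z=y$ gives (writing, as in the statement, $y\cdot\Pc^{\bullet}$ for $y\cdot j^{*}\Pc^{\bullet}$)
$$
\nu\bigl(x\cdot(y\cdot\Pc^{\bullet})\bigr)=(-1)^{m}\,x\cdot\nu\bigl(y\cdot j^{*}\Pc^{\bullet}\bigr)=(-1)^{m}\,x\cdot\bigl(\nu(y)\cdot i^{*}\Pc^{\bullet}\bigr),
$$
which is the assertion. The step that needs the most care is the first one: one must verify that the three maps $*\cdot\Pc^{\bullet}$ genuinely assemble into a map of fibre sequences, i.e.\ that the homotopies implicit in Proposition~\ref{chi} and in the construction of $\chi$ (defined only up to homotopy, via a homotopy fibre) can be chosen compatibly with the localization data. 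The point will be to phrase everything through the \emph{strict} commutation of the functor $-\otimes_{\OO_T}\Pc^{\bullet}$ with $i_{*}$ and $j^{*}$ at the level of exact categories, so that the required homotopy coherence is inherited from the functoriality of the $BQ$-construction and of Quillen's localization sequence. The only other delicate point is the sign bookkeeping in the second step.
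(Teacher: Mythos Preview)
Your argument is correct and follows the same underlying idea as the paper's proof: both rest on the fact that the maps $*\cdot\Pc^{\bullet}$ are compatible with the push-forward along $i$ and the pull-back along $j$ (Proposition~\ref{chi}$(ii)$,$(iii)$), hence carry the Quillen localization sequence for $(T,T',U)$ to that for $(S,S',V)$, and the boundary map then inherits the claimed identity with the sign $(-1)^m$.

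The difference is organizational. You split the verification into two separate compatibilities---first the $m=0$ case $\nu(y\cdot j^*\Pc^{\bullet})=\nu(y)\cdot i^*\Pc^{\bullet}$, then the graded $K_*(S)$-linearity $\nu(x\cdot w)=(-1)^m x\cdot\nu(w)$---and combine them at the end. The paper instead packages the pairing $(x,y)\mapsto x\cdot(y\cdot\Pc^{\bullet})$ as a single map out of the smash product $\Kb(\Pc(S))\wedge\Kb(-)$, checks that the resulting diagram
$$
\begin{array}{ccc}
\Kb(\Pc(S))\wedge\Kb(T')&\longrightarrow&\Kb(S')\\
\downarrow&&\downarrow\\
\Kb(\Pc(S))\wedge\Kb(T)&\longrightarrow&\Kb(S)\\
\downarrow&&\downarrow\\
\Kb(\Pc(S))\wedge\Kb(U)&\longrightarrow&\Kb(V)
\end{array}
$$
commutes up to homotopy, and then invokes Grayson's Theorem~2.5 from \cite{Gra}, which is precisely a general statement about how connecting maps interact with such pairings and already contains the sign. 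Your approach makes the two ingredients (the $\Pc^{\bullet}$-compatibility and the module-linearity of $\nu$) more visible and is self-contained modulo the graded Leibniz rule; the paper's is shorter because Grayson's theorem absorbs both the sign bookkeeping and the homotopy-coherence issue you flag as delicate. Either way the content is the same.
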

\begin{proof}
By Proposition \ref{chi}$(ii)$,$(iii)$, both squares in the following diagram of spaces are commutative up
to homotopy:
$$
\begin{array}{ccc}
\Kb(\Pc(S))\wedge \Kb(T')&\stackrel{*\cdot \Pc_{\bullet}\otimes
*}\longrightarrow& \Kb(S')\\
\downarrow\lefteqn{i_*\times {\rm id}}&&\downarrow\lefteqn{i_*}\\
\Kb(\Pc(S))\wedge \Kb(T)
&\stackrel{*\cdot \Pc_{\bullet}\otimes
*}\longrightarrow& \Kb(S)\\ \downarrow\lefteqn{j^*\times {\rm
id}}&&\downarrow\lefteqn{j^*}\\
\Kb(\Pc(S))\wedge\Kb(U)
&\stackrel{*\cdot \Pc_{\bullet}\otimes
*}\longrightarrow& \Kb(V),
\end{array}
$$
where $\Pc(S)$ denotes the exact category of locally free
coherent sheaves on $S$ and ``$\wedge$'' denotes the wedge
product of pointed topological spaces. Thus rest is a direct application
of Theorem 2.5 from \cite{Gra}.
\end{proof}

Let us mention the following simple fact.

\begin{lemma}\label{reduction}
Let $S\hookrightarrow T$ be a closed embedding, $S_{red}$ be the
reduced scheme, $j:S_{red}\to S$ be a natural embedding. Then
$j_*\circ\nu_{red}=\nu$, where $\nu:K'_*(T\backslash S)\to
K'_{*-1}(S)$, $\nu_{red}:K'_*(T\backslash S)\to
K'_{*-1}(S_{red})$ are the boundary maps.
\end{lemma}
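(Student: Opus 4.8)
The plan is to realize both boundary maps through Quillen's localization theorem for $K'$-theory ($=G$-theory) together with dévissage, so that the identity $j_*\circ\nu_{red}=\nu$ becomes a formal consequence of functoriality of pushforward along closed embeddings. First I would observe that the open subschemes $T\backslash S$ and $T\backslash S_{red}$ of $T$ coincide, since $S$ and $S_{red}$ have the same underlying closed set; write $U$ for this open subscheme, $i:S\hookrightarrow T$ and $i':S_{red}\hookrightarrow T$ for the two closed embeddings, so that $i'=i\circ j$.

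Next I would recall Quillen's localization sequence in the following form. Let $\M_{|S|}(T)\subset\M(T)$ be the Serre subcategory of coherent sheaves whose support is contained in the closed set $|S|=|S_{red}|$. There is a homotopy fibration $BQ\M_{|S|}(T)\to BQ\M(T)\to BQ\M(U)$, hence a long exact sequence
$$
\cdots\to K'_n(\M_{|S|}(T))\to K'_n(T)\to K'_n(U)\stackrel{\partial}\longrightarrow K'_{n-1}(\M_{|S|}(T))\to\cdots,
$$
whose connecting homomorphism $\partial$ depends only on $T$ and $U$, not on any scheme structure on the closed complement. On the other hand, by Quillen's dévissage theorem the pushforward functors $i_*:\M(S)\to\M_{|S|}(T)$ and $i'_*:\M(S_{red})\to\M_{|S|}(T)$ both induce isomorphisms on $K'$-groups; denote them $\rho:K'_*(S)\xrightarrow{\sim}K'_*(\M_{|S|}(T))$ and $\rho_{red}:K'_*(S_{red})\xrightarrow{\sim}K'_*(\M_{|S|}(T))$. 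By construction of the boundary maps in $K'$-theory one has $\nu=\rho^{-1}\circ\partial$ and $\nu_{red}=\rho_{red}^{-1}\circ\partial$.

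Finally, since pushforward along closed embeddings is functorial and $i'=i\circ j$, the triangle of induced maps commutes: $\rho\circ j_*=\rho_{red}$. Therefore
$$
j_*\circ\nu_{red}=j_*\circ\rho_{red}^{-1}\circ\partial=(\rho^{-1}\circ\rho\circ j_*)\circ\rho_{red}^{-1}\circ\partial=\rho^{-1}\circ\partial=\nu,
$$
which is the claim. The only point requiring any care is the compatibility $\rho\circ j_*=\rho_{red}$ of the two dévissage isomorphisms with $j_*$, and this is immediate from the functoriality of $i_*$ for the composable closed embeddings $S_{red}\hookrightarrow S\hookrightarrow T$; I do not anticipate any genuine obstacle here, the lemma being essentially a bookkeeping statement about the localization sequence.
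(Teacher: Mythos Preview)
Your proposal is correct and is essentially the same argument as the paper's: both rest on the fact that the localization fibration for $K'$-theory depends only on the underlying closed set, together with functoriality of pushforward along $S_{red}\hookrightarrow S\hookrightarrow T$. The paper presents this by writing down the commutative ladder of spaces $BQ\M(S_{red})\to BQ\M(T)\to BQ\M(T\backslash S_{red})$ and $BQ\M(S)\to BQ\M(T)\to BQ\M(T\backslash S)$ linked by $j_*$ and identities, and then passing to long exact homotopy sequences; your version unpacks the same content by naming the intermediate Serre subcategory $\M_{|S|}(T)$ and invoking d\'evissage explicitly, which is exactly what makes the vertical sequences in the paper's diagram ``fibrations up to homotopy.''
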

\begin{proof}
This follows immediately from the commutativity of the diagram of
CW-spaces
$$
\begin{array}{ccc}
BQ\M(S_{red})&\stackrel{j_*}\longrightarrow&BQ\M(S)\\
\downarrow&&\downarrow\\
BQ\M(T)&\stackrel{=}\longrightarrow&BQ\M(T)\\
\downarrow&&\downarrow\\ BQ\M(T\backslash
S_{red})&\stackrel{=}\longrightarrow&BQ\M(T\backslash S)
\end{array}
$$
after we pass to the long homotopy sequences, associated to the
vertical sequences, which are fibrations up to homotopy.
\end{proof}

Lemma \ref{reduction} implies the following statement.

\begin{corol}\label{reduced_residue_restrict}
Proposition \ref{residue_restrict} remains true after we change
the schematic intersection $S'=S\times_T T'$ by its reduced part $S'_{red}$.
\end{corol}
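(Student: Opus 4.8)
The final statement to prove is Corollary \ref{reduced_residue_restrict}: Proposition \ref{residue_restrict} remains valid after replacing the schematic intersection $S'=S\times_T T'$ by its reduced part $S'_{red}$.

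The plan is to deduce this immediately from Proposition \ref{residue_restrict} together with Lemma \ref{reduction}. First I would record the statement of Proposition \ref{residue_restrict} in the relevant shape: for $x\in K_m(S)$ and $y\in K'_n(U)$ with $m,n\ge 0$, $m+n\ge 1$, one has
$$
\nu(x\cdot (y\cdot \Pc^{\bullet}))=(-1)^m\, x\cdot(\nu(y)\cdot i^*\Pc^{\bullet})\in K'_{m+n-1}(S'),
$$
where the boundary map on the left is $\nu\colon K'_*(V)\to K'_{*-1}(S')$ and the one on the right is $\nu\colon K'_*(U)\to K'_{*-1}(T')$. The goal is the corresponding identity in $K'_{m+n-1}(S'_{red})$ with $\nu$ replaced throughout by $\nu_{red}$ (for $V\hookrightarrow S$ and for $S'_{red}\hookrightarrow S'$ via the embeddings of reduced subschemes).

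Next I would apply Lemma \ref{reduction} with the closed embedding $V\hookrightarrow S$ playing the role of $S\hookrightarrow T$ there: letting $j\colon S'_{red}\to S'$ be the natural embedding, we get $j_*\circ \nu_{red}=\nu$ as maps $K'_*(V)\to K'_{*-1}(S')$. (Note that $S'$ is the schematic fibre $S\times_T T'$ and $V=S\times_S U$ in the ambient $S$, so that Lemma \ref{reduction} applies verbatim.) Therefore, applying $j_*$ to the asserted reduced identity, it is equivalent to the identity obtained from Proposition \ref{residue_restrict} provided one knows that $j_*$ is injective on the relevant group — which one does \emph{not} need, since instead one argues in the other direction: starting from the already-proven non-reduced identity $\nu(x\cdot(y\cdot\Pc^{\bullet}))=(-1)^m x\cdot(\nu(y)\cdot i^*\Pc^{\bullet})$, one rewrites the left-hand side as $j_*(\nu_{red}(x\cdot(y\cdot\Pc^{\bullet})))$ using Lemma \ref{reduction}, and one must lift the right-hand side through $j_*$ as well. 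For the right-hand side note that $\nu(y)\in K'_{n-1}(T')$ and that $\nu(y)\cdot i^*\Pc^{\bullet}$ naturally lands in $K'_*$ of the support of the cohomology of $i^*\Pc^{\bullet}$, which is contained in $S'_{red}$; hence $x\cdot(\nu(y)\cdot i^*\Pc^{\bullet})$ canonically comes from $K'_{m+n-1}(S'_{red})$ and its image under $j_*$ is the right-hand side. Since $j_*$ is injective — more precisely, $j_*\colon K'_*(S'_{red})\to K'_*(S')$ is an isomorphism, being induced by the nilpotent thickening $S'_{red}\hookrightarrow S'$ (the standard dévissage/nilpotent invariance of $K'$-theory, cf. \cite{Q}, \S7) — cancelling $j_*$ yields exactly the reduced identity.

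The only mild subtlety, and the step I would be most careful about, is checking that the product $\nu(y)\cdot i^*\Pc^{\bullet}$ (and then its product with $x$) is already defined at the level of $S'_{red}$ rather than merely at the level of $S'$, so that the cancellation of the isomorphism $j_*$ is legitimate; this is where the setup of the Euler-characteristic-with-support map $\chi$ from Proposition \ref{chi} is used, namely that $\chi$ takes values in $\Kb$ of the scheme-theoretic support and that $\chi$ commutes with the reduction map by Proposition \ref{chi}$(ii)$ applied to the closed embedding $S'_{red}\hookrightarrow S'$. Once this compatibility is in hand the corollary is a formal consequence, with no further computation required.
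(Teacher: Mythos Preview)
Your approach is correct and is essentially the paper's: the paper's entire proof is the single sentence ``Lemma \ref{reduction} implies the following statement'', and you have spelled out what that means --- use $j_*\circ\nu_{red}=\nu$ together with the d\'evissage isomorphism $j_*\colon K'_*(S'_{red})\xrightarrow{\sim}K'_*(S')$ to transport the identity of Proposition~\ref{residue_restrict} from $S'$ to $S'_{red}$.

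One small imprecision: the compatibility you invoke at the end is not literally Proposition~\ref{chi}$(ii)$, which concerns a closed embedding $T'\hookrightarrow T$ of ambient schemes, not a change of the support scheme $S'$ to $S'_{red}$ inside a fixed $T'$. What you actually need is that the two Euler-characteristic maps $\chi\colon\Kb\Cc_n(T',S')\to\Kb(S')$ and $\chi\colon\Kb\Cc_n(T',S'_{red})\to\Kb(S'_{red})$ agree after composing the latter with $j_*$; this is immediate from the construction of $\chi$ in the proof of Proposition~\ref{chi}, since both maps factor through the same homotopy fiber $F\{F(T')\to F(T'\setminus S')\}$. Alternatively, since $j_*$ is an isomorphism you can simply \emph{define} the reduced right-hand side as $j_*^{-1}$ of the original one, and then there is nothing to check.
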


\begin{examp}
Let $k$ be a field, $T$ be the local scheme $(\Ab_k^2)_{(0,0)}$ with coordinates
$(x,y)$, $T'=\{xy=0\}$, $S=\{x+y=0\}$,
$\Pc^{\bullet}=\{\OO_T\stackrel{x+y}\longrightarrow \OO_T\}$. Let
$f(x),g(y)$ be rational functions on the corresponding
irreducible components of $T'$ such that $f(x)$ and $g(y)$
have the opposite valuations at the origin.
These functions naturally define an element $\alpha\in K'_1(T')$.
We have $\alpha\cdot
i^*\Pc^{\bullet}=a/b\in K_1(k)$, where $a$ and $b$ are the main
parts of $f(x)$ and $g(y)$ in $x$ and $y$, respectively.
\end{examp}

\subsection{Formula for product in $K$-cohomology}\label{product_cocycles_section}

Let $X$ be an irreducible smooth quasiprojective variety over an infinite perfect field,
$Y$, $Z$ be two equidimensional subvarieties in $X$ of codimensions $p$ and $q$,
respectively. Consider two cocycles
$\{f_y\}\in\bigoplus\limits_{y\in Y^{(0)}}K_m(k(y))$ and
$\{g_z\}\in\bigoplus\limits_{z\in Z^{(0)}}K_n(k(z))$
in the Gersten complexes $\G(X,p+m)^p$ and $\G(X,q+n)^q$, respectively.

Suppose that the subvarieties $Y$ and $Z$ intersect properly.
In addition, suppose that for any
irreducible component $w$ of the intersection $W=Y\cap Z$, the
collections $\{f_y\}_w$ and $\{g_z\}_w$ are represented by some
elements $\alpha_w$ and $\beta_w$ from the groups $K_m(Y_w)$ and $K'_n(Z_w)$,
respectively (recall that the index $w$ means the restriction of
a collection to $X_w=\Spec(\OO_{X,w})$).

By Remark~\ref{global_patching_systems} and
Remark~\ref{rmk-irreducible-patchingsyst}, there are patching
systems $\{Y^{1,2}_r\}$, $1\le r\le p-1$ and $\{Z^{1,2}_s\}$, $1\le
s\le q-1$ on $X$ for subvarieties $Y$ and $Z$, respectively, with
the freedom degree at least zero such that the following conditions
are satisfied:
\begin{itemize}
\item[(i)]
for any $s,1\le s\le q-1$, no irreducible component in $Y\cap Z^1_s$
is contained in $Z^2_s$;
\item[(ii)]
each irreducible component in $Y^{1,2}_{p-1}$ contains some
irreducible component in $Y$, each irreducible component in
$Y^{1,2}_r$, $2\le r\le p-2$, contains some irreducible component in
$Y^1_{r+1}\cup Y^2_{r+1}$, and the analogous is true for the
patching system $Z^{1,2}_{s}$, $1\le s\le q-1$; in particular, the
subvarieties $Y^{1,2}_r$ and $Y$ meet the subvarieties $Z^{1,2}_s$
and $Z$ properly.
\end{itemize}

Let $f\in \A(X,\K^X_{p+m})^p$ and $g\in \A(X,\K^X_{q+n})^q$ be good
cocycles for the collections $\{f_y\}$ and $\{g_z\}$ with respect to
the patching systems $\{Y^{1,2}_r\}$, $1\le r\le p-1$ and
$\{Z^{1,2}_s\}$, $1\le s\le q-1$, respectively, such that
$\{Y^{1,2}_r\}$ and $\{Z^{1,2}_s\}$ satisfy two conditions from
above (see Section \ref{explicit-classes}).

\begin{theor}\label{intersecting_cycles}
Let $\Pc^{\bullet}\to \OO_Y$ be a finite flat resolution of
$\OO_Y$ on $X$. Under the above assumptions we have:
$$
\nu_{X}(f\cdot g)=(-1)^{(p+m)q}\{\overline{\alpha}_w\cdot (\beta_w\cdot
i^*_Z\Pc^{\bullet})\}\in \mbox{$\bigoplus\limits_{w\in W^{(0)}}K_{m+n}(k(w))$},
$$
where $i_Z:Z\hookrightarrow X$ is the closed embedding (considered
locally around $w$ for each summand) and the bar over $\alpha_w$
denotes the image under the natural homomorphisms $K_m(Y_w)\to
K_m(k(w))$ for each point $w$.
\end{theor}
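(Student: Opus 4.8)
```latex
\begin{proof}[Proof proposal for Theorem \ref{intersecting_cycles}]
The plan is to compute the residue $\nu_X(f\cdot g)$ component by component at each irreducible point $w\in W^{(0)}$, by unwinding the explicit formula for the product on the adelic complex together with the explicit shape of good cocycles provided by Proposition \ref{adelic-class} and Claim \ref{integrality2}. First I would localize: since both sides are collections over $W^{(0)}$ and each summand is determined by the restriction to $X_w=\Spec(\OO_{X,w})$, it suffices to fix an irreducible component $w$ of $W$ and work on the local scheme $X_w$, where $Y_w$ and $Z_w$ are the relevant local pieces. On $X_w$ the point $w$ is closed, $\codim(w)=p+q$, and I must evaluate $\theta(\nu_X(f\cdot g))_{\eta_0\ldots\eta_{p+q}}$ for flags ending at $w$, using the formula $(f\cdot g)_{\eta_0\ldots\eta_{p+q}}=f_{\eta_0\ldots\eta_p}\otimes g_{\eta_p\ldots\eta_{p+q}}$ from Corollary \ref{Properties-S}(ii) combined with the sign-twisted residue $\nu_{p+q}$ from Example \ref{resexamp} and the definition of $\nu_X$.

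The key technical input is the description of the components of the good cocycles $f$ and $g$. By Proposition \ref{adelic-class}(i),(ii) and Claim \ref{integrality2}, for a flag $\eta_0\ldots\eta_p$ in the support of $f$ with $\eta_r$ a generic point of a component of $Y^1_r$ (and $\eta_r\notin Y^2_r$, $\eta_p$ a generic point of a component of $Y$), the component $f_{\eta_0\ldots\eta_p}$ equals $(-1)^{p(p+1)/2}\widetilde{f}_{\eta_p}$ where $\widetilde{f}_{\eta_p}\in F_{p+m}(k(X))$ satisfies $d_{\eta_p}\nu_{XY^1_1\ldots Y^1_{p-1}}(\widetilde{f}_{\eta_p})=\{f_y\}_{\eta_p}$, and moreover $\widetilde{f}_{\eta_p}$ lifts to an element $\alpha_0\in F_{p+m}(X_{\eta_p}\backslash(Y^1_1\cup Y^2_1))$; analogously for $g$ with the patching system $\{Z^{1,2}_s\}$. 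Because the two patching systems were chosen so that $Y^{1,2}_r$ meets $Z^{1,2}_s$ and $Z$ properly (condition (ii) above), a flag $\eta_0\ldots\eta_{p+q}$ ending at $w$ that contributes a nonzero term to both $f_{\eta_0\ldots\eta_p}$ and $g_{\eta_p\ldots\eta_{p+q}}$ must pass through the generic points of components of $Y^1_1,\ldots,Y^1_{p-1},Y$ and then through those of $Z^1_1,\ldots,Z^1_{q-1},Z$ relative to $\eta_p$; the proper-intersection and the ``not contained in $Z^2_s$'' conditions (i) guarantee that for each such flag the contributions organize into a single composition of residue maps computing a product $\widetilde{f}_{\eta_p}\cdot (\text{lift of }\beta)$ that can be evaluated by the Euler-characteristic-with-support machinery of Section \ref{Euler_charact}.

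The heart of the computation, and the main obstacle, is to identify $\sum_{\text{flags}}\nu_{\eta_0\ldots\eta_{p+q}}(f_{\eta_0\ldots\eta_p}\otimes g_{\eta_p\ldots\eta_{p+q}})$ at $w$ with $(-1)^{(p+m)q}\,\overline{\alpha}_w\cdot(\beta_w\cdot i_Z^*\Pc^{\bullet})$. For this I would proceed in stages along the flag: first resolve the ``$Z$-part'' $\eta_p\ldots\eta_{p+q}$ of $g$ down to an element of $K'_n(Z_w)$ representing $\beta_w$, using the local exactness of the Gersten complex (Proposition \ref{short-exact-sequence}, Corollary \ref{K-integrality}) and the fact that $g$ is a good cocycle so its restriction off $Z$ is again good; then apply the boundary-map formula for the product with $\Pc^{\bullet}$ from Proposition \ref{residue_restrict} and Corollary \ref{reduced_residue_restrict} repeatedly, each application converting one residue step into multiplication by $i_Z^*\Pc^{\bullet}$ up to the sign $(-1)^{(\deg)}$ dictated by that proposition; finally contract the ``$Y$-part'' $\eta_0\ldots\eta_p$ of $f$, which by Claim \ref{integrality2} is exactly engineered to produce $\{f_y\}$ after applying $\nu_{XY^1_1\ldots Y^1_{p-1}}$, hence evaluates to $\overline{\alpha}_w$ under $K_m(Y_w)\to K_m(k(w))$. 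The delicate part throughout is the bookkeeping of signs: the sign $(-1)^{p(p+1)/2}$ in the good cocycle, the sign $(-1)^{(p+q)(p+q+1)/2}$ in $\nu_{p+q}$ inside $\nu_X$, the Koszul sign in $f\otimes g$, and the sign $(-1)^m$ from each application of Proposition \ref{residue_restrict} must be assembled and shown to collapse to $(-1)^{(p+m)q}$; this is a direct but lengthy verification which I would carry out by first checking the case $p=q=1$ (the divisor-times-divisor formula recalled in Section \ref{section-K-generalities}, matching \cite{Par}, \cite{Lom}) and then arguing the general sign by induction on $p$ and $q$, reducing the passage from $(p,q)$ to $(p-1,q)$ or $(p,q-1)$ to one residue step via the good-cocycle structure and Lemma \ref{suitable_cycles}.
\end{proof}
```
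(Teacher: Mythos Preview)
Your proposal identifies the same ingredients as the paper's proof --- the support conditions from Proposition~\ref{adelic-class}(i),(ii), the integrality data of Claim~\ref{integrality2}, and the boundary formula of Proposition~\ref{residue_restrict} and Corollary~\ref{reduced_residue_restrict} --- and the overall strategy of localizing at $w$ and tracking the flag product is correct. However, your order of operations is reversed, and this matters. The element $(f\cdot g)_{\eta_0\ldots\eta_{p+q}}=f_{\eta_0\ldots\eta_p}\cdot g_{\eta_p\ldots\eta_{p+q}}$ lives in $K_{p+m+q+n}(k(X))$, and $\nu_{0\ldots p+q}$ is the composition $\nu_{\eta_{p+q-1}\eta_{p+q}}\circ\cdots\circ\nu_{\eta_0\eta_1}$; you cannot ``resolve the $Z$-part first'' and then ``contract the $Y$-part'' afterwards. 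The paper proceeds in the natural order: first the residues along $\eta_0,\ldots,\eta_p$ (the $Y$-part) are computed using Proposition~\ref{residue_restrict} with the \emph{trivial} complex $\Pc^{\bullet}=\OO_S$, yielding $(-1)^{p(p+1)/2}f_{\eta_p}\cdot i_{\eta_p}^*\widetilde{g}_{\eta_{p+q}}\in K_{m+q+n}(k(\eta_p))$; this is then recognized as the restriction to $k(\eta_p)$ of $\alpha_{\eta_{p+q}}\cdot i_Y^*\widetilde{g}_{\eta_{p+q}}\in K_{m+q+n}(Y_{\eta_{p+q}}\setminus(Z^1_1\cup Z^2_1))$. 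Only at this second stage, taking residues along the $Z$-part on $Y$, does $\Pc^{\bullet}$ enter, via the commuting squares built from Proposition~\ref{chi} and Corollary~\ref{reduced_residue_restrict} with $*\cdot i_s^*\Pc^{\bullet}$.

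Your sign plan is also more work than needed. There is no induction on $p,q$ via Lemma~\ref{suitable_cycles}: the paper simply collects the four signs $(-1)^{p(p+1)/2}$, $(-1)^{q(q+1)/2}$, $(-1)^{mq}$ (from the $q$ applications of Proposition~\ref{residue_restrict} on the $Z$-part) and $(-1)^{(p+q)(p+q+1)/2}$ from the definition of $\nu_X$, and verifies directly that their product is $(-1)^{(p+m)q}$.
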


\begin{proof}
Let $\eta_0\ldots\eta_{p+q}$ be a flag of type $(0\ldots p+q)$ on $X$. By
condition $(i)$ from Proposition \ref{adelic-class}, we have
$f_{\eta_0\ldots\eta_p}\cdot g_{\eta_{p}\ldots\eta_{p+q}}=0$ unless
$\eta_r$ is the generic point of an irreducible component of $Y^1_r$ for all
$r,1\le r\le p-1$, $\eta_{p}$ is the generic point of an irreducible component of $Y$,
$\eta_{p+s}$ is the generic point of an irreducible component of the intersection
$Y\cap Z^1_s$ for all
$s,1\le s\le q-1$, and $\eta_{p+q}$ is the generic point of an irreducible component
of the intersection $Y\cap Z$.

Suppose that the flag $\eta_0\ldots\eta_{p+q}$ enjoys this property.
Combining the assumption on the patching systems $\{Y_r^{1,2}\}$,
$1\le r\le p-1$ and $\{Z^{1,2}_{s}\}$, $1\le s\le q-1$, condition
$(ii)$ from Proposition \ref{adelic-class}, and
Claim~\ref{integrality2}, we see that
$$
f_{\eta_0\ldots\eta_p}=\widetilde{f}_{\eta_p}\in
K_{p+m}(X_{\eta_p}\backslash (Y^1_1\cup Y^2_1)),\,
g_{\eta_p\ldots\eta_{p+q}}=\widetilde{g}_{\eta_{p+q}}\in
K_{q+n}(X_{\eta_{p+q}}\backslash (Z_1^1\cup Z_1^2)),
$$
where
$$
d_{\eta_p}\nu_{XY^1_1\ldots Y^1_{p-1}}(\widetilde{f}_{\eta_p})=
(-1)^{\frac{p(p+1)}{2}}f_{\eta_p}= \{f_y\}_{\eta_p},
$$
$$
d_{\eta_{p+q}}\nu_{XZ^1_1\ldots
Z^1_{q-1}}(\widetilde{g}_{\eta_{p+q}})=
(-1)^{\frac{q(q+1)}{2}}\{g_z\}_{\eta_{p+q}}.
$$
Henceforth,
$\nu_{0\ldots p}(f\cdot g)_{\eta_p\ldots\eta_{p+q}}=
\nu_{XY^1_1\ldots
Y^1_{p-1}\eta_p}(\widetilde{f}_{\eta_p}\cdot\widetilde{g}_{\eta_{p+q}})$.

%Thus we see that the needed collection $\nu_{p+q}(f\cdot g)$ is equal to
%$$
%\{\sum_{\eta_{p+q}}
%\nu_{XY^1_1\ldots Y^1_{p-1}\overline{\eta}_{p},\overline{\eta}_{p}\ldots\cap Z^1_1,
%\overline{\eta}_{p}\cap Z^1_{q-1},\overline{\eta}_{p+q}}
%(\widetilde{f}_{\eta_p}\cdot\widetilde{g}_{\eta_{p+q}})\}
%\in\mbox{$\bigoplus\limits_{\eta_{p+q}}K_{m+n}(k(\eta_{p+q}))$}.
%$$

We claim that the residue $\nu_{XY^1_1\ldots
Y^1_{p-1}\eta_p}(\widetilde{f}_{\eta_p}\cdot\widetilde{g}_{\eta_{p+q}})$
is equal to the product
\mbox{$(-1)^{\frac{p(p+1)}{2}}f_{\eta_p}\cdot
i_{\eta_p}^*\widetilde{g}_{\eta_{p+q}}\in K_{m+q+n}(k(\eta_p))$},
where $i_{\eta_p}:\Spec (k(\eta_p))\to X$ is the natural morphism
(notice that $\eta_p$ does not belong to $Z^1_1\cup Z^2_1$). This
can be shown using Proposition \ref{residue_restrict} first with
$S=T=X_{\eta_p}$, $\Pc^{\bullet}=\OO_S$, $T'=(Y_1^1\cup
Y^2_1)_{\eta_p}$ and then, inductively, with $S=T=(Y^1_s)_{\eta_p}$,
$\Pc^{\bullet}=\OO_S$, $T'=(Y_{s+1}^1\cup Y^2_{s+1})_{\eta_p}$ for
$1\le s\le p-1$ (more precisely, for the induction step we use that
the cocycles $f$ and $g$ satisfy the conditions from
Claim~\ref{integrality2}).

In addition, the product
$f_{\eta_p}\cdot i_{\eta_p}^*\widetilde{g}_{\eta_{p+q}}\in K_{m+q+n}(k(\eta_p))$
equals the restriction to $\Spec (k(\eta_p))$ of the product
$$
\alpha_{\eta_{p+q}}\cdot i_{Y}^*\widetilde{g}_{\eta_{p+q}}\in
K_{m+q+n}(Y_{\eta_{p+q}}\backslash (Z_1^1\cup Z_1^2)),
$$
where $i_Y:Y\hookrightarrow X$ is the closed embedding. Consequently,
we have $\nu_{p+q}(f\cdot g)_{w}=(-1)^{\frac{p(p+1)}{2}}
\nu_{Y,Y\cap Z^1_1,\ldots,Y\cap Z^1_{q-1},w}
(\alpha_{w}\cdot i_Y^*\widetilde{g}_w)$ if $w$ is
the generic point of an irreducible
component of the intersection $W=Y\cap Z$. Otherwise,
$\nu_{p+q}(f\cdot g)_{w}=0$.

Let $w$ be the generic point of an irreducible component of $W$. In
what follows we consider all schemes on $X$ locally around the
schematic point $w$, i.e., we consider their restrictions to
$X_w=\Spec(\OO_{X,w})$, though we denote them by the same letter.
Put $Z_0^1=X$, $Z_q^1=Z$ and by $i_s:Z^1_s\hookrightarrow X$ denote
the natural closed embedding for each $s,0\le s\le q$. By
Proposition \ref{chi} and Corollary \ref{reduced_residue_restrict},
the following diagram commutes up to sign $(-1)^m$ for all $s,0\le
s\le q-1$:
$$
\begin{array}{ccc}
K_{*+m}'(Y\cap(Z^1_s\backslash (Z^1_{s+1}\cup
Z^2_{s+1})))&\longleftarrow& K'_*(Z^1_s\backslash (Z^1_{s+1}\cup
Z^2_{s+1}))\\
\downarrow&&\downarrow\\
K_{*+m-1}'(Y\cap
(Z^1_{s+1}\backslash Z^2_{s+1}))&\longleftarrow&
K_{*-1}'(Z^1_{s+1}\backslash Z^2_{s+1})),
\end{array}
$$
where the vertical arrows are the compositions of the boundary maps with
the restrictions to open subsets
and the horizontal arrows are the compositions
of the map $*\cdot i^*_s\Pc^{\bullet}$ (respectively, $*\cdot
i^*_{s+1}\Pc^{\bullet}$) with the multiplication on the right by
the restriction of $\alpha_{\eta_{p+q}}\in K_m(Y)$ to the
corresponding closed subsets in $Y$. Therefore, by Claim \ref{integrality2} and
Remark \ref{K-groups_multiplication}, we get
$$
\nu_{Y,Y\cap Z^1_1,\ldots,Y\cap Z^1_{q-1},w} (\alpha_{w}\cdot
i_Y^*\widetilde{g}_{w})= \nu_{Y,Y\cap Z^1_1,\ldots,Y\cap Z^1_{q-1},w}
(\alpha_{w}\cdot (\widetilde{g}_w\cdot
\Pc^{\bullet}))=
$$
$$
=\overline{\alpha}_{w}\cdot((-1)^{mq}\nu_{X Z^1_1\ldots Z^1_{q-1}w}
(\widetilde{g}_{w})\cdot i_Z^*\Pc^{\bullet})=
(-1)^{mq+\frac{q(q+1)}{2}}\overline{\alpha}_{w}\cdot (\beta_{w}\cdot i^*_Z\Pc^{\bullet}).
$$
Combining this with the equality $\nu_X=(-1)^{\frac{(p+q)(p+q+1)}{2}}\nu_{p+q}$,
we conclude the proof of the theorem.
\end{proof}

Let $\{f_y\}\in \G(X,p+m)^p$ and $\{g_z\}\in \G(X,q+n)^q$ be two Gersten cocycles as above
with one additional property: for any irreducible component
$w$ of the intersection $W=Y\cap Z$, the collection $\{g_z\}_w$ is represented by
an element $\beta_w$ from the group $K_n(Z_w)$.

\begin{corol}\label{Gersten_product}
Under the above assumptions, the product of the
classes of $\{g_y\}$ and $\{h_z\}$ in $K$-cohomology groups
is represented by the Gersten cocycle
$$
(-1)^{(p+m)q}\{(Y,Z;w)\overline{\alpha}_w\cdot\overline{\beta}_w\}\in
\mbox{$\bigoplus\limits_{w\in W^{(0)}}K_{m+n}(k(w))$},
$$
where $(Y,Z;w)$ is the
local intersection index of $Y$ and $Z$ at the component $w$.
In particular, the intersection product
in Chow groups coincides up to sign with the natural product in the corresponding
$K$-cohomology groups (the last assertion had been proved by different methods in
\cite{Gil} and \cite{Gra}).
\end{corol}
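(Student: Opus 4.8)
The plan is to deduce the Corollary directly from Theorem \ref{intersecting_cycles} by rewriting the factor $\beta_w\cdot i_Z^\ast\Pc^\bullet$ in terms of Serre's intersection multiplicities, and then to specialise to fundamental cocycles for the assertion on Chow groups.

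First I would invoke Remark \ref{global_patching_systems} together with Remark \ref{rmk-irreducible-patchingsyst} to produce patching systems $\{Y^{1,2}_r\}$, $1\le r\le p-1$ and $\{Z^{1,2}_s\}$, $1\le s\le q-1$ for $Y$ and $Z$ satisfying conditions (i) and (ii) stated before Theorem \ref{intersecting_cycles}, and then, by Claim \ref{claim-adelicgoodcocycles}, choose good cocycles $f\in\A(X,\K^X_{p+m})^p$ and $g\in\A(X,\K^X_{q+n})^q$ for $\{f_y\}$ and $\{g_z\}$ with respect to these patching systems. The structural point to set up is that the cohomology class of the Gersten cocycle $\nu_X(f\cdot g)$ represents the product in $K$-cohomology: by Theorem \ref{quasiis} the morphism $\nu_X$ induces an isomorphism $H^\bullet(\A(X,\K^X)^\bullet)\stackrel{\sim}{\longrightarrow}H^\bullet(\G(X)^\bullet)$, by Remark \ref{commute-adelic} and the discussion following it the natural map $H^\bullet(X,\K^X)\to H^\bullet(\A(X,\K^X)^\bullet)$ is a ring isomorphism, and $\G(X)^\bullet$ is a right DG-module over the DG-ring $\A(X,\K^X)^\bullet$ with $\nu_X$ equal to right multiplication by $1$. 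It therefore suffices to simplify the right-hand side of Theorem \ref{intersecting_cycles}.

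The next step is a local computation at each generic point $w$ of an irreducible component of $W=Y\cap Z$. Since $\Pc^\bullet\to\OO_Y$ is a finite flat resolution on $X$, the complex $i_Z^\ast\Pc^\bullet$ on $Z$ has cohomology sheaves $\Tor^{\OO_X}_i(\OO_Y,\OO_Z)$ supported on $W$, so by Proposition \ref{chi}$(i)$ its Euler characteristic with support $\chi_\ast([i_Z^\ast\Pc^\bullet])\in K'_0(W)$, after localisation at $w$ (legitimate by Proposition \ref{chi}$(iii)$), equals $\sum_i(-1)^i\,\mathrm{length}_{\OO_{X,w}}\Tor^{\OO_{X,w}}_i(\OO_{Y,w},\OO_{Z,w})=(Y,Z;w)$, all these Tor-modules being of finite length because $Y$ and $Z$ meet properly. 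By the additional hypothesis of the Corollary, $\{g_z\}_w$ is represented by an element $\beta_w\in K_n(Z_w)$, hence Remark \ref{K-groups_multiplication} applies and gives $\beta_w\cdot i_Z^\ast\Pc^\bullet=(Y,Z;w)\,\overline{\beta}_w\in K_n(k(w))$, where $\overline{\beta}_w$ is the image of $\beta_w$ under $K_n(Z_w)\to K_n(k(w))$. Substituting into the formula of Theorem \ref{intersecting_cycles} yields $\nu_X(f\cdot g)=(-1)^{(p+m)q}\{(Y,Z;w)\,\overline{\alpha}_w\cdot\overline{\beta}_w\}$, which is the asserted Gersten cocycle.

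For the final assertion I would apply the above with $m=n=0$, $\{f_y\}=\{1_Y\}$, $\{g_z\}=\{1_Z\}$, so that $\alpha_w=\beta_w=1$ and $\overline{\alpha}_w=\overline{\beta}_w=1\in\Z=K_0(k(w))$; the class of $\{1_Y\}$ in $H^p(X,\K^X_p)=CH^p(X)$ is the cycle class of $Y$, and likewise for $Z$ (Bloch--Quillen, cf.\ Proposition \ref{Gersten-resolution} and \cite{Q}), so the formula reads $(-1)^{pq}\{(Y,Z;w)\}$, i.e.\ $(-1)^{pq}$ times the intersection cycle of $Y$ and $Z$ computed with Serre's multiplicities. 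The only genuine difficulty is already absorbed into Theorem \ref{intersecting_cycles}, which rests on the whole adelic apparatus of Sections \ref{main_theorem} and \ref{Euler_charact}; the remaining work here is the bookkeeping identifying $\nu_X(f\cdot g)$ with the $K$-cohomological product and the translation of $\chi_\ast([i_Z^\ast\Pc^\bullet])$ into the Serre intersection number, both of which are routine once the compatibility of products has been established.
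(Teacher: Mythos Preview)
Your proposal is correct and follows essentially the same approach as the paper's proof: both identify the $K$-cohomological product with $\nu_X(f\cdot g)$ via the adelic resolution, then compute the right-hand side of Theorem~\ref{intersecting_cycles} by recognising $\chi_*([i_Z^*\Pc^\bullet])_w$ as Serre's Tor-formula intersection multiplicity and invoking Remark~\ref{K-groups_multiplication}. Your write-up simply makes explicit the auxiliary references (Proposition~\ref{chi}(i),(iii), the DG-module interpretation of $\nu_X$) that the paper leaves implicit.
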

\begin{proof}
The composition of the morphisms of complexes
$\K_n(\OO_X)\to
\underline{\A}(X,\K_n)^{\bullet}\stackrel{\nu_X}\longrightarrow
\underline{\G}(X,n)^{\bullet}$ is identity on the (hyper)cohomology groups.
Therefore the product of the classes of Gersten cocycles in $K$-cohomology groups is
represented by the image under the map $\nu_X$ of the adelic product
of the corresponding adelic cocycles.

On the other hand, we have $(Y,Z;w)=\displaystyle\sum_{i\ge 0}
(-1)^il(\Tor_i^{\OO_{X,w}}(\OO_{Y,w},\OO_{Z,w}))$,
where $l(\cdot)$ is the length of an $\OO_{X,w}$-module, i.e., the length of a
filtration whose adjoint quotients are one-dimensional vector spaces over
the field $k(w)$. Thus the lemma follows directly from Theorem
\ref{intersecting_cycles} and Remark \ref{K-groups_multiplication}.
\end{proof}

\begin{rmk}\label{regular-condition}
If each generic point $w$ of an irreducible component of the intersection $W=Y\cap Z$
is regular on $Y$ and $Z$, then the conditions of Corollary \ref{Gersten_product}
are satisfied.
\end{rmk}

\subsection{Massey triple product and the Weil pairing}\label{section-triple-formula}

In this section we apply the adelic resolution to computation of
some Massey triple product. Let $X$ be a smooth variety of dimension
$d$ over an infinite perfect field $k$. Consider elements $\alpha\in
CH^p(X)_l=H^p(X,\K_p^X)_l$, $\beta\in CH^q(X)_l=H^q(X,\K_q^X)_l$,
and $l\in H^0(X,\K^X_0)=\Z$ such that $p+q=d+1$. The triple
$(\alpha,l,\beta)$ satisfies $\alpha\cdot l=l\cdot \beta=0$ in
$K$-cohomology groups, hence there is a triple product
$$
m_3(\alpha,l,\beta)\in H^d(X,\K_{d+1})/(\alpha\cdot
H^{q-1}(X,\K_q^X)+ H^{p-1}(X,\K_p^X)\cdot \beta).
$$
We compute this product explicitly. Let us represent the classes
$\alpha\in CH^p(X)_l$ and $\beta\in CH^q(X)_l$ by cycles $Y=\sum_i
m_i\cdot Y_i$ and $Z=\sum_j n_j\cdot Z_j$, respectively, where $Y_i$
and $Z_j$ are irreducible subvarieties in $X$ of codimensions $p$
and $q$, respectively, and $m_i,n_j\in\Z$. Since $p+q=d+1$, it can
be assumed that the supports $|Y|=\cup_iY_i$ and $|Z|=\cup_j Z_j$ do
not intersect. Let $\{f_{\widetilde{y}}\}\in \G(X,p)^{p-1}$ and
$\{g_{\widetilde{z}}\}\in \G(X,q)^{q-1}$ be two collections such
that $d\{f_{\widetilde{y}}\}=lY$, $d\{g_{\widetilde{z}}\}=lZ$ and
let $\widetilde{Y}\subset X$ and $\widetilde{Z}\subset X$ be the
supports of $\{f_{\widetilde{y}}\}$ and $\{g_{\widetilde{z}}\}$,
respectively. It follows the moving lemma for higher Chow groups
(see \cite{Blo86} and also \cite{Lev}) that for $X$ either affine or
projective, one can choose $\{f_{\widetilde{y}}\}$ and
$\{g_{\widetilde{z}}\}$ such that the intersections
$\widetilde{Y}\cap |Z|$ and $|Y|\cap\widetilde{Z}$ are proper and
the rational functions $f_{\widetilde{y}}$ and $g_{\widetilde{z}}$
are regular at all points from the intersections $\widetilde{Y}\cap
|Z|$ and $|Y|\cap \widetilde{Z}$, respectively. For each point $x\in
\widetilde{Y}\cap |Z|$, we put $f(x)=\prod_{\widetilde{y}}
f_{\widetilde{y}}^{(\widetilde{y},Z;x)}(x)$. Similarly, we define
$g(x)$ for each point $x\in |Y|\cap\widetilde{Z}$.

\begin{prop}\label{Massey_comp}
Under the above assumptions, the triple product
$m_3(\alpha,l,\beta)$ is represented by the Gersten cocycle
$$
(-1)^{pq}(\sum_{x\in \widetilde{Y}\cap |Z|}f(x)\cdot x+\sum_{x\in
|Y|\cap \widetilde{Z}}g^{-1}(x)\cdot x)\in \G(X,d+1)^d.
$$
\end{prop}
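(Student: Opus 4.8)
The plan is to compute the Massey triple product $m_3(\alpha,l,\beta)$ using the adelic resolution, where the DG-ring structure makes higher products representable by explicit cocycle-level formulas. Recall that by Theorem \ref{quasiis} the complex $\A(X,\K^X_{\bullet})^{\bullet}$ computes $K$-cohomology and is a DG-ring, so Massey products may be computed there exactly as recalled in Remark \ref{rmk-Massey}: given adelic cocycles $a$, $\lambda$, $b$ representing $\alpha$, $l$, $\beta$ with $a\cdot\lambda=d(u)$ and $\lambda\cdot b=d(v)$, the product $m_3(\alpha,l,\beta)$ is represented by $u\cdot b \pm a\cdot v$, and then one applies $\nu_X$ to land in the Gersten complex. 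First I would take $a=[Y]\in\A(X,\K_p^X)^p$ and $b=[Z]\in\A(X,\K_q^X)^q$ to be good cocycles for the cycles $Y$ and $Z$ in the sense of Definition \ref{defin-goodcocycle}, constructed via Proposition \ref{adelic-class} and Claim \ref{integrality2}, with respect to patching systems chosen so that all relevant intersections are proper; here $\lambda=l\in\Z=\A(X,\K_0^X)^0$ is just multiplication by the integer $l$, so $a\cdot\lambda = l[Y]$ and $\lambda\cdot b = l[Z]$.

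The key step is to produce explicit adelic primitives. Since $d\{f_{\widetilde y}\}=lY$ in the Gersten complex and $[Y]$ is a good cocycle with $\nu_X([Y])=\{1_Y\}$, and since $\nu_X$ is a quasiisomorphism with $\ker(\nu_X)$ exact (as in the proof of Lemma \ref{suitable_cycles}), I would invoke Lemma \ref{suitable_cycles} to obtain an adele $\widetilde f\in\A(X,\K_p^X)^{p-1}$ with $d\widetilde f = l[Y]$, $\nu_X(\widetilde f)=\{f_{\widetilde y}\}$, and such that the restriction $\widetilde f_U$ to $U=X\setminus|Y|$ is itself a good cocycle for $\{f_{\widetilde y}\}_U$ relative to the restricted patching system for $\widetilde Y$. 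Similarly I get $\widetilde g\in\A(X,\K_q^X)^{q-1}$ with $d\widetilde g=l[Z]$ and $\nu_X(\widetilde g)=\{g_{\widetilde z}\}$, good on $X\setminus|Z|$. Then $m_3(\alpha,l,\beta)$ is represented by (up to a sign coming from the DG-conventions) $\widetilde f\cdot [Z]\pm[Y]\cdot\widetilde g\in\A(X,\K_{d+1}^X)^d$, and I must compute $\nu_X$ of this class.

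The main computation, and the main obstacle, is evaluating $\nu_X(\widetilde f\cdot [Z])$ and $\nu_X([Y]\cdot\widetilde g)$ as Gersten cocycles in degree $d$. This is exactly an instance of the product formula of Theorem \ref{intersecting_cycles}: the cocycle $\widetilde f_U$ is supported on $\widetilde Y$, the cocycle $[Z]$ on $|Z|$, these meet properly, and since $f_{\widetilde y}$ is regular at the points of $\widetilde Y\cap|Z|$ the relevant local $K'$-classes $\alpha_w$ are represented by units $f_{\widetilde y}(w)$ in residue fields; the flat resolution $\Pc^\bullet\to\OO_{\widetilde Y}$ contributes the local intersection multiplicities $(\widetilde y,Z;x)$. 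Thus $\nu_X(\widetilde f\cdot[Z])$ picks up exactly $\sum_{x\in\widetilde Y\cap|Z|}\bigl(\prod_{\widetilde y}f_{\widetilde y}^{(\widetilde y,Z;x)}(x)\bigr)\cdot x = \sum_x f(x)\cdot x$, up to the sign $(-1)^{(p-1+1)q}=(-1)^{pq}$ predicted by Theorem \ref{intersecting_cycles} with $m$ shifted appropriately; symmetrically $\nu_X([Y]\cdot\widetilde g)$ yields $\sum_{x\in|Y|\cap\widetilde Z}g(x)\cdot x$ with the analogous sign, and the relative sign between the two terms (from $a\cdot v$ versus $u\cdot b$ in the Massey formula, together with the degrees $p,q$) produces the $g^{-1}(x)$. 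Careful bookkeeping of the signs $\epsilon(p,q)$, $(-1)^{p(p+1)/2}$, and the DG-module conventions is the delicate part; one checks $p+q=d+1$ forces the supports $|Y|,|Z|$ disjoint so there is no $|Y|\cap|Z|$ contribution, and the only contributions are the two terms above. The indeterminacy of $m_3$ — the subgroup $\alpha\cdot H^{q-1}+H^{p-1}\cdot\beta$ — precisely absorbs the ambiguity in the choices of $\widetilde f,\widetilde g$ and of the good cocycles, so the class is well defined, completing the proof.
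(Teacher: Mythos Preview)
Your proposal is correct and follows essentially the same route as the paper's proof: choose good adelic cocycles $[Y]$ and $[Z]$, use Lemma \ref{suitable_cycles} to produce primitives $\widetilde f,\widetilde g$ whose restrictions to the complements $X\setminus|Y|$ and $X\setminus|Z|$ are good cocycles for $\{f_{\widetilde y}\}$ and $\{g_{\widetilde z}\}$, and then evaluate $\nu_X(\widetilde f\cdot[Z]-(-1)^p[Y]\cdot\widetilde g)$ via Theorem \ref{intersecting_cycles} and Corollary \ref{Gersten_product} on those open subsets. The only point you leave slightly implicit, which the paper states explicitly, is why $\nu_X(\widetilde f\cdot[Z])=\nu_U((\widetilde f\cdot[Z])_U)$ with $U=X\setminus|Y|$: this holds because $|Z|\subset U$ forces the product's $\nu_X$-contribution to live entirely over $U$, where $\widetilde f_U$ is a genuine good cocycle and the product formula applies.
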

\begin{proof}
By Remark~\ref{global_patching_systems} and
Remark~\ref{rmk-irreducible-patchingsyst}, there are patching
systems $\{Y^{1,2}_r\}$, $\{\widetilde{Y}^{1,2}_r\}$,
$\{Z^{1,2}_s\}$, and $\{\widetilde{Z}^{1,2}_s\}$ on $X$ for
subvarieties $|Y|$, $\widetilde{Y}$, $|Z|$, and $\widetilde{Z}$,
respectively, such that the pairs of patching systems
$(\{Y^{1,2}_r\},\{\widetilde{Z}^{1,2}_s\})$ and
$(\{\widetilde{Y}^{1,2}_r\},\{Z^{1,2}_s\})$ satisfy conditions of
Theorem~\ref{intersecting_cycles}. Let $[Y]\in \A(X,\K_p)^p$ and
$[Z]\in\A(X,q)^{q}$ be good cocycles for $Y$ and $Z$ with respect to
the patching systems $\{Y^{1,2}_r\}$ and $\{Z^{1,2}_s\}$ on $X$,
respectively. By Lemma~\ref{suitable_cycles}, there are adeles
$f\in\A(X,\K^X_p)^{p-1}$ and $g\in \A(X,\K^X_q)^{q-1}$ such that
$df=l[Y]$, $dg=l[Z]$, and the restrictions $f_{U}$ and $g_{V}$ are
good cocycles for $\{f_{\widetilde{y}}\}_U$ and
$\{g_{\widetilde{z}}\}_V$ with respect to the patching systems
$\{\widetilde{Y}^{1,2}_r\}_U$ and $\{\widetilde{Z}^{1,2}_s\}_V$
respectively, where $U=X\backslash |Y|$ and $V=X\backslash |Z|$. By
definition, $m_3(\alpha,l,\beta)$ is represented by the Gersten
cocycle $\nu_X(f\cdot[Z]-(-1)^p[Y]\cdot g)$. Since
$\nu_X(f\cdot[Z])=\nu_U((f\cdot[Z])_U)$ and $\nu_X([Y]\cdot
g)=\nu_V(([Y]\cdot g)_V)$, we conclude by
Theorem~\ref{intersecting_cycles} and
Corollary~\ref{Gersten_product}.
\end{proof}

Let $X$ be a smooth projective variety of dimension $d$ over a field
$k$. Evidently, for any degree zero element $\alpha\in CH^d(X)$, we
have $\pi_*(\alpha\cdot
H^0(X,\K_1^X))=\pi_*(\alpha\cdot\OO^*(X))=1$, where
$\pi_*:H^d(X,\K_{d+1}^X)\to k^*$ is the direct image map associated
with the structure morphism $\pi: X\to {\rm Spec}(k)$.

\begin{prop}\label{Bloch-vanishing-direct}
The subgroup $H^{d-1}(X,\K_d^X)\cdot \Pic^0(X)\subset
H^d(X,\K^X_{d+1})$ is in the kernel of the direct image map
$\pi_*:H^{d}(X,\K_{d+1}^X)\to k^*$.
\end{prop}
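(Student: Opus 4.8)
The statement is a ``vanishing'' fact about the direct image map: the product of an element $\xi\in H^{d-1}(X,\K_d^X)$ with a degree-zero divisor class $\beta\in\Pic^0(X)=H^1(X,\K_1^X)_0$ maps to $1$ under $\pi_*:H^d(X,\K_{d+1}^X)\to k^*$. The plan is to reduce this to a statement about zero-cycles. By the Bloch--Quillen formula $H^1(X,\K_1^X)=\Pic(X)$ and $H^d(X,\K_d^X)=CH^d(X)=CH_0(X)$, and $\pi_*$ on $H^d(X,\K_{d+1}^X)$ composed with the edge map $H^d(X,\K_{d+1}^X)\to H^d(X,\K_{d+1}^X)$ factors through the degree map on zero-cycles together with the norm map $K_1(k(x))\to K_1(k)$ along residue field extensions. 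So the first step is: write $\xi$ as (the class of) a zero-cycle supported on finitely many closed points, represent $\beta$ by a divisor $D$ avoiding that support and by a collection of local equations $\{s_\eta\}$ as in Section~\ref{section-K-generalities}, and use the explicit adelic (Gersten) formula for the product $\xi\cdot\beta$ that follows from Corollary~\ref{Gersten_product} (or directly from Theorem~\ref{intersecting_cycles}). Concretely, a zero-cycle class is represented by a Gersten cocycle $\{f_x\}\in\bigoplus_{x\in X^{(d)}}K_1(k(x))$ that is a coboundary of some $\{f_{\widetilde y}\}\in\G(X,d)^{d-1}$, i.e.\ the cycle lies in the image of $CH^{d-1}$-multiplication or, more simply, we may reduce to the case $\xi=[W]$ for $W$ an irreducible curve times a function, using the moving lemma to make $W$ meet $D$ properly.

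The key computational step is then an application of Proposition~\ref{Massey_comp}'s style of argument, or more directly of the reciprocity law on the curve $W$. Represent $\xi$ by a pair $(W,\varphi)$ with $W\subset X$ an integral curve (after a moving-lemma argument we may assume all components of $\xi$ arise this way, or we pass to a resolution/normalization $\widetilde W\to W$ and use the projection formula of Lemma~\ref{product_formula_Massey}) and $\varphi\in k(W)^*$. Intersecting with the divisor $D$ (with local equations $s_\eta$), Theorem~\ref{intersecting_cycles} and Remark~\ref{K-groups_multiplication} give that $\xi\cdot\beta$ is represented by the Gersten $d$-cocycle supported on $W\cap D$ whose component at a point $x$ is, up to sign, the tame symbol $\{\varphi,s\}_x\in K_1(k(x))$ of $\varphi$ and the local equation $s$ of $D$ at $x$. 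Pushing forward to $\Spec k$ means taking the product over $x\in W\cap D$ of $N_{k(x)/k}$ of these symbols. Since $D$ is linearly equivalent to zero on all of $X$ — here is where $\beta\in\Pic^0$, so $D=\div(h)$ for a global rational function $h$ after passing to a suitable birational model, or $D$ restricted to $W$ is $\div(h|_W)$ because $\deg(D|_W)=0$ for $W$ algebraically equivalent to zero... — the relevant fact is that on the proper curve $W$ (or $\widetilde W$) the restricted divisor $D|_W$ has degree zero, hence is the divisor of a rational function $h\in k(W)^*$. Then Weil reciprocity on the complete curve $W$ gives $\prod_{x\in W}N_{k(x)/k}\{\varphi,h\}_x=1$, and matching the local symbols $\{\varphi,s\}_x$ with $\{\varphi,h\}_x$ (they differ by units, which die under the tame symbol) yields $\pi_*(\xi\cdot\beta)=1$.

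The main obstacle will be the reduction in the first paragraph: making precise that an arbitrary class in $H^{d-1}(X,\K_d^X)\cdot\Pic^0(X)$ can be represented so that the intersection with $D$ is computed on a proper curve where Weil reciprocity applies, and handling the degree-zero hypothesis cleanly. The honest way is: (a) by bilinearity reduce to $\xi$ represented by a single irreducible curve-with-function $(W,\varphi)$, using the moving lemma (\cite{Blo86}, \cite{Lev}) to put $W$ and $D$ in general position; (b) replace $W$ by its normalization $\pi_W:\widetilde W\to X$, a proper morphism, so that by the projection formula (Lemma~\ref{product_formula_Massey} with $f=\pi\circ\pi_W$, or the adelic Proposition~\ref{product_formula}) the push-forward $\pi_*(\xi\cdot\beta)$ equals $(\pi_W)_*$ of an intersection computed on the smooth proper curve $\widetilde W$; (c) on $\widetilde W$, the class $\pi_W^*\beta\in\Pic^0(\widetilde W)$ has degree zero (this is where $\beta\in\Pic^0$, i.e.\ algebraic/numerical triviality, is used), hence equals $\div(h)$ for $h\in k(\widetilde W)^*$; (d) invoke the classical Weil reciprocity $\prod_{x\in\widetilde W}N_{k(x)/k}\partial_x(\varphi,h)=1$, which is exactly the statement that the push-forward along $\widetilde W\to\Spec k$ of the product in $\K$-cohomology of two degree-considerations vanishes — equivalently the composite $H^1(\widetilde W,\K_2)\to H^1(\widetilde W,\K_1\otimes\K_1)\to\cdots\to k^*$ kills such products. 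Once (a)--(d) are in place, the chain of equalities using Theorem~\ref{intersecting_cycles}, Remark~\ref{K-groups_multiplication}, and Lemma~\ref{reduction} is routine sign-bookkeeping. The subtle point worth spelling out in the writeup is that the local equation $s_\eta$ of $D$ and the function $h$ on $\widetilde W$ cutting out $D|_{\widetilde W}$ have the same divisor on $\widetilde W$ up to a unit, so their tame symbols with $\varphi$ agree after applying the norm, which is what closes the argument.
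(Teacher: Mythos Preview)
Your approach has a genuine gap at step~(c), and the confusion already surfaces in your second paragraph. You write that since $\beta\in\Pic^0(X)$, the pullback $\pi_W^*\beta\in\Pic^0(\widetilde W)$ has degree zero and \emph{hence equals $\div(h)$ for some $h\in k(\widetilde W)^*$}. This implication is false: $\Pic^0(\widetilde W)$ is the Jacobian of $\widetilde W$, and a degree-zero line bundle on a curve is principal only when its class in the Jacobian is trivial. There is no reason for the restriction of an arbitrary $\beta\in\Pic^0(X)$ to a curve $\widetilde W$ to be linearly equivalent to zero; the hypothesis $\beta\in\Pic^0(X)$ means $\beta$ is \emph{algebraically} equivalent to zero, not linearly. (Your earlier parenthetical ``$D=\div(h)$ for a global rational function $h$'' is the same error on $X$ itself.) Consequently the function $h$ on which you would run Weil reciprocity does not exist in general, and the argument breaks down precisely at the point where the $\Pic^0$ hypothesis is meant to enter.

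The paper's proof takes a different route that avoids this obstacle. Fixing the $K_1$-chain $\{f_j\}$ representing $\xi$, it regards $\beta\mapsto\pi_*(\{f_j\}\cdot\beta)$ as a homomorphism $\Pic^0(X)(k)\to k^*$, and shows (after base change to an algebraically closed field, using the Poincar\'e bundle on $X\times\Pic^0(X)$ and symmetric powers of the curves $C_j$) that this homomorphism is induced by a \emph{regular morphism} of varieties $\Pic^0(X)\to\mathbb{G}_m$. Since an abelian variety is complete and $\mathbb{G}_m$ is affine, any such morphism is constant, hence identically $1$. The $\Pic^0$ hypothesis is used not to trivialize $\beta$ on a curve, but to ensure that $\beta$ varies in a complete algebraic family, which is what forces the map to $\mathbb{G}_m$ to be constant. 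If you want to repair your approach, you would need a replacement for step~(c) that does not assume principality --- but the natural candidates (e.g.\ divisibility of $\Pic^0$, or an $l$-torsion argument) end up reproducing the rigidity argument in disguise.
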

\begin{proof}
After the base change, one may assume that the ground field $k$ is
algebraically closed. Consider a \mbox{$K_1$-chain} $\{f_{j}\}\in
\G(X,d)^{d-1}$ with $\sum_j\div(f_j)=0$ and a group homomorphism
$\Pic^0(X)(k)\to k^*$ given by $\beta\mapsto
\pi_*(\{f_j\}\cdot\beta)$. We claim that this homomorphism is
induced by a regular morphism from the Picard variety $\Pic^0(X)$ to
the algebraic group ${\mathbb G}_m$ and therefore is identically
equal to $1\in k^*$.

For each $j$, by $C'_j$ denote the complement to the divisor of the
function $f_j$ on $C_j$. For any closed point $\gamma\in \Pic^0(X)$,
there exists a rational section $s$ of the Poincar\'e line bundle on
the product $X\times \Pic^0(X)$ such that the restriction
$D_{\gamma}$ to $X\times \{\gamma\}$ of the divisor $D=\sum m_i D_i$
of the section $s$ meets the curve $C\times \{\gamma\}$ properly and
this intersection is contained in $C'_j\times\{\gamma\}$. Clearly,
this condition holds for all closed points $\beta$ from a
sufficiently small open neighborhood $U$ of the point $\gamma$ in
$\Pic^0(X)$. Let $a_{ij}$ be the degree of the natural finite map
$D_i\cap(C_j\times U)\to U$. We get a regular morphism
$$
\widetilde{f}:U\to
\prod_{i}\Sym^{a_{ij}}(C'_j)\stackrel{f}\longrightarrow {\mathbb
G}_m,
$$
where $f=\prod_{i}(\Sym^{a_{ij}}(f_j))^{m_i}$. Moreover, for any
point $\beta\in U$, there is an equality
$\widetilde{f}(\beta)=\prod\limits_{i,j} \prod\limits_{x\in
(D_i)_{\beta}\cap C_j}f_j^{m_i a_{ij}(x)}(x)$, where $a_{ij}(x)$ is
the intersection index of the divisor $(D_i)_{\beta}$ with $C_j$ at
a point $x$. Therefore by Corollary~\ref{Gersten_product},
$\pi_*(\{f_j\}\cdot\beta)=\widetilde{f}(\beta)^{(-1)^{d}}$. This
proves the needed statement.
\end{proof}

By Proposition~\ref{Bloch-vanishing-direct}, for any elements
$\alpha\in CH^d(X)_l$, $\beta\in\Pic^0(X)_l$, the direct image of
the Massey triple product
$\overline{m}_3(\alpha,l,\beta)=\pi_*(m_3(\alpha,l,\beta))$ is well
defined.

\begin{prop}\label{Weil_Massey}
Suppose that $l$ is prime to ${\rm char}(k)$; then for any elements
$\alpha\in CH^d(X)_l$, $\beta\in\Pic^0(X)_l$, there is an equality
$$
\overline{m}_3(\alpha,l,\beta)=\psi_l(\alpha,\beta)^{(-1)^d},
$$
where $\psi_l$ is the Weil pairing between the $l$-torsion of
Albanese and Picard varieties of $X$.
\end{prop}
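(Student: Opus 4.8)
The plan is to identify the explicit Gersten cocycle representing $m_3(\alpha,l,\beta)$ given by Proposition~\ref{Massey_comp}, reduce to the case of an algebraically closed field by a base change, and then match this cocycle with a standard description of the Weil pairing via functions and divisors. First I would recall the concrete setup: represent $\alpha$ by a zero-cycle $Y=\sum_i m_i\cdot Y_i$ (points) and $\beta$ by a divisor $Z=\sum_j n_j\cdot Z_j$ with disjoint supports, choose $K_1$-chains $\{f_{\widetilde y}\}$ and $\{g_{\widetilde z}\}$ with $d\{f_{\widetilde y}\}=lY$, $d\{g_{\widetilde z}\}=lZ$, so that by Proposition~\ref{Massey_comp} we have
$$
m_3(\alpha,l,\beta)=(-1)^{pq}\Bigl(\sum_{x\in\widetilde Y\cap|Z|}f(x)\cdot x+\sum_{x\in|Y|\cap\widetilde Z}g^{-1}(x)\cdot x\Bigr)\in\G(X,d+1)^d .
$$
Applying $\pi_*$ amounts to taking the product over all these points of the values $f(x)$ and $g^{-1}(x)$ (with the norm to $k$, which is trivial after base change). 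Since $p=d$, $q=1$, the sign $(-1)^{pq}=(-1)^d$, which already accounts for the exponent in the statement, and the problem becomes showing that
$$
\prod_{x\in\widetilde Y\cap|Z|}f(x)\cdot\prod_{x\in|Y|\cap\widetilde Z}g^{-1}(x)=\psi_l(\alpha,\beta).
$$

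The core of the argument is then a reciprocity/duality computation. I would write $\widetilde Y$ as a curve (or union of curves) carrying the function(s) $f_{\widetilde y}$ whose divisor is $lY$, and $\widetilde Z$ as a divisor carrying $g_{\widetilde z}$ with divisor $lZ$, and interpret $\prod_x f(x)$ as a ``pairing of the function $f$ with the divisor $Z$'' and $\prod_x g(x)$ as the ``pairing of the function $g$ with the cycle $Y$''. The key input is a Weil reciprocity type statement on $X$ (the ``Weil reciprocity for curves and divisors'' in higher dimension, which in this codimension reduces to the classical Weil reciprocity on a curve after intersecting with $\widetilde Y$, or to the statement $f(\div(g))=g(\div(f))$ for functions on a curve): since both $lY=\div(f)$ and $lZ=\div(g)$ are $l$-divisible, the product $\prod f(x)\cdot\prod g^{-1}(x)$ is an $l$-th root of unity depending only on the classes $\alpha\in\mathrm{Alb}(X)_l$ and $\beta\in\Pic^0(X)_l$, and this is exactly the construction of the Weil pairing $\psi_l$ by the ``descent along multiplication by $l$'' recipe (as in the curve case, cf.\ \cite{How}, \cite{Maz}, \cite{Gor}). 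I would verify the normalization and the sign by testing on a single explicit example (e.g.\ $X$ a curve, or $X=E\times\mathbb P^{d-1}$ with $E$ an elliptic curve), which also double-checks that no extra $(-1)^{d}$ creeps in beyond the one already present.

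The main obstacle I expect is the bookkeeping of signs and of the independence of all choices: one must check that $\overline m_3(\alpha,l,\beta)$ is genuinely well defined modulo the indeterminacy $\pi_*(\alpha\cdot H^0(X,\K_1^X))$ and $\pi_*(H^{d-1}(X,\K_d^X)\cdot\beta)$, both of which are trivial by the remark preceding Proposition~\ref{Bloch-vanishing-direct} and by Proposition~\ref{Bloch-vanishing-direct} itself, and that changing $\{f_{\widetilde y}\}$, $\{g_{\widetilde z}\}$, or the representing cycles $Y$, $Z$ only changes the product by an $l$-th power or a reciprocity-trivial factor. A secondary technical point is that Proposition~\ref{Massey_comp} requires $X$ affine or projective and the moving-lemma hypotheses on the intersections; for the projective $X$ in Proposition~\ref{Weil_Massey} this is fine, and the base change to $\bar k$ is harmless since both sides of the claimed identity are compatible with field extension. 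Modulo these verifications, the identity $\overline m_3(\alpha,l,\beta)=\psi_l(\alpha,\beta)^{(-1)^d}$ follows by comparing the explicit formula of Proposition~\ref{Massey_comp} with the classical function-theoretic definition of the Weil pairing.
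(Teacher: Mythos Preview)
Your approach overlaps with the paper's in the curve case: both compute the triple product explicitly and identify it with the classical function-theoretic formula for the Weil pairing, citing \cite{How}, \cite{Maz}, \cite{Gor}. (The paper does the curve case by a direct tame-symbol computation in the adelic complex rather than through Proposition~\ref{Massey_comp}, but for $\dim X=1$ the two amount to the same thing.)

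The gap is the higher-dimensional step. You assert that the product $\prod f(x)\cdot\prod g^{-1}(x)$ ``is exactly the construction of the Weil pairing $\psi_l$ by the descent-along-multiplication-by-$l$ recipe,'' but the references you cite establish this only on curves; there is no standard function-theoretic description of $\psi_l:\Alb(X)_l\times\Pic^0(X)_l\to\mu_l$ for general $X$ that makes this identification immediate. Testing on a single example cannot close the gap either: that would only pin down a sign once you already know the two pairings agree up to a universal scalar, which is precisely what remains unproved. The paper supplies the missing reduction. It takes a smooth hyperplane-section curve $i:C\hookrightarrow X$, invokes the projection formula for the Weil pairing, $\psi_l(i_*\M,\LL)=\psi_l(\M,i^*\LL)$, and the projection formula for Massey products from Lemma~\ref{product_formula_Massey}, which gives $\overline m_3(i_*\M,l,\LL)=\overline m_3(\M,l,i^*\LL)^{(-1)^{d-1}}$; combined with the surjectivity of $i_*:\Pic^0(C)_l\to\Alb(X)_l$, the curve case then yields the general statement with the correct exponent $(-1)^d$. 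Your informal remark about restricting to $\widetilde Y$ gestures toward such a reduction, but carrying it out would still require a projection formula to relate the pairing on (the normalization of) $\widetilde Y$ to $\psi_l$ on $X$.
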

\begin{proof}
Since both sides of the equality evidently do not change under
extensions of the ground field, we can assume that the field $k$ is
algebraically closed. First, suppose that $\dim(X)=1$ and consider
elements $\LL,\M\in\Pic^0(X)_l$. Choose two adeles (in fact, ideles)
$f,g\in\A(X,\K_1^X)^1$ that correspond to $\LL$ and $\M$,
respectively. There are two adeles
$\widetilde{f},\widetilde{g}\in\A(X,\K_1^X)^0$ such that
$d\widetilde{f}=f^l$, $d\widetilde{g}=g^l$ (we write the group law
for $K_1$-groups in the multiplicative way). By definition, we have
$$
\overline{m}_3(\LL,l,\M)=\prod_{x\in X}
(\widetilde{f}_X,g_{Xx})_x(f_{Xx},\widetilde{g}_x)_x,
$$
where $(\cdot,\cdot)_x$ is the tame symbol in the discrete valuation
ring $\OO_{X,x}$. We may assume that the supports of the divisors
$D=-\div(f)$ and $E=-\div(g)$ do not intersect. In this case we get
$\phi_l(\LL,\M)=f_{X}(E)\cdot g_X(-D)$. This formula for the Weil
pairing $\psi_l(\LL,\M)$ of $\LL$ and $\M$ is well known. The proof
can be found in~\cite{How}, \cite{Maz}, and \cite{Gor} (these three
proofs use different methods).

Now suppose that $X$ is not a curve. Let $i:C\hookrightarrow X$ be a
general $(d-1)$-th hyperplane section of $X$. Consider two elements
$\M\in \Pic^0(C)_l$, $\LL\in \Pic^0(X)_l$. The projection formula
for Weil pairing (following, for example, from that for \'etale
cohomology) implies that $\psi_l(i_*(\M),\LL)=\psi_l(\M,i^*(\LL))$.
The projection formula for Massey higher products, stated in Lemma
\ref{product_formula_Massey}, implies that
$\phi_l(i_*(\M),\LL)=\phi_l(\M,i^*(\LL))^{(-1)^{d-1}}$. On the other
hand, it is well known that the map $i_*:\Pic^0(C)_l\to \Alb(X)_l$
is surjective. Thus, by the previous step, we get the needed result.
\end{proof}

Propositions \ref{Massey_comp} and \ref{Weil_Massey} imply the
following formula.

\begin{corol}
Let the class $\alpha\in CH^d(X)_l$ be represented by a zero-cycle
$z=\sum_i m_i\cdot z_i$ and let the class $\beta\in\Pic^0(X)_l$ be
represented by a divisor $D$ such that $z$ does not intersect with
$D$. Suppose that $\div(g)=lD$ with $g\in k(X)^*$ and
$d(\{f_m\})=lz$, where $d$ is the differential in the Gersten
complex on $X$, $f_m\in k(C_m)^*$, and $C_m$ are irreducible curves
on $X$. Assume that for any $m$ the rational function $f_m$ is
regular at all points from the intersection of $D$ with $C_m$. Then
in the notations from Proposition \ref{Massey_comp} we have the
following formula for the Weil pairing of $\alpha$ and $\LL$:
$$
\psi_l(\alpha,\LL)=(\prod_{x\in C\cap D}f(x)\cdot
\prod_{i}g^{-m_i}(z_i))^{(-1)^d}.
$$
\end{corol}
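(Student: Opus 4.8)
The plan is to derive this formula as a direct specialization of Propositions~\ref{Massey_comp} and~\ref{Weil_Massey}, so most of the work is already available and what remains is to match up notation and keep track of signs. First I would reduce to the case of an algebraically closed ground field: exactly as in the proof of Proposition~\ref{Weil_Massey}, neither the Weil pairing nor the Massey product $\overline{m}_3$ changes under an extension of $k$, and over an algebraically closed field every residue field $k(x)$ of a closed point equals $k$, so that the direct image $\pi_*:H^d(X,\K^X_{d+1})\to H^0(\Spec k,\K_1)=k^*$ is induced on the top Gersten term $\G(X,d+1)^d=\bigoplus_x k(x)^*$ simply by $\sum_x a_x\cdot x\mapsto\prod_x a_x$ (for general $k$ one inserts the norms $N_{k(x)/k}$).

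Next I would invoke Proposition~\ref{Massey_comp} with $p=d$ and $q=1$, representing $\alpha$ by the zero-cycle $Y=z$ and $\LL$ by the divisor $Z=D$, and taking the cochains $\{f_{\widetilde y}\}=\{f_m\}\in\G(X,d)^{d-1}$ and $\{g_{\widetilde z}\}=g\in\G(X,1)^0=k(X)^*$. Here the two auxiliary supports are $\widetilde Y=C=\bigcup_m C_m$ and $\widetilde Z=X$ (the closure of the generic point of $X$, since $g$ lives in the codimension-zero term). I would check that the hypotheses of Proposition~\ref{Massey_comp} are satisfied: the intersection $\widetilde Y\cap|Z|=C\cap D$ is proper and $f_m$ is regular along it by assumption, while $|Y|\cap\widetilde Z=|z|$, and $g$ is regular and invertible off $|D|$ because $\div(g)=lD$, hence regular at each $z_i\notin|D|$. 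Proposition~\ref{Massey_comp} then exhibits the Gersten cocycle $(-1)^{pq}\big(\sum_{x\in C\cap D}f(x)\cdot x+\sum_{x\in|z|}g^{-1}(x)\cdot x\big)$ representing $m_3(\alpha,l,\LL)$, and the one small identification needed is $g^{-1}(z_i)=g^{-m_i}(z_i)$: this comes from the defining formula $g(x)=\prod_{\widetilde z}g_{\widetilde z}^{(\widetilde z,z;x)}(x)$, the only $\widetilde z$ being the generic point of $X$ with $g_{\widetilde z}=g$, together with the fact that the local intersection index of the fundamental cycle $[X]$ with $z=\sum_i m_i z_i$ at $z_i$ equals $m_i$.

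Finally I would apply $\pi_*$ to this cocycle, using Proposition~\ref{Bloch-vanishing-direct} to know that the indeterminacy of $m_3(\alpha,l,\LL)$ lies in $\ker\pi_*$, so that $\overline{m}_3(\alpha,l,\LL)$ is well defined independently of the chosen cochains, and then compare with Proposition~\ref{Weil_Massey}, which identifies $\overline{m}_3(\alpha,l,\LL)$ with $\psi_l(\alpha,\LL)^{(-1)^d}$; solving for $\psi_l(\alpha,\LL)$ and collecting the powers of $-1$ (with $pq=d$) yields the stated formula. I expect the only delicate point to be precisely this sign bookkeeping — reconciling the exponent $(-1)^{pq}$ of Proposition~\ref{Massey_comp}, the exponent $(-1)^d$ of Proposition~\ref{Weil_Massey}, and the conventional sign carried by the direct image $\pi_*$ on the top Gersten term — but no genuine obstacle arises beyond careful accounting.
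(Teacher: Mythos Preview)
Your proposal is correct and follows exactly the approach indicated by the paper, which simply states that the corollary follows from Propositions~\ref{Massey_comp} and~\ref{Weil_Massey} without further proof. You have filled in the specialization ($p=d$, $q=1$, $\widetilde{Z}=X$) and the identification $g^{-1}(z_i)=g^{-m_i}(z_i)$ carefully; your caveat about the sign bookkeeping is warranted, since with $pq=d$ the two $(-1)^d$ exponents from Propositions~\ref{Massey_comp} and~\ref{Weil_Massey} appear to cancel rather than combine, which may indicate a minor sign inconsistency in the stated formula.
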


\end{document}